	\tikzset{commutative diagrams/.cd, 
		mysymbol/.style = {start anchor=center, end anchor = center, draw = none}}
	\newcommand{\commutes}[2][\circ]{\arrow[mysymbol]{#2}[description]{#1}}
\newtheorem{theorem}{Theorem}[section]
\newtheorem{proposition}[theorem]{Proposition}
\newtheorem{lemma}[theorem]{Lemma}
\newtheorem{corollary}[theorem]{Corollary}
\theoremstyle{definition}
\newtheorem{definition}[theorem]{Definition}
\newtheorem{remark}[theorem]{Remark}
\newtheorem{example}[theorem]{Example}
\newtheorem{question}[theorem]{Question}
\newtheorem{notation}[theorem]{Notation}
\let\cal\mathcal
\renewcommand\AA{{\cal A}}
\newcommand\CC{{\cal C}}
\newcommand\DD{{\cal D}}
\newcommand\EE{{\cal E}}
\newcommand\FF{{\cal F}}
\newcommand\II{{\cal I}}
\newcommand\JJ{{\cal J}}
\newcommand\NN{{\cal N}}
\newcommand\TT{{\cal T}}
\let\blb\mathbb
\newcommand\bC{{\blb C}}
\newcommand\bZ{{\blb Z}}
\newcommand\bN{{\blb N}}
\let\frak\mathfrak
\newcommand\ff{\frak{f}}
\newcommand\fE{\frak{E}} 
\newcommand\fM{\frak{M}}
\newcommand{\Ab}{\mathsf{\mathbf{Ab}}}
\DeclareMathOperator{\inflation}{\rightarrowtail}
\DeclareMathOperator{\deflation}{\twoheadrightarrow}
\DeclareMathOperator{\LB}{\mathsf{LB}}
\DeclareMathOperator{\LH}{LH}
\DeclareMathOperator{\im}{im}
\DeclareMathOperator{\coim}{coim}
\DeclareMathOperator{\coker}{coker}
\DeclareMathOperator{\cone}{cone}
\DeclareMathOperator{\Hom}{Hom}
\DeclareMathOperator{\Mor}{Mor}
\DeclareMathOperator{\Mon}{Mon}
\DeclareMathOperator{\Inf}{Inf}
\DeclareMathOperator{\WInf}{WInf}
\DeclareMathOperator{\hMon}{hMon}
\DeclareMathOperator{\hInf}{hInf}
\DeclareMathOperator{\hWInf}{hWInf}
\DeclareMathOperator{\Glid}{Glid}
\DeclareMathOperator{\Preglid}{Preglid}
\DeclareMathOperator{\Mod}{Mod}
\DeclareMathOperator{\smod}{mod}
\renewcommand{\mod}{\operatorname{mod}}
\DeclareMathOperator{\modone}{mod^1}
\DeclareMathOperator{\modoneinf}{mod^1_{\mathsf{w}.\mathsf{adm}}}
\DeclareMathOperator{\smodad}{mod_{\mathsf{adm}}}
\DeclareMathOperator{\modad}{mod_{\mathsf{adm}}}
\DeclareMathOperator{\modadone}{mod_{\mathsf{adm}}^1}
\DeclareMathOperator{\smodinf}{mod_{\mathsf{w}.\mathsf{adm}}}
\DeclareMathOperator{\modinf}{mod_{\mathsf{w}.\mathsf{adm}}}
\DeclareMathOperator{\modinfone}{mod_{\mathsf{w}.\mathsf{adm}}^1}
\DeclareMathOperator{\eff}{eff}
\DeclareMathOperator{\Fun}{Fun}
\DeclareMathOperator{\Ob}{Ob}
\DeclareMathOperator{\FFinf}{mod_{\mathsf{w}.\mathsf{adm}}^1}
\DeclareMathOperator{\smodadinf}{mod_{\mathsf{w}.\mathsf{adm}}}
\DeclareMathOperator{\resdim}{res.dim}
\DeclareMathOperator{\C}{\mathbf{C}}
\DeclareMathOperator{\D}{\mathbf{D}}
\DeclareMathOperator{\K}{\mathbf{K}}
\DeclareMathOperator{\Ac}{\mathbf{Ac}}
\DeclareMathOperator{\AcC}{\mathbf{Ac}_{\mathbf{C}}}
\DeclareMathOperator{\Acb}{\mathbf{Ac}^b}
\DeclareMathOperator{\AcbC}{\mathbf{Ac}^b_{\mathbf{C}}}
\DeclareMathOperator{\AcK}{\mathbf{Ac}_{\mathbf{K}}}
\DeclareMathOperator{\Rex}{Rex}
\DeclareMathOperator{\Dstar}{\mathbf{D}^*}
\DeclareMathOperator{\Kstar}{\mathbf{K}^*}
\DeclareMathOperator{\Dm}{\mathbf{D}^-}
\DeclareMathOperator{\Db}{\mathbf{D}^b}
\DeclareMathOperator{\Kb}{\mathbf{K}^b}
\DeclareMathOperator{\DAb}{\mathbf{D}_{\mathcal{A}}^b}
\newcommand{\PLS}{\mathsf{PLS}}
\newcommand{\PLSW}{\mathsf{PLS_W}}
\newcommand{\PLN}{\mathsf{PLN}}
\newcommand{\ex}[1]{{#1}^{\text{ex}}}
\newcommand{\dqq}{{/\mkern-6mu/}}
\renewcommand{\emptyset}{\varnothing}
\DeclareMathOperator{\Funex}{\mathsf{Fun}_{\text{ex}}}
\newcommand{\Yoneda}{\mathbb{Y}}
\renewcommand{\Upsilon}{\Yoneda}
\numberwithin{equation}{section}
\begin{document}

\title{The left heart and exact hull of an additive regular category}



\author{Ruben Henrard}
\address{Ruben Henrard, Hasselt University, Campus Diepenbeek, Department WNI, 3590 Diepenbeek, Belgium}
\email{ruben.henrard@uhasselt.be}

\author{Sondre Kvamme}
\address{Sondre Kvamme, Department of Mathematical Sciences, Norwegian University of Science and Technology, 7491 Trondheim, Norway}
\email{sondre.kvamme@ntnu.no}

\author{Adam-Christiaan van Roosmalen}
\address{Adam-Christiaan van Roosmalen, Xi’an Jiaotong-Liverpool University, Suzhou, 215123, P. R. China}
\email{adamchristiaan.vanroosmalen@uhasselt.be}

\author{Sven-Ake Wegner}
\address{Sven-Ake Wegner, University of Hamburg, Bundesstra{\ss}e 55, 20146 Hamburg, Germany} \email{sven.wegner@uni-hamburg.de}

\subjclass[18E40, 18G80, 46A13, 46M18]{18E05, 18E08, 18E20, 18E35}

\makeatletter
\newcommand{\myitem}[1]{%
\item[#1]\protected@edef\@currentlabel{#1}%
}
\makeatother

\keywords{Exact category, regular category, $t$-structure}

\begin{abstract}
	Quasi-abelian categories are abundant in functional analysis and representation theory.  It is known that a quasi-abelian category $\EE$ is a cotilting torsionfree class of an abelian category.  In fact, this property characterizes quasi-abelian categories.  This ambient abelian category is derived equivalent to the category $\EE$, and can be constructed as the heart $\mathcal{LH}(\EE)$ of a $t$-structure on the bounded derived category $\Db(\EE)$ or as the localization of the category of monomorphisms in $\EE.$
	
	However, there are natural examples of categories in functional analysis which are not quasi-abelian, but merely one-sided quasi-abelian or even weaker.  Examples are the category of $\operatorname{LB}$-spaces or the category of complete Hausdorff locally convex spaces.  In this paper, we consider additive regular categories as a generalization of quasi-abelian categories that covers the aforementioned examples.  Additive regular categories can be characterized as those subcategories of abelian categories which are closed under subobjects.
	
	As for quasi-abelian categories, we show that such an ambient abelian category of an additive regular category $\EE$ can be found as the heart of a $\operatorname{t}$-structure on the bounded derived category $\Db(\EE)$, or as the localization of the category of monomorphisms of $\EE$.  In our proof of this last construction, we formulate and prove a version of Auslander's formula for additive regular categories.  
	
	Whereas a quasi-abelian category is an exact category in a natural way, an additive regular category has a natural one-sided exact structure.  Such a one-sided exact category can be 2-universally embedded into its exact hull.  We show that the exact hull of an additive regular category is again an additive regular category.
\end{abstract}

\maketitle


\section{Introduction}

Quasi-abelian categories are a well-behaved class of additive categories, generalizing the notion of an abelian category.  They are preabelian categories such that the class of all kernel-cokernel pairs satisfies the axioms of a Quillen exact category.  Quasi-abelian categories occur often in functional analysis, and motivating examples include the categories of Banach spaces and of Fr\'{e}chet spaces \cite{Prosmans00}.

In \cite{BondalvandenBergh03,Rump01,Schneiders99}, a characterization of a quasi-abelian category is given as follows: quasi-abelian categories are precisely those categories which occur as a cotilting torsionfree class in an abelian category.  For a quasi-abelian category $\EE$, such an ambient abelian category $\AA$ is (essentially) unique.  A construction is given in \cite[\S~1.2]{Schneiders99}: one can obtain the category $\AA$ as the heart of a $\operatorname{t}$-structure on the bounded derived category $\Db(\EE).$  This $\operatorname{t}$-structure is called the left $\operatorname{t}$-structure by Schneiders in \cite{Schneiders99} and the associated heart is then called the left heart, denoted by $\mathcal{LH}(\EE)$.  Furthermore, Schneiders shows that the embedding $\EE\to \mathcal{LH}(\EE)$ lifts to a triangle equivalence $\D(\EE)\stackrel{\simeq}{\rightarrow} \D(\mathcal{LH}(\EE))$, essentially reducing homological properties of the quasi-abelian category $\EE$ to those of the abelian category $\mathcal{LH}(\EE)$. Schneiders also shows that $\mathcal{LH}(\EE)$ is equivalent to a localization of the monomorphism category of $\EE$ with respect to the bicartesian squares (see \cite[Corollary~1.2.21]{Schneiders99}).

For several quasi-abelian categories arising in functional analysis, such as the category of Banach spaces, this last construction of the left heart can already be found in \cite{Waelbroeck71}, before the introduction of $\operatorname{t}$-structures in \cite{BeilinsonBernsteinDeligne82}.  Indeed, it is noted in \cite[Exemple 1.3.24]{BeilinsonBernsteinDeligne82} that Waelbroeck's construction can be recovered using $\operatorname{t}$-structures.

Waelbroeck's approach nonetheless suggests similar ambient abelian categories could also be found in non quasi-abelian settings.  Indeed, in \cite{Wegner17} this was done for what was there called \emph{Waelbroeck categories}.  These include categories such as the non quasi-abelian category $\LB$ of $\operatorname{LB}$-spaces (see section \ref{Section:LBSpaces}).  

This leads to the following natural question: how similar is the situation to that of quasi-abelian categories.  Specifically, one can ask the following questions.

\begin{question}\label{Question:Intro}
\begin{enumerate}
	\item Can these ambient abelian categories be obtained as the heart of a $\operatorname{t}$-structure on some natural triangulated category?
	\item What characterizes the embedding $\EE \to \mathcal{LH}(\EE)$?
\end{enumerate}
\end{question}

When trying to solve the above question for $\LB$, one might be tempted to search an appropriate exact structure on $\LB$ (so that the derived category is well-defined) such that the ambient abelian category is obtained as the heart of a natural $\operatorname{t}$-structure. In fact, the category $\LB$ has several natural exact structures; we will recall some of these in \Cref{Section:LBSpaces}.  However, we show in \Cref{theorem:NoDerivedEquivalenceForExact} that this approach cannot be successful: none of these exact structures yield a well-suited derived category.  Instead, we relax the conditions of an exact category and take the derived category of such a weaker structure.  Our starting point is the recent observation in \cite{HassounShahWegner20} that the category $\LB$ is left quasi-abelian and, as such, carries a natural one-sided exact structure.  It is possible to construct the derived category of $\LB$ with respect to this one-sided exact structure.  In this paper, we show that this derived category provides a good framework to answer the above questions.

Before addressing the above questions or describing the results in this paper, we sketch the setting more accurately.  We work with a slight generalization of a left quasi-abelian category, namely with additive regular categories.  An \emph{additive regular} category is an additive category where (i) every morphism has a cokernel-monomorphism factorization, and (ii) cokernels have pullbacks and the pullback of a cokernel is again a cokernel.  The difference between a left quasi-abelian category and an additive regular category is that the latter need not have cokernels.  An additive regular category is an additive category which is regular in the sense of \cite{BarrGrilletVanOsdol71, BorceuxBourn04}.

Whereas a quasi-abelian category is an exact category, an additive regular category (or even a left quasi-abelian category) need only satisfy those axioms of an exact category that pertain to the cokernel-side of the exact sequences: it is a deflation-exact category (see \Cref{subsection:PrelimOneSided} for a detailed definition).  Even though the axioms of a deflation-exact category are weaker than those of an exact category, deflation-exact categories still satisfy many attractive homological properties similar to those of an exact category, such as the Short Five Lemma, the Snake Lemma, and the Nine Lemma (see \cite{BazzoniCrivei13,HenrardvanRoosmalen20Obscure}).  One possible explanation for this nice behavior is given by the existence of the exact hull (\cite{HenrardvanRoosmalen19b, Rosenberg11}): a deflation-exact category $\EE$ can be embedded in a 2-universal way in an exact category $\ex{\EE}$; this embedding lifts to a derived equivalence $\Db(\EE) \to \Db(\ex{\EE})$.

The exact hull of a left quasi-abelian category need not be quasi-abelian (or even pre-abelian).  In contrast, we show that the exact hull of an additive regular category is again additive regular.  Similarly, we show that additive regular categories are stable under taking quotients (in the sense of \cite{HenrardvanRoosmalen19a}).  Furthermore, the following proposition (see \Cref{Proposition:ClosedUnderSubjectsInheritsDeflationExactAndAdmissibeKernels} in the text) gives a straightforward source of examples.

\begin{proposition}\label{Introduction:Proposition:ClosedUnderSubobjectsInheritsHavingAdmissibleKernels}
Let $\EE$ be an additive regular category.  Any full subcategory $\EE' \subseteq \EE$ which is closed under subobjects, is also additive regular.
\end{proposition}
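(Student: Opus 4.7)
The plan is to verify the two defining axioms of an additive regular category for $\EE'$ directly, exploiting that the inclusion $\EE' \hookrightarrow \EE$ is full and that (being a full additive subcategory of an additive category) $\EE'$ is automatically closed under finite biproducts.

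The crux is a transfer principle: \emph{if $e \colon A \twoheadrightarrow B$ is a cokernel in $\EE$ with $A, B \in \EE'$, then $e$ is also a cokernel in $\EE'$}. To prove it, pick $g \colon X \to A$ with $e = \coker(g)$ in $\EE$, and apply axiom (i) of $\EE$ to factor $g = m \circ p$ with $p$ a cokernel and $m \colon X' \rightarrowtail A$ a monomorphism. Since $p$ is epic, $\coker(g) = \coker(m)$, so $e = \coker(m)$ in $\EE$, and closure of $\EE'$ under subobjects forces $X' \in \EE'$. The diagram exhibiting $e$ as $\coker(m)$ thus lives inside $\EE'$, and its universal property passes to $\EE'$ by fullness.

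Axiom (i) for $\EE'$ is then immediate: any $f \colon A \to B$ in $\EE'$ admits in $\EE$ a factorization $f = m \circ e$ with $e \colon A \twoheadrightarrow I$ a cokernel and $m \colon I \rightarrowtail B$ a monomorphism; closure under subobjects yields $I \in \EE'$, and the transfer principle promotes $e$ to a cokernel in $\EE'$.

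For axiom (ii), let $e \colon A \twoheadrightarrow B$ be a cokernel in $\EE'$ and $f \colon C \to B$ any morphism in $\EE'$, and form the pullback $P$ in $\EE$. In an additive category, the pullback is the kernel of the morphism $(e, -f) \colon A \oplus C \to B$, so the canonical map $P \to A \oplus C$ is a monomorphism. Since $A \oplus C \in \EE'$ by additivity and $\EE'$ is closed under subobjects, $P \in \EE'$, and fullness promotes $P$ to a pullback in $\EE'$. The pullback map $\pi_2 \colon P \to C$ is a cokernel in $\EE$ by axiom (ii) of $\EE$, and it becomes a cokernel in $\EE'$ via the transfer principle once more.

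The main obstacle is locating the pullback object inside $\EE'$: closure under subobjects does not a priori imply closure under arbitrary finite limits inherited from $\EE$. The additive trick of realizing a pullback as a subobject of a biproduct circumvents this, reducing the question to the very hypothesis we are given. Everything else is routine bookkeeping with fullness.
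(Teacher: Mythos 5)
Your transfer principle and your verification of axiom \ref{DR1} for $\EE'$ are correct, and your route is genuinely different from the paper's: the paper never touches axioms \ref{DR1}--\ref{DR2} directly, but instead works in the conflation-category formalism (closure under subobjects makes $\EE'$ deflation-closed, hence it inherits a deflation-exact structure by \Cref{Proposition:DeflationClosed}, and admissible kernels are then checked via the deflation-mono factorization criterion of \Cref{Proposition:InterpretationOfAIForDeflationExactCategories}). Your trick of realizing the pullback as a subobject of a biproduct is exactly the right elementary replacement for that machinery.

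However, there is a genuine gap in your verification of axiom \ref{DR2}. You start with a cokernel $e\colon A \twoheadrightarrow B$ \emph{in $\EE'$}, then ``form the pullback $P$ in $\EE$'' and later assert that $\pi_2$ ``is a cokernel in $\EE$ by axiom (ii) of $\EE$.'' Both steps presuppose that $e$ is a cokernel \emph{in $\EE$}: an additive regular category need not admit pullbacks along arbitrary morphisms, so without this the pullback in $\EE$ need not even exist, and axiom \ref{DR2} of $\EE$ applies only to $\EE$-cokernels. What you need is the converse of your transfer principle --- every cokernel in $\EE'$ is a cokernel in $\EE$ --- and this is nowhere proved (your transfer principle goes in the opposite, and easier, direction; in a general full subcategory closed under biproducts the converse can fail). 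The converse does hold here and can be proved with your tools: write $e = \coker_{\EE'}(g)$, factor $e = m_e \circ p_e$ in $\EE$ with $p_e\colon A \twoheadrightarrow I$ an $\EE$-cokernel and $m_e\colon I \hookrightarrow B$ monic; then $I \in \EE'$ by closure under subobjects; from $m_e \circ p_e \circ g = 0$ and $m_e$ monic one gets $p_e \circ g = 0$, so the universal property of $e$ in $\EE'$ yields $u\colon B \to I$ with $u \circ e = p_e$; now $u \circ m_e = 1_I$ because $p_e$ is epic, and $m_e \circ u = 1_B$ because $e$, being a cokernel in $\EE'$, is epic in $\EE'$. Hence $m_e$ is an isomorphism and $e$ is an $\EE$-cokernel. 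With this lemma inserted before your treatment of axiom \ref{DR2}, your argument is complete.
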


As abelian categories are additive regular, the previous proposition gives an easy way to find additive regular categories inside an abelian category.  In fact, it follows from \Cref{theorem:MainTheoremIntroduction} below that every additive regular category occurs in this way.

We mentioned that an additive regular category $\EE$ has a natural structure of a deflation-exact category.  As such, one can consider the bounded derived category $\Db(\EE)$; we recall the construction in \S\ref{subsection:DerivedCategory}.  In this setting, the construction of the left heart for quasi-abelian categories given in \cite{Schneiders99} generalizes to the setting of additive regular categories.  We obtain the following theorem directly generalizing the properties we mentioned for quasi-abelian categories.

\begin{theorem}\label{theorem:MainTheoremIntroduction}
Let $\EE$ be an additive regular category.  There is an embedding of $\EE$ into an abelian category $\mathcal{LH}(\EE)$, characterized by the following properties:
\begin{enumerate}
	\item $\EE$ is closed under subobjects in $\mathcal{LH}(\EE)$,
	\item every object in $\mathcal{LH}(\EE)$ is a quotient of an object in $\EE$.
\end{enumerate}
The embedding $\EE \to \mathcal{LH}(\EE)$ lifts to a triangle equivalence $\Dstar(\EE) \to \Dstar(\mathcal{LH}(\EE))$, for $\ast \in \{\varnothing, -, b\}.$
\end{theorem}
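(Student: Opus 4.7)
The plan is to follow Schneiders' construction of the left $t$-structure, adapted from the quasi-abelian setting to that of additive regular categories, using the exact hull $\ex{\EE}$ as an intermediate step. Since the $2$-universal embedding $\EE \to \ex{\EE}$ lifts to a triangle equivalence $\Db(\EE) \to \Db(\ex{\EE})$, it suffices to construct the left $t$-structure on $\Db(\ex{\EE})$ and transport it. Combined with the (separately established) fact that the exact hull of an additive regular category is again additive regular, $\ex{\EE}$ is an additive regular category which is moreover exact; in particular it has cokernels of monomorphisms, and Schneiders' arguments apply almost verbatim.

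On $\Db(\ex{\EE})$ I would set
\[
\Db(\ex{\EE})^{\leq 0} = \{X^\bullet \mid H^i(X^\bullet) = 0 \text{ for } i > 0\},
\]
and take $\Db(\ex{\EE})^{\geq 0}$ to consist of complexes isomorphic to a two-term complex $[X^{-1} \to X^0]$ in degrees $-1, 0$ with $X^{-1} \to X^0$ a monomorphism in $\ex{\EE}$. Verification of the $t$-structure axioms uses the cokernel-monomorphism factorizations and the pullback-stability of cokernels provided by the additive regular structure. The heart is equivalent to the localization of the monomorphism category of $\ex{\EE}$ by the bicartesian squares. Using that $\EE$ is closed under subobjects in $\ex{\EE}$, every such monomorphism is, up to isomorphism in the heart, representable by a monomorphism between two objects of $\EE$, so the heart admits an intrinsic description in terms of $\EE$; this is the abelian category $\mathcal{LH}(\EE)$.

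For the embedding properties: every object of $\mathcal{LH}(\EE)$ is represented by a two-term complex $[X^{-1} \rightarrowtail X^0]$ with both $X^i \in \EE$, and the distinguished triangle $X^{-1} \to X^0 \to [X^{-1} \to X^0] \to X^{-1}[1]$ in $\Db(\EE)$ yields (on applying left cohomology) a short exact sequence $0 \to X^{-1} \to X^0 \to [X^{-1} \to X^0] \to 0$ in $\mathcal{LH}(\EE)$, proving (2). For (1), a monomorphism $Y^\bullet \hookrightarrow X$ in $\mathcal{LH}(\EE)$ with $X \in \EE$ is represented by a morphism of two-term complexes; the pullback-stability of cokernels and the cokernel-monomorphism factorization in $\EE$ then let one replace $Y^\bullet$ by an isomorphic object of $\EE$. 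Uniqueness of the pair $(\mathcal{LH}(\EE), \iota)$ given (1) and (2) is a formal torsion-theoretic argument. Finally, the derived equivalence $\Dstar(\EE) \to \Dstar(\mathcal{LH}(\EE))$ is obtained by composing $\Db(\EE) \simeq \Db(\ex{\EE})$ with the Schneiders-type equivalence $\Db(\ex{\EE}) \simeq \Db(\mathcal{LH}(\ex{\EE})) = \Db(\mathcal{LH}(\EE))$, then extending to $\ast \in \{\varnothing, -\}$ via standard truncation arguments.

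The hardest step will be showing that $\EE$ is closed under subobjects in $\mathcal{LH}(\EE)$: a monomorphism in the heart whose codomain lies in $\EE$ must have its domain representable by an object of $\EE$, and verifying this requires the full force of the additive regular axioms -- in particular pullback-stability of cokernels -- to rule out ``virtual'' subobjects which would live in $\ex{\EE}$ but not in $\EE$. A related secondary difficulty is checking that the left $t$-structure on $\Db(\ex{\EE})$ descends compatibly to $\Db(\EE)$ so that its heart, and the associated truncation functors, admit a description purely in terms of $\EE$ without reference to $\ex{\EE}$.
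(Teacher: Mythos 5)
Your strategy is circular as it stands. The foundation of your argument is the assertion that the exact hull $\ex{\EE}$ of an additive regular category is again additive regular, which you take as ``separately established.'' In this paper that statement is \Cref{theorem:ExactHullRegular}, and its proof runs through the very object you are trying to construct: one first builds $\mathcal{LH}(\EE)$, proves that $\ex{\EE}$ is closed under subobjects in $\mathcal{LH}(\EE)$ (\Cref{Proposition:HullLiesClosedUnderSubobjectsInLeftHeart}), and then applies \Cref{Proposition:ClosedUnderSubjectsInheritsDeflationExactAndAdmissibeKernels}. It is a new result of this paper rather than part of the general theory of exact hulls, and you do not supply an independent proof, so it cannot serve as input to the construction of $\mathcal{LH}(\EE)$. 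The paper avoids the hull altogether: since an additive regular category is strongly deflation-exact with kernels (\Cref{Proposition:DeflationAICategorySatisfiesAxiomR3+}), the left $\operatorname{t}$-structure is constructed directly on $\D(\EE)$, by writing down kernel-only truncation functors on $\K(\EE)$ and descending the resulting $\operatorname{t}$-structure through the Verdier quotient $\K(\EE)\to\D(\EE)$ via Schneiders' criterion (\Cref{proposition:ConditionSchneiders}) combined with the acyclicity lemma \Cref{proposition:StrengtheningBazzoniCrivei}. Properties (1) and (2) are then \Cref{Proposition:TheEmbeddingToHeartProperties}, the derived equivalences follow from $\resdim_{\EE}(\mathcal{LH}(\EE))\leq 1$ and the preresolving machinery (\Cref{corollary:EmbeddingInHeartIsTriangleEquivalence}), and the relation to $\ex{\EE}$ (\Cref{Proposition:PhiFactorsThroughHull}) is deduced only afterwards.

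Even granting the hull fact, your claim that Schneiders' arguments ``apply almost verbatim'' to $\ex{\EE}$ because ``it has cokernels of monomorphisms'' is false. The hull is exact with admissible kernels, but it need not be pre-abelian: the paper recalls that the exact hull of the Isbell category is exact and has the admissible intersection property yet fails to be pre-abelian, so monomorphisms in $\ex{\EE}$ can fail to have cokernels and $\ex{\EE}$ is in general not quasi-abelian. Extending Schneiders' construction from quasi-abelian categories to categories possessing only kernels is precisely the technical content of \Cref{section:LeftHeart}, so this step cannot be dismissed as routine. Separately, your definition of the $\operatorname{t}$-structure is incorrect: the class you call $\Db(\ex{\EE})^{\geq 0}$ (complexes isomorphic to a two-term monomorphism complex in degrees $-1,0$) describes the \emph{heart}, not the co-aisle---it is not closed under $\Sigma^{-1}$, so the pair you write down does not satisfy the $\operatorname{t}$-structure axioms---and the cohomologies $H^i$ used to define $\Db(\ex{\EE})^{\leq 0}$ are not defined in a non-abelian category before the truncation functors exist. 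Your endgame for properties (1) and (2), via deflation-mono factorizations and uniqueness of epi-mono factorizations in the abelian heart, does match the paper's argument, but the route you take to reach the heart does not stand.
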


The characterization in this theorem follows from combining Propositions \ref{Proposition:TheEmbeddingToHeartProperties} and \ref{proposition:NewEmbedding}; the last statement follows from Proposition \ref{Proposition:PhiFactorsThroughHull}.  

In section \ref{Section:AsLocalizationOfMonomorphisms}, we show that the left heart $\mathcal{LH}(\EE)$ of an additive regular category $\EE$ can be obtained by localizing the category of monomorphisms $\hMon(\EE)$ in $\EE$ (up to homotopy) at the bicartesian squares. This recovers Waelbroeck's construction as well as the construction of the left heart of the $\LB$-spaces as in \cite{Wegner17} (see section \ref{Section:LBSpaces} for more details on the latter).  The following theorem provides a construction of the left heart $\mathcal{LH}(\EE)$ that does not refer to the derived category $\Db(\EE).$

\begin{theorem}\label{theorem:MonoIntroduction}
Let $\hMon(\EE)$ be the category whose objects are monomorphisms $\delta_E\colon E^{-1} \hookrightarrow E^0$ in $\EE$ and whose morphisms are commutative squares
	\[\begin{tikzcd}
		{E^{-1}}\arrow[r, hookrightarrow, "\delta_E"] \arrow[d, "u_{-1}"] & {E^0} \arrow[d,"{u_0}"]\\
		F^{-1}\arrow[r, hookrightarrow, "\delta_F"] & F^0
		\end{tikzcd}\]
	up to homotopy (meaning that there is a morphism $t\colon E^0 \to F^{-1}$ in $\EE$ such that $u_{-1} = t \circ \delta_E$ and $u_0 = \delta_F \circ t$).
	\begin{enumerate}
		\item The set $S$ of all morphisms which are bicartesian squares is a multiplicative system in $\hMon(\EE).$
		\item The localization $\hMon(\EE)[S^{-1}]$ is equivalent to the left heart $\mathcal{LH}(\EE).$
	\end{enumerate}
\end{theorem}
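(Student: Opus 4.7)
The strategy is to construct a functor $F \colon \hMon(\EE) \to \mathcal{LH}(\EE)$ by sending $\delta_E\colon E^{-1} \inflation E^0$ to the corresponding 2-term complex concentrated in degrees $-1$ and $0$, and then to identify $\mathcal{LH}(\EE)$ with the localization $\hMon(\EE)[S^{-1}]$. Since morphisms in $\hMon(\EE)$ are commutative squares up to chain homotopy, they coincide with morphisms in the bounded homotopy category $\Kb(\EE)$ between these 2-term complexes, yielding a fully faithful embedding $\hMon(\EE) \hookrightarrow \Kb(\EE)$. Composing with the localization $\Kb(\EE) \to \Db(\EE)$ produces $F$, whose image lies in $\mathcal{LH}(\EE)$ by the description of the left heart provided in \Cref{theorem:MainTheoremIntroduction}.

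To prove (1), first observe that a square as in the statement is bicartesian in $\EE$ if and only if the induced sequence $E^{-1} \to E^0 \oplus F^{-1} \to F^0$ is a conflation, which holds if and only if the cone of the corresponding chain map is acyclic; hence $F$ sends every element of $S$ to an isomorphism in $\mathcal{LH}(\EE)$. The technical core is verifying that $S$ is a multiplicative system. Given a morphism $u\colon E \to F$ in $\hMon(\EE)$ and a bicartesian $s\colon G \to F$, we construct a bicartesian $s'\colon H \to E$ together with $v\colon H \to G$ satisfying $us' = sv$ up to homotopy. The object $H$ is assembled from the pullback $H^0 := E^0 \times_{F^0} G^0$ (which exists because $G^0 \inflation F^0$ belongs to a bicartesian square of monomorphisms and can therefore be pulled back along $u_0$ inside $\EE$) and a matching pullback construction for $H^{-1}$; the bicartesianness of the resulting square is then a diagram chase. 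The dual Ore axiom is handled by executing the analogous pushout construction inside $\mathcal{LH}(\EE)$ and invoking the closure of $\EE$ under subobjects (together with the Auslander-type formula established earlier in the paper) to guarantee the result remains representable in $\hMon(\EE)$. Cancellation is then obtained by a similar replacement argument.

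For (2), the induced functor $\bar F \colon \hMon(\EE)[S^{-1}] \to \mathcal{LH}(\EE)$ is essentially surjective by \Cref{theorem:MainTheoremIntroduction}. Full faithfulness is established via the calculus of fractions together with the description of $\mathcal{LH}(\EE)$ as a full subcategory of $\Db(\EE)$: every morphism in $\mathcal{LH}(\EE)$ is represented by a roof $X \xleftarrow{q} Z \to Y$ in $\Db(\EE)$ with $q$ a quasi-isomorphism, and by truncation and resolution one can arrange $Z \in \hMon(\EE)$ with $q$ the image of a bicartesian square; equivalent roofs then correspond to equivalent fractions modulo $S$. The principal obstacle is the verification of the Ore conditions: in Schneiders' quasi-abelian setting \cite{Schneiders99} one uses pushouts of monomorphisms freely, but in a merely additive regular $\EE$ these need not exist, and the argument must instead be executed by passing through $\mathcal{LH}(\EE)$ and carefully tracking which abelian constructions descend to representatives in $\hMon(\EE)$.
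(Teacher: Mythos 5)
Your overall strategy---embed $\hMon(\EE)$ into $\Kb(\EE)$, pass to $\Db(\EE)$, and identify the image with the heart---is the direct, Schneiders-style route, but the step you yourself call the technical core contains a genuine error. You claim that, given $u\colon \delta_E \to \delta_F$ and a bicartesian $s\colon \delta_G \to \delta_F$, the levelwise pullbacks $H^0 \coloneqq E^0 \times_{F^0} G^0$ (with a matching pullback for $H^{-1}$) yield a bicartesian square $\delta_H \to \delta_E$ ``by a diagram chase''. This fails already when $\EE$ is the category of finite-dimensional $k$-vector spaces: take $\delta_E = (0 \hookrightarrow k)$, $\delta_F = (k \xrightarrow{\mathrm{id}} k)$, $\delta_G = (0 \hookrightarrow 0)$, with $u = (0, \mathrm{id}_k)$ and $s = (0,0)$. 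Then $s$ is bicartesian (the sequence $0 \to 0\oplus k \to k$ is short exact), but $H^0 = E^0\times_{F^0}G^0 = 0$, so for \emph{any} choice of $H^{-1}\subseteq H^0$ the square $\delta_H \to \delta_E$ induces $0 \to k$ on cokernels: it is a pullback but not a pushout, hence not in $S$. The Ore condition does hold in this example, but only because $u$ is null-homotopic (via $t = \mathrm{id}\colon E^0 \to F^{-1}$); that is precisely the point: morphisms of $\hMon(\EE)$ are homotopy classes, the Ore square need only commute up to homotopy, and a construction performed on strict representatives that ignores the homotopy ideal---such as levelwise pullbacks---cannot work. (A correct completion must use the deflation $\begin{psmallmatrix}s_0 & \delta_F\end{psmallmatrix}\colon G^0\oplus F^{-1}\deflation F^0$ furnished by bicartesianness, not $s_0$ alone.) Two further problems: your justification for the existence of $H^0$ is off---nothing makes $s_0$ a monomorphism or an inflation in general; pullbacks exist simply because $\EE$ has kernels---and the second Ore condition and cancellation are not argued at all, since pushouts are exactly what an additive regular category lacks and ``descending'' abelian constructions to representatives in $\hMon(\EE)$ is the entire difficulty, not a remark. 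Part (2) leans on the same unproved calculus: a morphism in $\Db(\EE)$ between two-term complexes is itself a roof, so ``equivalent roofs correspond to equivalent fractions'' presupposes the fraction calculus you are trying to establish.

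For contrast, the paper never verifies the Ore conditions by hand, and this is what makes the non-quasi-abelian case tractable. It identifies $\hMon(\EE) \simeq \modone(\EE)$ via $\delta_E \mapsto \coker\Yoneda(\delta_E)$ (\Cref{Proposition:hMonModOneEquivalent}), proves that $(\eff(\EE), \modone(\EE))$ is a hereditary torsion pair in the abelian category $\smod(\EE)$ (\Cref{proposition:HereditaryTorsionForExactHull}), and shows that bicartesian squares are exactly the bimorphisms of $\modone(\EE)$ (\Cref{Proposition:BicartesianMeansRegular}). A general fact about hereditary torsion pairs (\Cref{Proposition:LocalizationAtTorsion}, resting on Gabriel--Zisman) then yields simultaneously that these bimorphisms form a multiplicative system in the torsionfree class and that the localization is $\smod(\EE)/\eff(\EE)$, which is $\mathcal{LH}(\EE)$ by the Auslander-formula description (\Cref{Theorem:AlternativeDescriptionOfLeftHeart}). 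If you wish to rescue your direct approach, you must rebuild the fraction calculus with homotopies incorporated, as Schneiders does in the quasi-abelian setting where cokernels are available; as written, the key construction is false and the remaining verifications are placeholders.
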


Our proof of \Cref{theorem:MonoIntroduction} is based on Auslander's Formula (see \Cref{Section:ConstructionOfTheLeftHeart} and \Cref{Section:AsLocalizationOfMonomorphisms}) and follows \cite{Rump01}.  We consider the category $\mod \EE$ of finitely presented functors on $\EE$.  As $\EE$ has kernels, $\mod \EE$ is an abelian category.  We show that the subcategory $\eff \EE$ of effaceable functors is a hereditary torsion class in $\mod \EE$; the corresponding torsionfree class is the category $\modone(\EE)$ of objects of projective dimension at most one.  Using the Yoneda embedding $\EE \to \mod \EE$, it is straightforward to show that $\hMon(\EE) \simeq \modone(\EE).$  The proof of \Cref{theorem:MonoIntroduction} then follows from studying the composition $\hMon(\EE) \simeq \modone(\EE) \to \mod(\EE) \to \mod(\EE) / \eff(\EE) \simeq \mathcal{LH}(\EE).$

\textbf{Acknowledgments.} The authors thank Luisa Fiorot and Michel Van den Bergh for helpful discussions.  The second author would like to thank the Hausdorff Institute for Mathematics in Bonn, since parts of the paper were written during his stay at the junior trimester program “New Trends in Representation Theory”. The third author gratefully acknowledges the support received from the Research Foundation-Flanders (FWO), 12.M33.16N.
\section{Preliminaries}\label{section:Preliminaries}

This section is preliminary in nature.  We summarize some results of \cite{BazzoniCrivei13,HenrardvanRoosmalen19b,HenrardvanRoosmalen19a} in a convenient form. Throughout the paper, all categories are assumed essentially small.  Furthermore, all categories and functors will be additive.

\subsection{The category of finitely presented functors} Let $\EE$ be an additive category.  We denote by $\Mod(\EE)$ the category $\Fun(\EE^\circ, \Ab)$ of contravariant additive functors $\EE \to \Ab$.  We write $\Yoneda\colon \EE \to \Mod \EE$ for the contravariant Yoneda functor $E \mapsto \Yoneda(E) = \Hom(-,E).$

We say that $M$ is \emph{finitely presented} if $M \cong \coker \Yoneda(f)$ where $f$ is a morphism in $\EE$.  We write $\smod(\EE)$ for the category of finitely presented objects in $\Mod(\EE)$.  If $\EE$ has weak kernels, then $\mod(\EE)$ is abelian.  The category of finitely presented objects satisfies the following universal property (see \cite[Universal Property 2.1]{Krause98}).

\begin{theorem}\label{theorem:UniversalFreyd} Let $F\colon \EE \to \AA$ be a functor between additive categories.  Assume that $\AA$ has cokernels.  There exists, up to a natural equivalence, a unique right exact functor $\overline{F}\colon \mod(\EE) \to \AA$ such that $F = \overline{F} \circ \Yoneda$.
\end{theorem}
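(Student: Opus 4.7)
My approach is to define $\overline{F}$ directly via cokernel presentations and then verify functoriality, right exactness, and naturality by diagram chases that hinge on the projectivity of representable functors in $\Mod(\EE)$ (which holds by the Yoneda lemma). For uniqueness, observe that any $M\in\mod(\EE)$ admits a presentation $\Yoneda(X)\xrightarrow{\Yoneda(f)}\Yoneda(Y)\to M\to 0$; a right exact functor $\overline{F}$ with $\overline{F}\circ\Yoneda\cong F$ is forced to send this to a cokernel diagram $F(X)\xrightarrow{F(f)}F(Y)\to \overline{F}(M)\to 0$, so $\overline{F}(M)$ is necessarily $\coker(F(f))$. This pins down $\overline{F}$ up to canonical isomorphism of cokernels.

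For existence, I fix a presentation of each $M$ and set $\overline{F}(M):=\coker(F(f))$. To define $\overline{F}$ on a morphism $\phi\colon M\to N$ with chosen presentations $\Yoneda(f)$ and $\Yoneda(f')$, I lift $\phi$ to a morphism of presentations: projectivity of $\Yoneda(Y)$ lifts the composite $\Yoneda(Y)\to M\xrightarrow{\phi}N$ through the epimorphism $\Yoneda(Y')\to N$ to a map $\Yoneda(v)\colon\Yoneda(Y)\to\Yoneda(Y')$. The composite $\Yoneda(v)\circ\Yoneda(f)$ then lands in $\ker(\Yoneda(Y')\to N)=\im(\Yoneda(f'))$, and a second application of projectivity produces $\Yoneda(u)$ with $\Yoneda(f')\circ\Yoneda(u)=\Yoneda(v)\circ\Yoneda(f)$. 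Applying $F$ to the resulting square and passing to cokernels defines $\overline{F}(\phi)$.

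The main obstacle is well-definedness. Given two such lifts $(u,v)$ and $(u',v')$, the difference $v-v'$ maps to zero in $N$, so $\Yoneda(v-v')$ factors through $\im(\Yoneda(f'))$, and projectivity of $\Yoneda(Y)$ produces $t\colon Y\to X'$ with $v-v'=f'\circ t$. After applying $F$, the difference $F(v)-F(v')$ factors through $F(f')$ and hence vanishes in $\coker(F(f'))$, showing the induced map on cokernels is independent of the chosen lift. An analogous argument, comparing two presentations of the same object via the identity on $M$, shows independence of the presentation itself, and functoriality of $\overline{F}$ follows similarly. Right exactness is then verified by lifting a cokernel sequence $L\to M\to N\to 0$ in $\mod(\EE)$ to a compatible triple of presentations (again using projectivity) and invoking a standard cokernel chase in $\AA$. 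Finally, $\overline{F}\circ\Yoneda\cong F$ is immediate, since every $\Yoneda(E)$ admits the trivial presentation with zero first map, whose image under the construction is $F(E)$; naturality in $F$ is then routine.
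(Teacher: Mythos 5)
Your proof is correct, but there is nothing in the paper to compare it against: the paper does not prove this theorem, it quotes it from the literature (see \cite[Universal Property 2.1]{Krause98}). What you have written is the standard Freyd--Auslander argument underlying that reference: uniqueness is forced because a right exact $\overline{F}$ must carry the presentation $\Yoneda(X)\xrightarrow{\Yoneda(f)}\Yoneda(Y)\to M\to 0$, which is a cokernel diagram in $\mod(\EE)$, to a cokernel diagram in $\AA$; existence turns this forced formula into a definition, with every well-definedness and functoriality check reduced, via projectivity of representables in $\Mod(\EE)$ and full faithfulness of $\Yoneda$, to the observation that two lifts of the same morphism differ by a map factoring through $\Yoneda(f')$. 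Three points deserve to be made explicit in a complete write-up. First, your cancellation $F(v)-F(v')=F(f')\circ F(t)$ and the identification $\Yoneda(X')\oplus\Yoneda(Y)\cong\Yoneda(X'\oplus Y)$ needed below require $F$ and $\Yoneda$ to be additive; this is covered by the paper's standing convention that all functors are additive, but it is genuinely needed, as the statement fails for non-additive $F$. Second, the phrase ``compatible triple of presentations'' in your right-exactness step hides the one substantive construction: given presentations of $M$ and $N$ and a lift $(u,v)$ of $\phi\colon M\to N$, the cokernel of $\phi$ admits the presentation $\Yoneda(X'\oplus Y)\to\Yoneda(Y')\to\coker(\phi)\to 0$, where the first map has components $\Yoneda(f')$ and $\Yoneda(v)$; once this is written down, the verification in $\AA$ is a routine universal-property chase, and your independence-of-presentation lemma lets you compute $\overline{F}(\coker\phi)$ from exactly this presentation. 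Third, to obtain the equality $F=\overline{F}\circ\Yoneda$ on the nose rather than merely up to natural isomorphism, choose the trivial presentation $\Yoneda(0)\to\Yoneda(E)\xrightarrow{\mathrm{id}}\Yoneda(E)\to 0$ for representables; and for uniqueness one should also record that the canonical isomorphisms $\overline{F}_1(M)\cong\overline{F}_2(M)$ between two candidates are natural in $M$, which follows from the same lifting argument you already use for well-definedness.
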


\begin{proposition}\label{proposition:UniversalLiftExact}
Let $\AA$ be an abelian category and let $\EE$ be an additive category with kernels.  If a functor $F\colon \EE \to \AA$ commutes with kernels, then the lift $\overline{F}\colon \mod(\EE) \to \AA$ is exact.
\end{proposition}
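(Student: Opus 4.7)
The plan is to verify that $\overline{F}$ preserves short exact sequences; since $\overline{F}$ is already right exact by \Cref{theorem:UniversalFreyd} (the abelian target $\AA$ has cokernels), only left exactness needs argument. We start with a short exact sequence $0 \to M' \to M \to M'' \to 0$ in $\mod(\EE)$. Because representables are projective in $\mod(\EE)$, the horseshoe lemma produces compatible projective presentations fitting into a commutative diagram with split short exact rows of representables $0 \to \Yoneda(E_i') \to \Yoneda(E_i) \to \Yoneda(E_i'') \to 0$ for $i = 0,1$, where $E_i = E_i' \oplus E_i''$, and with columns $\Yoneda(f'), \Yoneda(f), \Yoneda(f'')$ presenting $M'$, $M$, $M''$ respectively.

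We next take kernels in $\Mod(\EE)$ of the three columns. Since $\Yoneda$ is left exact and $\EE$ has kernels by hypothesis, these kernels are $\Yoneda(K')$, $\Yoneda(K)$, $\Yoneda(K'')$, where $K' = \ker f'$, $K = \ker f$, $K'' = \ker f''$ in $\EE$. Applying the snake lemma to the resulting diagram yields an exact sequence
\[ \Yoneda(K') \to \Yoneda(K) \to \Yoneda(K'') \to M' \to M \to M'' \to 0, \]
and the hypothesis that $M' \to M$ is monic forces the connecting morphism $\Yoneda(K'') \to M'$ to vanish. Hence $0 \to \Yoneda(K') \to \Yoneda(K) \to \Yoneda(K'') \to 0$ is short exact in $\Mod(\EE)$. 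Epimorphisms in $\Mod(\EE)$ are pointwise surjections, so the identity on $K''$ lifts to a morphism $K'' \to K$ that sections $K \to K''$; hence $0 \to K' \to K \to K'' \to 0$ is \emph{split} short exact in $\EE$.

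Applying the additive functor $F$ then produces a split short exact sequence $0 \to F(K') \to F(K) \to F(K'') \to 0$ in $\AA$, and the hypothesis that $F$ commutes with kernels identifies this with the kernel sequence of $F(f'), F(f), F(f'')$. Now we apply $\overline{F}$ to the $2 \times 3$ diagram of presentations: as $\overline{F}$ is additive it preserves the split short exact rows, and the column cokernels are $\overline{F}(M'), \overline{F}(M), \overline{F}(M'')$. The snake lemma gives
\[ 0 \to \ker F(f') \to \ker F(f) \to \ker F(f'') \to \overline{F}(M') \to \overline{F}(M) \to \overline{F}(M'') \to 0, \]
and the surjectivity of $\ker F(f) \to \ker F(f'')$ established above forces the connecting map to vanish, so $\overline{F}$ carries the original short exact sequence to a short exact sequence in $\AA$.

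The principal obstacle is establishing that the induced kernel sequence $0 \to K' \to K \to K'' \to 0$ in $\EE$ is actually \emph{split} exact. This step is the heart of the argument, because the additive functor $F$ is only guaranteed to preserve split exact sequences automatically—preserving arbitrary short exact sequences is precisely the conclusion we seek. The Yoneda embedding is the right tool: a short exact sequence of representables in $\Mod(\EE)$ is automatically split, since lifting the identity of $K''$ along a pointwise surjection produces a section, and the snake lemma applied to the horseshoe diagram produces exactly such a short exact sequence of representables.
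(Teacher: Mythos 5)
Your proof is correct, but it is organized differently from the paper's. The paper invokes a criterion of Krause (\cite[Lemma~2.5]{Krause98}): to prove $\overline{F}$ exact it suffices to check that $F$ sends an exact sequence of projectives $\Yoneda(A) \to \Yoneda(B) \to \Yoneda(C)$ in $\mod(\EE)$ to an exact sequence; exactness of such a sequence means precisely that $\Yoneda(A) \to \ker \Yoneda(g) \cong \Yoneda(\ker g)$ is a split epimorphism (since $\Yoneda(\ker g)$ is again projective), whence $F(A) \to F(\ker g)$ is epi and the hypothesis $F(\ker g) \cong \ker F(g)$ finishes. You instead verify preservation of short exact sequences directly: right exactness comes for free from \Cref{theorem:UniversalFreyd}, and for left exactness you combine the horseshoe lemma with the snake lemma in $\Mod(\EE)$ to produce a short exact sequence of representable kernels $0 \to \Yoneda(K') \to \Yoneda(K) \to \Yoneda(K'') \to 0$, observe that it splits by projectivity of representables, push the resulting split sequence through $F$, and run the snake lemma a second time in $\AA$. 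The key mechanism is the same in both arguments --- kernels of morphisms between representables are representable, hence projective, so the relevant maps split, and split data is preserved by any additive functor, after which kernel-preservation of $F$ closes the gap --- but the reduction to that mechanism differs: your version is self-contained (two snake lemmas and a horseshoe replace the citation of Krause's criterion), while the paper's version is shorter because the diagram bookkeeping is delegated to the cited lemma. One small point worth making explicit in your write-up: to pass from the split exact sequence of representables to a split exact sequence in $\EE$ itself, you should note that the fully faithful, kernel-preserving Yoneda embedding also \emph{reflects} kernels, so that $K' \to K$ really is the kernel of the split epimorphism $K \to K''$ in $\EE$; this is immediate, but it is the step that justifies the phrase ``split short exact in $\EE$.''
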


\begin{proof}
Following \cite[Lemma~2.5]{Krause98}, it suffices to show the following property: for each exact sequence $X \to Y \to Z$ of projective objects in $\mod(\EE)$, the corresponding sequence $F(X) \to F(Y) \to F(Z)$ is exact.

As $\EE$ has kernels (and hence is idempotent complete), every projective in $\mod(\EE)$ is of the form $\Yoneda(E)$, for some $E \in \EE$.  Hence, the sequence $X \to Y \to Z$ is isomorphic to a sequence of the form $\Yoneda(A) \to \Yoneda(B) \to \Yoneda(C)$ (for some sequence $A \xrightarrow{f} B \xrightarrow{g} C$).  Saying that $\Yoneda(A) \to \Yoneda(B) \to \Yoneda(C)$ is exact is equivalent to $\im \Yoneda(f) = \ker \Yoneda(g)$.  As $\EE$ has kernels, we find $\ker \Yoneda(g) \cong \Yoneda(\ker g)$.  In particular, $\ker \Yoneda(g)$ is projective.  Hence, $\Yoneda(A) \to \Yoneda(\ker g)$ is a split epimorphism.

We find a split epimorphism $A \to \ker g$ and hence a split epimorphism $F(A) \to F(\ker g)$.  As $F$ commutes with kernels, we also find an exact sequence $0 \to F(\ker g) \to F(B) \xrightarrow{F(g)} F(C)$.  Combining these sequences, we see that $F(A) \to F(B) \to F(C)$ is exact, as required.
\end{proof}

For any functor $F\colon\EE \to \FF$ between additive categories, there is an obvious restriction functor
\[
- \circ F\colon\Mod(\FF)\to \Mod(\EE)
\]
which sends an $M \in \Mod(\FF)$ to $M \circ F \in \Mod(\EE)$. The restriction functor has a left adjoint 
\[
-\otimes_{\EE} \FF\colon \Mod(\EE)\to \Mod(\FF)
\]
which is the (essentially unique) cocontinuous functor which sends the projective generators $\Yoneda(E)$ of $\Mod(\EE)$ to $\Yoneda(F(E))$.  Note that $-\otimes_\EE \FF\colon \Mod(\EE) \to \Mod(\FF)$ restricts to a functor $\mod(\EE) \to \mod(\FF)$.

Let $\EE$ be an additive category with kernels (in particular, $\mod \EE$ is abelian).  We write $\modone(\EE)$ for the subcategory of $\mod(\EE)$ consisting of all objects of global dimension at most one.  The following description of the objects of $\modone(\EE)$ is standard.

\begin{proposition}\label{proposition:GlobalDimensionAtMostOne}
Let $\EE$ be an additive category with kernels.  The following are equivalent for an object $M \in \mod(\EE)$:
\begin{enumerate}
	\item $M$ has projective dimension at most one,
	\item there is a monomorphism $f$ in $\EE$ such that $M \cong \coker \Yoneda(f)$,
	\item every morphism $f$ in $\EE$ for which $M \cong \coker \Yoneda(f)$ factors as $f = m \circ p$ where $p$ is a retraction and $m$ is a monomorphism.
\end{enumerate}
\end{proposition}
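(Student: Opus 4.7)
The plan is to translate each of the three conditions into an equivalent statement about $\Yoneda(f)$ in $\mod(\EE)$ via the fully faithful embedding $\Yoneda$, and to exploit that image factorizations in the abelian category $\mod(\EE)$ can be transferred back to $\EE$. A key preliminary is that any additive category with kernels is idempotent complete: for an idempotent $e \colon E \to E$, the inclusion $\ker(1-e) \hookrightarrow E$ together with the induced factorization of $e$ through it provides the splitting. Consequently, every projective in $\mod(\EE)$ is representable, since such a projective is a summand of some $\Yoneda(E_1 \oplus \cdots \oplus E_n)$, hence corresponds by Yoneda to an idempotent in $\EE$, which then splits.

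For (1)~$\Rightarrow$~(2), let $\Yoneda(B) \twoheadrightarrow M$ be an epimorphism arising from a presentation of $M$, and set $K = \ker(\Yoneda(B) \to M)$. By (1), $K$ is projective, hence $K \cong \Yoneda(A)$ for some $A \in \EE$. Full faithfulness of $\Yoneda$ then lifts the inclusion $K \hookrightarrow \Yoneda(B)$ to a morphism $f \colon A \to B$ with $M \cong \coker \Yoneda(f)$, and since $\Yoneda$ reflects monomorphisms (directly from the definition of monomorphism), $f$ is a monomorphism. For (2)~$\Rightarrow$~(1), if $M \cong \coker \Yoneda(f)$ with $f$ mono, then $\Yoneda(f)$ is mono and $0 \to \Yoneda(A) \to \Yoneda(B) \to M \to 0$ is a projective resolution of length one.

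The substantial direction is (1)~$\Rightarrow$~(3). Given any $f \colon A \to B$ with $M \cong \coker \Yoneda(f)$, factor $\Yoneda(f)$ through its image in $\mod(\EE)$ as $\Yoneda(A) \twoheadrightarrow I \hookrightarrow \Yoneda(B)$. Since $I = \ker(\Yoneda(B) \to M)$ is projective by (1), the preliminary yields $I \cong \Yoneda(A')$ for some $A' \in \EE$. Full faithfulness of $\Yoneda$ lifts the surjection $\Yoneda(A) \twoheadrightarrow \Yoneda(A')$ and the inclusion $\Yoneda(A') \hookrightarrow \Yoneda(B)$ to morphisms $p \colon A \to A'$ and $m \colon A' \to B$ in $\EE$ with $f = m \circ p$; here $m$ is a monomorphism by reflection of monos, and $p$ is a retraction because $\Yoneda(A) \twoheadrightarrow \Yoneda(A')$ splits (its target being projective), and any such splitting lifts back to $\EE$.

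Finally, (3)~$\Rightarrow$~(2) is immediate: in a factorization $f = m \circ p$ with $m$ a monomorphism and $p$ a retraction, $\Yoneda(p)$ is a split epimorphism, so $M \cong \coker \Yoneda(f) \cong \coker \Yoneda(m)$, exhibiting $M$ as the cokernel of a representable monomorphism. The main technical point throughout is that idempotent completeness of $\EE$ forces projectives in $\mod(\EE)$ to be representable, which is precisely what enables the translation between image factorizations in $\mod(\EE)$ and mono-retraction factorizations in $\EE$.
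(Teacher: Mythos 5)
Your proof is correct, and it is precisely the ``straightforward adaptation'' of Auslander--Reiten that the paper's proof consists of citing: the key points---that an additive category with kernels is idempotent complete, hence every projective in $\mod(\EE)$ is representable, so that the image factorization of $\Yoneda(f)$ in the abelian category $\mod(\EE)$ can be transported back to a factorization $f = m \circ p$ with $p$ a retraction and $m$ a monomorphism---constitute the standard argument behind that citation. One small remark: as literally stated, item (3) only asks that $p$ be a retraction, which would be vacuous (take $f = f \circ \mathrm{id}$); your proof correctly treats $m$ as required to be a monomorphism, which is the intended reading (and the one the paper later uses) and is exactly what makes your step (3)$\Rightarrow$(2) go through.
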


\begin{proof}
Straightforward adaptation of \cite[Proposition~1.1]{AuslanderReiten75}.
\end{proof}

The following proposition (see \cite[Proposition~3.4]{HenrardKvammevanRoosmalen20}) will be used multiple times throughout the text.

\begin{proposition}\label{Lemma:TheFamousDiagramChase}
	Let $\EE$ be an additive category and write $\Upsilon\colon \EE\to \Mod(\EE)$ for the Yoneda embedding. Consider a commutative diagram
	\[\xymatrix{
		A\ar[r]^{\beta}\ar[d]^f & C\ar[d]^{g}\\
		B\ar[r]^{\alpha} & D
	}\] in $\EE$ such that $g$ admits a kernel $k\colon \ker(g)\to C$ and such that the cospan $B\stackrel{\alpha}{\rightarrow}D\stackrel{g}{\leftarrow}C$ admits a pullback $E$. Write $F=\coker(\Upsilon(f))$, $G=\coker(\Upsilon(g))$ and $\eta\colon F\to G$ for the induced map. Consider the commutative diagram
	\begin{equation}\label{Thefamousdiagram}\xymatrix{
		&&\ker(g')\ar[d]^{k'}\ar@{=}[r] & \ker(g)\ar[d]^{k} &&\\
		\ker(g)\oplus A\ar[r]^-{\begin{psmallmatrix}0&1\end{psmallmatrix}}\ar[d]^{\begin{psmallmatrix}k'&\beta''\end{psmallmatrix}} & A\ar[r]^{\beta''}\ar[d]^f & E\ar[r]^{\beta'}\ar[d]^{g'} & C\ar[r]^-{\begin{psmallmatrix}1\\0\end{psmallmatrix}}\ar[d]^{g} & C\oplus B\ar[d]^{\begin{psmallmatrix}g&\alpha\end{psmallmatrix}}\\
		E\ar[r]^{g'} & B\ar@{=}[r] & B\ar[r]^{\alpha} & D\ar@{=}[r] & D
	}\end{equation} where $ECBD$ is a pullback square and $\beta=\beta'\beta''$. Applying $\Upsilon$ and taking the cokernel of the vertical maps induces the epi-mono factorization 
	\[\xymatrix{
		\ker(\eta)\ar@{>->}[r] & F\ar@{->>}[r] & \im(\eta)\ar@{>->}[r] & G\ar@{->>}[r] & \coker(\eta)
	}\] of $\eta\colon F \to G$ in $\Mod(\EE)$.
\end{proposition}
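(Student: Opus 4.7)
The plan is to apply the (covariant, left-exact) Yoneda functor $\Upsilon$ to the entire diagram \eqref{Thefamousdiagram} and then compute the cokernel of each of the five vertical maps in $\Mod(\EE)$, producing five objects $K_1,\ldots,K_5$ linked by maps induced by the horizontal arrows. By construction $K_2 = F$ and $K_4 = G$, so the problem reduces to identifying $K_1$, $K_3$, $K_5$ with $\ker(\eta)$, $\im(\eta)$, $\coker(\eta)$, respectively, and checking that the four induced horizontal maps are the canonical ones in the factorization. The key inputs throughout are that $\Upsilon$ preserves kernels and pullbacks.

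I would tackle $K_3 = \coker(\Upsilon(g'))$ first, since it controls the other identifications. The map $K_3 \to K_4 = G$ is induced by $\Upsilon(\alpha)$ via the commutativity $\alpha g' = g\beta'$. The pullback property of $E$, combined with left exactness of $\Upsilon$, delivers the key identity $\im\Upsilon(g') = \Upsilon(\alpha)^{-1}(\im\Upsilon(g)) = \ker(\Upsilon(B) \to G)$ in $\Upsilon(B)$. Hence $K_3 = \Upsilon(B)/\im\Upsilon(g')$ is precisely the image of $\Upsilon(B) \xrightarrow{\Upsilon(\alpha)} \Upsilon(D) \twoheadrightarrow G$. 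Since $g\beta = \alpha f$, this map factors as $\Upsilon(B) \twoheadrightarrow F \xrightarrow{\eta} G$, so $K_3 = \im(\eta)$, the map $K_3 \to K_4$ is the inclusion $\im(\eta) \hookrightarrow G$, and $K_2 \to K_3$ becomes the canonical quotient $F = \Upsilon(B)/\im\Upsilon(f) \twoheadrightarrow \Upsilon(B)/\im\Upsilon(g') = \im(\eta)$, which is well-defined because $\Upsilon(f) = \Upsilon(g')\Upsilon(\beta'')$ forces $\im\Upsilon(f) \subseteq \im\Upsilon(g')$. A direct computation yields $K_5 = \Upsilon(D)/(\im\Upsilon(g) + \im\Upsilon(\alpha)) = G/\im(\eta) = \coker(\eta)$, with $K_4 \to K_5$ the canonical projection $G \twoheadrightarrow \coker(\eta)$.

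The heart of the argument is identifying $K_1 = \coker(\Upsilon((k',\beta'')))$ with $\ker(\eta) \subseteq F$. Because $\Upsilon$ preserves kernels and the pullback square automatically yields $\ker(g') \cong \ker(g)$ (realized by $k'$), we have $\ker\Upsilon(g') = \im\Upsilon(k')$. Pulling the previous paragraph's analysis back through the surjection $\Upsilon(E) \twoheadrightarrow \im\Upsilon(g') = \ker(\Upsilon(B) \to G)$, the preimage of $\im\Upsilon(f) \subseteq \im\Upsilon(g')$ in $\Upsilon(E)$ equals $\im\Upsilon(\beta'') + \ker\Upsilon(g') = \im\Upsilon(\beta'') + \im\Upsilon(k') = \im\Upsilon((k',\beta''))$. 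Consequently $\ker(\eta) = \im\Upsilon(g')/\im\Upsilon(f) \cong \Upsilon(E)/\im\Upsilon((k',\beta'')) = K_1$, and the induced map $K_1 \to K_2 = F$ (which on representatives is just $\Upsilon(g')$) is exactly the inclusion $\ker(\eta) \hookrightarrow F$.

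The main obstacle I foresee is not any single homological step but the bookkeeping: confirming that each induced horizontal map on cokernels is well-defined and agrees with the canonical map of the factorization, together with the careful use of the pullback $E$ to turn inclusions of images into equalities. Once these identifications are in place, concatenating them exhibits $K_1 \to K_2 \to K_3 \to K_4 \to K_5$ as the epi-mono factorization $\ker(\eta) \hookrightarrow F \twoheadrightarrow \im(\eta) \hookrightarrow G \twoheadrightarrow \coker(\eta)$ of $\eta$ in $\Mod(\EE)$, as required.
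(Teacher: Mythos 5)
Your proof is correct. There is nothing in the paper to compare it against: the authors do not prove this proposition but quote it from \cite[Proposition~3.3]{HenrardKvammevanRoosmalen20}, so your argument must stand on its own merits, and it does. The element-style manipulations (preimages of images, sums of subobjects) are legitimate because $\Mod(\EE)$ is a functor category into abelian groups, where kernels, cokernels, images and preimages are all computed pointwise; and the two facts you import from $\EE$ --- that $\Upsilon(k')$ is the kernel of $\Upsilon(g')$ and that $\Upsilon(E)$ is the pullback of $\Upsilon(\alpha)$ and $\Upsilon(g)$ --- hold because the Yoneda embedding preserves whatever limits exist in $\EE$ (here one also uses the standard fact, which you correctly invoke, that in the pullback square $ECBD$ the map $k'$ induced by $k$ is a kernel of $g'$). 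These give exactly your two key identities, $\im\Upsilon(g')=\Upsilon(\alpha)^{-1}(\im\Upsilon(g))$ and $\Upsilon(g')^{-1}(\im\Upsilon(f))=\im\Upsilon(\beta'')+\im\Upsilon(k')=\im\Upsilon\begin{psmallmatrix}k'&\beta''\end{psmallmatrix}$, from which the identifications $K_1\cong\ker(\eta)$, $K_3\cong\im(\eta)$, $K_5\cong\coker(\eta)$ and the agreement of the four induced maps with the canonical ones of the epi-mono factorization follow as you describe.
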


It will be convenient to state the following corollary.

\begin{corollary}\label{proposition:TorsionPart}
Let $\EE$ be an additive category.  Let $f\colon A \to B$ be any morphism in $\EE$ with factorization $f = m \circ p$ where $m$ is a monomorphism.  There is an associated exact sequence in $\mod(\EE)$
\[0 \to \coker \Yoneda(p) \to \coker \Yoneda(f) \to \coker \Yoneda(m) \to 0.\]
\end{corollary}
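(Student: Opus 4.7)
The plan is to apply \Cref{Lemma:TheFamousDiagramChase} to a degenerate commutative square tailored so that the five-term epi--mono factorization of that proposition collapses to exactly the claimed short exact sequence. Write $p\colon E \to G$ and $m\colon G \to F$, so that $f = m \circ p$, and consider the square
\[\begin{tikzcd}
E \arrow[r, "p"] \arrow[d, "f"'] & G \arrow[d, "m"] \\
F \arrow[r, "\mathrm{id}_F"'] & F
\end{tikzcd}\]
in $\EE$. Since $m$ is a monomorphism, its kernel exists and equals $0$, and the cospan $F \xrightarrow{\mathrm{id}_F} F \xleftarrow{m} G$ obviously admits $G$ itself as pullback, with structure maps $m$ and $\mathrm{id}_G$. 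The hypotheses of \Cref{Lemma:TheFamousDiagramChase} are therefore satisfied, with the translation (in the notation of that proposition) $A = E$, $B = F$, $C = G$, $D = F$, $\beta = p$, $\alpha = \mathrm{id}_F$, pullback object $G$, $g' = m$, $\beta' = \mathrm{id}_G$, $\beta'' = p$, and $k' = 0$.

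Applying the proposition, the induced morphism $\eta\colon \coker \Yoneda(f) \to \coker \Yoneda(m)$ fits into the epi--mono factorization
\[\ker(\eta) \hookrightarrow \coker \Yoneda(f) \twoheadrightarrow \im(\eta) \hookrightarrow \coker \Yoneda(m) \twoheadrightarrow \coker(\eta).\]
Both outer terms collapse. By the proposition, $\coker(\eta)$ is the cokernel of $\Yoneda$ applied to $(g,\alpha) = (m, \mathrm{id}_F)\colon G \oplus F \to F$, which is a split epimorphism, so $\coker(\eta) = 0$ and $\eta$ is epi. Similarly, $\ker(\eta)$ is the cokernel of $\Yoneda$ applied to $(k', \beta'') = (0, p)\colon 0 \oplus E \to G$, which simplifies to $\coker \Yoneda(p)$. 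Combining these identifications yields the exact sequence
\[0 \to \coker \Yoneda(p) \to \coker \Yoneda(f) \to \coker \Yoneda(m) \to 0\]
as claimed.

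There is no substantive obstacle: the argument is pure diagram chasing in $\Mod(\EE)$, and the only non-formal input is that $m$ being a monomorphism forces $\ker(m) = 0$, which ensures the degenerate configuration of \Cref{Lemma:TheFamousDiagramChase} is genuinely applicable. The only care needed is in tracking the auxiliary data $k', \beta', \beta'', g'$ of that proposition and recognizing the pullback of $\mathrm{id}_F$ and $m$ as the trivial one given by $G$.
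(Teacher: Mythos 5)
Your proof is correct and is essentially the paper's own argument: the paper likewise applies \Cref{Lemma:TheFamousDiagramChase} to the square with top edge $p$, right edge $m$, bottom edge $\mathrm{id}_F$ and left edge $f$ (i.e.\ with $A = E$, $C = E'$, $B = D = F$ in the notation of that proposition), using that $\ker(m) = 0$ and that the pullback of $\mathrm{id}_F$ along $m$ is trivial, so that the five-term factorization collapses to the claimed short exact sequence. Your identifications of $\ker(\eta) \cong \coker\Yoneda(p)$ and $\coker(\eta) = 0$ via the split epimorphism $(m \; \mathrm{id}_F)$ match the paper's instantiation exactly.
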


\begin{proof}
The given factorization gives the following commutative diagram in $\EE:$
\[\begin{tikzcd}
A \arrow[equal]{r} \arrow[d, "p"] & A \arrow{d}{f} \arrow[r, "p"] & E \arrow[hook]{d}{m} \\
E \arrow[hook]{r}{m} & B \arrow[equal]{r} & B
\end{tikzcd}\]
Applying the Yoneda embedding $\Yoneda\colon \EE \to \mod(\EE)$ and then taking the cokernels of the vertical maps, we find a sequence $F\stackrel{\phi}{\to} G \stackrel{\psi}{\to} H$ in $\smod(\EE)$ where $F=\coker(\Upsilon(p))$ and $H=\coker(\Upsilon(m))$.  By \Cref{Lemma:TheFamousDiagramChase} (where $g'=m$ and $\beta''=p$), we find that $0 \to F\stackrel{\phi}{\to} G \stackrel{\psi}{\to} H \to 0$ is a short exact sequence.
\end{proof}

\subsection{One-sided exact categories}\label{subsection:PrelimOneSided}
One sided-exact categories are obtained via a weakening of the axioms of a Quillen exact category \cite{BazzoniCrivei13,Generalov92,Rump11}.

\begin{definition}\label{definition:Conflation}
	A \emph{conflation category} is an additive category $\EE$ together with a chosen class of kernel-cokernel pairs, called \emph{conflations}, such that this class is closed under isomorphisms. The kernel part of a conflation is called an \emph{inflation} and the cokernel part of a conflation is called a \emph{deflation}.  	We depict inflations by the symbol $\inflation$ and deflations by $\deflation$. Moreover, we depict monomorphisms by $\hookrightarrow$.  A morphism $X \to Y$ is called \emph{admissible} if it admits a deflation-inflation factorization $X \deflation Z \inflation Y.$
	
	An additive functor $F\colon \CC\to \DD$ between conflation categories is called \emph{(conflation-)exact} if conflations are mapped to conflations.  We say that $F$ is \emph{left (conflation-)exact }if any conflation $A \stackrel{f}{\inflation} B \stackrel{g}{\deflation} C$ is mapped to a sequence $F(A) \stackrel{F(f)}{\inflation} F(B) \stackrel{F(g)}{\rightarrow} F(C)$ where $F(g)$ is admissible and $F(f) = \ker(F(g)).$
\end{definition}

\begin{definition}\label{Definition:DeflationExact}
  A \emph{deflation-exact category} $\EE$ is a conflation category satisfying the following axioms:
	\begin{enumerate}[label=\textbf{R\arabic*},start=0]
		\item\label{R0} For each $X\in \EE$, the map $X\to 0$ is a deflation.
		\item\label{R1} The composition of two deflations is a deflation.
		\item\label{R2} The pullback of a deflation along any morphism exists and deflations are stable under pullbacks.
	\end{enumerate}
	Dually, an \emph{inflation-exact category} is a conflation category $\EE$ satisfying the following axioms:
		\begin{enumerate}[label=\textbf{L\arabic*},start=0]
		\item\label{L0} For each $X\in \EE$, the map $0\to X$ is an inflation.
		\item\label{L1} The composition of two inflations is an inflation.
		\item\label{L2} The pushout of an inflation along any morphism exists and inflations are stable under pushouts.
	\end{enumerate}
\end{definition}

\begin{definition}\label{Definition:StronglyDeflationExact}
	Let $\EE$ be a conflation category. In addition to the properties listed in \Cref{Definition:DeflationExact}, we will also consider the following axioms:
	\begin{enumerate}[align=left]
		\myitem{\textbf{R3}}\label{R3} \hspace{0.175cm}If $i\colon A\rightarrow B$ and $p\colon B\rightarrow C$ are morphisms in $\EE$ such that $p$ has a kernel and $p\circ i$ is a deflation, then $p$ is a deflation.
		\myitem{\textbf{R3}$^+$}\label{R3+} \hspace{0.175cm}If $i\colon A\rightarrow B$ and $p\colon B\rightarrow C$ are morphisms in $\EE$ such that $p\circ i$ is a deflation, then $p$ is a deflation.
	\end{enumerate}
	The axioms \textbf{L3} and \textbf{L3}$^+$ are defined dually. A deflation-exact category satisfying \ref{R3} is called \emph{strongly deflation-exact}. Dually, an inflation-exact category satisfying axiom \textbf{L3} is called a \emph{strongly inflation-exact category}.
\end{definition}

\begin{remark}\label{Remark:BasicDefinitions}
	\begin{enumerate}
		\item Inflation-exact and deflation-exact categories are called left or right exact categories in the literature. However, as the use of left and right is not consistent, we prefer to use the above terminology to avoid possible confusion.
		\item It follows from \cite[Proposition~2.5]{KaledinLowen15} that a deflation-exact category is an additive single $\Lambda$-topology, where $\Lambda$ is the class of deflations.
		\item Axioms \ref{R0} and \ref{L0} are slightly stronger than their counterparts in \cite{BazzoniCrivei13,HenrardvanRoosmalen19b,HenrardvanRoosmalen19a}. The above definition ensures that all split kernel-cokernel pairs are conflations in a one-sided exact category.
		\item An exact category in the sense of Quillen (see \cite{Quillen73}) is a conflation category $\EE$ satisfying axioms \ref{R0} through \ref{R3} and \ref{L0} through \textbf{L3}. In \cite[Appendix~A]{Keller90}, Keller shows that axioms \ref{R0}, \ref{R1}, \ref{R2}, and \ref{L2} suffice to define an exact category.
		\item Axioms \ref{R3} and \textbf{L3} are sometimes referred to as Quillen's \emph{obscure axioms} (see \cite{Buhler10,ThomasonTrobaugh90}).
		\item For a conflation category, the following implications hold: \ref{R3+} $\Rightarrow$ \ref{R3} . 
		\item For a deflation-exact category $\EE$, axiom \ref{R3+} is equivalent to $\EE$ being weakly idempotent complete and satisfying axiom \ref{R3} (see \cite[Proposition~7.1]{HenrardvanRoosmalen19a}).
	\end{enumerate}
\end{remark}

The following theorem highlights the importance of axioms \ref{R3} and \ref{R3+}.

\begin{theorem}[\protect{\cite[Theorems~1.1 and 1.2]{HenrardvanRoosmalen20Obscure}}]\label{Theorem:ImportanceR3andR3+}
	Let $\EE$ be a deflation-exact category.
		\begin{enumerate}
			\item The category $\EE$ satisfies axiom \ref{R3} if and only if the Nine Lemma holds.
			\item The category $\EE$ satisfies axiom \ref{R3+} if and only if the Snake Lemma holds.
		\end{enumerate}
\end{theorem}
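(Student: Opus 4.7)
The theorem asserts two biconditionals characterizing the obscure axioms \textbf{R3} and \textbf{R3}$^+$ by the validity of the Nine Lemma and the Snake Lemma, respectively. The plan is to prove each of the four implications separately, treating the forward directions (axiom implies diagram lemma) as adaptations of the classical abelian-category proofs to the deflation-exact setting, and the converse directions (lemma implies axiom) as the main content of the theorem.

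For the forward direction of (1), I assume \textbf{R3} and consider a commutative $3\times 3$ diagram in which the three columns and the top two rows are conflations. A diagram chase using pullbacks of deflations (available by \textbf{R2}) and composition of deflations (\textbf{R1}) produces the candidate bottom row together with a kernel candidate for its right-hand map; \textbf{R3} is then invoked precisely to upgrade this map to a deflation, finishing the identification of the third conflation. For the forward direction of (2), the Snake Lemma follows from a longer chase that chains such pullback constructions with the Nine Lemma itself (already available once \textbf{R3} is in force), using \textbf{R3}$^+$ at the step where one must recognize a map as a deflation without the benefit of a pre-existing kernel.

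For the converse direction of (1), suppose the Nine Lemma holds and let $i\colon A\to B$, $p\colon B\to C$ satisfy that $p$ admits a kernel $k\colon K\to B$ and $p\circ i$ is a deflation. I would assemble a $3\times 3$ diagram whose top two rows and three columns are visibly conflations, built from the conflation $\ker(p\circ i)\inflation A\deflation C$ together with split conflations involving $K$, and arranged so that the remaining row is $K\to B\to C$. The Nine Lemma then promotes this row to a conflation, forcing $p$ to be a deflation with kernel $k$. For the converse of (2), I proceed analogously but cannot assume that $p$ has a kernel; instead, the Snake Lemma applied to a suitable diagram must produce both a kernel for $p$ and its deflation structure simultaneously.

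The main obstacle is the converse of (2): designing the diagram whose Snake sequence yields both data at once. A useful reduction is the equivalence noted earlier, namely that \textbf{R3}$^+$ holds if and only if \textbf{R3} holds and $\EE$ is weakly idempotent complete. This suggests splitting the argument in two: first deduce \textbf{R3} from the Snake Lemma by reducing to the Nine Lemma case handled in the converse of (1), and then deduce weak idempotent completeness by applying the Snake Lemma to a diagram built from an arbitrary split idempotent, so that the resulting exact sequence exhibits the required direct summand complement.
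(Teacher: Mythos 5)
First, a structural point: the paper does not prove this theorem at all --- it is imported, with citation, from \cite[Theorems~1.1 and~1.2]{HenrardvanRoosmalen20Obscure} --- so there is no internal proof to compare yours against, and your proposal must be judged as an attempt to reprove the cited result. Its architecture is reasonable: the forward implications are indeed deflation-exact adaptations of the classical chases, and reducing \ref{R3+} to ``\ref{R3} together with weak idempotent completeness'' is legitimate, since that equivalence is recorded in \Cref{Remark:BasicDefinitions}. However, there is a foundational gap running through the whole plan: you never fix what ``the Nine Lemma'' and ``the Snake Lemma'' actually assert in a deflation-exact category. Since kernels and cokernels need not exist here, these are not single unambiguous statements; their hypotheses (which rows and columns are conflations, which vertical maps are assumed admissible or to have kernels) carry essentially all the content of the ``only if'' directions, and none of your converse arguments can be started, let alone checked, until those formulations are pinned down.

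Second, two concrete gaps. (i) In ``Nine Lemma $\Rightarrow$ \ref{R3}'', your $3\times 3$ diagram cannot be assembled from the conflation $\ker(p\circ i)\inflation A\deflation C$ and split conflations alone, contrary to your claim that all the input rows and columns are ``visibly conflations''. Writing $N=\ker(p\circ i)$ and $K=\ker p$, the diagram whose unknown bottom row is $K\xrightarrow{k} B\xrightarrow{p} C$ must have as its middle column the conflation $N\inflation A\oplus K\deflation B$, and producing this conflation is the crux of the implication: one pulls back the deflation $p\circ i$ along $p$ (axiom \ref{R2}), observes that the projection $P\to A$ is a split epimorphism with kernel $K$, so that $P\cong A\oplus K$ by a purely additive argument, and identifies $P\to B$ with $\begin{psmallmatrix}i & k\end{psmallmatrix}$, a deflation with kernel $N$. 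This step is absent from your sketch. (ii) The step ``Snake Lemma $\Rightarrow$ weak idempotent completeness'' is only asserted, and it is precisely where circularity threatens. Weak idempotent completeness means every retraction $r\colon Y\to Z$ admits a kernel (``split idempotent'' is not the relevant notion). Any Snake Lemma that is actually true under \ref{R3+} must hypothesize admissibility of, or existence of kernels for, the vertical maps; a bare retraction supplies none of this. For instance, the natural test diagram --- the morphism from the split conflation $Z\inflation Z\oplus Y\deflation Y$ whose inflation is the graph $\begin{psmallmatrix}1\\s\end{psmallmatrix}$ of the section $s$, to the conflation $Z\inflation Z\deflation 0$ --- has middle component $\begin{psmallmatrix}0 & r\end{psmallmatrix}\colon Z\oplus Y\to Z$, and its admissibility can neither be assumed (it follows from, and essentially amounts to, the existence of $\ker r$) nor bypassed; even granting it, extracting $\ker r$ from $\ker\begin{psmallmatrix}0 & r\end{psmallmatrix}$ is again a kernel-of-a-retraction problem of the same type. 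So this direction needs a genuinely new idea --- it is the real content of the cited Theorem~1.2 --- and, as written, your plan restates the goal rather than proving it.
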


The following observation is essentially contained in \cite{Rump01}.

\begin{proposition}\label{proposition:CriterionDeflationRegular}
Let $\EE$ be a conflation category. If every kernel-cokernel pair is a conflation and $\EE$ satisfies axiom \ref{R2}, then $\EE$ is deflation-exact.
\end{proposition}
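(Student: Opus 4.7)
The plan is to verify axioms \ref{R0} and \ref{R1}, since \ref{R2} is among the hypotheses. Axiom \ref{R0} is immediate: for each $X \in \EE$ the pair $(\operatorname{id}_X,\, X \to 0)$ is a kernel-cokernel pair, hence a conflation by hypothesis, so $X \to 0$ is a deflation.

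For \ref{R1}, let $f\colon A \deflation B$ and $g\colon B \deflation C$ be deflations, with kernels $K_f \inflation A$ and $k_g\colon K_g \inflation B$. Using \ref{R2} I would form the pullback
\[\begin{tikzcd}
P \ar[r, "\alpha"] \ar[d, "\beta"', twoheadrightarrow] & A \ar[d, twoheadrightarrow, "f"] \\
K_g \ar[r, "k_g"'] & B,
\end{tikzcd}\]
so that $\beta$ is again a deflation and $\alpha$ is a monomorphism (being the pullback of the monomorphism $k_g$). A standard pullback argument identifies $\ker(\beta)$ with $K_f$, compatibly with the inclusion $K_f \inflation A$. The strategy is then to show that $(\alpha, gf)$ is a kernel-cokernel pair; by the hypothesis that every kernel-cokernel pair is a conflation, this forces $gf$ to be a deflation.

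That $\alpha = \ker(gf)$ follows directly from the universal property of the pullback: if $h\colon X \to A$ satisfies $gfh = 0$, then $fh$ factors through $k_g$, and the resulting pair induces a unique lift $X \to P$ over $\alpha$. The main step is to show $gf = \coker(\alpha)$. Given $h\colon A \to X$ with $h\alpha = 0$, I would first observe that $h$ vanishes on $K_f$ (since $K_f \to A$ factors through $\alpha$), so $h = h'f$ for a unique $h'\colon B \to X$. The identity $h'k_g\beta = h'f\alpha = h\alpha = 0$, combined with the fact that the deflation $\beta$ is an epimorphism, yields $h'k_g = 0$, so $h' = h''g$ for a unique $h''\colon C \to X$; thus $h = h''(gf)$, with uniqueness automatic since $gf$ is an epimorphism. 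The main obstacle is precisely this cokernel verification, where the role of $\beta$ being an epimorphism is essential; every other step is formal.
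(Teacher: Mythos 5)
Your proof is correct, but it takes a genuinely different route from the paper. The paper disposes of \ref{R0} in one line exactly as you do, and then handles \ref{R1} purely by citation: it invokes \cite[Proposition~5.11]{Kelly69}, observing that under \ref{R2} axiom \ref{R1} amounts to the statement that a composition of totally regular epimorphisms (cokernels all of whose pullbacks exist and are cokernels) is again totally regular. You instead verify \ref{R1} directly: given deflations $f\colon A \deflation B$ and $g\colon B \deflation C$, you pull back $f$ along $k_g = \ker(g)$ to get $\alpha\colon P \to A$ (a monomorphism) and $\beta\colon P \deflation K_g$ (a deflation by \ref{R2}), and then check that $(\alpha, gf)$ is a kernel-cokernel pair, so that the hypothesis promotes it to a conflation and exhibits $gf$ as a deflation. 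All the individual steps are sound: $\alpha = \ker(gf)$ follows from the pullback property plus $\alpha$ being monic; $gf = \coker(\alpha)$ uses that $f = \coker(\ker f)$ and $g = \coker(k_g)$ (which hold because deflations are cokernel parts of kernel-cokernel pairs) together with the epicity of $\beta$ to kill $h'k_g$, which is indeed the one non-formal point. In substance your argument is the additive specialization of what lies behind Kelly's proposition, so the two proofs are mathematically cognate; the difference is that the paper's version buys brevity and a connection to the classical literature on regular epimorphisms, while yours buys a self-contained verification that needs no translation of Kelly's terminology.
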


\begin{proof}
As all kernel-cokernel pairs are conflations, $\EE$ satisfies axiom \ref{R0}.  That $\EE$ satisfies axiom \ref{R1} follows from \cite[Proposition~5.11]{Kelly69} (in the terminology of \cite{Kelly69} and assuming axiom \ref{R2}, axiom \ref{R1} is equivalent to saying that the composition of totally regular epimorphisms is again a totally regular epimorphism).
\end{proof}

\begin{definition}
We recall that a \emph{pre-abelian} category is an additive category where every morphism has a kernel and a cokernel.  We say that a pre-abelian category is \emph{deflation quasi-abelian} (or \emph{left quasi-abelian}) if the class of all kernel-cokernel pairs endow it with the structure of a deflation-exact category.  Dually, a pre-abelian category is \emph{inflation quasi-abelian} (or \emph{right quasi-abelian}) if the class of all kernel-cokernel pairs endow it with the structure of an inflation-exact category.
\end{definition}

\begin{remark}
\begin{enumerate}
  \item A quasi-abelian category is called an almost abelian category in \cite{Rump01}.
	\item For a pre-abelian category to be deflation quasi-abelian, it suffices that the pullback of a cokernel is a cokernel, see \cite[Proposition 1 and 2]{Rump01}.
	\item A pre-abelian category is idempotent complete.  A deflation quasi-abelian category satisfies axiom \ref{R3+}, see \cite[Proposition 2]{Rump01}.
\end{enumerate}
\end{remark}

%

\subsection{Derived categories of one-sided exact categories}\label{subsection:DerivedCategory}

The derived category of a one-sided exact category was studied in \cite{BazzoniCrivei13,Generalov92,HenrardvanRoosmalen19b}.  We recall the definition of the derived category, starting with the notion of an acyclic complex.

\begin{definition}\label{definition:AcylicComplex}
	Let $\EE$ be a conflation category. A complex $X^{\bullet}\in \C(\EE)$ is called \emph{acylic (or exact) in degree $n$} if $d_X^{n-1}\colon X^{n-1} \to X^n$ factors as
	\[\xymatrix{
	X^{n-1}\ar[rr]^{d_X^{n-1}}\ar@{->>}[rd]^{p^{n-1}} && X^n\\
	 & \ker(d_X^{n})\ar@{>->}[ru]^{i^{n-1}} &
	}\] where the deflation $p^{n-1}$ is the cokernel of $d_X^{n-2}$ and the inflation $i^{n-1}$ is the kernel of $d_X^{n}$.
	
	A complex $X^{\bullet}$ is called \emph{acyclic} or \emph{exact} if it is acylic in each degree.  We write $\AcC(\EE)$ for the full subcategory of $\C(\EE)$ consisting of acyclic complexes.  We write $\AcK(\EE)$ for the full subcategory of $\K(\EE)$ given by those complexes which are homotopy equivalent to an acyclic complex (thus, $\AcK(\EE)$ is the closure of $\AcC (\EE)$ under isomorphisms in $\K(\EE)$).  We simply write $\Ac(\EE)$ for either $\AcC(\EE)$ or $\AcK(\EE)$ if there is no confusion.  The bounded versions are defined by $\mathbf{Ac}_{\mathbf{C}}^*(\EE)=\AcC (\EE)\cap \C^*(\EE)$ and $\mathbf{Ac}_{\mathbf{K}}^*=\AcK (\EE)\cap \K^*(\EE)$.
\end{definition}

The subcategory $\AcC(\EE)$ of $\K(\EE)$ is not \emph{replete}, i.e.~it is not closed under isomorphisms in $\K(\EE)$. Nonetheless, it is a triangulated subcategory of $\K(\EE)$.

\begin{lemma}[\protect{\cite[Lemma~7.2]{BazzoniCrivei13}}]\label{lemma:AcyclicIsTriangulated}
	For each map $f\colon X^{\bullet}\to Y^{\bullet}$ in $\AcC(\EE)$, we have that $\cone(f^{\bullet})\in \AcC(\EE)$.  In particular, the category $\AcC(\EE)$ is a triangulated subcategoy of $\K(\EE)$ which is not necessarily closed under isomorphisms.
\end{lemma}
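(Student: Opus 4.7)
The plan is to verify acyclicity degree by degree: at each $n$, exhibit a factorization of $d_{\cone(f)}^n$ as $\cone(f)^n \deflation \ker(d_{\cone(f)}^{n+1}) \inflation \cone(f)^{n+1}$ where the deflation is the cokernel of $d_{\cone(f)}^{n-1}$. First, I would write the cone differential explicitly as
$$d_{\cone(f)}^n = \begin{pmatrix} -d_X^{n+1} & 0 \\ f^{n+1} & d_Y^n \end{pmatrix}\colon X^{n+1}\oplus Y^n \to X^{n+2}\oplus Y^{n+1},$$
and invoke acyclicity of $X^\bullet$ and $Y^\bullet$ to obtain factorizations $d_X^n = i_X^n p_X^n$ and $d_Y^n = i_Y^n p_Y^n$ through the kernel objects $K_X^{n+1}$ and $K_Y^{n+1}$. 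Since $f$ is a chain map and the $i_Y^n$ are monic, a short argument produces compatible restrictions $\bar f^n\colon K_X^n \to K_Y^n$ satisfying $i_Y^n \bar f^n = f^n i_X^n$ and $\bar f^{n+1} p_X^n = p_Y^n f^n$.

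Next, I would construct $\ker(d_{\cone(f)}^n)$ as the pullback
$$\begin{tikzcd}
Z^n \arrow[r, "\pi_2^n"] \arrow[d, two heads, "\pi_1^n"'] & Y^n \arrow[d, two heads, "p_Y^n"] \\
K_X^{n+1} \arrow[r, "-\bar f^{n+1}"'] & K_Y^{n+1},
\end{tikzcd}$$
which exists by axiom \ref{R2}, with $\pi_1^n$ a deflation. The induced morphism $(i_X^{n+1}\pi_1^n,\pi_2^n)\colon Z^n \to X^{n+1}\oplus Y^n$ is then readily checked to be $\ker(d_{\cone(f)}^n)$ by combining the pullback universal property with the monicity of $i_X^{n+1}$.

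The main obstacle is to show that the canonical map $\phi^n\colon X^{n+1}\oplus Y^n \to Z^{n+1}$, determined by the universal property of $Z^{n+1}$ applied to $-p_X^{n+1}$ (into $K_X^{n+2}$) and $(x,y)\mapsto f^{n+1}(x)+d_Y^n(y)$ (into $Y^{n+1}$), is a deflation and realizes $\coker(d_{\cone(f)}^{n-1})$. The deflation property comes from assembling $\phi^n$ out of the deflations $-p_X^{n+1}$ and $p_Y^n$ via the pullback defining $Z^{n+1}$, using stability under composition and pullbacks (axioms \ref{R1} and \ref{R2}). The identification with the cokernel follows from the direct computation $\phi^n d_{\cone(f)}^{n-1}=0$ together with a diagram chase that lifts any annihilating morphism along the acyclicity factorizations of $X^\bullet$ and $Y^\bullet$. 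With acyclicity of $\cone(f)$ established at every degree, the ``in particular'' statement is immediate: $\AcC(\EE)$ is plainly closed under the shift $[1]$ by inspection of \Cref{definition:AcylicComplex}, and it is closed under cones in $\C(\EE)$ by the above, so it is a triangulated subcategory of $\K(\EE)$; repleteness need not hold, since acyclicity of a representative complex is not \emph{a priori} invariant under homotopy equivalence.
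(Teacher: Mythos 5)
Your proposal is sound in outline and follows the same route as the proof of \cite[Lemma~7.2]{BazzoniCrivei13}, which is what the paper cites here rather than reproving; the closest argument written out in the paper is the strengthening \Cref{proposition:StrengtheningBazzoniCrivei}, proved by the same method. Concretely, both you and that proof factor the differentials of $X^{\bullet}$ and $Y^{\bullet}$ through their kernels, identify $\ker(d_{\cone(f)}^n)$ with the pullback $Z^n$ of $p_Y^n$ along $-\bar f^{n+1}$, and then must show that the induced map $\phi^n\colon X^{n+1}\oplus Y^n \to Z^{n+1}$ is a deflation computing $\coker(d_{\cone(f)}^{n-1})$. Your kernel identification is correct, and once every $\phi^n$ is known to be a deflation the cokernel identification is indeed formal: a deflation is the cokernel of its kernel $\iota^n\colon Z^n \to X^{n+1}\oplus Y^n$, and $\coker(\iota^n)=\coker(\iota^n\circ\phi^{n-1})=\coker(d_{\cone(f)}^{n-1})$ since $\phi^{n-1}$ is epic; this also makes each $\iota^n$ an inflation, being the kernel of the deflation $\phi^n$.

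The weak point is precisely the pivotal claim that $\phi^n$ is a deflation. As stated --- ``assembling $\phi^n$ out of the deflations $-p_X^{n+1}$ and $p_Y^n$ via the pullback defining $Z^{n+1}$, using \ref{R1} and \ref{R2}'' --- this suggests a principle that is false in general: a morphism into a pullback whose components are deflations need not be a deflation (the diagonal $A\to A\oplus A$ has identity components but is not even epic for $A\neq 0$). This is exactly the step for which \cite{BazzoniCrivei13}, and the proof of \Cref{proposition:StrengtheningBazzoniCrivei}, invoke dedicated machinery (\cite[Proposition~3.9]{HenrardvanRoosmalen19a} and the duals of \cite[Propositions~5.4 and~5.5]{BazzoniCrivei13}) producing the conflation $Z^n \inflation X^{n+1}\oplus Y^n \deflation Z^{n+1}$ from a morphism of conflations. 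Your sketch can be repaired along the lines you hint at, but an actual argument must be supplied. For instance: write $\phi^n = \psi\circ(1_{X^{n+1}}\oplus p_Y^n)$, where $1\oplus p_Y^n$ is a deflation by \ref{R2}, and $\psi\colon X^{n+1}\oplus K_Y^{n+1}\to Z^{n+1}$ has components the map $\psi_1$ induced into the pullback $Z^{n+1}$ by the pair $(-p_X^{n+1}, f^{n+1})$ and the kernel $j\colon K_Y^{n+1}\to Z^{n+1}$ of $\pi_1^{n+1}$. To see that $\psi$ is a deflation, pull the deflation $\pi_1^{n+1}\colon Z^{n+1}\deflation K_X^{n+2}$ back along $-p_X^{n+1}$ to get a deflation $\beta\colon P\deflation Z^{n+1}$ (again \ref{R2}); the other projection $P\to X^{n+1}$ is a deflation with kernel $K_Y^{n+1}$ and admits the section induced by $(\psi_1,1)$, so $P\cong X^{n+1}\oplus K_Y^{n+1}$ by the usual splitting argument, and under this isomorphism $\beta$ becomes $\psi$; finally apply \ref{R1}. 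With this step supplied, your degree-by-degree verification, and hence the ``in particular'' statement including the repleteness caveat, goes through.
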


Analogously to exact categories, one can define the derived category $\D(\EE)$ as the Verdier localization $\K(\EE)/\left\langle\Ac(\EE)\right\rangle_{\text{thick}}$ of the bounded homotopy category by the thick closure of the triangulated subcategory of acyclic complexes. The bounded versions are defined analogously. The following theorem summarizes some useful properties of the derived category.

\begin{theorem}[\protect{\cite[Theorem~1.2]{HenrardvanRoosmalen19b}}]\label{Theorem:BasicPropertiesDerivedCategory}
	Let $\EE$ be a deflation-exact category.
	\begin{enumerate}
		\item The natural embedding $i\colon \EE\to \D(\EE)$ is fully faithful.
		\item For all $X, Y\in \EE$ and $n>0$, $\Hom_{\D(\EE)}(\Sigma^n iX, iY)=0$.
		\item Every conflation $X\inflation Y\deflation Z$ in $\EE$ maps to a triangle $iX\to iY\to iZ\to \Sigma iX$ in $\D(\EE)$.
	\end{enumerate}
\end{theorem}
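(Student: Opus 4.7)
The plan is to realize $\D(\EE)$ via a calculus of fractions. Set $\Sigma$ to be the class of morphisms in $\K(\EE)$ whose mapping cone lies in $\langle\AcK(\EE)\rangle_{\text{thick}}$. Lemma~\ref{lemma:AcyclicIsTriangulated} shows $\AcK(\EE)$ is triangulated in $\K(\EE)$, and passing to the thick closure ensures closure under direct summands. The standard octahedral-axiom argument then shows $\Sigma$ is a multiplicative system: given a span $A^\bullet \xrightarrow{g} C^\bullet \xleftarrow{s} B^\bullet$ with $s\in \Sigma$, complete $s$ to a triangle and pull back to obtain a commuting square $B'^\bullet$; the roof $A^\bullet \xleftarrow{s'} B'^\bullet \xrightarrow{g'} B^\bullet$ does the job. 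Thus $\Hom_{\D(\EE)}(A^\bullet,B^\bullet)$ is the usual set of equivalence classes of roofs.

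Part~(3) is the easiest: given a conflation $X \xrightarrow{f} Y \xrightarrow{g} Z$, the complex $\cdots \to 0 \to X \to Y \to Z \to 0 \to \cdots$ is acyclic by the very definition of conflation. Forming $\cone(f)^\bullet$ in $\K(\EE)$, the induced chain map $\cone(f)^\bullet \to iZ$ has acyclic mapping cone, hence is an isomorphism in $\D(\EE)$. The standard triangle $iX \to iY \to \cone(f)^\bullet \to \Sigma iX$ in $\K(\EE)$ therefore descends to the required distinguished triangle.

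For parts~(1) and~(2), represent a morphism $\varphi\colon \Sigma^n iX \to iY$ in $\D(\EE)$ (with $n\geq 0$) by a roof $\Sigma^n iX \xleftarrow{s} W^\bullet \xrightarrow{h} iY$ with $s\in \Sigma$. The key technical ingredient is a \emph{truncation lemma}: using axiom~\ref{R2} one can replace $W^\bullet$, in a way compatible with $s$, by a quasi-isomorphic complex supported in degrees $[-n,0]$ whose term in degree $-n$ is $X$ and whose lower differentials factor through deflations obtained from iterated pullbacks. For $n=0$ the resulting roof collapses to a genuine morphism $X\to Y$ in $\EE$, and a parallel argument shows two roofs of this form are equivalent iff the underlying morphisms agree in $\EE$, giving (1). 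For $n>0$, a downward induction on the degree shows that any $h\colon W'^\bullet \to iY$, with $iY$ concentrated in degree $0$, is homotopic to zero via a homotopy built inductively from splittings of the (deflations appearing in the) acyclicity data of $W'^\bullet$; this yields (2).

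The main obstacle will be the truncation lemma in the one-sided setting. In a Quillen exact category, one would freely take cohomology and use both kernels and cokernels; here only pullbacks of deflations are available, so the truncation has to be produced one degree at a time by pulling back along deflations and appealing to the definition of acyclicity (\Cref{definition:AcylicComplex}) together with axioms~\ref{R0}--\ref{R2}. A secondary subtlety is that $\AcC(\EE)$ is not replete in $\K(\EE)$, which is precisely why one must take the thick closure of $\AcK(\EE)$ when defining $\Sigma$; once this is in place, the remainder of the argument runs parallel to Keller's treatment of the exact case.
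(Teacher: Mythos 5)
The paper itself offers no proof of this theorem: it is imported verbatim from \cite[Theorem~1.2]{HenrardvanRoosmalen19b}, so your attempt has to stand entirely on its own. Your localization setup is fine, and your argument for part~(3) is correct and complete: the three-term complex attached to a conflation is acyclic (axiom \ref{R0}, via split conflations, handles the outer degrees), so the induced map $\cone(i(f))\to iZ$ has acyclic cone and becomes invertible in $\D(\EE)$. The genuine gap is that parts~(1) and~(2) are carried entirely by your ``truncation lemma,'' which you assert but never prove, and which is not a technical ingredient: it is equivalent to the statements being proven. For $n=0$, the claim that every roof $iX \leftarrow W^\bullet \to iY$ can be replaced, compatibly with $s$, by one whose middle term is $X$ concentrated in degree $0$ is precisely the surjectivity of $\Hom_\EE(X,Y)\to\Hom_{\D(\EE)}(iX,iY)$; the injectivity half (``two roofs of this form are equivalent iff the underlying morphisms agree'') is likewise only asserted. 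You have deferred all of the content of fully faithfulness to a lemma you yourself call ``the main obstacle.''

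Moreover, the mechanisms you sketch for closing this gap would fail as stated. First, you invoke ``splittings of the (deflations appearing in the) acyclicity data,'' but deflations in a deflation-exact category do not split in general, so no such splittings are available. Second, if $W'^\bullet$ is supported in degrees $[-n,0]$ and $iY$ sits in degree $0$, the only possible homotopy component is $\sigma^1\colon W'^1\to Y$, and $W'^1=0$; hence ``$h$ is null-homotopic'' literally means $h^0=0$, which is far stronger than any truncation can deliver. The correct mechanism in a Verdier localization is not null-homotopy of a fixed representative but refinement of roofs: one must produce $t\colon W''^\bullet\to W'^\bullet$ with cone in the localizing subcategory such that $h\circ t$ is null-homotopic, and $W''^\bullet$ is not constrained to live in $[-n,0]$. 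Finally, the thick closure creates a problem for your plan rather than solving one: an object of $\langle\AcK(\EE)\rangle_{\text{thick}}$ is only a direct summand of an acyclic complex, so every degree-wise appeal to \Cref{definition:AcylicComplex} for such cones is illegitimate. The standard remedy is the opposite of your framing: since $\AcK(\EE)$ is already triangulated (\Cref{lemma:AcyclicIsTriangulated}) and the kernel of a Verdier quotient is automatically thick, one has $\K(\EE)/\AcK(\EE)\simeq \K(\EE)/\langle\AcK(\EE)\rangle_{\text{thick}}$, so one may compute with roofs whose cones are genuinely acyclic and only then run degree-wise arguments.
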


With regard to the derived category, axioms \ref{R3} and \ref{R3+} have useful interpretations.

\begin{proposition}[\protect{\cite[Propositions~3.11 and 6.2]{HenrardvanRoosmalen19b} and \cite[Theorem~1.1.(4) and 1.2.(2)]{HenrardvanRoosmalen20Obscure}}]\label{Proposition:BasicPropertiesDerivedCategoryNew}
	Let $\EE$ be a deflation-exact category.
	\begin{enumerate}
		\item Axiom \ref{R3} is equivalent to: a sequence $X \to Y \to Z$ in $\EE$ is a conflation if and only if $i(X) \to i(Y) \to i(Z) \to \Sigma i(X)$ is a triangle in $\Db(\EE)$.
		\item If $\EE$ satisfies axiom \ref{R3+}, then $\AcbC(\EE)$ is a thick triangulated subcategory of $\Kb(\EE)$.
		\item If $\EE$ satisfies axiom \ref{R3} and is idempotent complete, then $\AcC(\EE)$ is a thick triangulated subcategory of $\K(\EE)$.
	\end{enumerate}
\end{proposition}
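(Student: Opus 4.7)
The statement collects three results cited from the earlier papers \cite{HenrardvanRoosmalen19b, HenrardvanRoosmalen20Obscure}, so my plan is to indicate, for each item, the structural input that drives the proof. For part (1), the direction conflation implies triangle is already supplied by \Cref{Theorem:BasicPropertiesDerivedCategory}(3) and does not require \ref{R3}. For the converse, given a sequence $X \xrightarrow{f} Y \xrightarrow{g} Z$ in $\EE$ whose image forms a distinguished triangle in $\Db(\EE)$, I would first show that $g$ is admissible: representing the connecting morphism by a roof in the calculus of fractions and truncating appropriately reduces the data to a bounded diagram of complexes, from which one extracts a factorization $Y \deflation W \to Z$ whose second arrow becomes an isomorphism in $\Db(\EE)$. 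Axiom \ref{R3} then upgrades this to the statement that $g$ itself is a deflation with kernel $f$, so that $(f,g)$ is a conflation.

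For parts (2) and (3), \Cref{lemma:AcyclicIsTriangulated} already supplies the triangulated subcategory structure; the genuinely new content is thickness, i.e.\ closure under direct summands in the relevant homotopy category. Given a direct summand $X^\bullet$ of an acyclic complex $X^\bullet \oplus Y^\bullet$, I would verify degreewise that the required kernel-cokernel factorization exists in $\EE$. At each degree $n$, the acyclicity of the direct sum yields a kernel-cokernel pair in $\EE$ from which the $X^\bullet$-component must be split off; this split-off is exactly the content of the Snake Lemma, which is equivalent to \ref{R3+} by \Cref{Theorem:ImportanceR3andR3+}(2), handling (2). For part (3) one only has \ref{R3} available; here one uses idempotent completeness to guarantee that the relevant kernels $\ker d_{X^\bullet}^n$ exist as summands in $\EE$, and then the Nine Lemma characterization \Cref{Theorem:ImportanceR3andR3+}(1) combined with \ref{R3} yields the required conflation at each degree.

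The main obstacle I anticipate is the converse direction in (1): translating a distinguished triangle in $\Db(\EE)$ — defined via a Verdier quotient involving zig-zags of quasi-isomorphisms — into a bona fide conflation in $\EE$ is technically delicate, and locating the precise point where \ref{R3} (as opposed to the stronger \ref{R3+}) is enough requires careful bookkeeping in the roof calculus. By contrast, the thickness assertions in (2) and (3), once the Snake and Nine Lemmas are in hand via \Cref{Theorem:ImportanceR3andR3+}, reduce to fairly routine degreewise diagram chases in $\EE$.
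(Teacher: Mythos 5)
This proposition is not proved in the paper at all---it is imported from \cite{HenrardvanRoosmalen19b,HenrardvanRoosmalen20Obscure}---so your sketch must be measured against what those proofs actually require, and on that score it has two genuine gaps. The first concerns item (1), which is a two-way equivalence: axiom \ref{R3} holds \emph{if and only if} the embedding detects conflations via triangles. You only sketch the implication from \ref{R3} to the detection property (the ``conflation $\Rightarrow$ triangle'' half being free by \Cref{Theorem:BasicPropertiesDerivedCategory}). The converse---assuming the detection property, deduce axiom \ref{R3}, i.e.\ that any $p\colon B \to C$ admitting a kernel and a morphism $i$ with $p \circ i$ a deflation is itself a deflation---is never addressed, and it is not a formality: one must first manufacture a triangle on $\ker(p) \to B \to C$ in $\Db(\EE)$ (e.g.\ via the octahedral axiom applied to $p \circ i$ and the triangle coming from the conflation on $p\circ i$) before the detection property can be invoked.

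The second gap is more serious: in (2) and (3) you silently replace direct summands in $\Kb(\EE)$ by direct sum decompositions in $\C(\EE)$. The objects of $\AcbC(\EE)$ are honestly acyclic complexes, and this subcategory is a priori \emph{not} replete in $\Kb(\EE)$ (the paper states this just above \Cref{lemma:AcyclicIsTriangulated}). Since a complex isomorphic to an acyclic complex is in particular a direct summand of it, thickness forces repleteness: every complex merely \emph{homotopy equivalent} to an acyclic one must itself be acyclic. That is exactly the content singled out in the remark following \Cref{Proposition:BasicPropertiesDerivedCategoryNew}, and it is the crux of the cited results; your degreewise argument, which starts from a genuine equality $Z^\bullet = X^\bullet \oplus Y^\bullet$ of complexes with $Z^\bullet$ acyclic, never meets it. The standard bridge is: a homotopy equivalence $X^\bullet \to Z^\bullet$ yields an isomorphism of complexes $X^\bullet \oplus C_1^\bullet \cong Z^\bullet \oplus C_2^\bullet$ with $C_1^\bullet, C_2^\bullet$ contractible; one then needs (i) contractible complexes are acyclic---and this is precisely where weak idempotent completeness (available from \ref{R3+}, sufficient for bounded complexes) versus genuine idempotent completeness (needed for unbounded complexes) enters, explaining the differing hypotheses in (2) and (3)---(ii) finite direct sums of acyclic complexes are acyclic, and (iii) the complex-level summand statement you sketched. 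Only (iii) appears in your proposal, and even there the operative tool is not really the Snake Lemma but axiom \ref{R3+} applied to composites with the split projections (or, in (3), splitting the induced idempotent on $\ker d^n$). Your closing assessment that (2) and (3) reduce to routine degreewise chases therefore inverts the actual difficulty: the weight of the statement lies in the repleteness half of thickness, which is absent from the proposal.
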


\begin{remark}
\Cref{Proposition:BasicPropertiesDerivedCategoryNew} implies that, if $\EE$ satisfies axiom \ref{R3+}, any complex which is homotopic to an acyclic complex, is acyclic itself.
\end{remark}

\begin{theorem}[Horseshoe lemma]\label{theorem:Horseshoe} Let $\EE$ be a deflation-exact category satisfying axiom \ref{R3}.  Let $X \inflation Y \deflation Z$ be a conflation in $\EE$.  If $P_X^\bullet \to X$ and $P_Z^\bullet \to Z$ are projective resolutions of $X$ and $Z$, respectively, then there is a projective resolution $P_Y^\bullet \to Y$, fitting in a commutative diagram
\[\begin{tikzcd}
\vdots \arrow[d]  & \vdots \arrow[d] & \vdots \arrow[d] \\
P_X^{-1} \arrow[d] \arrow[r] & P_Y^{-1} \arrow[d] \arrow[r] & P_Z^{-1} \arrow[d] \\
P_X^0 \arrow[d] \arrow[r] & P_Y^0 \arrow[d] \arrow[r] & P_Z^0 \arrow[d] \\
X \arrow[r, rightarrowtail] & Y \arrow[r, twoheadrightarrow] & Z.
\end{tikzcd}\]
Moreover, for each $i \leq 0$, the sequence $P_X^i \inflation P_Y^i \deflation P_Z^i$ is a split kernel-cokernel pair.
\end{theorem}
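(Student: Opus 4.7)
My plan is to argue by downward induction on the degree, constructing $P_Y^\bullet$ so that $P_Y^n = P_X^n \oplus P_Z^n$. The inductive step is self-similar: starting from a conflation $A \inflation B \deflation C$ (playing the role of $X \inflation Y \deflation Z$) and compatible deflations $P_X^n \deflation A$ and $P_Z^n \deflation C$ (coming from the previous stage of the resolutions), I will construct a deflation $P_X^n \oplus P_Z^n \deflation B$ and then pass to the kernels to feed into the next step of the induction.

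For the construction, I use that $P_Z^n$ is projective and $B \deflation C$ is a deflation to lift the composite $P_Z^n \deflation C$ to some $\beta\colon P_Z^n \to B$. I set $\pi\colon P_X^n \oplus P_Z^n \to B$ to be $(P_X^n \deflation A \inflation B,\beta)$. To verify that $\pi$ is a deflation, I form the pullback $B \times_C P_Z^n$, which exists by \ref{R2} and fits into a conflation $A \inflation B \times_C P_Z^n \deflation P_Z^n$ (the pullback of $A \inflation B \deflation C$). The lift $\beta$ provides a section of this deflation, splitting the pullback as $B \times_C P_Z^n \cong A \oplus P_Z^n$, under which the projection to $B$ is the canonical map $(A \inflation B,\beta)$. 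Consequently $\pi$ factors as
\[
P_X^n \oplus P_Z^n \xrightarrow{p_X \oplus \mathrm{id}} A \oplus P_Z^n \xrightarrow{\sim} B \times_C P_Z^n \deflation B,
\]
where $p_X\colon P_X^n \deflation A$ comes from the resolution. The first arrow is the pullback of $p_X$ along the projection $A \oplus P_Z^n \to A$ and hence a deflation by \ref{R2}; the second arrow is a deflation by construction. Thus $\pi$ is a deflation by \ref{R1}.

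Once $\pi$ is a deflation, I pass to kernels. Let $K_X, K_Y, K_Z$ be the kernels of $p_X$, $\pi$, and $p_Z\colon P_Z^n \deflation C$. The resulting commutative $3 \times 3$ diagram has all three columns as conflations, the middle row as the split conflation $P_X^n \inflation P_X^n \oplus P_Z^n \deflation P_Z^n$, and the bottom row as $A \inflation B \deflation C$. I now invoke the Nine Lemma, available via \ref{R3} by \Cref{Theorem:ImportanceR3andR3+}, to conclude that the top row $K_X \inflation K_Y \deflation K_Z$ is a conflation. This is exactly what is needed to iterate, with $K_X, K_Y, K_Z$ playing the roles of $A, B, C$ and with the deflations $P_X^{n-1} \deflation K_X$, $P_Z^{n-1} \deflation K_Z$ arising from the next stage of the given resolutions.

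I expect the Nine Lemma step to be the main point. The construction of the deflation $\pi\colon P_Y^n \deflation B$ itself requires only projectivity together with \ref{R0}, \ref{R1}, \ref{R2}; what crucially depends on \ref{R3} is the ability to propagate the conflation structure to the kernels, without which the induction cannot be carried out.
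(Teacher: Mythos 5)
Your proof is correct and follows essentially the same route as the paper's: the paper simply cites B\"uhler's Horseshoe argument (Theorem~12.8 of \cite{Buhler10}) as holding verbatim in the deflation-exact setting, with its two key ingredients --- that the assembled map $P_X^n \oplus P_Z^n \to B$ is a deflation, and the Nine Lemma, which is where axiom \ref{R3} enters --- replaced by deflation-exact counterparts, and these are precisely the two steps you carry out before inducting on the syzygies. Your only (harmless) deviation is that instead of citing the deflation-exact analogue of B\"uhler's Corollary~3.2 you verify the deflation step by hand, via the pullback of the conflation along $P_Z^n \deflation C$, the splitting induced by the lift $\beta$, and axioms \ref{R1} and \ref{R2}; your closing remark that only the kernel-propagation step genuinely needs \ref{R3} matches the paper's \Cref{Theorem:ImportanceR3andR3+}.
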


\begin{proof}
The proof of \cite[Theorem~12.8]{Buhler10} for exact categories holds verbatim in the deflation-exact setting.  One can replace the reference to \cite[Corollary~3.2 and Corollary~3.6]{Buhler10} by references to a deflation-exact version \cite[Lemma~4.2.(2) and Theorem~4.1]{HenrardvanRoosmalen20Obscure} (or the duals of \cite[Lemma~5.10 and Proposition~5.11]{BazzoniCrivei13}).%
\end{proof}

\subsection{Preresolving subcategories}

This subsection is a brief summary of \cite{HenrardvanRoosmalen20Preresolving}. We recall the following definition.

\begin{definition}
	Let $\EE$ be a deflation-exact category. A full additive subcategory $\AA\subseteq \EE$ is called \emph{preresolving} if the following two conditions are met:
	\begin{enumerate}[label=\textbf{PR\arabic*},start=1]
		\item\label{PR1} 	For every $E\in \EE$, there exists a deflation $A\deflation E$ with $A\in \EE$.
		\item\label{PR2}	The subcategory $\AA\subseteq \EE$ is \emph{deflation-closed}, i.e,.~for every conflation $X\inflation Y\deflation Z$ in $\EE$ with $Y,Z\in \AA$, we have $X\in \AA$ as well.
	\end{enumerate}
	If $\AA\subseteq \EE$ satisfies \ref{PR1}, we define the $\AA$-resolution dimension of an object $E\in \EE$, denoted by $\resdim_{\AA}(E)$, as the smallest integer $n\geq 0$ for which there exists an exact sequence 
	\[0\to A^{-n} \to A^{-n+1}\to \dots \to A^{-1} \to A^{0}\deflation E\to 0\]
	where all $A^{k}\in \AA$. If such an $n$ does not exist, we write $\resdim_{\AA}(E)=\infty$.\\
	Furthermore, we set $\resdim_{\AA}(\EE)=\sup_{E\in \EE}\resdim_{\AA}(E)$.
	
	A preresolving subcategory $\AA\subseteq \EE$ is called \emph{finitely preresolving} if for all $E\in \EE$, $\resdim_{\AA}(E)<\infty$ and is called \emph{uniformly preresolving} if $\resdim_{\AA}(\EE)<\infty$.
\end{definition}

Deflation-closed subcategories of deflation-exact categories inherit a deflation-exact structure.

\begin{proposition}[\protect{\cite[Proposition~3.6]{HenrardvanRoosmalen20Preresolving}}]\label{Proposition:DeflationClosed}
	Let $\EE$ be a deflation-exact category and let $\AA\subseteq \EE$ be a full additive subcategory. If $\AA\subseteq \EE$ is deflation-closed, then $\AA$ inherits a deflation-exact structure from $\EE$ (the conflations are precisely the conflations in $\EE$ with terms in $\AA$). Furthermore, if $\EE$ satisfies axioms \ref{R3} or \ref{R3+}, then so does $\AA$.
\end{proposition}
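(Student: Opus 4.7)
The plan is to declare the conflations in $\AA$ to be precisely the conflations in $\EE$ whose three terms all lie in $\AA$. Since $\AA$ is a full subcategory of $\EE$, any such conflation is automatically a kernel-cokernel pair in $\AA$ (the kernel/cokernel universal property, restricted to test objects in $\AA$, is inherited from $\EE$), and the class is closed under isomorphism. The task is therefore to verify axioms \ref{R0}, \ref{R1}, \ref{R2} for this class, and then, under the extra hypothesis, to transfer \ref{R3} and \ref{R3+} from $\EE$ to $\AA$.

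The first three axioms I expect to be direct applications of \ref{PR2}. Axiom \ref{R0} is immediate since $0 \in \AA$. For \ref{R1}, the composition of two deflations $Y \deflation Z \deflation W$ in $\AA$ is a deflation in $\EE$, and \ref{PR2} applied to the resulting conflation in $\EE$ with middle term $Y \in \AA$ and right term $W \in \AA$ places its kernel in $\AA$. For \ref{R2}, given $g\colon Y \deflation Z$ in $\AA$ and $f\colon X \to Z$ in $\AA$, I would form the pullback in $\EE$ through the standard conflation $P \inflation Y \oplus X \deflation Z$ in $\EE$ (the right-hand map being $(g,-f)$, which is a deflation since it factors as a deflation of the form $g \oplus 1_X$ followed by a split deflation). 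Since $\AA$ is closed under direct sums, $Y \oplus X \in \AA$, so \ref{PR2} yields $P \in \AA$; the pullback in $\EE$ is then also a pullback in $\AA$, and a second application of \ref{PR2} shows that the induced deflation $P \deflation X$ has its kernel in $\AA$.

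The axiom \ref{R3+} transfers easily: if $p \circ i$ is a deflation in $\AA$, hence in $\EE$, then \ref{R3+} in $\EE$ makes $p$ a deflation in $\EE$, and \ref{PR2} places its kernel in $\AA$. The main obstacle is \ref{R3}, because its hypothesis requires $p$ to have a kernel, and \emph{a priori} a kernel of $p$ in $\AA$ need not agree with a kernel of $p$ in $\EE$ (fewer test objects in $\AA$ yield a weaker universal property, so one cannot directly invoke \ref{R3} of $\EE$). To bypass this delicate comparison, I plan to use \Cref{Theorem:ImportanceR3andR3+}: \ref{R3} is equivalent to the validity of the Nine Lemma. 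A $3 \times 3$ diagram of conflations in $\AA$ is \emph{a fortiori} such a diagram in $\EE$; the conflation produced by the Nine Lemma in $\EE$ has its three terms among the nine objects of the diagram, hence lies in $\AA$, and so is a conflation in $\AA$. This transfers the Nine Lemma from $\EE$ to $\AA$, and \Cref{Theorem:ImportanceR3andR3+}, now applied to the deflation-exact category $\AA$, yields \ref{R3} for $\AA$.
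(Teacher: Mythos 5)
This proposition is not proved in the paper: it is imported by citation from \cite[Proposition~3.6]{HenrardvanRoosmalen20Preresolving}, so I assess your argument on its own merits rather than against an in-paper proof. Your verifications of \ref{R0}, \ref{R1}, \ref{R2} and \ref{R3+} are correct and are the expected ones: in each case you exhibit a conflation of $\EE$ whose second and third terms lie in $\AA$ and invoke \ref{PR2}; realizing the pullback as the kernel of the deflation $(g,-f)\colon Y\oplus X\to Z$ is the standard device, and your factorization of $(g,-f)$ through $g\oplus 1_X$ (a pullback of $g$ along a projection, hence a deflation by \ref{R2} of $\EE$) followed by a split deflation is fine, since split kernel-cokernel pairs are conflations.

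The substantive step is \ref{R3}, and you have correctly isolated the genuine subtlety: the hypothesis of \ref{R3} in $\AA$ only provides a kernel in $\AA$, which a priori is not a kernel in $\EE$, so \ref{R3} of $\EE$ cannot be invoked directly. Your detour through \Cref{Theorem:ImportanceR3andR3+} does work, but it silently uses one structural feature of the Nine Lemma that you should make explicit: in the form proved in \cite{HenrardvanRoosmalen20Obscure} (the deflation-exact analogue of B\"uhler's $3\times 3$ lemma), every hypothesis asserts that specified rows and columns of a \emph{given} commutative $3\times 3$ diagram are conflations (together with a vanishing-composition condition), and the conclusion asserts that the remaining row of the same diagram is a conflation. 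Only for a statement of this shape does your transfer run: the hypotheses pass from $\AA$ to $\EE$ because conflations of the induced structure are, by definition, conflations of $\EE$, and the conclusion passes back because a conflation of $\EE$ with all terms in $\AA$ is a conflation of $\AA$. Had the Nine Lemma instead carried a hypothesis such as ``the remaining row is a kernel-cokernel pair'', the very ambient-versus-subcategory discrepancy you flagged for kernels would resurface and the argument would collapse. With that reading made explicit, your proof is complete, and it is arguably cleaner than the direct route, which requires showing by hand that under the hypotheses of \ref{R3} the $\AA$-kernel $k\colon K\to B$ of $p\colon B\to C$ is already a kernel in $\EE$ (this can be done, e.g.\ by identifying the pullback $B\times_C A$ with $K\oplus A$ and exploiting the resulting deflation $(k,i)\colon K\oplus A\to B$, but it takes considerably more work).
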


The following theorem is an extension of \cite[Lemma~I.4.6]{Hartshorne66}.

\begin{theorem}[\protect{\cite[Theorem~1.1]{HenrardvanRoosmalen20Preresolving}}]\label{Theorem:PreResolvingDerivedEquivalence}
	Let $\EE$ be a deflation-exact category and let $\AA\subseteq \EE$ be a full additive subcategory.
	\begin{enumerate}
		\item If $\AA$ is preresolving, the embedding $\AA\to \EE$ lifts to a triangle equivalence $\Dm(\AA)\xrightarrow{\simeq} \Dm(\EE)$.
		\item If $\AA$ is finitely preresolving, the embedding $\AA\to \EE$ lifts to a triangle equivalence $\Db(\AA)\xrightarrow{\simeq} \Db(\EE)$.
		\item If $\AA$ is uniformly preresolving, the embedding $\AA\to \EE$ lifts to a triangle equivalence $\D(\AA)\xrightarrow{\simeq} \D(\EE)$.
	\end{enumerate}
\end{theorem}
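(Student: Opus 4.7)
The plan is to follow the classical pattern of derived-equivalence proofs for resolving subcategories, adapted to the deflation-exact setting as in \cite{Hartshorne66} and \cite{Keller96}, where the essential input is the construction of $\AA$-resolutions for arbitrary bounded-above complexes.

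Setup: first invoke \Cref{Proposition:DeflationClosed} to conclude that $\AA$ inherits a deflation-exact structure whose conflations are those of $\EE$ with all terms in $\AA$. Hence the categories $\Dstar(\AA)$ are defined and the exact inclusion $\AA \hookrightarrow \EE$ induces triangulated functors $F_{\ast}\colon \Dstar(\AA) \to \Dstar(\EE)$ for $\ast \in \{-,b,\varnothing\}$; I now need to prove these are equivalences in each of the three cases.

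The key construction is a Cartan--Eilenberg-style $\AA$-resolution. For a single object $E \in \EE$, I would combine \ref{PR1} with \ref{R2} iteratively: take a deflation $A^0 \deflation E$ with $A^0 \in \AA$, form the kernel $K^0 \in \EE$ (using \ref{R2} with the zero map), then pick a deflation $A^{-1} \deflation K^0$ with $A^{-1} \in \AA$, etc. This yields the standard resolutions whose existence is already encoded in the definition of $\resdim_{\AA}$. To extend this from objects to complexes, I would run the construction degree by degree on a bounded-above complex $X^{\bullet}\in \Cm(\EE)$, using a version of the Horseshoe lemma \Cref{theorem:Horseshoe} applied to each conflation $Z^{i} \inflation X^{i} \deflation B^{i+1}$ extracted from $X^{\bullet}$, producing a double complex $A^{\bullet,\bullet}$ whose rows are $\AA$-resolutions and whose total complex $\mathrm{Tot}(A^{\bullet,\bullet})$ maps quasi-isomorphically to $X^{\bullet}$. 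For (2), bound the resolution length in each column by the finite dimension; for (3) by the uniform bound; then the total complex lies in $\Db$ respectively $\Cb$.

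Once this is in place, essential surjectivity of $F_{\ast}$ is immediate from the quasi-isomorphism $\mathrm{Tot}(A^{\bullet,\bullet}) \to X^{\bullet}$. For fully faithfulness I would use the standard calculus-of-fractions argument: a morphism $A^{\bullet} \to B^{\bullet}$ in $\Dstar(\EE)$ with $A^{\bullet},B^{\bullet} \in \Dstar(\AA)$ is represented by a roof $A^{\bullet} \xleftarrow{s} Y^{\bullet} \to B^{\bullet}$ with $s$ a quasi-isomorphism and $Y^{\bullet} \in \Cstar(\EE)$; applying the $\AA$-resolution construction to $Y^{\bullet}$ replaces this roof by an equivalent one inside $\Dstar(\AA)$, which both surjects roofs and kills zero roofs, giving bijectivity on $\Hom$. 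The same argument shows a morphism becoming zero in $\Dstar(\EE)$ was already zero in $\Dstar(\AA)$.

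The main obstacle is the Horseshoe step: \Cref{theorem:Horseshoe} is stated for projective resolutions, where the splitting of $P_{X}^{i} \inflation P_{Y}^{i} \deflation P_{Z}^{i}$ is automatic. For general $\AA$-resolutions there is no splitting available, so lifting the middle resolution of a conflation $X \inflation Y \deflation Z$ requires a more delicate argument that builds $A_{Y}^{0}$ as a cover of the pullback/extension rather than as a direct sum. This is where \ref{PR1} and \ref{PR2} must be used together carefully: \ref{PR1} gives a cover of $Y$ (or of an appropriate extension); \ref{PR2} ensures that the iterated kernels remain in $\AA$ to the degree permitted by $\resdim_{\AA}$, so that the induction closes within the required boundedness class. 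Handling this without the splitting property, and checking that the resulting totalization is genuinely acyclic (so that \Cref{Proposition:BasicPropertiesDerivedCategoryNew} applies and it becomes an isomorphism in the derived category), is the technical heart of the proof.
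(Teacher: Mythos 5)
First, a point of comparison: this theorem is not proved in the paper at all; it is imported from \cite[Theorem~1.1]{HenrardvanRoosmalen20Preresolving}, the paper only indicating that it is ``an extension of \cite[Lemma~I.4.6]{Hartshorne66}''. So your proposal can only be measured against that cited proof. Its outer skeleton you have reproduced correctly: give $\AA$ the inherited structure via \Cref{Proposition:DeflationClosed}, show that every complex in $\Cm(\EE)$ receives a quasi-isomorphism from a complex in $\Cm(\AA)$, and conclude by a Hartshorne-type localization (calculus-of-fractions) argument; your handling of full faithfulness by re-rooting roofs inside $\Dm(\AA)$ is exactly this and is fine.

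The genuine gap is in your central construction. A Cartan--Eilenberg double complex requires extracting from $X^{\bullet}$ the conflations $Z^{i}\inflation X^{i}\deflation B^{i+1}$, i.e.\ it requires every differential $d_X^{i}$ to be an admissible morphism with cycle and boundary objects. In a general deflation-exact category this is unavailable: $\EE$ is not assumed to have kernels, and even when it does, an arbitrary differential need not factor as a deflation followed by an inflation --- such factorizations in all degrees are precisely the \emph{definition} of an acyclic complex (\Cref{definition:AcylicComplex}), not a property of arbitrary ones. So the double complex cannot even be set up, and this cannot be repaired by the fix you sketch for the Horseshoe step. That secondary obstacle is also real: \Cref{theorem:Horseshoe} genuinely needs projectivity (to lift $A^{0}_{Z}\deflation Z$ through $Y\deflation Z$), and your suggested replacement is only gestured at. The proof in the cited source avoids cycles and boundaries entirely: one constructs $A^{\bullet}\to X^{\bullet}$ by descending induction so that the comparison map is a \emph{termwise deflation}; at each stage the pullback of the already-built deflation $A^{-n+1}\deflation X^{-n+1}$ along $d_X^{-n}$ exists by axiom \ref{R2} (this is why one maintains the termwise-deflation invariant), and \ref{PR1} provides a cover of that pullback in $\AA$, arranged so that the new differential squares to zero. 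Axiom \ref{PR2} then enters not where you place it (boundedness bookkeeping) but to guarantee that a complex over $\AA$ which is acyclic in $\EE$ is already acyclic in $\AA$ --- its syzygies lie in $\AA$ by deflation-closedness --- so that $\Ac(\AA)=\Ac(\EE)\cap \Km(\AA)$ and the two localizations being compared invert matching classes of morphisms; without this, faithfulness fails. The finite, respectively uniform, resolution dimension then supplies the boundedness assertions in parts (2) and (3).
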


\subsection{The exact hull}\label{subsection:ExactHull}

The following is based on \cite[Section~7]{HenrardvanRoosmalen19b}; the exact hull of a one-sided exact category also appeared in \cite[Proposition~I.7.5]{Rosenberg11}.

\begin{definition}
	Let $\EE$ be a deflation-exact category. The \emph{exact hull} $\EE^{\mathsf{ex}}$ of $\EE$ is the extension closure of $i(\EE)\subseteq \Db(\EE)$.
\end{definition}

The conflation structure on $\ex{\EE}$ is given as follows (based on \cite{Dyer05}): a sequence $A \xrightarrow{f} B \xrightarrow{g} C$ in $\ex{\EE}$ is a conflation if and only if there is a triangle $A \xrightarrow{f} B \xrightarrow{g} C \to \Sigma(A)$ in $\Db(\EE)$.  With this conflation structure, the canonical embedding $j\colon \EE \rightarrow \EE^{\mathsf{ex}}$ is conflation-exact.

\begin{theorem}[\protect{\cite[Section~7]{HenrardvanRoosmalen19b}}]\label{Theorem:ExactHull}
	Let $\EE$ be a deflation-exact category.
	\begin{enumerate}
		\item The embedding $j\colon \EE\hookrightarrow \EE^{\mathsf{ex}}$ is fully faithful, and is $2$-universal among conflation-exact functors to exact categories.
		\item The embedding $j$ lifts to a triangle equivalence $\Db(\EE)\stackrel{\simeq}{\rightarrow}\Db(\EE^{\mathsf{ex}})$.
		\item For every $Z\in \EE^{\mathsf{ex}}$, there is a conflation $X\inflation Y\deflation Z$ in $\EE^{\mathsf{ex}}$ with $X,Y\in i(\EE)$.
	\end{enumerate}
	Furthermore, if $\EE$ satisfies axiom \ref{R3}, then the embedding $j$ reflects conflations.
\end{theorem}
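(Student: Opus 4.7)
The plan is to exploit two facts already at hand: the canonical functor $i\colon\EE\to\Db(\EE)$ is fully faithful by \Cref{Theorem:BasicPropertiesDerivedCategory}(1), and $\ex{\EE}$ is by construction an extension closure inside $\Db(\EE)$. Since $j$ is just the corestriction of $i$ to $\ex{\EE}$, the full faithfulness claim in (1) is immediate. That $j$ is conflation-exact follows from \Cref{Theorem:BasicPropertiesDerivedCategory}(3), which sends each conflation in $\EE$ to a triangle in $\Db(\EE)$, i.e.\ to a conflation of $\ex{\EE}$ in the sense defined before the theorem. For the 2-universal property, given a conflation-exact functor $F\colon\EE\to\AA$ to an exact category $\AA$, I would invoke the universal property of $\Db(\EE)$ for conflation-exact functors to obtain a triangulated functor $\Db(F)\colon\Db(\EE)\to\Db(\AA)$; since $\AA\subseteq\Db(\AA)$ is extension-closed and contains $\Db(F)(i(\EE))$, the restriction of $\Db(F)$ to $\ex{\EE}$ factors through $\AA$, yielding the unique (up to natural equivalence) exact extension $\widetilde{F}\colon\ex{\EE}\to\AA$.

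For (2), I would show that $i(\EE)$ is a finitely preresolving subcategory of $\ex{\EE}$ and then apply \Cref{Theorem:PreResolvingDerivedEquivalence}. Axiom \ref{PR1} will follow from (3) (every object of $\ex{\EE}$ receives a deflation from $i(\EE)$), and deflation-closedness (\ref{PR2}) will follow from the observation that a conflation in $\ex{\EE}$ whose two outer terms lie in $i(\EE)\subseteq\Db(\EE)$ is a triangle of $\Db(\EE)$ with those two vertices in the standard heart, forcing its third vertex into $i(\EE)$ as well. The preresolving dimension is finite because $\ex{\EE}$ is built from finitely many iterated extensions, so \Cref{Theorem:PreResolvingDerivedEquivalence}(2) yields $\Db(i(\EE))\simeq\Db(\ex{\EE})$, which composed with $\Db(\EE)\simeq\Db(i(\EE))$ gives the asserted equivalence.

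For (3), which I expect to be the main technical obstacle, I would argue by induction on the number of extension steps required to produce $Z\in\ex{\EE}$ from $i(\EE)$. The base case $Z\in i(\EE)$ uses the split conflation $0\inflation Z\deflation Z$ (available by \ref{R0}). For the inductive step, if $Z$ fits in a triangle $A\to B\to Z\to\Sigma A$ with $A,B$ of smaller complexity, the induction hypothesis provides conflations $X_A\inflation Y_A\deflation A$ and $X_B\inflation Y_B\deflation B$ in $\ex{\EE}$ with $X_A,Y_A,X_B,Y_B\in i(\EE)$. A horseshoe-type construction (analogous to \Cref{theorem:Horseshoe}, but performed inside the triangulated category $\Db(\EE)$ via the octahedral axiom) then yields a conflation $X\inflation Y\deflation Z$ in $\ex{\EE}$ with $X\in\{X_A\oplus X_B\text{-like objects}\}$ and $Y\in\{Y_A\oplus Y_B\text{-like objects}\}$, both of which remain in $i(\EE)$ because $\EE$ is closed under direct sums in $\Db(\EE)$. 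The delicate point is ensuring that the middle term stays in $i(\EE)$ rather than drifting into $\Db(\EE)$, which is where the choice of $Y_A,Y_B$ as objects in $i(\EE)$ becomes essential.

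Finally, for the last assertion, assume $\EE$ satisfies \ref{R3}, and let $A\to B\to C$ in $\EE$ have conflation image under $j$. By definition of the conflation structure on $\ex{\EE}$, this means $j(A)\to j(B)\to j(C)\to \Sigma j(A)$ is a triangle in $\Db(\EE)$, which via \Cref{Proposition:BasicPropertiesDerivedCategoryNew}(1) is precisely the characterization of conflations in $\EE$ under \ref{R3}. Hence $j$ reflects conflations, completing the proof plan.
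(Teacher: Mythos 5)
First, a caveat: the paper does not prove this statement at all --- it is imported verbatim from \cite[Section~7]{HenrardvanRoosmalen19b} --- so your proposal can only be measured against what the construction itself requires. Measured that way, it has two genuine gaps. For part (2), your plan is to verify that $i(\EE)$ is finitely preresolving in $\ex{\EE}$ and invoke \Cref{Theorem:PreResolvingDerivedEquivalence}; the problem is axiom \ref{PR2}. Your justification --- that a conflation in $\ex{\EE}$ with its two outer terms in $i(\EE)$ is a triangle ``with those two vertices in the standard heart'', forcing the third vertex into $i(\EE)$ --- presupposes a $\operatorname{t}$-structure on $\Db(\EE)$ with heart $i(\EE)$, which does not exist in this generality: the left $\operatorname{t}$-structure of Section~3 requires $\EE$ to have kernels and satisfy \ref{R3}, and even then its heart is $\mathcal{LH}(\EE)$, not $i(\EE)$. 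Deflation-closedness of $\EE$ in $\ex{\EE}$ is exactly where the obscure axiom enters: the paper asserts the preresolving property only under \ref{R3+} (\Cref{Theorem:DivisiveDeflationExactIsFinitelyPreResolvingInExactHull}), whereas the theorem at hand assumes only that $\EE$ is deflation-exact. Without \ref{R3}, a sequence in $\EE$ can become a conflation in $\ex{\EE}$ without being one in $\EE$ (this is the content of \Cref{Proposition:BasicPropertiesDerivedCategoryNew}(1) and the reason the final claim of the theorem needs \ref{R3}), so neither \ref{PR2} nor your concluding identification $\Db(\EE)\simeq\Db(i(\EE))$ --- where $i(\EE)$ carries the conflation structure inherited from $\ex{\EE}$ --- is available under the stated hypotheses.

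Second, part (3). Your induction is set up on the wrong decomposition: by \Cref{notation:ExactHull}, an object $Z\in\EE_n$ is the \emph{middle} term of a triangle $A\to Z\to C\to\Sigma A$ with $A\in\EE_{n-1}$ and $C\in\EE_0$, not the cone of a map between simpler objects. More importantly, already the case $n=1$ contains the entire difficulty, and your sketch does not touch it: one must show that an arbitrary extension, i.e.\ a triangle $i(A)\to Z\to i(C)\xrightarrow{c}\Sigma i(A)$, can be rewritten as a cone, i.e.\ a triangle $i(X)\to i(Y)\to Z\to\Sigma i(X)$. Splicing the inductive resolution $i(X_A)\to i(Y_A)\to A\to\Sigma i(X_A)$ into the defining triangle of $Z$ (your ``horseshoe-type construction'') requires lifting the connecting map $c\colon i(C)\to\Sigma A$ along $\Sigma i(Y_A)\to\Sigma A$, and the obstruction to doing so lives in $\Hom_{\Db(\EE)}(i(C),\Sigma^2 i(X_A))$, an $\Ext^2$-type group which does not vanish in general; the vanishing in \Cref{Theorem:BasicPropertiesDerivedCategory}(2) concerns only negative degrees. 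This is precisely where \cite{HenrardvanRoosmalen19b} has to do real work, representing morphisms $i(C)\to\Sigma i(A)$ in $\Db(\EE)$ by explicit roofs and two-term complexes built from the deflation-exact structure, rather than by a formal octahedron argument. By contrast, your treatment of full faithfulness and conflation-exactness of $j$ in (1), and of the final assertion (reflection of conflations under \ref{R3} via \Cref{Proposition:BasicPropertiesDerivedCategoryNew}(1)), is correct, though the $2$-universality sketch still needs both the extension-closedness of an exact category in its own derived category and an argument handling natural transformations, not just the lifting of objects.
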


When working with the exact hull, it is often useful to describe objects of the exact hull $\ex{\EE}$ as iterated extensions of objects in $\EE.$  For this, the following notation will be useful.

\begin{notation}\label{notation:ExactHull}
For a deflation-exact category $\EE$, we write $\EE_0$ for the full subcategory of $\Db(\EE)$ consisting of stalk complexes concentrated in degree 0.  The subcategories $\EE_n$ are recursively defined as all objects $B$ which fit into a triangle $A\to B\to C\to \Sigma A$ with $A\in \EE_{n-1}$ and $C\in \EE_0$.
\end{notation}

With this notation, we have $\EE^{\mathsf{ex}}=\bigcup_{n\geq 0}\EE_n$; this uses \cite[Lemme~1.3.10]{BeilinsonBernsteinDeligne82}.

\begin{lemma}\label{Lemma:MonosAndEpisInExactHull}
	Let $\EE$ be a deflation-exact category. Let $f\colon X\to Y$ be a morphism in $\EE$. Then $f$ is a monomorphism (resp. epimorphism) if and only if $j(f)$ is a monomorphism (resp. epimorphism).
\end{lemma}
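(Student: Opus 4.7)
The direction ``$j(f)$ mono (resp.\ epi) $\Rightarrow$ $f$ mono (resp.\ epi)'' is immediate from the full faithfulness of $j\colon \EE \to \ex{\EE}$ established in \Cref{Theorem:ExactHull}(1): if $f \circ g = f \circ g'$ in $\EE$, then $j(f) \circ j(g) = j(f) \circ j(g')$, so $j(g) = j(g')$ and hence $g = g'$; dually for epimorphisms. So the content of the lemma lies in the other direction, which I will handle for monomorphisms, the argument for epimorphisms being formally dual.

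The plan is to induct on the smallest $n \geq 0$ such that the source of a test morphism lies in $\EE_n$ (see \Cref{notation:ExactHull}), using that $\ex{\EE} = \bigcup_{n \geq 0} \EE_n$. Suppose $f\colon X \to Y$ is a monomorphism in $\EE$ and let $g\colon Z \to X$ be a morphism in $\ex{\EE}$ with $j(f) \circ g = 0$. If $n = 0$, then $Z \cong j(Z')$ for some $Z' \in \EE$, and by full faithfulness $g = j(g')$ for a unique $g'\colon Z' \to X$ with $f \circ g' = 0$; since $f$ is a monomorphism in $\EE$, $g' = 0$ and therefore $g = 0$.

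For the inductive step, by definition of $\EE_n$ choose a triangle $A \to Z \to C \to \Sigma A$ in $\Db(\EE)$ with $A \in \EE_{n-1}$ and $C \in \EE_0$; this corresponds to a conflation $A \inflation Z \deflation C$ in $\ex{\EE}$. The composite $A \to Z \xrightarrow{g} X$, post-composed with $f$, vanishes, so by the induction hypothesis $A \to X$ is zero. Applying the cohomological functor $\Hom_{\ex{\EE}}(-,X)$ to the triangle shows that $g$ factors as $g = h \circ (Z \deflation C)$ for some $h\colon C \to X$. Then $f \circ h \circ (Z \deflation C) = 0$, and since $Z \deflation C$ is a deflation in the exact category $\ex{\EE}$ it is epic, so $f \circ h = 0$. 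The base case applied to $C \in \EE_0$ now gives $h = 0$, whence $g = 0$.

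The dual argument treats epimorphisms: for $f\colon X \deflation Y$ epic in $\EE$ and $g\colon Y \to Z$ in $\ex{\EE}$ with $g \circ j(f) = 0$, one inducts on $n$ with $Z \in \EE_n$, using a conflation $A \inflation Z \deflation C$ with $A \in \EE_{n-1}$, $C \in \EE_0$. The base case gives that $(Z \deflation C) \circ g = 0$, so $g$ factors through the kernel $A \inflation Z$ as $g = (A \inflation Z) \circ g''$; precomposing with $f$ and using that $A \inflation Z$ is monic yields $g'' \circ f = 0$, and induction finishes the argument. The only nontrivial ingredient beyond full faithfulness is the interplay between the filtration $\EE_n$ and the exact structure on $\ex{\EE}$ inherited from $\Db(\EE)$; once one recognises that inflations and deflations of $\ex{\EE}$ are in particular monic and epic, the induction runs smoothly.
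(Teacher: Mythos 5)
Your proof is correct and follows essentially the same route as the paper's: reflection of monomorphisms and epimorphisms via full faithfulness, and preservation by induction along the filtration $\EE_n$, factoring the test morphism through the cokernel (resp.\ kernel) of a conflation $A \inflation Z \deflation C$ with $A \in \EE_{n-1}$ and $C \in \EE_0$. The only cosmetic difference is that you justify the factorization by applying the cohomological functor $\Hom_{\Db(\EE)}(-,X)$ to the triangle, whereas the paper uses directly that the conflation is a kernel-cokernel pair in $\ex{\EE}$; both are valid.
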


\begin{proof}
As $j$ is fully faithful, it is clear that $j$ reflects epimorphisms and monomorphisms.  We first show that $j$ preserves monomorphisms.  For this, consider a monomorphism $f\colon X \to Y$ in $\EE$. Let $t\colon T\to X$ be a map in $\EE^{\mathsf{ex}}$ such that $f\circ t=0$ in $\EE^{\mathsf{ex}}$. As $T\in \EE^{\mathsf{ex}}$, there exists an $n$ such that $T\in \EE_n$. We proceed by induction on $n$. If $n=0$, then $t=0$ as $f$ is a monomorphism in $\EE_0$. Assume now that $n\geq 1$. By construction, there is a conflation $A \stackrel{i}{\inflation} T\stackrel{p}{\deflation} B$ in $\EE^{\mathsf{ex}}$ with $A\in \EE_{n-1}$ and $B\in \EE_0$. By the induction hypothesis, we have $t\circ i=0$. It follows that there is a unique map $u\colon B\to X$ such that $u\circ p=t$. Note that $f\circ t=f\circ u\circ p=0$ and thus $f\circ u=0$ as $p$ is a deflation (and hence an epimorphism). The induction hypothesis implies that $u=0$ and thus $t=u\circ p=0$. This shows that $j(f)$ is a monomorphism.

To show that $j$ preserves epimorphisms, consider an epimorphism $f\colon X \to Y$.  Let $t\colon Y\to T$ be a map in $\EE^{\mathsf{ex}}$ such that $t\circ f=0$ in $\EE^{\mathsf{ex}}$.  As $T \in \ex{\EE}$, there exists an $n$ such that $T\in \EE_n$.  Consider a conflation $A \stackrel{i}{\inflation} T\stackrel{p}{\deflation} B$ with $A\in \EE_{n-1}$ and $B\in \EE_0$.  Using that $f$ is an epimorphism in $\EE \simeq \EE_0$, we obtain from $p \circ t \circ f = 0$ that $p \circ t = 0$ and hence $t\colon Y \to T$ factors through $i\colon A \to Y.$  Using an induction argument as before, one can show that $t=0$.  This shows that $j(f)$ is an epimorphism in $\ex{\EE}.$
\end{proof}

If $\EE$ satisfies axiom \ref{R3+}, the embedding $j\colon \EE\to \EE^{\mathsf{ex}}$ satisfies additional properties.

\begin{theorem}[\protect{\cite[Theorem~5.7]{HenrardvanRoosmalen20Preresolving}}]\label{Theorem:DivisiveDeflationExactIsFinitelyPreResolvingInExactHull}
	Let $\EE$ be a deflation-exact category. If $\EE$ satisfies axiom \ref{R3+}, $\EE\subseteq \EE^{\mathsf{ex}}$ is a uniformly preresolving subcategory such that $\resdim_{\EE}(\EE^{\mathsf{ex}})\leq 1$. In particular, the derived equivalences of \Cref{Theorem:PreResolvingDerivedEquivalence} hold.
\end{theorem}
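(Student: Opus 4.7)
Since the derived equivalences of \Cref{Theorem:PreResolvingDerivedEquivalence} follow formally from the preresolving property, the plan is to establish \ref{PR1}, \ref{PR2}, and the dimension bound. Both \ref{PR1} and the bound come for free from \Cref{Theorem:ExactHull}(3): for every $Z \in \ex{\EE}$ there is a conflation $X \inflation Y \deflation Z$ in $\ex{\EE}$ with $X, Y \in \EE$, so the deflation $Y \deflation Z$ realizes \ref{PR1} and the short sequence $0 \to X \to Y \to Z \to 0$ bounds $\resdim_{\EE}(Z)$ by $1$.

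The substantive content is \ref{PR2}: given a conflation $X \inflation Y \deflation Z$ in $\ex{\EE}$ with $Y, Z \in \EE$, conclude $X \in \EE$. My plan uses that \ref{R3+} implies \ref{R3} (\Cref{Remark:BasicDefinitions}(5)), so by the last statement of \Cref{Theorem:ExactHull} the embedding $j$ reflects conflations. I would apply \Cref{Theorem:ExactHull}(3) to $Z$ to obtain a conflation $A \inflation B \deflation Z$ in $\ex{\EE}$ with $A, B \in \EE$; conflation reflection places it in $\EE$, giving in particular a deflation $p\colon B \deflation Z$ in $\EE$. Using \ref{R2}, I form the pullback $P := Y \times_Z B$ in $\EE$ to obtain a conflation $A \inflation P \deflation Y$ in $\EE$ which agrees with the same pullback computed in $\ex{\EE}$ (since $j$ is exact and preserves pullbacks of deflations). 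In the exact category $\ex{\EE}$, pullback-stability of deflations then yields a conflation $X \inflation P \deflation B$ with $h\colon P \to B$ the pullback projection and $\ker(h) \cong X$.

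The main obstacle will be descending this last conflation from $\ex{\EE}$ back into $\EE$, equivalently showing that $h\colon P \to B$ is a deflation in $\EE$ (its $\EE$-kernel will then necessarily coincide with $X$). The key data are the commutativity $p \circ h = g \circ q$, with $q\colon P \deflation Y$ a deflation in $\EE$, and the morphism of conflations $(A \inflation P \deflation Y) \to (A \inflation B \deflation Z)$ with vertical maps $(\mathrm{id}_A, h, g)$. My plan is to apply the Snake Lemma (available under \ref{R3+} by \Cref{Theorem:ImportanceR3andR3+}(2)) to this morphism, combined with an iterated use of \ref{R3+} (extracting deflations from composed morphisms known to be deflations) to identify $X \oplus A$ as the kernel in $\EE$ of the deflation $g \circ q = p \circ h\colon P \deflation Z$; the weak idempotent completeness of $\EE$ guaranteed by \ref{R3+} (\Cref{Remark:BasicDefinitions}(6)) will then split $X$ off from $X \oplus A$ inside $\EE$ by taking the kernel of the split retraction $X \oplus A \to A$. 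This descent from $\ex{\EE}$ to $\EE$ is the technical heart of the proof and is precisely where axiom \ref{R3+} is indispensable: without it, one recovers only the $\ex{\EE}$-level conflation, and $X$ need not land in $\EE$.
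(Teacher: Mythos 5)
Your handling of \ref{PR1}, of the bound $\resdim_{\EE}(\ex{\EE})\leq 1$, and of the passage to the derived equivalences via \Cref{Theorem:PreResolvingDerivedEquivalence} is correct. (Note that the paper itself offers no proof of this theorem; it imports it from the preresolving paper, so there is no in-text argument to compare against, and your proposal must stand on its own.) The problem lies in your treatment of \ref{PR2}, which is not merely incomplete but circular. Since $\ker_{\ex{\EE}}(h)\cong\ker_{\ex{\EE}}(g)=X$ and, under \ref{R3} (which follows from \ref{R3+} by \Cref{Remark:BasicDefinitions}), the embedding $j$ reflects conflations by \Cref{Theorem:ExactHull}, the statements ``$g$ is a deflation in $\EE$'', ``$h$ is a deflation in $\EE$'', and ``$X\in\EE$'' are all equivalent; likewise ``$g\circ q$ is a deflation in $\EE$'' is equivalent to ``$X\oplus A\in\EE$'', hence, by weak idempotent completeness, to ``$X\in\EE$''. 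Your pullback construction therefore relabels the problem without reducing it: taking the trivial instance $B=Z$, $p=\mathrm{id}_Z$ (which your use of \Cref{Theorem:ExactHull}(3) on $Z\in\EE$ does not improve upon), one gets $P=Y$, $q=\mathrm{id}$, $h=g$, and ``show $h$ is an $\EE$-deflation'' is verbatim the claim to be proved.

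The tools you invoke cannot break this circle. Axiom \ref{R3+} only extracts the right-hand factor from a composite \emph{already known} to be an $\EE$-deflation, and every $\EE$-deflation in sight ($q$, $p$, and the deflation $Y\oplus B\deflation Z$ coming from the pullback conflation $P\inflation Y\oplus B\deflation Z$) has its kernel already in $\EE$; no composite having $g\circ q$, $h$, or $g$ as right-hand factor is available, and none can be manufactured from these. The Snake Lemma of \Cref{Theorem:ImportanceR3andR3+} is equally unusable here: it takes as input a morphism of conflations in $\EE$ whose vertical maps admit kernels and cokernels in $\EE$ (the theorem under proof does not assume $\EE$ has kernels), and admissibility of $g$ or $h$ in $\EE$ is precisely what is at stake. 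What is genuinely missing is a mechanism for descending exactness from $\ex{\EE}$ to $\EE$, and this requires resolving the \emph{unknown} object $X$, which your argument never does. One way to finish: apply \Cref{Theorem:ExactHull}(3) to $X$ itself to obtain a conflation $U\inflation V\deflation X$ in $\ex{\EE}$ with $U,V\in\EE$; the composite $V\deflation X\inflation Y$ is a morphism of $\EE$, and $0\to U\to V\to Y\xrightarrow{g}Z\to 0$ is a complex over $\EE$ that is acyclic in $\ex{\EE}$, hence zero in $\Db(\ex{\EE})\simeq\Db(\EE)$. Since \ref{R3+} makes $\AcbC(\EE)$ a thick subcategory of $\Kb(\EE)$ (\Cref{Proposition:BasicPropertiesDerivedCategoryNew}), being zero in $\Db(\EE)$ forces this complex to be acyclic already in $\EE$; unwinding acyclicity in its last two degrees exhibits $g$ as a deflation in $\EE$ whose kernel lies in $\EE$ and maps under $j$ to $X$, so $X\in\EE$. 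Note where \ref{R3+} really enters: through thickness of the acyclics, not through the factor-extraction and Snake Lemma steps you propose.
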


\subsection{\texorpdfstring{$\operatorname{t}$}{t}-Structures and their hearts}

Let $\TT$ be a triangulated category with suspension functor $\Sigma$. A \emph{$\operatorname{t}$-structure} on $\TT$ is a pair $(\TT^{\leq 0},\TT^{\geq 0})$ of full and replete (i.e.~closed under isomorphisms) subcategories satisfying the following properties:
\begin{enumerate}
	\item $\Hom_{\TT}(\TT^{\leq 0}, \Sigma^{-1}\TT^{\geq 0})=0$.
	\item If $X\in \TT^{\leq 0}$, then $\Sigma X\in \TT^{\leq 0}$. Similarly, if $Y\in \TT^{\geq 0}$, then $\Sigma^{-1}Y\in \TT^{\geq 0}$.
	\item For any $C\in \TT$, there exists a triangle $X \to C\to \Sigma^{-1}Y\to \Sigma X$ with $X\in \TT^{\leq 0}$ and $Y\in \TT^{\geq 0}$.
\end{enumerate}

We write $\TT^{\leq i} \coloneqq \Sigma^{-i} \TT^{\leq 0}$ and $\TT^{\geq i} \coloneqq \Sigma^{-i} \TT^{\geq 0}$.  Given a $\operatorname{t}$-structure $(\TT^{\leq 0},\TT^{\geq 0})$ on $\TT$, the \emph{heart} of $\TT$ is defined as the subcategory $\TT^{\heartsuit}=\TT^{\leq 0}\cap \TT^{\geq 0}$. The following proposition is standard (see \cite{BeilinsonBernsteinDeligne82}).

\begin{proposition}
	Given a $\operatorname{t}$-structure $(\TT^{\leq 0},\TT^{\geq 0})$ on a triangulated category $\TT$. The categories $\TT^{\leq 0}$ and $\TT^{\geq 0}$ are closed under extensions and the heart $\TT^{\heartsuit}$ is an abelian subcategory.  Moreover, a sequence $0 \to X \xrightarrow{f} Y \xrightarrow{g} Z \to 0$ is a short exact sequence in $\TT^{\heartsuit}$ if and only if there is a triangle $X \xrightarrow{f} Y \xrightarrow{g} Z \to \Sigma(X)$ in $\TT.$
\end{proposition}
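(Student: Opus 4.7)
The plan is to follow the classical argument from \cite{BeilinsonBernsteinDeligne82}, using the truncation functors afforded by the t-structure axioms. First I would verify the extension closure: given a triangle $A \to B \to C \to \Sigma A$ with $A,C \in \TT^{\leq 0}$, apply $\Hom_{\TT}(-,Y)$ for arbitrary $Y \in \TT^{\geq 1}$. Since $\Sigma^{-1}Y,Y \in \Sigma^{-1}\TT^{\geq 0}$, axiom (1) makes $\Hom(A,Y)$, $\Hom(C,Y)$, $\Hom(\Sigma A,Y)=\Hom(A,\Sigma^{-1}Y)$ and $\Hom(\Sigma C,Y)$ all vanish, so the long exact sequence forces $\Hom(B,Y)=0$. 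To conclude $B \in \TT^{\leq 0}$, I invoke the truncation triangle $B' \to B \to B'' \to \Sigma B'$ with $B' \in \TT^{\leq 0}$ and $B'' \in \TT^{\geq 1}$ provided by axiom (3): since $\Hom(B,B'')=0$, the map $B \to B''$ vanishes, hence $B'' \to \Sigma B'$ is a split monomorphism. A standard summand argument (using axiom (1) once more to show $\TT^{\leq 0}$ is closed under direct summands) then yields $B'' \cong 0$ and $B \cong B'\in \TT^{\leq 0}$. The closure of $\TT^{\geq 0}$ under extensions is dual.

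Next I would show the heart $\TT^{\heartsuit}$ has kernels and cokernels. Given $f\colon X \to Y$ in $\TT^{\heartsuit}$, complete to a triangle $X \xrightarrow{f} Y \to Z \to \Sigma X$. Using the extension closure just established, one gets $Z \in \TT^{\leq 0} \cap \TT^{\geq -1}$ from the triangles $Y \to Z \to \Sigma X$ and the fact that $Y \in \TT^{\geq 0}, \Sigma X \in \TT^{\geq -1}$ (and $Y \in \TT^{\leq 0}, \Sigma X \in \TT^{\leq -1}$). Applying the truncation triangle to $Z$ produces $\Sigma^{-1}K \to Z \to C \to K$ with $K := \Sigma\tau^{\leq -1}Z \in \TT^{\heartsuit}$ and $C := \tau^{\geq 0}Z \in \TT^{\heartsuit}$. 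Rotating and pasting octahedra with the original triangle, one identifies $K = \ker f$ and $C = \coker f$ in $\TT^{\heartsuit}$ via the universal properties (testing against objects of $\TT^{\heartsuit}$ using the orthogonality axiom to kill the unwanted $\Hom$-groups).

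To conclude that $\TT^{\heartsuit}$ is abelian, I would verify that every morphism whose kernel and cokernel vanish is an isomorphism. If $\ker f = \coker f = 0$ in $\TT^{\heartsuit}$, then $\Sigma\tau^{\leq -1}Z = 0 = \tau^{\geq 0}Z$, so $Z = 0$ in $\TT$, and the triangle $X \xrightarrow{f} Y \to 0 \to \Sigma X$ shows $f$ is an isomorphism. The standard argument via an octahedron then yields the factorization through the image and the coincidence of coimage and image, giving abelianness.

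For the "moreover" statement: if $0 \to X \xrightarrow{f} Y \xrightarrow{g} Z \to 0$ is a short exact sequence in $\TT^{\heartsuit}$, then $Z = \coker f$, which by the previous paragraph is realized by the triangle $X \xrightarrow{f} Y \to Z' \to \Sigma X$ completing $f$, where $Z' \in \TT^{\heartsuit}$ maps isomorphically to $Z = \tau^{\geq 0}Z'$; hence the completing triangle has the desired form. Conversely, if $X \xrightarrow{f} Y \xrightarrow{g} Z \to \Sigma X$ is a triangle with all three vertices in $\TT^{\heartsuit}$, then $g$ factors through $\coker f$; computing $\coker f$ via truncation of the cone shows $\coker f \cong Z$ and $\ker f \cong \Sigma^{-1}\tau^{\leq -1}\cone(f) = 0$, giving the short exact sequence. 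The main obstacle, as usual in this argument, is the careful octahedral bookkeeping to identify truncations of the mapping cone with kernel and cokernel in $\TT^{\heartsuit}$.
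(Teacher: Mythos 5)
The paper gives no proof of this proposition at all: it is stated as standard with a citation to Beilinson--Bernstein--Deligne, and your proposal is precisely the classical argument from that source (orthogonality plus truncation triangles for extension-closure and the summand trick, truncations of the mapping cone for kernels and cokernels, octahedra for the coincidence of image and coimage, and the cone computation for the correspondence between short exact sequences and triangles). So in approach you and the paper agree, and your reconstruction is essentially sound; in particular you correctly note that abelianness needs the image--coimage comparison and not merely that bimorphisms are isomorphisms.

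One slip should be fixed. Under the paper's conventions, $\TT^{\leq i} = \Sigma^{-i}\TT^{\leq 0}$, so $\Sigma$ \emph{lowers} the index: the truncation $\tau^{\leq -1}Z$ of the cone $Z$ lies in $\TT^{\leq -1}\cap\TT^{\geq -1} = \Sigma\TT^{\heartsuit}$, and hence the heart object giving the kernel is $\Sigma^{-1}\tau^{\leq -1}Z$, not $K := \Sigma\tau^{\leq -1}Z$ as you define it (your $K$ lies in $\TT^{\leq -2}\cap\TT^{\geq -2} = \Sigma^{2}\TT^{\heartsuit}$, which meets the heart only in $0$). Correspondingly, the truncation triangle reads $\Sigma K \to Z \to C \to \Sigma^{2}K$ rather than $\Sigma^{-1}K \to Z \to C \to K$. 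You in fact use the correct formula $\ker f \cong \Sigma^{-1}\tau^{\leq -1}\cone(f)$ in your final paragraph, so this is an internal inconsistency rather than a conceptual gap; with $K := \Sigma^{-1}\tau^{\leq -1}Z$ throughout, the argument you outline goes through as in the cited source.
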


\section{The left \texorpdfstring{$\operatorname{t}$}{t}-structure and left heart}\label{section:LeftHeart}

The left $\operatorname{t}$-structure and the left heart were introduced in \cite{Schneiders99} for quasi-abelian categories.  In this section, we show that these constructions and many of the properties lift to a weaker setting, namely that of a deflation-exact structure on an additive category $\EE$ with kernels.  We follow the same outline as \cite[Section 1.2]{Schneiders99}.

We will assume that the deflation-exact structure is strong (that is, satisfies axiom \ref{R3}).  This is a purely technical condition: as $\EE$ has kernels, one can take the closure of $\EE$ under the axiom \ref{R3} without changing the derived category (see \cite{HenrardvanRoosmalen20Obscure}).

\subsection{\texorpdfstring{A $\operatorname{t}$-structure on $\K(\EE)$}{A t-structure on the homotopy category}}
As in \cite{Schneiders99}, we start by considering a $\operatorname{t}$-structure on the homotopy category $\K(\EE)$.  In this subsection, we only use the additive structure on $\EE$.  We will use the following truncation functors.

\begin{definition}\label{Definition:Truncations}
Let $C^{\bullet}$ be a complex in $\EE$.  As $\EE$ has kernels, every differential $d^{n-1}_C\colon C^{n-1} \to C^n$ factors as 
	\[C^{n-1}\xrightarrow{p^{n-1}} \ker(d^n)\xrightarrow{i^{n-1}} C^n\]
	where $i^{n-1}$ is the kernel of $d^n_C$. The \emph{canonical truncation} $\tau^{\leq n}C^{\bullet}$ is a complex together with a morphism $\tau^{\leq n}C^{\bullet} \to C^\bullet$ given by:
	\begin{equation*}
		\begin{tikzcd}
			\tau^{\leq n}C^{\bullet}\arrow{d}{} & &\cdots\arrow{r} &C^{n-3}\arrow[equals]{d}\arrow{r}&C^{n-2}\arrow[equals]{d}\arrow{r} &C^{n-1}\arrow[equals]{d}\arrow{r}{p^{n-1}} &\ker(d^n_C)\arrow{d}{i^{n-1}}\arrow{r}&0\arrow{d}\arrow{r}&\cdots\\
			C^{\bullet} &  &\cdots\arrow{r} &C^{n-3}\arrow{r}&C^{n-2}\arrow{r} &C^{n-1}\arrow{r} & C^n\arrow{r}&C^{n+1}\arrow{r}&\cdots
		\end{tikzcd}
	\end{equation*}
	and the \emph{canonical truncation} $C^\bullet \to \tau^{\geq n+1}C^{\bullet}$ is similarly defined by:
	\begin{equation*}
		\begin{tikzcd}
			C^{\bullet}\arrow{d}{} & &\cdots\arrow{r} &C^{n-3}\arrow{d}\arrow{r}&C^{n-2}\arrow{d}\arrow{r} &C^{n-1}\arrow{d}{p^{n-1}}\arrow{r} &C^{n}\arrow[equals]{d}\arrow{r}&C^{n+1}\arrow[equals]{d}\arrow{r}&\cdots\\
			\tau^{\geq n+1}C^{\bullet} &  &\cdots\arrow{r} &0\arrow{r}&0\arrow{r} &\ker(d^n_C)\arrow{r}{i^{n-1}} & C^n\arrow{r}&C^{n+1}\arrow{r}&\cdots
		\end{tikzcd}
	\end{equation*}
\end{definition}

The following proposition is \cite[Proposition~3.13]{HenrardvanRoosmalen19b}.

\begin{proposition}\label{Proposition:AcylicComplexAsExtensionOfTruncations}
	Let $\EE$ be an additive category with kernels.  Let $C^{\bullet}\in \C^*(\EE)$ where $*\in \{-,+,b,\varnothing\}$.  For each $n \in \bZ$, the following triangle is a distinguished triangle in $\K^*(\EE)$:
\[\tau^{\leq n}C^{\bullet}\rightarrow C^{\bullet} \rightarrow \tau^{\geq n+1}C^{\bullet}\rightarrow \Sigma \left( \tau^{\leq n}C^{\bullet}\right)\]
In other words, $C^{\bullet}$ is an extension of the canonical truncation $\tau^{\geq n+1}C^{\bullet}$ by $\tau^{\leq n}C^{\bullet}$ in $\K^*(\EE)$.
\end{proposition}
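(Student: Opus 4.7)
The plan is to identify the triangle in the statement, up to isomorphism in $\K^*(\EE)$, with the standard distinguished triangle attached to the mapping cone of the canonical chain map $f\colon \tau^{\leq n}C^\bullet \to C^\bullet$. Since the mapping cone of a morphism of complexes exists in any additive category, one automatically obtains a distinguished triangle
\[
	\tau^{\leq n}C^\bullet \to C^\bullet \to \cone(f) \to \Sigma\,\tau^{\leq n}C^\bullet
\]
in $\K^*(\EE)$. It therefore suffices to produce a homotopy equivalence $\alpha\colon \cone(f)\to \tau^{\geq n+1}C^\bullet$ whose composite with the canonical map $C^\bullet \to \cone(f)$ recovers the canonical truncation $C^\bullet \to \tau^{\geq n+1}C^\bullet$.

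Concretely, $\cone(f)^k = (\tau^{\leq n}C)^{k+1} \oplus C^k$, which equals $C^{k+1}\oplus C^k$ for $k<n-1$, equals $\ker(d_C^n)\oplus C^{n-1}$ for $k=n-1$, and equals $C^k$ for $k\geq n$. I would define $\alpha$ to be zero in degrees $k<n-1$, the identity in degrees $k\geq n$, and $(a,b)\mapsto a+p^{n-1}(b)$ in degree $n-1$; dually, a candidate inverse $\beta\colon \tau^{\geq n+1}C^\bullet \to \cone(f)$ is given by $a\mapsto (a,0)$ in degree $n-1$, the identity in degrees $k\geq n$, and zero elsewhere. That $\alpha$ and $\beta$ are chain maps follows from the factorization $d_C^{n-1}=i^{n-1}p^{n-1}$ together with $p^{n-1}\circ d_C^{n-2}=0$ (the latter holds because $i^{n-1}$ is a monomorphism and $d_C^n\circ d_C^{n-1}=0$). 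A direct computation yields $\alpha\circ\beta=\mathrm{id}$.

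For the remaining relation $\beta\circ\alpha \simeq \mathrm{id}$, the difference $\mathrm{id} - \beta\circ\alpha$ vanishes in degrees $k \geq n$, is the identity in degrees $k<n-1$, and sends $(a,b)\mapsto (-p^{n-1}(b),b)$ in degree $n-1$. The homotopy $h^k\colon \cone(f)^k \to \cone(f)^{k-1}$ defined by $h^k(x,y) = (y,0)$ for $k\leq n-1$ and $h^k = 0$ for $k\geq n$ satisfies $d\circ h+h\circ d = \mathrm{id} - \beta\circ\alpha$; the verification reduces to a handful of block-matrix computations using the explicit form of the cone differentials. Compatibility with the canonical maps is then immediate from the definitions (in degree $n-1$ the composite $C^\bullet \to \cone(f)\xrightarrow{\alpha} \tau^{\geq n+1}C^\bullet$ sends $b \mapsto (0,b) \mapsto p^{n-1}(b)$), so $\alpha$ transports the mapping-cone triangle to the triangle in the statement. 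The main obstacle is purely bookkeeping --- tracking the block-matrix differentials of the cone and verifying the homotopy identity --- and no homological input beyond the existence of kernels in $\EE$ is required.
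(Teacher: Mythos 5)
Your proof is correct. The paper itself gives no argument for this statement --- it simply cites \cite[Proposition~3.13]{HenrardvanRoosmalen19b} --- and your direct verification (showing that $\tau^{\geq n+1}C^{\bullet}$ is homotopy equivalent to the mapping cone of the canonical map $\tau^{\leq n}C^{\bullet}\to C^{\bullet}$, compatibly with the canonical maps, so that the stated triangle is isomorphic in $\K^*(\EE)$ to a cone triangle) is precisely the standard argument underlying that citation; your formulas for $\alpha$, $\beta$, and the homotopy $h$ all check out. One trivial index slip: to get $p^{n-1}\circ d_C^{n-2}=0$ you cancel the monomorphism $i^{n-1}$ against $d_C^{n-1}\circ d_C^{n-2}=0$, not against $d_C^{n}\circ d_C^{n-1}=0$; this affects nothing.
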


We can now consider the $\operatorname{t}$-structure $(\K^{\leq 0}(\EE), \K^{\geq 0}(\EE))$ on the homotopy category $\K(\EE)$ where
\begin{align*}
\K^{\leq 0}(\EE) &= \{X^\bullet \in \K(\EE) \mid \tau^{\geq 1} X^\bullet \cong 0 \}, \\
\K^{\geq 0}(\EE) &= \{X^\bullet \in \K(\EE) \mid \tau^{\leq -1} X^\bullet \cong 0 \}.
\end{align*}
In other words, $\K^{\leq 0}(\EE)$ is given by those complexes $X^\bullet$ such that $\ker d^{i-1}_X \to X^{i-1} \to \ker d^{i}_X$ is a split kernel-cokernel pair for all $i \geq 1.$  Likewise, $X^\bullet \in \K^{\leq 0}(\EE)$ if and only if $\ker d^{i-1}_X \to X^{i-1} \to \ker d^{i}_X$ is a split kernel-cokernel pair for all $i \leq -1$.

\subsection{Induced \texorpdfstring{$\operatorname{t}$}{t}-structures on the derived category}

In this subsection, we show that the above $\operatorname{t}$-structure on $\K(\EE)$ induces a $\operatorname{t}$-structure on $\D(\EE) = \K(\EE) / \Ac(\EE).$  For this, we use the following statement from \cite[Lemma~1.2.17]{Schneiders99}

\begin{proposition}\label{proposition:ConditionSchneiders}
Let $(\TT^{\leq 0}, \TT^{\geq 0})$ be a $\operatorname{t}$-structure on a triangulated category $\TT$.  Let $\NN \subseteq \TT$ be a thick subcategory and write $Q\colon \TT \to \TT / \NN$ for the corresponding quotient.  The pair $(Q(\TT^{\leq 0}), Q(\TT^{\geq 0}))$ is a $\operatorname{t}$-structure on $\TT / \NN$ if and only if for any triangle $X_1 \to X_0 \to N \to \Sigma X_1$ with $X_1 \in \TT^{\geq 1}$, $X_0 \in \TT^{\leq 0}$, and $N \in \NN$, we have $X_1, X_0 \in \NN$
\end{proposition}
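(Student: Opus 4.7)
The plan is to prove the two implications separately.

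For the forward direction, suppose $(Q(\TT^{\leq 0}), Q(\TT^{\geq 0}))$ is a $\operatorname{t}$-structure on $\TT/\NN$, and let $X_1 \to X_0 \to N \to \Sigma X_1$ be a triangle with $X_1 \in \TT^{\geq 1}$, $X_0 \in \TT^{\leq 0}$, and $N \in \NN$. Since $\NN$ is thick, $Q(N) = 0$, so the induced triangle in $\TT/\NN$ yields an isomorphism $Q(X_1) \xrightarrow{\sim} Q(X_0)$. As $Q(X_0) \in Q(\TT^{\leq 0})$ and $Q(X_1) \in Q(\TT^{\geq 1}) = \Sigma^{-1} Q(\TT^{\geq 0})$, the orthogonality axiom of the induced $\operatorname{t}$-structure forces $\Hom_{\TT/\NN}(Q(X_0), Q(X_1)) = 0$. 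The inverse of our isomorphism therefore vanishes, so $Q(X_0) \cong Q(X_1) \cong 0$, and thickness of $\NN$ gives $X_0, X_1 \in \NN$.

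For the converse I would verify the $\operatorname{t}$-structure axioms on $\TT/\NN$ one by one. Suspension stability and existence of truncation triangles (obtained by applying $Q$ to the canonical truncation in $\TT$) are formal, and repleteness is automatic if the subcategories are taken as essential images. The nontrivial point is the orthogonality $\Hom_{\TT/\NN}(Q(X), Q(\Sigma^{-1}Y)) = 0$ for $X \in \TT^{\leq 0}$ and $Y \in \TT^{\geq 0}$. I would represent any such morphism by a roof
\[ X \xleftarrow{s} X' \xrightarrow{g} \Sigma^{-1}Y, \quad \cone(s) = N \in \NN, \]
and consider the canonical truncation triangle $A \xrightarrow{a} X' \xrightarrow{b} B \to \Sigma A$ with $A = \tau^{\leq 0}X' \in \TT^{\leq 0}$ and $B = \tau^{\geq 1}X' \in \TT^{\geq 1}$. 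Since $\Sigma^{-1}Y \in \TT^{\geq 1}$ and $\Hom_{\TT}(A, \Sigma^{-1}Y) = 0$ by orthogonality in $\TT$, the right leg factors as $g = g' \circ b$ for some $g' \colon B \to \Sigma^{-1}Y$; in particular $g \circ a = 0$.

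It remains to show that $a \colon A \to X'$ already has cone in $\NN$, because then the roof $X \xleftarrow{s \circ a} A \xrightarrow{g \circ a} \Sigma^{-1}Y$ is equivalent to the original and represents the zero morphism. This is where the hypothesis enters: the octahedral axiom applied to the composition $A \xrightarrow{a} X' \xrightarrow{s} X$ produces a triangle
\[ B \to C \to N \to \Sigma B, \]
where $C = \cone(s \circ a)$ fits in a triangle $A \to X \to C \to \Sigma A$. Rotating and using that $\TT^{\leq 0}$ is closed under extensions and under $\Sigma$, we deduce $C \in \TT^{\leq 0}$. The hypothesis applied to $B \to C \to N \to \Sigma B$ (with $B$ in the role of $X_1$ and $C$ in the role of $X_0$) then yields $B \in \NN$, as required. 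The main obstacle I anticipate is the orchestration of this octahedron, namely identifying the third vertex $C$ of the composite as lying in $\TT^{\leq 0}$ so that the hypothesis is applicable; once this is secured, the rest of the argument is automatic.
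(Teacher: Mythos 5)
Your proof is correct. Note that the paper does not actually prove this statement---it quotes it from Schneiders (Lemma~1.2.17 of \emph{Quasi-abelian categories and sheaves})---and your argument is essentially the standard one given there: the forward direction by observing that the inverse of the induced isomorphism $Q(X_1) \xrightarrow{\sim} Q(X_0)$ lies in a $\Hom$-group killed by orthogonality (so both objects lie in $\ker Q = \NN$ by thickness), and the converse by reducing the only nontrivial axiom, orthogonality in $\TT/\NN$, to a roof $X \xleftarrow{s} X' \xrightarrow{g} \Sigma^{-1}Y$, truncating the apex $X'$, and using the octahedron on $s \circ a$ together with extension-closedness of $\TT^{\leq 0}$ to place the triangle $B \to C \to N \to \Sigma B$ under the hypothesis, which makes $\cone(a)=B \in \NN$ and hence the roof equivalent to one with zero numerator.
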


The following proposition is a convenient strengthening of \cite[Lemma~7.2]{BazzoniCrivei13}.

\begin{proposition}\label{proposition:StrengtheningBazzoniCrivei}
Let $f\colon X^{\bullet}\to Y^{\bullet}$ in $\C(\EE)$.  If $X^\bullet$ is acylic in degree $n$ and $Y^\bullet$ is acyclic in degrees $n-1$ and $n$, then $\cone(f^\bullet)$ is acyclic in degree $n-1$.
\end{proposition}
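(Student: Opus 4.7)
The statement refines \Cref{lemma:AcyclicIsTriangulated} by pinpointing precisely which local acyclicities of $X^\bullet$ and $Y^\bullet$ force acyclicity of $\cone(f^\bullet)$ at the single degree $n-1$.  The plan is to construct $\ker d^{n-1}_{\cone(f)}$ and the required deflation explicitly, following the strategy of \cite[Lemma~7.2]{BazzoniCrivei13} while tracking exactly which hypotheses are used at each step.

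From the three hypotheses I fix deflation-inflation factorizations $d_X^{n-1} = i_X^n \circ p_X^{n-1}$, $d_Y^{n-1} = i_Y^n \circ p_Y^{n-1}$, and $d_Y^{n-2} = i_Y^{n-1}\circ p_Y^{n-2}$, with each $p$ a deflation and the cokernel of the preceding differential, and each $i$ the kernel of the subsequent one.  Since $d_Y^n \circ f^n \circ i_X^n = f^{n+1}\circ d_X^n \circ i_X^n = 0$ and $i_Y^n = \ker d_Y^n$, a unique $\phi\colon \ker d_X^n \to \ker d_Y^n$ satisfies $f^n\circ i_X^n = i_Y^n\circ \phi$; the chain-map equation then forces $\phi\circ p_X^{n-1} = p_Y^{n-1}\circ f^{n-1}$.

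Because $p_Y^{n-1}$ is a deflation, axiom~\ref{R2} produces a pullback $K$ of $-\phi$ along $p_Y^{n-1}$, with deflation $\pi_1\colon K \twoheadrightarrow \ker d_X^n$ and monomorphism $\iota := (i_X^n\pi_1,\pi_2)\colon K \to X^n \oplus Y^{n-1}$.  A direct universal-property check identifies $\iota$ with $\ker d^{n-1}_{\cone(f)}$: the vanishing $d^{n-1}_{\cone(f)}\circ \iota = 0$ reduces to the pullback condition together with $d_X^n\circ i_X^n = 0$, and the universal property follows from the kernel property of $i_X^n$ and the pullback universality.  Consequently $d^{n-2}_{\cone(f)}$ factors through $\iota$ as $\iota\circ \pi$, with $\pi\colon X^{n-1}\oplus Y^{n-2} \to K$ having components $-p_X^{n-1}\circ \mathrm{pr}_1$ and $f^{n-1}\circ\mathrm{pr}_1 + d_Y^{n-2}\circ\mathrm{pr}_2$; the required pullback compatibility uses $\phi\circ p_X^{n-1} = p_Y^{n-1}\circ f^{n-1}$ together with $p_Y^{n-1}\circ d_Y^{n-2} = 0$.

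The remaining task, and the main obstacle, is to show that $\pi$ is simultaneously a deflation and the cokernel of $d^{n-3}_{\cone(f)}$.  I plan to decompose $\pi = \rho \circ (\mathrm{id}\oplus p_Y^{n-2})$, where $\rho\colon X^{n-1}\oplus \ker d_Y^{n-1} \to K$ sends $(x,y')$ to $(-p_X^{n-1}(x),\, f^{n-1}(x)+i_Y^{n-1}(y'))$.  A short verification identifies $X^{n-1}\oplus \ker d_Y^{n-1}$ with the pullback of $-p_X^{n-1}$ along $\pi_1$, exhibiting $\rho$ as a pullback of the deflation $-p_X^{n-1}$ and hence a deflation by axiom~\ref{R2}; then $\pi$ is a deflation by axiom~\ref{R1}.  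For the cokernel identification, given $g\colon X^{n-1}\oplus Y^{n-2}\to T$ with $g\circ d^{n-3}_{\cone(f)} = 0$, the cokernel property of $p_Y^{n-2} = \coker d_Y^{n-3}$ factors $g|_{Y^{n-2}}$ as $\tilde{g}_2\circ p_Y^{n-2}$; a careful verification using the chain-map identity together with the cokernel properties of $p_X^{n-1} = \coker d_X^{n-2}$ and $p_Y^{n-2}$ shows that the composite $(g|_{X^{n-1}},\tilde{g}_2)\colon X^{n-1}\oplus \ker d_Y^{n-1}\to T$ vanishes on $\ker\rho$, yielding a unique factorization through the deflation $\rho$ and hence through $\pi$.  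The delicate point is tracking these cokernel universalities without invoking any acyclicity of $X^\bullet$ or $Y^\bullet$ beyond the three stated local hypotheses.
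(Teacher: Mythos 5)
Your first two steps are correct and are in substance the same construction as the paper's: your $K$ is the paper's pullback $C^{n-1}$, your identification of $\iota$ with $\ker d^{n-1}_{\cone(f)}$ matches the paper's kernel verification, and your proof that $\pi$ is a deflation (exhibiting $\rho$ as a pullback of $-p_X^{n-1}$ and invoking \ref{R2} and \ref{R1}) is a clean hands-on substitute for the paper's use of \cite[Proposition~3.9]{HenrardvanRoosmalen19a} and the duals of \cite[Propositions~5.2, 5.4, 5.5]{BazzoniCrivei13}. However, the proposal has two genuine gaps.

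First, \Cref{definition:AcylicComplex} requires the factorization $d^{n-2}_{\cone(f)}=\iota\circ\pi$ to have $\iota$ an \emph{inflation}, not merely a monomorphism that happens to be the kernel of $d^{n-1}_{\cone(f)}$; your proposal never addresses this. In a deflation-exact category ``kernel'' does not imply ``inflation'', and the obvious repair --- writing $\iota$ as the composite of the inflation $K\inflation \ker d_X^n\oplus Y^{n-1}$ (kernel of a deflation) with the inflation $i_X^n\oplus 1$ --- fails because composites of inflations need not be inflations (axiom \ref{L1} is not available). This is precisely why the paper constructs the \emph{second} conflation $C^{n-1}\inflation X^n\oplus Y^{n-1}\deflation C^n$: it uses the deflation-mono factorization of $d_X^n$ (via \cite[Proposition~4.4]{HenrardvanRoosmalen20Obscure}, which needs the hypothesis that $i_X^{n-1}$ is an inflation), the pullback $C^n$ of the coimages, and the dual Noether lemmas to exhibit $\iota$ as part of a conflation. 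Your proposal has no counterpart to this ``degree $n{+}1$'' half of the paper's diagram, and without it the factorization you produce is not of the required deflation--inflation type.

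Second, the cokernel identification --- which you rightly single out as the main obstacle --- is not closed by your sketch, and cannot be closed by ``careful bookkeeping'' with the two cited cokernel properties. Unwinding your reduction, you must show that $u:=g_1\circ i_X^{n-2}-\tilde g_2\circ g^{n-1}\colon \ker d_X^{n-1}\to T$ vanishes (here $g^{n-1}$ is induced by $f^{n-1}$ and $\ker\rho$ is embedded by the pair $(i_X^{n-2},-g^{n-1})$), knowing only that $u\circ\delta=0$, where $\delta\colon X^{n-2}\to\ker d_X^{n-1}$ is the corestriction of $d_X^{n-2}$. The hypothesis $p_X^{n-1}=\coker d_X^{n-2}$ kills only maps of the form $v\circ i_X^{n-2}$, i.e.\ maps out of $\ker d_X^{n-1}$ that extend to $X^{n-1}$; the summand $\tilde g_2\circ g^{n-1}$ has no reason to extend, and in a deflation-exact category one cannot push out along the inflation $i_X^{n-2}$ to reduce to the extendable case. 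The implication you need --- ``$p=\coker(i\delta)$ for a conflation $A\stackrel{i}{\inflation}B\stackrel{p}{\deflation}C$ forces $\delta$ to be an epimorphism'' --- is simply false in this generality: in the deflation-exact category of abelian groups with no elements of order $p^2$ (closed under subobjects in $\mathrm{Ab}$, hence additive regular), the conflation $p\bZ\inflation\bZ\deflation\bZ/p$ and $\delta\colon p^2\bZ\hookrightarrow p\bZ$ satisfy $\coker(i\delta)=(\bZ\to\bZ/p)$, yet $\delta$ is not an epimorphism (the quotient $p\bZ\to p\bZ/p^2\bZ$ kills it). So this step of your plan would fail, not merely remain unverified; any correct treatment must either supply an additional argument that the corestriction $\delta$ is an epimorphism (as happens, e.g., when $X^\bullet$ is also acyclic, or split acyclic, in degree $n-1$ --- which is the situation in the paper's application) or avoid the cokernel clause altogether. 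For what it is worth, the paper's own write-up is silent on this point as well: it asserts that only the kernel condition remains to be checked. You attempted strictly more than the paper here, but the attempt as sketched does not close.
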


\begin{proof}
The proof follows that of \cite[Lemma~7.2]{BazzoniCrivei13} closely.  As $X^\bullet$ is acyclic in degree $n$, we know that $i_X^{n-1}\colon \ker d^n_X \inflation X^n$ is an inflation.  By \cite[Proposition~4.4]{HenrardvanRoosmalen20Obscure}, this implies that $d^{n}\colon X^n \to X^{n+1}$ has a deflation-mono factorization: $\begin{tikzcd}[cramped, sep = scriptsize] X^n \arrow[r, twoheadrightarrow, "p_X^n"] & \coim d^{n+1}_X \arrow[hook]{r}{m}&X^{n+1}\end{tikzcd}$.  We find the following commutative diagram
\[\begin{tikzcd}[column sep=scriptsize]
\arrow[rr, dotted, no head]  && X^{n-2} \arrow[dr] \arrow[rr, "d_X^{n-2}"] \arrow[ddd, "f^{n-2}"] && X^{n-1} \arrow[dr, twoheadrightarrow, "p_X^{n-1}"] \arrow[rr, "d_X^{n-1}"] \arrow[ddd,  "f^{n-1}"] && X^n \arrow[dr, twoheadrightarrow, "p_X^{n}"] \arrow[rr, "d_X^{n}"] \arrow[ddd,  "f^{n}"] && X^{n+1} \arrow[rr, dotted, no head] \arrow[ddd,  "f^{n+1}"] && {} \\
&&&\ker d_X^{n-1} \ar[d, dashrightarrow, "g^{n-1}"] \arrow[ru, rightarrowtail, "i_X^{n-2}"] &&\ker d^n_X\ar[d, dashrightarrow, "g^{n}"] \arrow[ru, rightarrowtail, "i_X^{n-1}"] && \coim d^{n+1}_X\ar[d, dashrightarrow, "g^{n}"] \arrow[ru, hookrightarrow, "m"] \\
&&&\ker d_Y^{n-1} \arrow[rd, rightarrowtail, "i_Y^{n-2}"'] &&\ker d^n_Y\arrow[rd, rightarrowtail, "i_Y^{n-1}"'] && \coim d^{n+1}_Y\arrow[rd] \\
\arrow[rr, dotted, no head] && Y^{n-2} \arrow[ur, twoheadrightarrow, "p_Y^{n-2}"'] \arrow[rr, "d_Y^{n-2}"'] && Y^{n-1} \arrow[ur, twoheadrightarrow, "p_Y^{n-1}"'] \arrow[rr, "d_Y^{n-1}"'] && Y^n \arrow[ur, twoheadrightarrow, "p_Y^{n}"'] \arrow[rr, "d_Y^{n}"'] && Y^{n+1} \arrow[rr, dotted, no head] && {}
\end{tikzcd}\]
where the morphisms $g^{n-1}$, $g^n$, and $g^{n+1}$ are uniquely determined.  We can apply \cite[Proposition~3.9]{HenrardvanRoosmalen19a} (or the dual of \cite[Proposition~5.2]{BazzoniCrivei13}) to the maps $(g^{n-1}, f^{n-1}, g^n)$ and $(g^{n}, f^{n}, g^{n+1})$ between conflations to obtain the following commutative diagram (the squares marked with BC are bicartesian squares):
\[\begin{tikzcd}[column sep=scriptsize]
\arrow[rr, dotted, no head]  && X^{n-2} \arrow[dr] \arrow[rr, "d_X^{n-2}"] \arrow[dd, "f_1^{n-2}"] && X^{n-1} \arrow[dr, twoheadrightarrow, "p_X^{n-1}"] \arrow[rr, "d_X^{n-1}"] \arrow[dd,  "f_1^{n-1}"] && X^n \arrow[dr, twoheadrightarrow, "p_X^{n}"] \arrow[rr, "d_X^{n}"] \arrow[dd,  "f_1^{n}"] && X^{n+1} \arrow[r, dotted, no head] \arrow[dddd,  "f^{n+1}"] & {} \\
&&&\ker d_X^{n-1} \ar[dd, "g^{n-1}"] \arrow[ru, rightarrowtail, "i_X^{n-2}"] \arrow[dr, phantom, "\text{BC}" description] &&\ker d^n_X\ar[dd, "g^{n}"] \arrow[ru, rightarrowtail, "i_X^{n-1}"] \arrow[dr, phantom, "\text{BC}" description] && \coim d^{n+1}_X\ar[dd, "g^{n}"] \arrow[ru, hookrightarrow, "m"] \\
&& C^{n-2} \arrow[dd, "f_2^{n-2}"] \arrow[ru, twoheadrightarrow, "q^{n-2}"'] \arrow[dr, phantom,"\text{BC}" description] && C^{n-1} \arrow[dd, "f_2^{n-1}"] \arrow[ru, twoheadrightarrow, "q^{n-1}"'] \arrow[dr, phantom,"\text{BC}" description]&& C^{n} \arrow[dd, "f_2^{n}"] \arrow[ru, twoheadrightarrow, "q^{n}"'] \arrow[dr, phantom,"\text{BC}" description]\\
&&&\ker d_Y^{n-1} \arrow[rd, rightarrowtail, "i_Y^{n-2}"'] \arrow[ru, rightarrowtail, "j^{n-2}"'] &&\ker d^n_Y\arrow[rd, rightarrowtail, "i_Y^{n-1}"'] \arrow[ru, rightarrowtail, "j^{n-1}"'] && \coim d^{n+1}_Y\arrow[rd] \\
\arrow[rr, dotted, no head] && Y^{n-2} \arrow[ur, twoheadrightarrow, "p_Y^{n-2}"'] \arrow[rr, "d_Y^{n-2}"'] && Y^{n-1} \arrow[ur, twoheadrightarrow, "p_Y^{n-1}"'] \arrow[rr, "d_Y^{n-1}"'] && Y^n \arrow[ur, twoheadrightarrow, "p_Y^{n}"'] \arrow[rr, "d_Y^{n}"'] && Y^{n+1} \arrow[r, dotted, no head] & {}
\end{tikzcd}\]
Additionally, we have added the pullback of the cospan $\begin{tikzcd} Y^{n-2} \arrow[r, twoheadrightarrow, "p_Y^{n-2}"] & \ker d_Y^{n-1} & \ker d_X^{n-1} \arrow[l, "g^{n-1}"'] \end{tikzcd}$.  By the dual of \cite[Propositions~5.4 and~5.5]{BazzoniCrivei13}, we have the conflations
\[\begin{tikzcd}[column sep = 7em]
C^{n-2} \arrow[r, rightarrowtail, "\begin{pmatrix} i^{n-2}_X q^{n-2} \\ -f_2^{n-2} \end{pmatrix}"] & X^{n-1} \oplus Y^{n-2} \arrow[r, twoheadrightarrow, "\left({ f_1^{n-1} \enspace j^{n-2} p_Y^{n-2} }\right)" ] & C^{n-1}
\end{tikzcd}\]
and
\[\begin{tikzcd}[column sep = 7em]
C^{n-1} \arrow[r, rightarrowtail, "\begin{pmatrix} i^{n-1}_X q^{n-1} \\ -f_2^{n-1} \end{pmatrix}"] & X^{n} \oplus Y^{n-1} \arrow[r, twoheadrightarrow, "\left( {f_1^{n} \enspace j^{n-1} p_Y^{n-1}} \right)" ] & C^{n}
\end{tikzcd}\]
Hence, to show that $\cone(f^\bullet)$ is acyclic in degree $n-1$, we only need to show that $C^{n-1} \inflation X^n \oplus Y^{n-1}$ is the kernel of $X^n \oplus Y^{n-1} \to X^{n+1} \oplus Y^{n}$.  For this, it suffices to show that $\begin{psmallmatrix} m q^n \\ f_2^n \end{psmallmatrix}\colon C^n \to X^{n+1} \oplus Y^n$ is a monomorphism.  To verify this last claim, consider the following commutative diagram
\[\begin{tikzcd}
\ker d_Y^n \arrow[r, rightarrowtail, "j^{n-1}"] \arrow[d, equal] & C^n \arrow[r, twoheadrightarrow, "q^n"] \arrow[d, "f_2^n"] & \coim d^{n+1}_X \arrow[hook]{r}{m} & X^{n+1} \\
\ker d_Y^n \arrow[r, rightarrowtail, "i_Y^{n-1}"] & Y^{n}.
\end{tikzcd}\]
Let $t\colon T \to C^n$ be a morphism for which $\begin{psmallmatrix} m q^n \\ f_2^n \end{psmallmatrix}\circ t = 0$.  As $m$ is a monomorphism, it follows from $m q^n t = 0$ that $t = j^{n-1} \circ t'$.  As $i_Y^{n-1}$ is a monomorphism, it follows from $i_Y^{n-1}\circ t' = f_2^n \circ j^{n-1} \circ t' = 0$ that $t' = 0$.  Hence, $t = 0$ and we find that $\begin{psmallmatrix} m q^n \\ f_2^n \end{psmallmatrix}\colon C^n \to X^{n+1} \oplus Y^n$ is a monomorphism.
\end{proof}

\begin{proposition}\label{Proposition:tStructureOnHomotopyCategory}
Let $\EE$ be a strongly deflation-exact category with kernels.  There is a $\operatorname{t}$-structure on $\D(\EE)$ given by
\begin{align*}
\D^{\leq 0}(\EE) &= \{X^\bullet \in \D(\EE) \mid \tau^{\geq 1}X^{\bullet} \cong 0\}, \\
\D^{\geq 0}(\EE) &= \{X^\bullet \in \D(\EE) \mid \tau^{\leq -1}X^{\bullet} \cong 0\}.
\end{align*}
\end{proposition}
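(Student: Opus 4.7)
The plan is to apply Schneiders's descent criterion (Proposition \ref{proposition:ConditionSchneiders}) in order to transfer the evident $t$-structure from $\K(\EE)$ down to $\D(\EE) = \K(\EE)/\NN$, where $\NN := \langle \Ac(\EE) \rangle_{\text{thick}}$.

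First I verify that the pair $(\K^{\leq 0}(\EE), \K^{\geq 0}(\EE))$ is itself a $t$-structure on $\K(\EE)$. Shift-stability is built into the definitions. The vanishing $\Hom_{\K(\EE)}(X, \Sigma^{-1} Y) = 0$ for $X \in \K^{\leq 0}(\EE)$, $Y \in \K^{\geq 0}(\EE)$ is established by producing an explicit null-homotopy, built degree by degree using the universal property of the kernels in $\EE$. The decomposition triangle is precisely Proposition \ref{Proposition:AcylicComplexAsExtensionOfTruncations} at $n=0$. I then apply Proposition \ref{proposition:ConditionSchneiders} with $\TT = \K(\EE)$. By uniqueness of the $t$-structure decomposition, every triangle $X_1 \to X_0 \to N \to \Sigma X_1$ in $\K(\EE)$ with $X_1 \in \K^{\geq 1}(\EE)$ and $X_0 \in \K^{\leq 0}(\EE)$ satisfies $X_0 \cong \tau^{\leq 0} N$ and $X_1 \cong \Sigma^{-1} \tau^{\geq 1} N$ in $\K(\EE)$. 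Since $\NN$ is triangulated and the truncation triangle links both truncations to $N$, the criterion reduces to showing: $N \in \NN \Rightarrow \tau^{\leq 0} N \in \NN$.

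The key technical point is that $\tau^{\leq 0}$ descends to an additive functor $\K(\EE) \to \K(\EE)$. Given a null-homotopy $h$ of a chain map $X \to Y$, I take $\tilde h^n = h^n$ for $n \leq -1$ and $\tilde h^0 = h^0 \circ i_X$, where $i_X\colon \ker d^0_X \to X^0$ is the canonical inclusion; verification of the degree-zero identity follows from the factorization $d^{-1}_Y = i_Y \circ p^{-1}_Y$ supplied by the universal property of $\ker d^0_Y$. A direct inspection then shows that $\tau^{\leq 0}$ carries acyclic complexes to acyclic complexes: the conflations $\ker d^n_N \inflation N^n \deflation \ker d^{n+1}_N$ intrinsic to acyclicity persist under truncation, and in degree zero the deflation $p^{-1}\colon N^{-1} \deflation \ker d^0_N$ retains its role as the cokernel of $d^{-2}_N$. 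Combining these two facts, $\tau^{\leq 0}(\AcK(\EE)) \subseteq \AcK(\EE)$.

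To extend this to the full thick closure, note that $\AcK(\EE)$ is triangulated (Lemma \ref{lemma:AcyclicIsTriangulated}), and its summand-closure in $\K(\EE)$ is again triangulated by a standard split-cone argument (for $A_i \oplus B_i \in \AcK(\EE)$, the object $\cone(f\colon A_1 \to A_2) \oplus B_2 \oplus \Sigma B_1$, realized as the cone of $(f, 0)\colon A_1 \oplus B_1 \to A_2 \oplus B_2$, lies in $\AcK(\EE)$); consequently this summand-closure equals $\NN$. Because $\tau^{\leq 0}$ commutes with direct sums at the chain level and respects homotopy equivalence, for any $N \in \NN$ with $N \oplus N' \in \AcK(\EE)$ one obtains $\tau^{\leq 0} N \oplus \tau^{\leq 0} N' = \tau^{\leq 0}(N \oplus N') \in \AcK(\EE)$, so $\tau^{\leq 0} N \in \NN$. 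The main obstacle is the functoriality of $\tau^{\leq 0}$ on $\K(\EE)$; without it, one cannot track homotopy equivalences and direct summands through truncation, which is essential for the summand-closure argument. Proposition \ref{proposition:StrengtheningBazzoniCrivei} provides a fallback through degree-by-degree cone analysis should this chain-level approach to the functoriality break down.
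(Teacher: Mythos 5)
Your chain-level work is correct as far as it goes: $\tau^{\leq 0}$ does descend to an additive functor on $\K(\EE)$, it preserves acyclic complexes, and hence the thick closure $\NN=\langle\Ac(\EE)\rangle_{\text{thick}}$ satisfies $\tau^{\leq 0}\NN\subseteq\NN$. The fatal step is the reduction of the criterion of \Cref{proposition:ConditionSchneiders} to this truncation-stability. Your uniqueness claim is false: uniqueness of $\operatorname{t}$-decompositions applies to triangles $A\to X\to B\to\Sigma A$ with $A\in\TT^{\leq 0}$ and $B\in\TT^{\geq 1}$, whereas rotating your triangle gives $X_0\to N\to\Sigma X_1\to\Sigma X_0$ in which the third term only satisfies $\Sigma X_1\in\K^{\geq 0}(\EE)$; the indices are off by one, so this need not be the canonical truncation triangle of $N$. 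Concretely, let $\EE$ be the (abelian, hence strongly deflation-exact with kernels) category of finitely generated abelian groups, let $X_0=N=\bZ$ concentrated in degree $0$, let $u\colon X_0\to N$ be multiplication by $2$, and let $X_1=\Sigma^{-1}\cone(u)$, i.e.\ the complex $\bZ\xrightarrow{2}\bZ$ in degrees $0$ and $1$. Then $X_1\in\K^{\geq 1}(\EE)$ and $X_0\in\K^{\leq 0}(\EE)$, but $\Sigma^{-1}\tau^{\geq 1}N\cong 0$ while $X_1\ncong 0$ in $\K(\EE)$, so $X_1\ncong\Sigma^{-1}\tau^{\geq 1}N$.

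Moreover, this is not a repairable bookkeeping slip: the statement you reduce to is strictly weaker than Schneiders's condition, so no proof of it can close the argument. If your reduction were valid, it would show that a $\operatorname{t}$-structure descends along the Verdier quotient by \emph{any} thick subcategory stable under the truncation functors, and this is false. For instance, take $\TT=\Db(\AA)$ with the standard $\operatorname{t}$-structure, where $\AA$ is the category of finite-dimensional representations of the quiver $1\to 2$ over a field, and let $\NN$ be the thick subcategory generated by the indecomposable projective $P_1$; since $P_1$ is exceptional, every object of $\NN$ is a finite direct sum of shifts of $P_1$, so $\NN$ is thick and stable under $\tau^{\leq 0}$ and $\tau^{\geq 1}$. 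Yet the triangle $\Sigma^{-1}S_1\to S_2\to P_1\to S_1$ coming from $0\to S_2\to P_1\to S_1\to 0$ has $\Sigma^{-1}S_1\in\TT^{\geq 1}$, $S_2\in\TT^{\leq 0}$, $P_1\in\NN$ and $S_2\notin\NN$, so by the \emph{only if} direction of \Cref{proposition:ConditionSchneiders} the $\operatorname{t}$-structure does not descend to $\TT/\NN$. Any correct proof must therefore use what $\Ac(\EE)$ actually is, beyond its truncation-stability, and that is exactly what the paper does: since $\EE$ has kernels (hence is idempotent complete) and satisfies axiom \ref{R3}, \Cref{Proposition:BasicPropertiesDerivedCategoryNew} shows $\Ac(\EE)$ is already thick, so one may take $\NN=\Ac(\EE)$, whose objects are genuinely acyclic complexes; then, for a triangle as in the criterion, one has $\Sigma X_1\cong\cone(X_0\to N)$ and \Cref{proposition:StrengtheningBazzoniCrivei} is applied degree by degree (the split kernel-cokernel pairs encoded by $X_0\in\K^{\leq 0}(\EE)$ and $X_1\in\K^{\geq 1}(\EE)$ being conflations) to conclude that $X_1$, and then $X_0$ via \Cref{lemma:AcyclicIsTriangulated}, lie in $\Ac(\EE)$. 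Your closing remark offers \Cref{proposition:StrengtheningBazzoniCrivei} only as a fallback for the functoriality of $\tau^{\leq 0}$, but functoriality was never the problem; that proposition is needed for the direct verification of the triangle condition itself, which your proof never performs.
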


\begin{proof}
Let $Q\colon \K(\EE) \to \K(\EE) / \Ac(\EE)$ be the Verdier localization.  We see that $\D^{\leq 0}(\EE) = Q(\K^{\leq 0}(\EE))$ and $\D^{\geq 0}(\EE) = Q(\K^{\geq 0}(\EE))$.  Hence, to prove this proposition, it suffices to show that the conditions of \Cref{proposition:ConditionSchneiders} are satisfied for $\TT = \K(\EE)$ and $\NN = \Ac(\EE).$  As $\EE$ has kernels and satisfies axiom \ref{R3}, we know by \Cref{Proposition:BasicPropertiesDerivedCategoryNew} that $\Ac(\EE)$ is a thick subcategory of $\K(\EE)$.  The rest follows directly from \Cref{proposition:StrengtheningBazzoniCrivei}.
\end{proof}

\begin{definition}\label{Definition:LefttStructure}\label{Definition:CohomologyFunctors}
Let $\EE$ be a strongly deflation-exact category with kernels.  We call the $\operatorname{t}$-structure $(\D^{\leq 0}(\EE),\D^{\geq 0}(\EE))$ from \Cref{Proposition:tStructureOnHomotopyCategory} the \emph{left $\operatorname{t}$-structure}.  We write $\mathcal{LH}(\EE)$ for the heart $\D^\heartsuit(\EE) = \D^{\leq 0}(\EE) \cap \D^{\geq 0}(\EE)$ and $\LH^i \coloneqq \tau^{\leq 0} \circ \tau^{\geq 0} \circ \Sigma^i\colon \D(\EE) \to \mathcal{LH}(\EE)$ for the corresponding cohomology functors.
\end{definition}

\begin{remark}
As an alternative description, we have $X^\bullet \in \D^{\leq 0}(\EE)$ if and only if, for all $i \geq 1$, the sequence $\ker d^{i-1}_X \to X^{i-1} \to \ker d^{i}_X$ is a conflation.  Likewise, $X^\bullet \in \D^{\geq 0}(\EE)$ if and only if, for all $i \leq -1$, the sequence $\ker d^{i-1}_X \to X^{i-1} \to \ker d^{i}_X$ is a conflation.
\end{remark}

\subsection{Embedding into the left heart} We now turn our attention to the heart of the left $\operatorname{t}$-structure (see \Cref{Definition:LefttStructure}) on $\D(\EE).$


\begin{proposition}\label{proposition:DescriptionCohomology}
	Let $C^{\bullet}\in \D(\EE)$. The $n^{\text th}$ cohomology $\LH^n(C^{\bullet})$ is the three-term complex
	\[\dots \to 0\to \ker(d^{n-1}) \hookrightarrow C^{n-1} \to \ker(d^n)\to 0 \to \dots\]
	with $\ker(d^n)$ in degree $0$.
\end{proposition}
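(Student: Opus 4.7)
The plan is a direct unwinding of the definition $\LH^n = \tau^{\leq 0} \circ \tau^{\geq 0} \circ \Sigma^n$ using the explicit chain-level descriptions of the truncations from \Cref{Definition:Truncations}. Since the derived-category truncations are induced by these chain-level truncations (this is precisely the mechanism behind the construction of the left $\operatorname{t}$-structure via \Cref{proposition:ConditionSchneiders} and \Cref{Proposition:tStructureOnHomotopyCategory}), it suffices to compute on a representing complex.

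First I would form $D^\bullet = \Sigma^n C^\bullet$, so $D^i = C^{n+i}$; the sign $(-1)^n$ appearing in the shifted differentials is irrelevant for identifying kernels or subsequent differentials. Applying $\tau^{\geq 0}$ to $D^\bullet$ then modifies only the negative degrees: reading off the formula of \Cref{Definition:Truncations} with $m=0$, the result $E^\bullet$ has $E^i = 0$ for $i \leq -3$, $E^{-2} = \ker(d_C^{n-1})$ with the kernel inclusion into $E^{-1} = C^{n-1}$ as the new differential, and $E^i = D^i$ unchanged for $i \geq -1$. Finally, $\tau^{\leq 0}$ leaves $E^i$ fixed for $i < 0$, replaces $E^0 = C^n$ by $\ker(d^0_E) = \ker(d_C^n)$, installs the factorization map $p^{n-1}\colon C^{n-1} \to \ker(d_C^n)$ arising from $d_C^{n-1} = i^{n-1}\circ p^{n-1}$ as the differential $E^{-1} \to E^0$, and zeros out degrees $\geq 1$. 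Assembling the pieces, $\LH^n(C^\bullet)$ is the three-term complex with $\ker(d^{n-1})$, $C^{n-1}$, $\ker(d^n)$ in degrees $-2$, $-1$, $0$, exactly as claimed.

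The only real obstacle is careful bookkeeping: one must track which degree each kernel sits in under $\Sigma^n$ and the two truncations, and verify that the middle differential is the canonical factorization $p^{n-1}$ rather than some unrelated morphism. There is no conceptual difficulty here, since the left $\operatorname{t}$-structure was designed so that these truncations behave correctly; likewise, the fact that the resulting complex represents an object of $\mathcal{LH}(\EE)$ is automatic from the definition of $\LH^n$ as a composition of truncation projections onto the heart.
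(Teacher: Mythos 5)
Your proposal is correct and takes essentially the same route as the paper: the paper's entire proof is the one-line observation that the claim follows directly from $\LH^n = \tau^{\leq 0} \circ \tau^{\geq 0} \circ \Sigma^n$, and your argument is simply a careful degree-by-degree unwinding of that same composition of chain-level truncations. Your bookkeeping (placement of $\ker(d^{n-1})$, $C^{n-1}$, $\ker(d^n)$ in degrees $-2$, $-1$, $0$, the identification of the middle differential with the canonical factorization $p^{n-1}$, and the harmlessness of the sign $(-1)^n$ coming from the shift) is accurate.
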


\begin{proof}
This follows directly from $\LH^i = \tau^{\leq 0} \circ \tau^{\geq 0} \circ \Sigma^i$.
\end{proof}

Via the embedding $i\colon \EE \to \DD(\EE)$, the category $\EE$ can be considered as a subcategory of the left heart $\mathcal{LH}(\EE)$.  We write $\phi\colon \EE \to \mathcal{LH}(\EE)$ for the corresponding embedding.

\begin{proposition}\label{proposition:EmbeddingToLeftHeartCommutesWithKernels}
The embedding $\phi\colon \EE \to \mathcal{LH}(\EE)$ commutes with kernels.
\end{proposition}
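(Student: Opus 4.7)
The plan is to compute $\ker\phi(f)$ explicitly in $\mathcal{LH}(\EE)$ using the mapping cone of $f$ in $\D(\EE)$ together with the standard cohomology long exact sequence for the left $\operatorname{t}$-structure, and then to apply Proposition~\ref{proposition:DescriptionCohomology} to identify the result with $\phi(\ker f)$.

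Let $f\colon X\to Y$ be a morphism in $\EE$ with kernel $k\colon K\to X$. Form the rotated mapping-cone triangle
\[\Sigma^{-1}\cone(i(f))\longrightarrow i(X)\xrightarrow{i(f)}i(Y)\longrightarrow\cone(i(f))\]
in $\D(\EE)$ and apply the cohomology functors $\LH^n$ from Definition~\ref{Definition:CohomologyFunctors}. Since $i(X)$ and $i(Y)$ are stalk complexes concentrated in degree $0$, we have $\LH^0(i(X))=\phi(X)$, $\LH^0(i(Y))=\phi(Y)$, and $\LH^n(i(X))=\LH^n(i(Y))=0$ for $n\ne 0$. The resulting long exact sequence in $\mathcal{LH}(\EE)$ therefore degenerates to
\[0\longrightarrow\LH^0(\Sigma^{-1}\cone(i(f)))\longrightarrow\phi(X)\xrightarrow{\phi(f)}\phi(Y),\]
which exhibits $\LH^0(\Sigma^{-1}\cone(i(f)))$ as the kernel of $\phi(f)$ in $\mathcal{LH}(\EE)$, together with its canonical monomorphism into $\phi(X)$ induced by the triangle.

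It then remains to identify this kernel with $\phi(K)$. The complex $C^{\bullet}\coloneqq\Sigma^{-1}\cone(i(f))$ is concentrated in degrees $0$ and $1$, with $C^0=X$, $C^1=Y$, $C^{-1}=0$, and $d^0_C=f$; in particular $\ker d^0_C=K$. Proposition~\ref{proposition:DescriptionCohomology} applied at $n=0$ yields
\[\LH^0(C^{\bullet})=\bigl[\,\cdots\to 0\to 0\to K\to 0\to\cdots\,\bigr]\]
with $K$ sitting in degree $0$, that is, the stalk complex $\phi(K)$. Combining the two identifications gives $\ker\phi(f)\cong\phi(K)$ as sub-objects of $\phi(X)$.

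The only detail remaining is to check that the monomorphism $\phi(K)\inflation\phi(X)$ coming from the triangle coincides with $\phi(k)$; this is the point that requires a small amount of bookkeeping, but no genuine difficulty. At the level of complexes, the chain map $\Sigma^{-1}\cone(i(f))\to i(X)$ equals the identity on $X$ in degree $0$ and is zero in degree $1$; restricting to $\ker d^0_C=K\subseteq C^0=X$ yields precisely the inclusion $k\colon K\to X$. Hence the induced map $\phi(K)\to\phi(X)$ in $\mathcal{LH}(\EE)$ is $\phi(k)$, completing the argument. The main (mild) obstacle is simply keeping track of cone and shift conventions; the substance is contained in Proposition~\ref{proposition:DescriptionCohomology} and the general $\operatorname{t}$-structure formalism.
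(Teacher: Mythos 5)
Your proof is correct and is essentially the paper's own argument: the paper likewise takes the cone $C$ of $i(f)$, applies the cohomology functors to obtain the exact sequence $0 \to \LH^{-1}(C) \to \LH^0(iX) \to \LH^0(iY) \to \LH^0(C) \to 0$, and identifies $\LH^{-1}(C) = \tau^{\leq 0}\tau^{\geq 0}\Sigma^{-1}(C) = i(\ker f)$ --- which is exactly your $\LH^0(\Sigma^{-1}\cone(i(f)))$, since $\LH^{-1} = \LH^0 \circ \Sigma^{-1}$ by definition. Your final verification that the induced monomorphism $\phi(\ker f) \to \phi(X)$ is indeed $\phi(k)$ is a detail the paper leaves implicit, and it is a welcome addition.
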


\begin{proof}
Let $f\colon X \to Y$ be any map in $\EE.$  Let $C$ be the cone of the corresponding morphism $i(f)\colon i(X) \to i(Y)$ in $\D(\EE)$.  Applying the cohomology functors $\LH^\bullet$, we find the following exact sequence in $\mathcal{\LH}(\EE):$
\[0 \to \LH^{-1}(C) \to \LH^0(iX) \to \LH^0(iY) \to \LH^0(C) \to 0\text{.}\]
As $C$ is the complex $\cdots \to 0 \to X \xrightarrow{f} Y \to 0 \to \cdots$ (with $Y$ in degree 0), we find that $\LH^{-1}(C) = \tau^{\leq 0} \circ \tau^{\geq 0} \circ \Sigma^{-1} (C) = i(\ker f).$
\end{proof}

\begin{corollary}\label{corollary:GeneralHeart}
An object $C^\bullet \in \D(\EE)$ belongs to the heart $\mathcal{LH}(\EE)$ of the left $\operatorname{t}$-structure if and only if it is isomorphic to a complex of the form
\[\cdots \to 0 \to \ker f \stackrel{k}{\hookrightarrow} X \xrightarrow{f} Y \to 0 \to \cdots\]
with $Y$ in degree 0.  For such an object $C^\bullet$, there is an exact sequence $0 \to \phi(\ker f) \xrightarrow{\phi(k)} \phi(X) \xrightarrow{\phi(f)} \phi(Y) \to C^\bullet \to 0$ in $\mathcal{LH}(\EE).$
\end{corollary}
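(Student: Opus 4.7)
The plan is to prove the two halves of the biconditional, and then separately derive the exact sequence via a long exact sequence argument in the heart.

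For the ``only if'' direction, any object $C^\bullet\in\mathcal{LH}(\EE)$ satisfies $C^\bullet\cong \LH^0(C^\bullet)$ in $\D(\EE)$. By \Cref{proposition:DescriptionCohomology}, $\LH^0(C^\bullet)$ is represented by the three-term complex $\ker d^{-1}\hookrightarrow C^{-1}\to \ker d^0$ with $\ker d^0$ in degree $0$. Setting $X=C^{-1}$, $Y=\ker d^0$, and letting $f\colon X\to Y$ be the factorization of $d^{-1}$ through $\ker d^0$, one checks immediately that $\ker f = \ker d^{-1}$, giving the desired shape. For the converse, given a complex $C^\bullet=(\ker f \xrightarrow{k} X \xrightarrow{f} Y)$ concentrated in degrees $-2,-1,0$, I would apply the alternative description of $\D^{\leq 0}(\EE)$ and $\D^{\geq 0}(\EE)$ from the remark following \Cref{Definition:LefttStructure}. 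The only non-trivial sequences to verify are $\ker d^0\to C^0\to 0$, which equals $Y\xrightarrow{\mathrm{id}} Y\to 0$, and $\ker k\to \ker f\to \ker f$; since $k$ is a monomorphism, $\ker k=0$, so both are split conflations, and all other sequences involve only zero objects.

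For the exact sequence, I would apply the cohomological functors $\LH^\bullet$ to the triangle
\[
\phi(X)\xrightarrow{\phi(f)}\phi(Y)\to C'\to \Sigma\phi(X)
\]
in $\D(\EE)$, where $C'\coloneqq \cone(\phi(f))$ is represented by the two-term complex $X\xrightarrow{f}Y$ concentrated in degrees $-1,0$. Since the stalk complexes $\phi(X)$ and $\phi(Y)$ satisfy $\LH^i(\phi(X))=\LH^i(\phi(Y))=0$ for $i\neq 0$, the associated long exact sequence in $\mathcal{LH}(\EE)$ collapses to
\[
0\to \LH^{-1}(C')\to \phi(X)\xrightarrow{\phi(f)}\phi(Y)\to \LH^0(C')\to 0.
\]
Applying \Cref{proposition:DescriptionCohomology} to $C'$ identifies $\LH^0(C')$ with precisely the three-term complex $\ker f\hookrightarrow X\to Y$, i.e.~with $C^\bullet$, and identifies $\LH^{-1}(C')$ with the stalk $\phi(\ker f)$ in degree $0$. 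Finally, because $\phi$ commutes with kernels by \Cref{proposition:EmbeddingToLeftHeartCommutesWithKernels}, the leftmost map $\phi(\ker f)\to \phi(X)$, being the kernel of $\phi(f)$ in $\mathcal{LH}(\EE)$, must equal $\phi(k)$.

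The main (and only mildly technical) obstacle is the identification $\LH^{-1}(C')\cong \phi(\ker f)$; unpacking the definition $\LH^{-1}=\tau^{\leq 0}\tau^{\geq 0}\Sigma^{-1}$ applied to $C'$ and then invoking \Cref{proposition:DescriptionCohomology} makes this immediate. The remaining checks are routine verifications using the conflation-based descriptions of $\D^{\leq 0}(\EE)$ and $\D^{\geq 0}(\EE)$ already established.
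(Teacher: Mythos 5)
Your proposal is correct and takes essentially the same route as the paper: the biconditional via \Cref{proposition:DescriptionCohomology}, and the exact sequence by applying the cohomology functors $\LH^\bullet$ to the triangle on the cone of $i(f)$ and identifying $\LH^{-1}$ and $\LH^0$ of that cone, exactly as in the paper's proof (which reuses the argument from \Cref{proposition:EmbeddingToLeftHeartCommutesWithKernels}). The only differences are that you spell out routine details the paper leaves implicit, namely the split-conflation checks for the converse direction and the identification of the first map with $\phi(k)$ via uniqueness of kernels.
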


\begin{proof}
If $C^\bullet$ belongs to the heart, then it must be isomorphic to $\LH^0(C^\bullet)$, for some $C^\bullet \in \D(\EE)$.  By \Cref{proposition:DescriptionCohomology}, it is isomorphic to a complex of the required form.  Conversely, it is easy to see that any such complex must be in the heart.

Let $D$ be the cone of the morphism $i(f)\colon i(X) \to i(Y)$ in $\D(\EE)$.  As in the previous proof, we find an exact sequence
\[0 \to \LH^{-1}(D) \to \LH^0(iX) \to \LH^0(iY) \to \LH^0(D) \to 0\]
in $\mathcal{\LH}(\EE)$.  Using the definition of the cohomology functors, we recover the exact sequence $0 \to \phi(\ker f) \xrightarrow{\phi(k)} \phi(X) \xrightarrow{\phi(f)} \phi(Y) \to C^\bullet \to 0$ in $\mathcal{LH}(\EE).$
\end{proof}

\begin{proposition}\label{Proposition:TheEmbeddingToHeartPropertiesNew}
Let $\EE$ be a deflation-exact category with kernels.  Assume that $\EE$ satisfies axiom \ref{R3}.
	\begin{enumerate}
		\item\label{Item:Proposition:TheEmbeddingToHeartProperties1x} The embedding $\phi$ is an exact and fully faithful embedding that reflects conflations.
		\item\label{Item:Proposition:TheEmbeddingToHeartProperties2x} For every object $Z\in \mathcal{LH}(\EE)$, there exists an epimorphism $Y\to Z$ with $Y\in \EE$.
		\item\label{Item:Proposition:TheEmbeddingToHeartProperties3x} The embedding $\phi$ preserves and reflects monomorphisms.
	\end{enumerate}
\end{proposition}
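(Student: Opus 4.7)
The plan is to verify the three items essentially in order, deriving each claim either directly from the factorization $\phi\colon \EE\xrightarrow{i}\D(\EE)\xrightarrow{\LH^0}\mathcal{LH}(\EE)$ or from results already established in the preliminaries.

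For \eqref{Item:Proposition:TheEmbeddingToHeartProperties1x}, I would first note that $i(\EE)\subseteq \D^{\leq 0}(\EE)\cap \D^{\geq 0}(\EE)=\mathcal{LH}(\EE)$, so $\phi$ is just the corestriction of $i$; fully faithfulness is then immediate from \Cref{Theorem:BasicPropertiesDerivedCategory}(1). To see that $\phi$ is exact, take a conflation $A\inflation B\deflation C$ in $\EE$. By \Cref{Theorem:BasicPropertiesDerivedCategory}(3) it induces a triangle $i(A)\to i(B)\to i(C)\to \Sigma i(A)$ in $\D(\EE)$, whose three first terms already lie in the heart, so the standard description of short exact sequences in the heart (the last proposition of \S2.6) gives a short exact sequence $0\to \phi(A)\to \phi(B)\to \phi(C)\to 0$ in $\mathcal{LH}(\EE)$. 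For the reflection property, if $0\to \phi(A)\to \phi(B)\to \phi(C)\to 0$ is short exact in $\mathcal{LH}(\EE)$ then it comes from a triangle $i(A)\to i(B)\to i(C)\to \Sigma i(A)$ in $\D(\EE)$, and this triangle a fortiori lives in $\Db(\EE)$; then \Cref{Proposition:BasicPropertiesDerivedCategoryNew}(1), which crucially uses axiom \ref{R3}, forces $A\to B\to C$ to be a conflation in $\EE$.

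For \eqref{Item:Proposition:TheEmbeddingToHeartProperties2x}, I would simply invoke \Cref{corollary:GeneralHeart}: any $Z\in \mathcal{LH}(\EE)$ is isomorphic to a complex of the form $\cdots\to 0\to \ker f\hookrightarrow X\xrightarrow{f} Y\to 0\to \cdots$ with $Y$ in degree $0$, and the corollary produces an exact sequence ending in $\phi(Y)\to Z\to 0$. The map $\phi(Y)\to Z$ is then the required epimorphism with source in $\EE$.

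For \eqref{Item:Proposition:TheEmbeddingToHeartProperties3x}, the key input is \Cref{proposition:EmbeddingToLeftHeartCommutesWithKernels}: $\phi$ commutes with kernels. Given a monomorphism $f\colon X\to Y$ in $\EE$, we have $\ker f=0$, hence $\ker \phi(f)=\phi(\ker f)=\phi(0)=0$ in $\mathcal{LH}(\EE)$, so $\phi(f)$ is mono. Conversely, if $\phi(f)$ is a monomorphism, then $\phi(\ker f)\cong \ker \phi(f)=0$, and fully faithfulness of $\phi$ gives $\mathrm{id}_{\ker f}=0$, so $\ker f=0$ and $f$ is a monomorphism in $\EE$.

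The only nontrivial step is the conflation-reflection in \eqref{Item:Proposition:TheEmbeddingToHeartProperties1x}, which is where the hypothesis \ref{R3} is genuinely needed (to invoke \Cref{Proposition:BasicPropertiesDerivedCategoryNew}(1)); the remaining parts are essentially formal consequences of the definition of $\phi$, the description of the heart, and \Cref{proposition:EmbeddingToLeftHeartCommutesWithKernels}.
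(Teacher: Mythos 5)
Your proposal is correct and follows essentially the same route as the paper: fully faithfulness and exactness from \Cref{Theorem:BasicPropertiesDerivedCategory}, reflection of conflations from \Cref{Proposition:BasicPropertiesDerivedCategoryNew}(1) (where axiom \ref{R3} enters), item (2) from the exact sequence in \Cref{corollary:GeneralHeart}, and item (3) from \Cref{proposition:EmbeddingToLeftHeartCommutesWithKernels}. The only cosmetic difference is in the reflection of monomorphisms, where the paper invokes the standard fact that fully faithful functors reflect monomorphisms, while you argue via vanishing of the kernel; both are immediate.
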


\begin{proof}
	\begin{enumerate}
		\item By \Cref{Theorem:BasicPropertiesDerivedCategory}, the embedding $\phi$ is fully faithful and exact.  \Cref{Proposition:BasicPropertiesDerivedCategoryNew} now shows that $\phi$ reflects exactness.
		\item This follows directly from the exact sequence in \Cref{corollary:GeneralHeart}.
		


		\item As $\phi$ is fully faithful, it reflects monomorphisms.  As $\phi$ commutes with kernels, it also preserves monomorphisms. \qedhere
	\end{enumerate}
\end{proof}

\begin{theorem}\label{Theorem:EmbeddingInHeartIsTriangleEquivalence}
Let $\EE$ be a deflation-exact category with kernels and assume that $\EE$ satisfies axiom \ref{R3}. 	The category $\EE$ is a uniformly preresolving subcategory of $\mathcal{LH}(\EE)$ with $\resdim_{\EE}(\mathcal{LH}(\EE))\leq 2$.  Consequently, the embedding lifts to a triangle equivalence $\Phi\colon\Dstar(\EE)\to \Dstar(\mathcal{LH}(\EE))$ for $*\in \{-,b,\emptyset\}$.
\end{theorem}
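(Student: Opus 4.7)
The plan is to prove that $\EE$, embedded via $\phi\colon \EE \to \mathcal{LH}(\EE)$, is a uniformly preresolving subcategory of the abelian category $\mathcal{LH}(\EE)$ with $\resdim_\EE(\mathcal{LH}(\EE)) \leq 2$, and then invoke \Cref{Theorem:PreResolvingDerivedEquivalence} to obtain the stated triangle equivalences at all three boundedness levels. So I would verify axioms \ref{PR1} and \ref{PR2} of a preresolving subcategory and exhibit a uniformly bounded $\EE$-resolution of every object of $\mathcal{LH}(\EE)$.

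For \ref{PR1}, I would simply quote \Cref{Item:Proposition:TheEmbeddingToHeartProperties2x}: every $Z \in \mathcal{LH}(\EE)$ admits an epimorphism $\phi(Y) \twoheadrightarrow Z$ with $Y \in \EE$, and since $\mathcal{LH}(\EE)$ is abelian, every epimorphism is a deflation. For \ref{PR2}, I would take a conflation $A \inflation B \deflation C$ in $\mathcal{LH}(\EE)$ (that is, a short exact sequence) with $B = \phi(B')$ and $C = \phi(C')$ in $\phi(\EE)$; then $A$ is the kernel of $\phi(B') \to \phi(C')$ computed in $\mathcal{LH}(\EE)$, and by \Cref{proposition:EmbeddingToLeftHeartCommutesWithKernels} this kernel is $\phi(\ker(B' \to C'))$, so $A \in \phi(\EE)$. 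This establishes deflation-closedness.

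To control the resolution dimension, I would fix $Z \in \mathcal{LH}(\EE)$ and apply \Cref{corollary:GeneralHeart}: $Z$ is represented by a three-term complex $\ker f \hookrightarrow X \xrightarrow{f} Y$ with all terms in $\EE$, and the corollary provides an exact sequence
\[ 0 \to \phi(\ker f) \to \phi(X) \to \phi(Y) \to Z \to 0 \]
in $\mathcal{LH}(\EE)$, which is precisely a length-$2$ $\EE$-resolution of $Z$. Hence $\resdim_\EE(\mathcal{LH}(\EE)) \leq 2$, so $\EE$ is uniformly preresolving in $\mathcal{LH}(\EE)$, and \Cref{Theorem:PreResolvingDerivedEquivalence} yields the triangle equivalences $\Dstar(\EE) \xrightarrow{\simeq} \Dstar(\mathcal{LH}(\EE))$ for $\ast \in \{-,b,\varnothing\}$. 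No real obstacle arises here, since all substantive content has been absorbed into the preceding propositions; the only conceptual point worth flagging is that \Cref{corollary:GeneralHeart} delivers exactly the length-$2$ resolution that the definition of $\resdim$ demands, so no further refinement of representatives is needed.
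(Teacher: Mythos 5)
Your proof is correct and takes essentially the same route as the paper: deflation-closedness (axiom \ref{PR2}) via \Cref{proposition:EmbeddingToLeftHeartCommutesWithKernels}, axiom \ref{PR1} and the bound $\resdim_{\EE}(\mathcal{LH}(\EE))\leq 2$ via the four-term exact sequence of \Cref{corollary:GeneralHeart}, and then \Cref{Theorem:PreResolvingDerivedEquivalence}. The only cosmetic difference is that you quote \Cref{Proposition:TheEmbeddingToHeartPropertiesNew}\eqref{Item:Proposition:TheEmbeddingToHeartProperties2x} for \ref{PR1}, which is itself an immediate consequence of the same corollary the paper cites directly.
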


\begin{proof}
	By \Cref{proposition:EmbeddingToLeftHeartCommutesWithKernels}, we know that $\EE\subseteq \mathcal{LH}(\EE)$ is deflation-closed (hence, axiom \ref{PR2} is satisfied).  \Cref{corollary:GeneralHeart} implies that axiom \ref{PR1} is satisfied, as well as $\resdim_{\EE}(\mathcal{LH}(\EE))\leq 2$.  Hence, $\EE\subseteq \mathcal{LH}(\EE)$ is uniformly preresolving. That $\phi$ lifts to a derived equivalence now follows from \Cref{Theorem:PreResolvingDerivedEquivalence}.
\end{proof}

\begin{proposition}\label{proposition:CharacterizationOfHearts}
Let $\AA$ be an abelian category.  Let $\EE \subseteq \AA$ be a full subcategory satisfying condition \ref{PR1} (thus, every object in $\AA$ is a quotient of an object in $\EE$).  If for any morphism $f$ in $\EE$, we have $\ker f \in \EE$, then $\mathcal{LH}(\EE) \simeq \AA$.
\end{proposition}

\begin{proof}
As $\EE$ satisfies axiom \ref{PR1} and is closed under kernels, we find that $\EE$ is a uniformly preresolving subcategory of $\AA$ (with $\resdim_\EE(\AA) \leq 2$).  By \Cref{Proposition:DeflationClosed}, we know that $\EE$ is a strongly deflation-exact category.  It now follows from \Cref{Theorem:PreResolvingDerivedEquivalence} that the natural functor $\D(\EE) \to \D(\AA)$ is an equivalence.  In particular, every complex with terms in $\AA$ is quasi-isomorphic to a complex with terms in $\EE$.  Using the explicit form of the truncation functors on $\D(\EE)$ from \Cref{Definition:Truncations}, we see that the equivalence $\D(\EE) \to \D(\AA)$ maps the left $\operatorname{t}$-structure on $\D(\EE)$ to the standard $\operatorname{t}$-structure on $\D(\AA)$.  This now gives the equivalence $\mathcal{LH}(\EE) \simeq \AA$.
\end{proof}

\subsection{Universal properties of the left heart}  The left heart of a strongly deflation-exact category $\EE$ with kernels can be characterized via a universal property.  The first universal property we give is analogous to \cite[Proposition~12]{Rump01}.

\begin{proposition}\label{Proposition:UniversalPropertyOfLeftHeart}
	The embedding $\phi\colon \EE\to \mathcal{LH}(\EE)$ is 2-universal among conflation-exact functors to abelian categories that preserve kernels, that is to say, the functor $-\circ \phi\colon  \Funex(\mathcal{LH}(\EE), \AA) \to \Funex(\EE, \AA)$ is a fully faithful functor whose essential image consists of those functors $\EE \to \AA$ that preserve kernels.  Here, $\Funex(-,-)$ stands for the category of conflation-exact functors.
\end{proposition}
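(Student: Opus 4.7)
The plan is to handle both assertions using the 3-term resolutions of objects of $\mathcal{LH}(\EE)$ from \Cref{corollary:GeneralHeart}, together with the derived equivalence $\Phi\colon\Dm(\EE)\xrightarrow{\simeq}\Dm(\mathcal{LH}(\EE))$ of \Cref{Theorem:EmbeddingInHeartIsTriangleEquivalence}. The containment of the essential image in the class of kernel-preserving functors is immediate: any exact $G\colon\mathcal{LH}(\EE)\to\AA$ preserves kernels, and by \Cref{proposition:EmbeddingToLeftHeartCommutesWithKernels} so does $\phi$.

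For the converse, fix a conflation-exact $F\colon\EE\to\AA$ preserving kernels. I would first verify that the chain functor $\K^-(F)\colon\K^-(\EE)\to\K^-(\AA)$ sends acyclic complexes to acyclic complexes: each factorization $X^{n-1}\twoheadrightarrow\ker d^n\hookrightarrow X^n$ entering \Cref{definition:AcylicComplex} consists of the deflation part of a conflation (preserved by $F$) and a kernel (also preserved), so the image complex again satisfies the definition. Hence $F$ descends to a triangle functor $\Dm(F)\colon\Dm(\EE)\to\Dm(\AA)$, and composition with $\Phi^{-1}$ produces a triangle functor $T\coloneqq\Dm(F)\circ\Phi^{-1}\colon\Dm(\mathcal{LH}(\EE))\to\Dm(\AA)$.

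The key step is to show that $T$ carries $\mathcal{LH}(\EE)\hookrightarrow\Dm(\mathcal{LH}(\EE))$ into $\AA\hookrightarrow\Dm(\AA)$, so that $\overline{F}\coloneqq T|_{\mathcal{LH}(\EE)}$ is a well-defined functor $\mathcal{LH}(\EE)\to\AA$. Given $Z\in\mathcal{LH}(\EE)$ with resolution $0\to\phi(A)\to\phi(B)\xrightarrow{\phi(f)}\phi(C)\to Z\to 0$ as in \Cref{corollary:GeneralHeart} (so $A=\ker f$ in $\EE$), the complex $[A\to B\to C]$ concentrated in degrees $-2,-1,0$ satisfies $\Phi[A\to B\to C]=[\phi(A)\to\phi(B)\to\phi(C)]$, which is quasi-isomorphic to $Z$ by the resolution; hence $\Phi^{-1}(Z)\cong[A\to B\to C]$. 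Applying $\Dm(F)$ yields $[F(A)\to F(B)\to F(C)]$, and since $F(A)=\ker F(f)$ by kernel preservation, this complex has cohomology concentrated in degree $0$ and equal to $\coker F(f)$. Thus $\overline{F}(Z)\cong\coker F(f)\in\AA$.

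Exactness of $\overline{F}$ then falls out: a short exact sequence in $\mathcal{LH}(\EE)$ is a triangle in $\Dm(\mathcal{LH}(\EE))$, mapped by the triangle functor $T$ to a triangle in $\Dm(\AA)$ whose three vertices already lie in $\AA$, hence a short exact sequence in $\AA$. Restriction along $\phi$ recovers $F$ since $\Phi^{-1}\phi(E)\cong i(E)$. For full faithfulness of $-\circ\phi$, given exact $G_1,G_2\colon\mathcal{LH}(\EE)\to\AA$ and a natural transformation $\alpha\colon G_1\phi\to G_2\phi$, the same resolution presents $G_i(Z)$ as the cokernel of $G_i(\phi B)\to G_i(\phi C)$, so the pair $(\alpha_{\phi B},\alpha_{\phi C})$ induces a unique $\widetilde{\alpha}_Z\colon G_1(Z)\to G_2(Z)$ compatible with $\alpha$, and naturality of $\widetilde{\alpha}$ follows by standard diagram chasing. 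The main obstacle is the verification that $T$ lands in $\AA$ on the heart; this rests critically on $F$ preserving kernels (to annihilate the potential $H^{-1}$) together with the explicit three-term shape of the resolution.
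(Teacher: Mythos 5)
Your construction of the lift is essentially the paper's own: the paper also derives $F$ pointwise to a triangle functor $\Db(\EE)\to\Db(\AA)$ and uses kernel-preservation to conclude that the left heart is carried into the standard heart of $\Db(\AA)$, then composes with $\operatorname{H}^0$; your explicit computation with the three-term resolution of \Cref{corollary:GeneralHeart} is precisely the verification the paper leaves implicit (routing through $\Phi^{-1}$ rather than through the inclusion $\mathcal{LH}(\EE)\subseteq\Db(\EE)$ is a cosmetic difference). Your faithfulness argument is also the paper's: every object of the heart receives an epimorphism from an object of $\phi(\EE)$, so a natural transformation is determined by its restriction.

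Where you genuinely diverge is fullness, and there your proof has a gap as written. You define $\widetilde{\alpha}_Z$ from a chosen presentation $\phi(B)\to\phi(C)\to Z\to 0$ and assert naturality ``by standard diagram chasing.'' The standard chase would lift a morphism $Z\to Z'$ to a morphism of the chosen presentations, but that is exactly what is unavailable here: the objects $\phi(C)$ are not projective in $\mathcal{LH}(\EE)$, so a map $\phi(C)\to Z'$ need not factor through the chosen epimorphism $\phi(C')\twoheadrightarrow Z'$. The argument can be repaired: given $h\colon\phi(C)\to Z'$, form the pullback $P=\phi(C)\times_{Z'}\phi(C')$ in the abelian category $\mathcal{LH}(\EE)$, choose an epimorphism $\phi(D)\twoheadrightarrow P$ with $D\in\EE$ (using \Cref{Proposition:TheEmbeddingToHeartPropertiesNew}), and chase through $\phi(D)$, using that $G_1$ preserves epimorphisms; this simultaneously yields independence of the presentation and naturality, and hence also well-definedness of $\widetilde{\alpha}$. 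The paper sidesteps all of this with a slicker device: it applies the already-established lifting statement to the arrow category $\AA^{[1]}$, so that a natural transformation $G_1\phi\Rightarrow G_2\phi$ is itself a kernel-preserving conflation-exact functor $\EE\to\AA^{[1]}$ and lifts for free. Your route is more elementary but needs the pullback-and-cover step spelled out; the paper's route buys naturality with no diagram chase, at the cost of the routine observation that kernels and conflations in $\AA^{[1]}$ are computed componentwise.
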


\begin{proof}
The required fully faithful functor is given by this diagram:
\[\xymatrix{\EE \ar[r] \ar[d]_{\phi} & \AA \\ {\mathcal{LH}(\EE)} \ar[ru]}\]
Let $F\colon \EE \to \AA$ be any exact functor.  Deriving this functor gives a triangle functor $\overline{F}\colon \Db(\EE) \to \Db(\AA)$, given by taking a complex $E^\bullet \in \Db(\EE)$ and applying $F$ pointwise.  As $F$ preserves kernels, it maps the heart $\mathcal{LH}(\EE)$ of the left $\operatorname{t}$-structure to the heart of the standard $\operatorname{t}$-structure on $\Db(\AA).$  That is, $\overline{F}$ restricts to an exact functor $\mathcal{LH}(\EE) \to \AA.$  Thus, for any exact $F\colon \EE \to \AA$, we find an exact functor $\mathcal{LH}(\EE) \to \Db(\EE) \to \Db(\AA) \xrightarrow{\operatorname{H}^0} \AA.$  This shows that $F$ can be lifted to $\mathcal{LH}(\EE) \to \AA$ along $\phi.$

To see that the functor $-\circ \phi\colon  \Funex(\mathcal{LH}(\EE), \AA) \to \Funex(\EE, \AA)$ is faithful, it suffices to see that every natural transformation $F \Rightarrow G$ between functors $F,G\colon \mathcal{LH}(\EE) \to \AA$ is completely determined by the restriction $F \circ \phi \Rightarrow G \circ \phi$.  This is true since, by \Cref{Theorem:EmbeddingInHeartIsTriangleEquivalence}, every object $X \in \mathcal{LH}(\EE)$ has a resolution $A \inflation B \to C \deflation X$ with $A,B, C \in \EE.$

Finally, we show that $-\circ \phi\colon  \Funex(\mathcal{LH}(\EE), \AA) \to \Funex(\EE, \AA)$ is full.  For this, we consider the arrow category $\AA^{[1]}$ of $\AA$, that is, the objects of $\AA^{[1]}$ are arrows $A \to B$ in $\AA$ and morphisms are given by commutative diagrams.  An exact functor $\EE \to \AA^{[1]}$ is given by two exact functors $F,G\colon \EE \to \AA$, together with a natural transformation $\eta\colon F \Rightarrow G$; indeed, given such $\eta\colon F \Rightarrow G$, we construct a functor $E \mapsto (\eta_E\colon F(E) \to G(E)).$  The fact that $-\circ \phi\colon  \Funex(\mathcal{LH}(\EE), \AA) \to \Funex(\EE, \AA)$ is full follows from the lifting property of $-\circ \phi\colon  \Funex(\mathcal{LH}(\EE), \AA^{[1]}) \to \Funex(\EE, \AA^{[1]}).$%
\end{proof}

\begin{remark}
	If $\EE$ is left quasi-abelian, the above proposition and \cite[Proposition~12]{Rump01} imply that $\mathcal{LH}(\EE)\simeq Q_l(\EE)$ where $Q_l(\EE)$ is the left abelian cover as defined in \cite{Rump01}. 
\end{remark}

The following proposition is a generalization of \cite[Proposition~1.2.34]{Schneiders99}.  For conflation categories $\EE, \FF$, we write $\Rex(\EE, \FF)$ for the category of right exact functors $\EE \to \FF$.

\begin{proposition}\label{proposition:RightExactUniversal}
Let $\EE$ be a strongly deflation-exact category with kernels.  For any abelian category $\AA$, the inclusion functor $\phi\colon \EE \to \mathcal{LH}(\EE)$ induces an equivalence of categories
\[\phi'\colon \Rex(\mathcal{LH}(\EE), \AA) \to \Rex(\EE, \AA).\]
Under this equivalence, conflation-exact functors correspond to conflation-exact functors.
\end{proposition}

\begin{proof}
The proof of \cite[Proposition~1.2.34]{Schneiders99} carries over to this setting.  We only note that, since $\AA$ is abelian, that a right exact functor $\EE \to \AA$ maps admissible morphisms to admissible morphisms.
\end{proof}

\begin{remark}
If $\EE$ is an exact category with kernels, then \Cref{proposition:RightExactUniversal} shows that $\phi\colon \EE \to \mathcal{LH}(\EE)$ is the right abelian envelope of $\EE$ in the sense of \cite[Definition~4.2]{BodzentaBondal20}.
\end{remark}

\subsection{The left heart as a localization}  Our final result in this section is a description of the left heart of $\EE$ as a quotient of the category $\mod(\EE)$.  To describe this quotient, we first recall the notion of an effaceable functor.

\begin{definition}\label{Definition:EffaceableFunctors}
Let $\EE$ be a deflation-exact category.  We say that an object $M \in \smod(\EE)$ is \emph{effaceable} if $M \cong \coker \Yoneda(f)$ for a deflation $f$ in $\EE$.  We write $\eff(\EE)$ for the category of effaceable functors.
\end{definition}

\begin{proposition}\label{proposition:EffaceableSerre}
Let $\EE$ be a strongly deflation-exact category.  If $\EE$ has kernels, then the category $\eff(\EE)$ is a Serre subcategory of $\mod(\EE)$.
\end{proposition}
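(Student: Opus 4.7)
Recall that a Serre subcategory of an abelian category is a full additive subcategory closed under subobjects, quotients, and extensions. I would verify these three closure properties one at a time, exploiting the fact that $\EE$ has kernels (so $\Yoneda$ preserves them and every projective in $\mod(\EE)$ is a $\Yoneda(E)$) together with axioms \ref{R2} and \ref{R3}.

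\textbf{Quotients.} Suppose $M = \coker\Yoneda(f)$ with $f\colon A \deflation B$ a deflation, and let $M \twoheadrightarrow N$ be an epimorphism in $\mod(\EE)$. Since $\Yoneda(B) \twoheadrightarrow M \twoheadrightarrow N$ is epi and $N$ is finitely presented, there is a finite presentation of the form $\Yoneda(C) \xrightarrow{\Yoneda(g)} \Yoneda(B) \to N \to 0$ with $g\colon C \to B$ in $\EE$. Because $\Yoneda(A) \to \Yoneda(B) \to N$ is zero, projectivity of $\Yoneda(A)$ together with exactness yields a lift $h\colon A \to C$ in $\EE$ with $g \circ h = f$. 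Since $g$ admits a kernel (as $\EE$ has kernels) and $g \circ h = f$ is a deflation, axiom \ref{R3} implies that $g$ is a deflation, so $N = \coker\Yoneda(g) \in \eff(\EE)$.

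\textbf{Subobjects.} Suppose $N \hookrightarrow M$ with $M = \coker\Yoneda(f\colon A \deflation B)$. Choose an epimorphism $\Yoneda(E) \twoheadrightarrow N$ from a projective; projectivity lifts the composite $\Yoneda(E) \to M$ to some $u\colon E \to B$. By axiom \ref{R2}, the pullback of $f$ along $u$ gives a deflation $f'\colon P \deflation E$ sitting in a pullback square. Applying $\Yoneda$ and passing to cokernels of the vertical arrows yields an induced morphism $\eta\colon \coker\Yoneda(f') \to \coker\Yoneda(f) = M$ whose image is exactly $\mathrm{im}(\Yoneda(E) \to M) = N$. To conclude, I would invoke \Cref{Lemma:TheFamousDiagramChase} applied to the pullback square: in that setting the auxiliary object $E_{\text{lem}}$ constructed in the lemma coincides with $P$, the map $\beta''$ is the identity, and the kernel of $\eta$ is the cokernel of $\Yoneda(\begin{psmallmatrix} k' & 1 \end{psmallmatrix})$, which vanishes as the matrix is a split epimorphism. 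Hence $\eta$ is a monomorphism and $N \cong \coker\Yoneda(f') \in \eff(\EE)$.

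\textbf{Extensions.} Given $0 \to M' \to M \to M'' \to 0$ with $M' = \coker\Yoneda(f'\colon A' \deflation B')$ and $M'' = \coker\Yoneda(f''\colon A'' \deflation B'')$, the horseshoe lemma in the abelian category $\mod(\EE)$ produces a projective presentation $\Yoneda(A' \oplus A'') \xrightarrow{\phi} \Yoneda(B' \oplus B'') \to M \to 0$ in which $\phi$ is block lower-triangular with $\Yoneda(f')$ and $\Yoneda(f'')$ on the diagonal. Full faithfulness of $\Yoneda$ gives $\phi = \Yoneda(f)$ with $f = \begin{psmallmatrix} f' & \psi \\ 0 & f'' \end{psmallmatrix}$ for some $\psi\colon A'' \to B'$ in $\EE$. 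Factoring
\[
f = \bigl(1_{B'} \oplus f''\bigr) \circ \Bigl(\begin{psmallmatrix} 1 & \psi \\ 0 & 1 \end{psmallmatrix} \circ (f' \oplus 1_{A''})\Bigr),
\]
both outer factors are deflations (direct sums of deflations with identities) and the middle matrix is an isomorphism, so $f$ is a deflation by \ref{R1}, whence $M = \coker\Yoneda(f) \in \eff(\EE)$.

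The subtlest step is the subobject closure, since it is the one place where axiom \ref{R2} is genuinely needed and where a nontrivial monomorphism in $\mod(\EE)$ has to be produced; everything hinges on recognizing the pullback square as the input to \Cref{Lemma:TheFamousDiagramChase}.
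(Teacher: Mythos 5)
Your proof is correct and follows essentially the same route as the paper's (sketched) proof: extensions via the Horseshoe Lemma and subobjects via \Cref{Lemma:TheFamousDiagramChase}, exactly as the paper indicates. Your only deviation is the quotient case, where you lift presentations over a common projective cover and apply axiom \ref{R3} directly rather than extracting the conclusion from \Cref{Lemma:TheFamousDiagramChase}; this is a harmless variant, since the paper's route also ends by applying \ref{R3} to the pulled-back morphism $g'$ appearing in that lemma.
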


\begin{proof}
	This is similar to \cite[Lemma~2.3]{Ogawa19}. Alternatively, using the Horseshoe lemma, one readily verifies that $\eff(\EE)$ is extension-closed in $\Mod(\EE)$.  It follows from \Cref{Lemma:TheFamousDiagramChase} that $\eff(\EE)$ is closed under subobjects and quotients in $\smod(\EE)$.
\end{proof}

We start with the embedding $\phi\colon \EE \to \mathcal{LH(\EE)}$.  By the universal property of $\mod(\EE)$, we find a natural functor $\overline{\phi}\colon \mod(\EE) \to \mathcal{LH(\EE)}.$

\begin{theorem}\label{Theorem:AlternativeDescriptionOfLeftHeart}
Let $\EE$ be strongly deflation-exact category with kernels.  The natural right exact functor $\overline{\phi}\colon \mod(\EE) \to \mathcal{LH(\EE)}$ extending $\phi\colon \EE \to \mathcal{LH(\EE)}$ induces an equivalence $\mod(\EE) / \eff(\EE) \to \mathcal{LH(\EE)}$
\end{theorem}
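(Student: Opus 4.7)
The plan is to check that $\widetilde{\phi}$ is exact, essentially surjective, and fully faithful. Since $\phi$ commutes with kernels by \Cref{proposition:EmbeddingToLeftHeartCommutesWithKernels}, \Cref{proposition:UniversalLiftExact} makes $\overline{\phi}\colon \mod(\EE) \to \mathcal{LH}(\EE)$ exact. The conflation-exactness of $\phi$ (\Cref{Proposition:TheEmbeddingToHeartPropertiesNew}) sends each deflation $g$ to an epimorphism in $\mathcal{LH}(\EE)$, so $\overline{\phi}(\coker \Yoneda(g)) = \coker \phi(g) = 0$. Thus $\overline{\phi}$ vanishes on the Serre subcategory $\eff(\EE)$ (\Cref{proposition:EffaceableSerre}) and factors through the desired exact functor $\widetilde{\phi}\colon \mod(\EE)/\eff(\EE) \to \mathcal{LH}(\EE)$.

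Essential surjectivity is immediate from \Cref{corollary:GeneralHeart}: every $X \in \mathcal{LH}(\EE)$ is isomorphic to $\coker \phi(f) = \widetilde{\phi}(\coker \Yoneda(f))$ for some $f$ in $\EE$. For faithfulness, exactness of $\widetilde{\phi}$ reduces the claim to $\ker \overline{\phi} \subseteq \eff(\EE)$. If $M = \coker \Yoneda(f\colon A \to B)$ and $\overline{\phi}(M) = 0$, the exact sequence of \Cref{corollary:GeneralHeart} collapses to a short exact sequence $0 \to \phi(\ker f) \to \phi(A) \to \phi(B) \to 0$ in $\mathcal{LH}(\EE)$. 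Via the derived equivalence of \Cref{Theorem:EmbeddingInHeartIsTriangleEquivalence} this is a triangle $\ker f \to A \to B \to \Sigma(\ker f)$ in $\Db(\EE)$, so \Cref{Proposition:BasicPropertiesDerivedCategoryNew}(1) together with axiom \ref{R3} forces $\ker f \to A \to B$ to be a conflation in $\EE$; hence $f$ is a deflation and $M \in \eff(\EE)$.

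The main obstacle is fullness. The plan is to lift morphisms in $\mathcal{LH}(\EE)$ to zigzags of chain maps via the derived equivalence of \Cref{Theorem:EmbeddingInHeartIsTriangleEquivalence}. Given $\alpha\colon \overline{\phi}(M) \to \overline{\phi}(N)$ with presentations $M = \coker \Yoneda(f\colon A \to B)$ and $N = \coker \Yoneda(g\colon C \to D)$, the objects $\overline{\phi}(M)$ and $\overline{\phi}(N)$ are represented by the three-term complexes $\tilde{M} = [\ker f \to A \to B]$ and $\tilde{N} = [\ker g \to C \to D]$ concentrated in degrees $-2, -1, 0$ (via \Cref{corollary:GeneralHeart}). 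Any morphism between them in $\Db(\EE)$ is a roof of chain maps $\tilde{M} \xleftarrow{\sigma} X \xrightarrow{\tau} \tilde{N}$ with $\sigma$ a quasi-isomorphism; after applying truncations one may assume $X$ is also concentrated in degrees $[-2, 0]$. Restricting $\sigma$ and $\tau$ to degrees $(-1, 0)$ and applying the Yoneda-cokernel construction produces a roof $M \xleftarrow{s} M' \xrightarrow{h} N$ in $\mod(\EE)$. The crucial, and most delicate, step is to show that $\ker s$ and $\coker s$ are effaceable; this follows by a diagram chase using \Cref{Lemma:TheFamousDiagramChase}, translating the acyclicity of the cone of $\sigma$ into the effaceability of the discrepancy between $M$ and $M'$. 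That this roof realizes $\alpha$ after passing to $\mod(\EE)/\eff(\EE)$ is then a formal consequence of the compatibility of the Yoneda cokernel with the passage to the heart established in the earlier paragraphs.
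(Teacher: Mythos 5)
Your proposal takes a genuinely different route from the paper. The paper never verifies essential surjectivity or full faithfulness directly: it shows that $Q\circ\Yoneda\colon\EE\to\smod(\EE)/\eff(\EE)$ is conflation-exact and kernel-preserving, and that any kernel-preserving conflation-exact functor $F\colon\EE\to\AA$ lifts (via \Cref{theorem:UniversalFreyd} and \Cref{proposition:UniversalLiftExact}) to an exact functor on $\smod(\EE)$ killing $\eff(\EE)$, hence factors through the quotient; thus $Q\circ\Yoneda$ satisfies the same $2$-universal property that \Cref{Proposition:UniversalPropertyOfLeftHeart} establishes for $\phi\colon\EE\to\mathcal{LH}(\EE)$, and the equivalence follows from uniqueness of $2$-universal objects. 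That route is short because all the derived-category work is already packaged in \Cref{Proposition:UniversalPropertyOfLeftHeart}; your route produces the equivalence explicitly, morphism by morphism. Your first four steps (exactness of $\overline{\phi}$, vanishing on $\eff(\EE)$, essential surjectivity, faithfulness) are complete and correct; in particular the reduction of faithfulness to ``$\overline{\phi}(M)=0$ implies $M\in\eff(\EE)$'' and the use of axiom \ref{R3} through \Cref{Proposition:BasicPropertiesDerivedCategoryNew}(1) to recognize $\ker f\to A\to B$ as a conflation is exactly the right argument.

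Fullness, however, is where all the real work of your route sits, and as written it is a plan rather than a proof. Two assertions are left unexecuted. First, the effaceability of $\ker s$ and $\coker s$: this is in fact provable as you indicate. After truncating the apex above degree $0$ (note that the chain-level truncation $X^\bullet\to\tau^{\geq -2}X^\bullet$ points the wrong way, so you cannot also truncate below at the chain level --- but you do not need to), acyclicity of $\cone(\sigma)$ in degrees $0$ and $-1$ says precisely that the map $X^0\oplus A\to B$ induced by $\sigma^0$ and $f$ is a deflation, and that the induced map $\ker f\oplus X^{-1}\to E$ into the pullback $E=X^0\times_B A$ is a deflation; applying \Cref{Lemma:TheFamousDiagramChase} to the square formed by $(\sigma^{-1},\sigma^0)$ between $d_X^{-1}$ and $f$, these two maps are exactly the ones whose Yoneda cokernels compute $\ker(s)$ and $\coker(s)$, which are therefore effaceable. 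Second, the claim that the roof $M\xleftarrow{s}M'\xrightarrow{h}N$ realizes $\alpha$ is \emph{not} ``a formal consequence of the earlier paragraphs'': nothing you established earlier supplies the required naturality of the identification $\overline{\phi}(\coker\Yoneda(g))\cong[\ker g\to Y\to Z]$ of \Cref{corollary:GeneralHeart} with respect to chain maps of two-term complexes (equivalently, that $\overline{\phi}$ of the induced map on Yoneda cokernels agrees with $\LH^0$ of the chain map under these identifications). This naturality does hold --- the identification is $\LH^0$ of the chain-level cone, and cones of chain maps are functorial at the chain level --- but it must be stated and checked, since without it the roof you construct is not yet known to map to $\alpha$. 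So: right strategy, correct choice of key lemma, but the decisive verifications are asserted rather than carried out.
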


\begin{proof}
Write $Q\colon \smod(\EE)\to \smod(\EE)/\eff(\EE)$ for the quotient functor.  We show that $Q\circ \Upsilon\colon \EE\to \smod(\EE)/\eff(\EE)$ satisfies the universal property of $\phi\colon \EE\to \mathcal{LH}(\EE)$ given in \Cref{Proposition:UniversalPropertyOfLeftHeart}. 

Let $F\colon \EE \to \AA$ be a conflation-exact functor, preserving kernels, to an abelian category $\AA$.  We consider the lift $\overline{F}\colon \mod(\EE) \to \AA$ given by the universal property (\Cref{theorem:UniversalFreyd}).  As $F$ commutes with kernels, we know that $\overline{F}$ is exact (\Cref{proposition:UniversalLiftExact}).

Since $\overline{F}(\eff(\EE))\cong 0$, we find that $\overline{F}$ factors through $Q\colon \smod(\EE)\to\smod(\EE)/\eff(\EE)$. It remains to show that $Q\circ \Upsilon$ preserves kernels and conflations.  As both $Q$ and $\Yoneda$ commute with kernels, so does the composition $Q\circ \Upsilon$.  To see that $Q\circ \Upsilon$ preserves conflations, consider a conflation $X \inflation Y \stackrel{f}{\deflation} Z$ in $\EE.$  As the Yoneda functor is left exact, we find an exact complex $0 \to \Yoneda(X) \to \Yoneda(Y) \stackrel{\Yoneda(f)}{\deflation} \Yoneda(Z) \to \coker \Yoneda(f) \to 0$ in $\smod \EE$.  As $\coker \Yoneda(f) \in \eff(\EE)$, we have $Q(\coker \Yoneda(f)) = 0.$  As $Q\colon \mod \EE \to \smod(\EE)/\eff(\EE)$ is exact, we find that $Q\circ \Upsilon$ applied to the conflation $X \inflation Y \deflation Z$ gives a conflation (=short exact sequence) in $\smod(\EE)/\eff(\EE)$.  This shows that $Q\circ \Upsilon$ preserves conflations.%
%
\end{proof}

\begin{remark}
	In \cite[Theorem~2.9]{Ogawa19}, Ogawa shows that $\smod(\EE)/\eff(\EE)\simeq \text{lex}(\EE)$ for any extriangulated category $\EE$ with weak kernels (any exact category is extriangulated in the sense of \cite{NakaokaPalu19}). In \cite[Theorem~6.11]{Fiorot19}, Fiorot shows that \Cref{Theorem:AlternativeDescriptionOfLeftHeart} holds for any ($n$-)quasi-abelian category $\EE$. Hence, for any quasi-abelian category $\EE$, we have the following equivalent characterizations of the left heart $\mathcal{LH}(\EE)$:
	\[\mathcal{LH}(\EE)\simeq \smod(\EE)/\eff(\EE)\simeq \text{lex}(\EE)\simeq Q_l(\EE),\]
	where $Q_l(\EE)$ is the left abelian cover as defined in \cite{Rump01,Rump20}.
\end{remark}

\begin{example}
\begin{enumerate}
	\item If $\EE$ is abelian, then the left $\operatorname{t}$-structure is the standard $\operatorname{t}$-structure;  the heart of the standard $\operatorname{t}$-structure is $\EE$ itself.
	\item If $\EE$ is quasi-abelian, then the $\operatorname{t}$-structure given here is the left $\operatorname{t}$-structure from \cite[Definition~1.2.18]{Schneiders99}.
	\item If $\EE$ is equipped with the split conflation structure (thus, the only conflations are the split kernel-cokernel pairs), then $\D(\EE) = \K(\EE) \simeq \D(\smod \EE)$ where this last equivalence uses that objects in $\mod \EE$ have projective dimension at most two (as $\EE$ has kernels).  The left $\operatorname{t}$-structure is the canonical $\operatorname{t}$-structure on $\D(\smod \EE)$.  We see that the heart is equivalent to the category $\mod \EE$ of finitely presented functors (see also \Cref{Theorem:AlternativeDescriptionOfLeftHeart}).
\end{enumerate}
\end{example}
\section{Additive regular categories, admissible kernels, and the admissible intersection property}

In this section, we consider additive regular categories (see Definition \ref{definition:DeflationRegular} below).  We show that, endowing an additive regular category $\EE$ with the class of conflations consisting of all kernel-cokernel pairs, $\EE$ has the structure of a deflation-exact category.  In fact, the conflations of a deflation-exact category are given by the kernel-cokernel pairs of a regular category if and only if one of the following equivalent conditions hold: $\EE$ has admissible kernels (\Cref{Definition:AdmissibleKernels}) or $\EE$ has admissible intersections (\Cref{definition:(AI)-Category}).  This will be shown in \Cref{proposition:EquivalentCharacterizations}.

As a deflation-exact category, $\EE$ admits a derived category $\D(\EE)$ and the construction of the left heart $\mathcal{LH}(\EE)$ as in \Cref{section:LeftHeart} goes through: we show that $\EE$ is a uniformly preresolving subcategory of $\mathcal{LH}(\EE)$ so that the embedding $\EE\hookrightarrow \mathcal{LH}(\EE)$ lifts to a derived equivalence.

\subsection{Additive regular categories and their conflation structure}

We start by defining an additive regular category.  This definition is an additive version of a regular category, as defined in \cite{BarrGrilletVanOsdol71,BorceuxBourn04}.

\begin{definition}\label{definition:DeflationRegular}
An additive category is called \emph{additive regular} if
\begin{enumerate}[label=\textbf{Reg\arabic*},start=1]
	\item\label{DR1} every morphism $f$ has a factorization $f=m\circ p$ where $p$ is a cokernel and $m$ is a monomorphism, and
	\item\label{DR2} the pullback along every cokernel exists and the pullback of a cokernel is a cokernel.
\end{enumerate}
The dual of an additive regular category is called an \emph{additive coregular category}.
\end{definition}

\begin{remark}
\begin{enumerate}
	\item In \cite{Crivei12} (based on \cite{RichmanWalker77,SiegWegner11}), a cokernel $c$ was called \emph{semi-stable} if pullbacks along $c$ exist and the pullback of $c$ is a cokernel.  With this terminology, one can reformulate axiom \ref{DR2} as: all cokernels are semi-stable.
	\item\label{enumerate:RemarkRegular} In \cite[p.122]{BarrGrilletVanOsdol71}, a \emph{regular category} is a (not necessarily additive) finitely complete category where the class of regular epimorphisms satisfies the following properties: (i) every morphism $f$ has a factorization $f=m\circ p$ where $p$ is a regular epimorphism and $m$ is a monomorphism, and (ii) the pullback of a regular epimorphism is a regular epimorphism.  As additive regular categories are finitely complete (see Proposition \ref{proposition:DRProperty}\eqref{enumerate:DRProperty1} below), we see that additive categories are precisely those categories which are both additive and regular.
	
	  Note that a \emph{regular epimorphism} is an epimorphism that occurs as the coequalizer of a pair of parallel morphisms \cite[Definition~4.3.1]{BorceuxI}.  Hence, in a (pre)additive category, regular epimorphisms are precisely the cokernel maps.
	\item Let $\EE$ be an additive regular category.  We write $\fE$ for the class of cokernels and $\fM$ for the class of monomorphism.  As an additive regular category is regular (see \cref{enumerate:RemarkRegular} above), it follows from \cite{Kelly91} that the pair $(\fE, \fM)$ defines a factorization system on $\EE$ (in the sense of \cite[Definition~5.5.1]{BorceuxI}, also called a factorization \cite[Section~2.2]{FreydKelly72} or an orthogonal factorization system \cite[Section~11.2]{Riehl14}).
\end{enumerate}
\end{remark}

\begin{remark}
Not all authors require a regular category to be finitely complete (see, for example, \cite[Definition~2.1.1]{BorceuxII} and \cite[p. 4]{BarrGrilletVanOsdol71}).  These two definitions of a regular category coincide when the category is additive (see \cite[Lemma~2.6.6]{BorceuxII}).
\end{remark}

\begin{proposition}\label{proposition:DRProperty}
Let $\EE$ be an additive regular category.
\begin{enumerate} 
	\item\label{enumerate:DRProperty1} Each morphism in $\EE$ admits a kernel.
	\item Each kernel admits a cokernel.
	\item\label{enumerate:DRProperty2} Each cokernel is the cokernel of its kernel, and each kernel is the kernel of its cokernel.
	\item\label{enumerate:DRProperty3} The cokernel-monomorphism factorization in axiom \ref{DR1} is unique up to isomorphism.
\end{enumerate}
\end{proposition}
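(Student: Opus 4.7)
The plan is to establish the four claims in sequence, concentrating the substantive work in a single key observation from which the remaining statements follow formally. For claim \eqref{enumerate:DRProperty1}, given $f\colon A\to B$, I would apply \ref{DR1} to obtain a cokernel--monomorphism factorization $f = m\circ p$ with $p\colon A \to I$ and $m\colon I\to B$. Since $m$ is a monomorphism, $\ker f = \ker p$, so it suffices to produce the kernel of a cokernel. But $\ker p$ is precisely the pullback of $p$ along $0\to I$, which exists by \ref{DR2}.

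The key intermediate step is to show that every cokernel $p\colon A\to I$ coincides with the cokernel of its (now known to exist) kernel. Writing $p = \coker g$ for some $g\colon X\to A$ and letting $k\colon K \to A$ denote $\ker p$, we have $p\circ g = 0$, so $g$ factors through $k$ as $g = k\circ \tilde g$. Any morphism $q\colon A \to Y$ satisfying $q\circ k = 0$ then satisfies $q\circ g = 0$, hence factors uniquely through $p$ by the universal property of $p = \coker g$. Therefore $p = \coker k$. This is the only step that requires any real argument.

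From this, the remaining claims drop out. For \eqref{enumerate:DRProperty2} together with the ``kernel is kernel of its cokernel'' half of \eqref{enumerate:DRProperty3}: given a kernel $k = \ker f$, factor $f = m'\circ p'$ as above; since $m'$ is monic, $k = \ker f = \ker p'$, and by the intermediate step $p' = \coker(\ker p') = \coker k$. This simultaneously produces $\coker k$ and shows $\ker(\coker k) = \ker p' = k$. For claim \eqref{enumerate:DRProperty3}, the ``cokernel is cokernel of its kernel'' half is exactly the intermediate step.

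For the uniqueness in \eqref{enumerate:DRProperty3}, suppose $f = m_1 \circ p_1 = m_2 \circ p_2$ are two cokernel--monomorphism factorizations. Since each $m_i$ is monic, $\ker p_i = \ker f$, and by the intermediate step each $p_i$ is $\coker(\ker f)$. The universal property of the cokernel supplies a unique isomorphism $\phi$ between the codomains of $p_1$ and $p_2$ with $\phi \circ p_1 = p_2$, and comparing with $m_1 \circ p_1 = m_2 \circ p_2$ then forces $m_2 \circ \phi = m_1$, identifying the two factorizations. The whole argument is essentially formal once the intermediate step is in place; I do not anticipate an obstacle beyond checking that factorization carefully.
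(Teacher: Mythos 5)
Your proof is correct and follows essentially the same route as the paper: the kernel of a cokernel is obtained as the pullback along $0$, one notes $\ker p = \ker f$ since $m$ is monic, and uniqueness of the factorization follows from $p = \coker(\ker f)$. The only difference is that where the paper cites a standard reference (Mitchell, Proposition I.13.3) for the fact that a cokernel possessing a kernel is the cokernel of that kernel, you prove this fact directly via the factorization $g = k \circ \tilde{g}$, which is a harmless self-contained inlining of the same step.
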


\begin{proof}
As cokernels have pullbacks in $\EE$, every cokernel $p\colon X \to Y$ admits a kernel; this kernel can be found as the pullback along $0 \to Y.$  Let $f\colon X \to Y$ be any morphism, and let $f = m \circ p$ be a cokernel-mono factorization.  We find that $\ker p = \ker f$, so that $f$ does admit a kernel.  Moreover, $p = \coker (\ker f)$ so that all kernel maps have cokernels.

The third statement is standard (see, for example, \cite[Proposition I.13.3]{Mitchell65} together with its dual).  For the last statement, let $f\colon X \to Y$ be any morphism in $\EE$ with cokernel-mono factorization $X \xrightarrow{p} I \xrightarrow{m} Y.$  By \eqref{enumerate:DRProperty2}, we see that $p = \coker(\ker p)$.  As $\ker p = \ker f$, the uniqueness follows.
\end{proof}

\begin{proposition}\label{Proposition:DeflationAICategorySatisfiesAxiomR3+}
Any additive regular category is a deflation-exact category (where the conflations are given by all kernel-cokernel pairs) satisfying axiom \ref{R3+}.
\end{proposition}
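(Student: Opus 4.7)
The plan is to establish the deflation-exact structure first via \Cref{proposition:CriterionDeflationRegular}, then verify axiom \ref{R3+} by an explicit pullback construction. With all kernel-cokernel pairs declared conflations, the deflations coincide with the cokernels, so axiom \ref{R2} is literally axiom \ref{DR2}. An appeal to \Cref{proposition:CriterionDeflationRegular} therefore delivers axioms \ref{R0}, \ref{R1}, and \ref{R2}, showing that $\EE$ is deflation-exact.

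For \ref{R3+}, suppose $i\colon A\to B$ and $p\colon B\to C$ are morphisms such that $p\circ i$ is a cokernel in $\EE$. Axiom \ref{DR2} lets us form the pullback
\[
\begin{tikzcd}
P \arrow[r, "\alpha"] \arrow[d, "\beta"'] & A \arrow[d, "p\circ i"] \\
B \arrow[r, "p"'] & C
\end{tikzcd}
\]
in which the projection $\beta\colon P\to B$ is again a cokernel. The pair $(\operatorname{id}_A, i)$ satisfies $(p\circ i)\cdot \operatorname{id}_A = p\cdot i$, so the universal property of the pullback produces a section $\gamma\colon A\to P$ of $\alpha$. Because every morphism in $\EE$ has a kernel by \Cref{proposition:DRProperty}, the split epimorphism $\alpha$ is automatically the cokernel of its own kernel (write $\operatorname{id}_P - \gamma\alpha$ as a factorization through $\ker\alpha$ and verify the universal property directly), hence is itself a cokernel. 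Consequently $p\beta = (p\circ i)\circ\alpha$ is a composition of two cokernels, and therefore a cokernel by axiom \ref{R1}.

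To descend from ``$p\beta$ is a cokernel'' to ``$p$ is a cokernel,'' I write $p\beta = \coker(j)$ for some $j\colon J\to P$ and verify that $p = \coker(\beta j)$: the equality $p\cdot\beta j = 0$ is immediate, and for the universal property any $f\colon B\to Z$ with $f\beta j = 0$ satisfies $f\beta = h\cdot p\beta$ for a unique $h\colon C\to Z$, after which $\beta$ being an epimorphism forces $f = hp$. Uniqueness of $h$ uses that $p\beta$ being an epimorphism already implies that $p$ is an epimorphism. The main obstacle is identifying the split epimorphism $\alpha$ with a cokernel, which is precisely where the existence of kernels in $\EE$ guaranteed by \Cref{proposition:DRProperty} is indispensable; without this step, $p\beta$ could not be recognized as a cokernel and the final cancellation would collapse.
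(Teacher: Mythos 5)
Your proof is correct, and it takes a genuinely different route from the paper in the second half. For the deflation-exact structure you argue exactly as the paper does: identify deflations with cokernels (which rests on \Cref{proposition:DRProperty}), observe that axiom \ref{R2} then coincides with axiom \ref{DR2}, and invoke \Cref{proposition:CriterionDeflationRegular}. For axiom \ref{R3+}, however, the paper's proof is a one-line citation to \cite[Proposition~5.12]{Kelly69}, whereas you give a self-contained argument: pull back the cokernel $p\circ i$ along $p$ so that $\beta$ is a cokernel by \ref{DR2}; split $\alpha$ via the universal property of the pullback applied to $(\mathrm{id}_A, i)$; use that a split epimorphism admitting a kernel is the cokernel of that kernel (the factorization of $\mathrm{id}_P - \gamma\alpha$ through $\ker\alpha$ is the standard verification); conclude by \ref{R1} that $p\beta = (p\circ i)\alpha$ is a cokernel; and finally cancel the epimorphism $\beta$ by exhibiting $p$ as $\coker(\beta j)$ where $p\beta = \coker(j)$. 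Each of these steps checks out, including the last cancellation (existence of the factorization uses that $\beta$ is epic, uniqueness that $p\beta$, hence $p$, is epic). What your approach buys is a proof that is self-contained and makes explicit where the existence of kernels — i.e., additive regularity rather than a bare conflation structure — enters, namely in recognizing the split epimorphism $\alpha$ as a cokernel; what the paper's citation buys is brevity and the embedding of this fact into Kelly's general theory of totally regular epimorphisms, of which this statement is a special case.
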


\begin{proof}
Choosing the class of all kernel-cokernel pairs as conflations, every cokernel is a deflation; this follows from \Cref{proposition:DRProperty}.  That this conflation structure satisfies axiom \ref{R2} is just axiom \ref{DR2}.  Hence, by \Cref{proposition:CriterionDeflationRegular}, this conflation structure gives a deflation-exact category.  It follows from \cite[Proposition~5.12]{Kelly69} that axiom \ref{R3+} is satisfied as well.
\end{proof}

\subsection{On admissible kernels and admissible intersections}

In the previous subsection, we started with an additive regular category and endowed it with a conflation structure.  In this subsection, we start with a deflation-exact category $\EE$ and find two properties which are equivalent to $\EE$ being additive regular.  The first property we consider has already been mentioned in \cite[\S 1.3.22]{BeilinsonBernsteinDeligne82} for exact categories.

\begin{definition}\label{Definition:AdmissibleKernels}
Let $\EE$ be a conflation category. We say that $\EE$ has \emph{admissible kernels} if every morphism admits a kernel and kernels are inflations. Having \emph{admissible cokernels} is defined dually.
\end{definition}

In \cite{Previdi12}, the admissible intersection property is introduced for exact categories (see \cite{Buhler21} for some corrections), and in \cite{HassounRoy19,BrustleHassounTattar20} for pre-abelian exact categories. It is shown in \cite[Theorem~6.1]{HassounShahWegner20} that a pre-abelian exact category satisfying the admissible intersection property is quasi-abelian. However, the admissible intersection property can be defined for general conflation categories. 

\begin{definition}\label{definition:(AI)-Category}
	Let $\EE$ be a conflation category. The category $\EE$ satisfies the \emph{admissible intersection property} if for any two inflations $f\colon X\inflation Z$ and $g\colon Y\inflation Z$, the pullback of $f$ along $g$ exists and is of the following form:
		\begin{equation*}
			\begin{tikzcd}
				P\arrow[tail]{r}{}\arrow[tail]{d}[swap]{} \commutes[\text{PB}]{dr}& Y \arrow[tail]{d}{g}\\
				X \arrow[tail]{r}[swap]{f} & Z
			\end{tikzcd}
		\end{equation*}
	The \emph{admissible cointersection property} is defined dually.
\end{definition}

The following lemma (based on \cite[Proposition~4.8]{BrustleHassounTattar20}) shows that the property of having admissible kernels and the admissible intersection property coincide for conflation categories.

\begin{lemma}\label{Lemma:InterpretationOfAI}
	Let $\EE$ be a conflation category such that all split kernel-cokernel pairs are conflations. The following are equivalent:
	\begin{enumerate}
		\item The category $\EE$ satisfies the admissible intersection property.
		\item The category $\EE$ has admissible kernels.
	\end{enumerate}
\end{lemma}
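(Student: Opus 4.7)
The plan is to prove the two implications of the equivalence separately, exploiting in each direction the specific construction that allows one to manufacture the pullback (resp.\ kernel) from the hypothesis.

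For $(1)\Rightarrow(2)$, assume the admissible intersection property and let $f\colon X\to Y$ be an arbitrary morphism. I would test this on the two split inflations
\[
\begin{pmatrix}1_X\\ f\end{pmatrix}\colon X\inflation X\oplus Y,\qquad
\begin{pmatrix}1_X\\ 0\end{pmatrix}\colon X\inflation X\oplus Y,
\]
which are indeed inflations because every split kernel-cokernel pair is a conflation by hypothesis. The admissible intersection property then produces a pullback square whose two projections $k_1,k_2\colon P\to X$ are inflations. Commutativity of the square forces $k_1=k_2=:k$ and $f\circ k=0$, and a direct check using the universal property of the pullback shows that $k$ is the kernel of $f$. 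Hence every morphism admits a kernel which is an inflation, i.e.\ $\EE$ has admissible kernels.

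For $(2)\Rightarrow(1)$, assume admissible kernels and let $f\colon X\inflation Z$, $g\colon Y\inflation Z$ be inflations. Since $f$ is an inflation, it is the kernel of some deflation $p\colon Z\deflation Z'$. The composite $pg\colon Y\to Z'$ has, by hypothesis, a kernel $k\colon P\inflation Y$ that is an inflation. Because $p(gk)=(pg)k=0$ and $f=\ker p$, there exists a unique $\alpha\colon P\to X$ with $f\alpha=gk$. I would then verify that the resulting commutative square
\[
\begin{tikzcd}
P \arrow[tail]{r}{\alpha} \arrow[tail]{d}{k} & X \arrow[tail]{d}{f}\\
Y \arrow[tail]{r}{g} & Z
\end{tikzcd}
\]
is a pullback by the usual diagram chase: given $(a,b)$ with $fa=gb$, we have $(pg)b=p(fa)=0$, so $b$ factors uniquely through $k$, and then monicity of $f$ forces the factorisation through $\alpha$ as well.

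At this point $k$ is an inflation by construction; it remains to show the same for $\alpha$. This is the one step that requires a little care, and I would handle it by symmetry: $g$ is also the kernel of some deflation $r\colon Z\deflation Z''$, so the same argument applied to $rf\colon X\to Z''$ produces a pullback of $f$ along $g$ whose projection to $X$ is the inflation $\ker(rf)$. By uniqueness of pullbacks this projection is isomorphic to $\alpha$, so $\alpha$ is an inflation. The mild obstacle in the whole argument is precisely this symmetrisation step, since the construction $\ker(pg)$ a priori builds only one of the two projections as an inflation; everything else is a formal consequence of the definitions and of the fact that inflations are (by definition) kernels of deflations.
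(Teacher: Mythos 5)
Your proof is correct and follows essentially the same route as the paper's: the implication $(1)\Rightarrow(2)$ is the identical graph trick with the two split inflations into the biproduct, and the implication $(2)\Rightarrow(1)$ is a self-contained verification of exactly the facts the paper cites from Mitchell's Proposition I.13.2 (the pullback of two inflations is realized as the kernel of a composite with the complementary deflation, so each projection is such a kernel). Your symmetrisation step via uniqueness of pullbacks is a valid substitute for the paper's direct identification of the second leg as the kernel of the symmetric composite, so there are no gaps.
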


\begin{proof}
	Assume that the admissible intersection property holds. Let $g\colon Y\to Z$ be a morphism in $\EE$. As all split kernel-cokernel pairs are conflations, the sequences 
	$$Y\xrightarrow{\begin{psmallmatrix}1\\g\end{psmallmatrix}}Y\oplus Z\xrightarrow{\begin{psmallmatrix}-g & 1\end{psmallmatrix}}Z \mbox{ and } Y\xrightarrow{\begin{psmallmatrix}1\\0\end{psmallmatrix}}Y\oplus Z\xrightarrow{\begin{psmallmatrix}0 & 1\end{psmallmatrix}}Z$$
	are conflations. By the admissible intersection property, we have the following pullback diagram:
	\begin{equation*}
		\begin{tikzcd}
			P\arrow[tail]{r}{f}\arrow[tail]{d}[swap]{f'} \commutes[\text{PB}]{dr}& Y \arrow[tail]{d}{\begin{psmallmatrix}1\\g\end{psmallmatrix}}\\ Y \arrow[tail]{r}[swap]{\begin{psmallmatrix}1\\0\end{psmallmatrix}} & Y\oplus Z \arrow[two heads]{r}[swap]{(\hspace{0.5pt}0\;1\hspace{0.5pt})\:}& Z.
		\end{tikzcd}
	\end{equation*}
	As the bottom row is a kernel-cokernel pair and the square is a pullback, it follows that $f=\ker(\begin{psmallmatrix}0&1\end{psmallmatrix}\begin{psmallmatrix}1\\g\end{psmallmatrix})=\ker(g)$.
	
	The reverse implication follows immediately from \cite[Proposition I.13.2]{Mitchell65}, where it is shown that $f$ is the kernel of $\begin{psmallmatrix}0&1\end{psmallmatrix}\begin{psmallmatrix}1\\g\end{psmallmatrix}$ and $f'$ is the kernel of $\begin{psmallmatrix}-g&1\end{psmallmatrix}\begin{psmallmatrix}1\\0\end{psmallmatrix}$.
\end{proof}

\begin{remark}\label{REM}
The conflations of a conflation category $\EE$ having admissible kernels, are given by all kernel-cokernel pairs. Moreover, as every cokernel is the cokernel of its kernel, all cokernels are deflations.
\end{remark}

For deflation-exact categories, the above lemma can be extended (the proof is an adaptation of \cite[Proposition~I.1.4]{Schneiders99}).

\begin{proposition}\label{Proposition:InterpretationOfAIForDeflationExactCategories}
	Let $\EE$ be a deflation-exact category. The following are equivalent:
	\begin{enumerate}
		\item\label{lemma:InterpretationOfAIForDeflationExactCategories1} The admissible intersection property holds.
		\item\label{lemma:InterpretationOfAIForDeflationExactCategories2} The category $\EE$ has admissible kernels.
		\item\label{lemma:InterpretationOfAIForDeflationExactCategories3} Every morphism has a deflation-mono factorization, i.e.~any morphism $g\colon Y\to Z$ factors as $Y\deflation\coim(g)\hookrightarrow Z$.
	\end{enumerate}
	Moreover, a factorization as in \eqref{lemma:InterpretationOfAIForDeflationExactCategories3} is unique up to isomorphism.
\end{proposition}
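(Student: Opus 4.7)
The plan is to leverage Lemma \ref{Lemma:InterpretationOfAI} for the equivalence of \eqref{lemma:InterpretationOfAIForDeflationExactCategories1} and \eqref{lemma:InterpretationOfAIForDeflationExactCategories2}. Since $\EE$ is deflation-exact (and thus, by Remark \ref{Remark:BasicDefinitions}(2), every split kernel-cokernel pair is a conflation), the hypothesis of Lemma \ref{Lemma:InterpretationOfAI} is satisfied and the equivalence follows immediately. So the substantive content is the equivalence of \eqref{lemma:InterpretationOfAIForDeflationExactCategories2} with \eqref{lemma:InterpretationOfAIForDeflationExactCategories3}, together with uniqueness.

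For \eqref{lemma:InterpretationOfAIForDeflationExactCategories2} $\Rightarrow$ \eqref{lemma:InterpretationOfAIForDeflationExactCategories3}, given $g\colon Y \to Z$, I would take $k\colon K \inflation Y$ to be the kernel of $g$; it is an inflation by \eqref{lemma:InterpretationOfAIForDeflationExactCategories2}, hence part of a conflation $K \stackrel{k}{\inflation} Y \stackrel{p}{\deflation} C$ with $C = \coker(k)$. Since $g\circ k=0$, the universal property of the cokernel $p$ yields a unique $m\colon C\to Z$ with $g = m\circ p$. The heart of the argument is showing $m$ is a monomorphism. For this, I would take any $h\colon T\to C$ with $m\circ h = 0$ and form the pullback of the deflation $p$ along $h$ using axiom \ref{R2}, yielding a deflation $p'\colon P \deflation T$ and a map $h'\colon P \to Y$ with $p \circ h' = h \circ p'$. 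Then $g\circ h' = m\circ p\circ h' = m\circ h\circ p' = 0$, so $h'$ factors through $k = \ker(g)$, giving $p\circ h' = 0$, hence $h\circ p' = 0$; since $p'$ is a deflation and therefore an epimorphism, $h = 0$. This pullback step is the main obstacle, as it is the only place where the full deflation-exact structure (beyond mere existence of kernels) is used.

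The reverse implication \eqref{lemma:InterpretationOfAIForDeflationExactCategories3} $\Rightarrow$ \eqref{lemma:InterpretationOfAIForDeflationExactCategories2} is formal: given $g\colon Y \to Z$, take the factorization $g = m\circ p$ with $p$ a deflation and $m$ a monomorphism. Since $m$ is monic, $\ker(g) = \ker(p)$, and because $p$ is a deflation it fits in a conflation whose kernel is an inflation. So $\ker(g)$ exists and is an inflation, which is \eqref{lemma:InterpretationOfAIForDeflationExactCategories2}.

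Finally, for uniqueness of the factorization, suppose $g = m_1\circ p_1 = m_2\circ p_2$ are two deflation-mono factorizations. Since each $m_i$ is a monomorphism, $\ker(p_1) = \ker(g) = \ker(p_2)$ via a common inflation $k\colon K \inflation Y$. Each $p_i$ is a deflation in a conflation and hence equals $\coker(k)$; the universal property of the cokernel then produces a unique isomorphism $\alpha$ between the codomains of $p_1$ and $p_2$ with $p_2 = \alpha\circ p_1$, after which $m_1 = m_2 \circ \alpha$ is forced by cancelling the epimorphism $p_1$. This gives the claimed uniqueness up to isomorphism.
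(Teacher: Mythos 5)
Your proof is correct and follows essentially the same route as the paper: the equivalence of \eqref{lemma:InterpretationOfAIForDeflationExactCategories1} and \eqref{lemma:InterpretationOfAIForDeflationExactCategories2} via \Cref{Lemma:InterpretationOfAI}, and the same pullback-along-a-deflation argument (axiom \ref{R2}) to show that the induced map out of $\coim(g)$ is monic. The only difference is that you write out the implication \eqref{lemma:InterpretationOfAIForDeflationExactCategories3}$\Rightarrow$\eqref{lemma:InterpretationOfAIForDeflationExactCategories2} and the uniqueness of the factorization, which the paper dismisses as straightforward; your arguments for both are correct.
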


\begin{proof}
	By \Cref{Remark:BasicDefinitions}, all split kernel-cokernel pairs are conflations in $\EE$. The equivalence $\eqref{lemma:InterpretationOfAIForDeflationExactCategories1}\Leftrightarrow \eqref{lemma:InterpretationOfAIForDeflationExactCategories2}$ now follows from \Cref{Lemma:InterpretationOfAI}.
	
	Assume \eqref{lemma:InterpretationOfAIForDeflationExactCategories2}. Let $g\colon Y\to Z$ be a map. As $g$ admits a kernel which is an inflation, we find a sequence $\ker(g)\stackrel{f}{\inflation}Y\stackrel{h}{\deflation}\coim(g)\xrightarrow{k}Z$ such that $k\circ h=g$. We claim that $k$ is a monomorphism, to that end, let $t\colon T\to \coim(g)$ be a map such that $k\circ t=0$. By axiom \ref{R2}, the pullback of $t$ along the deflation $h$ exists and we obtain the following commutative diagram:
	\begin{equation*}
		\begin{tikzcd}
			&\ker(g)\arrow[tail]{d}{f}&\\
			P\arrow{r}{t'}\arrow[two heads]{d}[swap]{h'}\arrow[dotted]{ru}{\exists!\hspace{0.5pt}u}& Y \arrow[two heads]{d}{h}\arrow{r}{g}& Z\arrow[equal]{d}\\
			T \arrow{r}[swap]{t} & \coim(g)\arrow{r}[swap]{k}& Z
		\end{tikzcd}
	\end{equation*}
	By commutativity of the diagram, $g\circ t'=0$ holds and thus there exists a unique map $u\colon P\to \ker(g)$ such that $f\circ u=t'$. It follows that $t\circ h'=h\circ t'= h\circ f\circ u=0$. Since $h'$ is a deflation, it is epic, and thus $t\circ h'=0$ implies that $t=0$. This shows that $k$ is monic and thus \eqref{lemma:InterpretationOfAIForDeflationExactCategories3} holds. The implication $\eqref{lemma:InterpretationOfAIForDeflationExactCategories3}\Rightarrow \eqref{lemma:InterpretationOfAIForDeflationExactCategories2}$ and the uniqueness of a deflation-mono factorization are straightforward to show.
\end{proof}

\begin{remark}
	\begin{enumerate}
		\item Every left quasi-abelian category is a deflation-exact category having admissible kernels, or equivalent, satisfying the admissible intersection property. Despite \cite[Theorem~6.1]{HassounShahWegner20}, such a category need not be quasi-abelian as it might fail to be exact. Such an example is given by the category $\mathsf{LB}$ (see \cite[Theorem~3.4]{HassounShahWegner20} or Section \ref{Section:LBSpaces}).
		\item Despite \cite[Theorem~6.1]{HassounShahWegner20}, an exact category with the admissible intersection property might fail to be quasi-abelian as well. Indeed, in \cite[Example~7.18]{HenrardvanRoosmalen19b} it shown that the exact hull $\II^{\mathsf{ex}}$ of the Isbell category $\II$ need not be pre-abelian. On the other hand, $\II^{\mathsf{ex}}$ is exact and has the admissible intersection property by \Cref{Proposition:InterpretationOfAIForDeflationExactCategories} and \Cref{Corollary:TheHullInheritsAdmissibleKernels}.
	\end{enumerate}
\end{remark}

\begin{proposition}\label{proposition:EquivalentCharacterizations}
The following are equivalent for an additive category $\EE$.
\begin{enumerate}
  \item $\EE$ is an additive regular category,
	\item $\EE$ is a deflation-exact category with admissible kernels, and
	\item $\EE$ is a deflation-exact category with admissible intersections,
\end{enumerate}
where the conflation structure is given by the class of all kernel-cokernel pairs.
\end{proposition}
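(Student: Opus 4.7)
The plan is to observe that the three conditions largely reduce to results already assembled in the excerpt, so the proof will be a matter of matching the conflation structure (all kernel-cokernel pairs) to the hypotheses of \Cref{Lemma:InterpretationOfAI} and \Cref{Proposition:InterpretationOfAIForDeflationExactCategories}, and of relating ``cokernels'' to ``deflations'' under that identification.

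First I would handle the equivalence $(2)\Leftrightarrow(3)$. In any conflation category whose conflations are precisely all kernel-cokernel pairs, the split kernel-cokernel pairs are trivially conflations, so \Cref{Lemma:InterpretationOfAI} applies and gives the equivalence directly. This step is essentially free.

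For $(1)\Rightarrow(2)$, I would invoke \Cref{Proposition:DeflationAICategorySatisfiesAxiomR3+} to obtain that $\EE$, equipped with all kernel-cokernel pairs, is deflation-exact. It remains to verify that kernels are admissible. By \Cref{proposition:DRProperty}\eqref{enumerate:DRProperty1}, every morphism in $\EE$ has a kernel. If $k = \ker f$ for some $f\colon X\to Y$, then taking a cokernel-monomorphism factorization $f = m\circ p$ (axiom \ref{DR1}), one has $\ker f = \ker p$ and, by \Cref{proposition:DRProperty}\eqref{enumerate:DRProperty2}, $p = \coker(\ker p) = \coker k$. Hence $(k,p)$ is a kernel-cokernel pair, i.e.\ a conflation, so $k$ is an inflation.

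For $(2)\Rightarrow(1)$, the key observation is that when the conflations are all kernel-cokernel pairs and $\EE$ has admissible kernels, a morphism is a deflation if and only if it is a cokernel (see \Cref{REM}; conversely, any cokernel is the cokernel of its kernel, which is an inflation by hypothesis, so the pair is a conflation). Axiom \ref{DR2} then follows immediately from axiom \ref{R2}. For \ref{DR1}, I would invoke \Cref{Proposition:InterpretationOfAIForDeflationExactCategories}\eqref{lemma:InterpretationOfAIForDeflationExactCategories3} to produce a deflation-mono factorization of every morphism; under the identification deflation $=$ cokernel, this is the desired cokernel-monomorphism factorization.

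There is no real obstacle here: every ingredient is already proved earlier in the paper. The only subtlety worth spelling out is the identification of cokernels with deflations under the hypothesis ``conflations $=$ all kernel-cokernel pairs'', which is what lets one translate between the additive-regular vocabulary (cokernels, cokernel-mono factorizations) and the deflation-exact vocabulary (deflations, deflation-mono factorizations).
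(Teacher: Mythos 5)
Your proposal is correct and follows essentially the same route as the paper, whose proof is simply a citation of \Cref{Proposition:DeflationAICategorySatisfiesAxiomR3+} and \Cref{Proposition:InterpretationOfAIForDeflationExactCategories}; you fill in exactly the details the paper leaves implicit (the identification of cokernels with deflations via \Cref{proposition:DRProperty} and \Cref{REM}, and the appeal to \Cref{Lemma:InterpretationOfAI} for $(2)\Leftrightarrow(3)$, which \Cref{Proposition:InterpretationOfAIForDeflationExactCategories} already subsumes).
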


\begin{proof}
This follows from \Cref{Proposition:DeflationAICategorySatisfiesAxiomR3+} and \Cref{Proposition:InterpretationOfAIForDeflationExactCategories}.
\end{proof}

\begin{remark}\label{remark:TakingConflationStructure}
Being an additive regular category is a property of an (additive) category.  In contrast, being a deflation-exact category with admissible kernels (or equivalently, admissible intersections) is a property of a conflation category.  We have shown that an additive regular category endowed with the maximal conflation structure is deflation-exact with admissible kernels.

Later in this article, for example in \Cref{Proposition:ClosedUnderSubjectsInheritsDeflationExactAndAdmissibeKernels}, we consider results which produce a deflation-exact category having admissible kernels.  This is slightly stronger than producing an additive regular category.  Indeed, the former means that we get an additive regular category with a conflation structure, and states on top, that this conflation structure is maximal.
\end{remark}

\subsection{Some examples}  We now provide some examples of deflation-exact categories with admissible kernels.  We start with an easy criterion.

\begin{proposition}\label{Proposition:ClosedUnderSubjectsInheritsDeflationExactAndAdmissibeKernels}
	Let $\EE$ be a deflation-exact category having admissible kernels. If $\FF\subseteq \EE$ is a subcategory closed under subobjects, then $\FF$ is deflation-exact and has admissible kernels.
\end{proposition}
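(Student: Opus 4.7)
The plan is to endow $\FF$ with the conflation structure inherited from $\EE$: a sequence $X \to Y \to Z$ is declared a conflation in $\FF$ iff it is a conflation in $\EE$ and $X,Y,Z \in \FF$. Being an additive subcategory, $\FF$ contains $0$ and is closed under finite direct sums; together with the subobject-closure hypothesis this gives the key consequence that, whenever $X \in \FF$ and $K \hookrightarrow X$ is any monomorphism in $\EE$, the object $K$ already lies in $\FF$. I will then verify axioms \ref{R0}--\ref{R2} and admissible kernels.

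Axiom \ref{R0} is immediate, since the split kernel-cokernel pair $X \xrightarrow{\mathrm{id}} X \to 0$ has all terms in $\FF$. For \ref{R1}, given composable deflations $X \deflation Y \deflation Z$ in $\FF$, axiom \ref{R1} in $\EE$ produces a deflation $X \deflation Z$ with some kernel $K \inflation X$; as $K$ is a subobject of $X \in \FF$, it lies in $\FF$, and the resulting conflation is a conflation in $\FF$.

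The main step is \ref{R2}. Given $g\colon Y \deflation Z$ in $\FF$ and $f\colon W \to Z$ in $\FF$, axiom \ref{R2} in $\EE$ yields a pullback square in $\EE$ whose top edge $P \deflation W$ is a deflation. The crucial observation is the classical identification of pullbacks in additive categories as kernels of difference maps: $P$ is the kernel of $(f,-g)\colon W \oplus Y \to Z$, which in particular exhibits the canonical map $P \to W \oplus Y$ as a monomorphism. Since $W \oplus Y \in \FF$, subobject-closure forces $P \in \FF$, and the kernel of $P \deflation W$ (which equals $\ker g$, a subobject of $Y \in \FF$) also lies in $\FF$; hence the whole conflation sits in $\FF$.

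Finally, for admissible kernels, let $f\colon X \to Y$ be any morphism in $\FF$. Applying \Cref{Proposition:InterpretationOfAIForDeflationExactCategories} inside $\EE$ produces a deflation-mono factorization $X \deflation \coim f \hookrightarrow Y$ together with a conflation $\ker f \inflation X \deflation \coim f$. The monomorphism $\coim f \hookrightarrow Y$ puts $\coim f$ in $\FF$, and $\ker f \hookrightarrow X$ puts $\ker f$ in $\FF$; hence this is a conflation in $\FF$, realizing the kernel of $f$ as an inflation in $\FF$. The only nontrivial step in the entire argument is the identification of the pullback with the kernel of $(f,-g)$ in \ref{R2}; this is what allows subobject-closure to carry the whole proof.
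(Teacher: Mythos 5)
Your proof is correct. The admissible-kernels half is essentially the paper's own argument: both apply \Cref{Proposition:InterpretationOfAIForDeflationExactCategories} in $\EE$ to obtain the deflation-mono factorization $X \deflation \coim(f) \hookrightarrow Y$, and then use closure under subobjects to place $\ker(f)$ and $\coim(f)$ in $\FF$, so that $\ker(f) \inflation X \deflation \coim(f)$ is a conflation in $\FF$. Where you genuinely diverge is the deflation-exactness half. The paper handles it in one line: closure under subobjects implies that $\FF \subseteq \EE$ is deflation-closed (axiom \ref{PR2}), since the kernel term of a conflation is in particular a subobject of the middle term, and then it invokes the general inheritance result \Cref{Proposition:DeflationClosed}. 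You instead verify \ref{R0}--\ref{R2} from scratch; the only substantive step is \ref{R2}, where identifying the pullback $P$ with $\ker\bigl((f,-g)\colon W \oplus Y \to Z\bigr)$ exhibits $P \to W \oplus Y$ as a monomorphism, so that the (stronger) subobject-closure hypothesis forces $P \in \FF$ directly, and the kernel of $P \deflation W$ lies in $\FF$ for the same reason. This is a real trade-off rather than a redundancy: your route is self-contained and makes visible exactly why subobject-closure is easier to exploit than mere deflation-closure --- for a deflation-closed subcategory one needs the finer fact that $P \inflation W \oplus Y \deflation Z$ is a conflation to conclude $P \in \FF$, not just that $P \to W \oplus Y$ is monic --- whereas the paper's route is shorter and reuses machinery (\Cref{Proposition:DeflationClosed}) that it needs elsewhere anyway. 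Both arguments are complete.
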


\begin{proof}
	Assume that $\FF\subseteq \EE$ is closed under subobjects. In particular, $\FF\subseteq\EE$ is deflation-closed and thus inherits a deflation-exact structure by \Cref{Proposition:DeflationClosed}. Let $f\colon X\to Y$ be a morphism in $\FF$. By \Cref{Proposition:InterpretationOfAIForDeflationExactCategories}, $f$ admits a deflation-mono factorization $X\stackrel{f'}{\deflation} \coim(f)\stackrel{f''}{\hookrightarrow} Y$ in $\EE$. By assumption, $\ker(f),\coim(f)\in \FF$. The result then follows from \Cref{Proposition:InterpretationOfAIForDeflationExactCategories} as $\ker(f)\inflation X\deflation \coim(f)$ is a conflation in $\FF$ and the map $f''$ is a monomorphism in $\FF$. 
\end{proof}

\begin{example}
	For any category $\AA$, a \emph{preradical functor} $T$ is a subfunctor of the identity functor on $\AA$.  Let $\AA$ be a conflation category.  Consider a preradical functor $T$ with embedding $\eta\colon T \to 1_\AA$.  Assume now that for each $A \in \AA$, the given monomorphism $\eta_A\colon T(A) \to A$ is an inflation in $\AA$. To any such a preradical functor $T$, one assigns the full subcategory $\TT$ consisting of those objects $C\in \AA$ such that $\eta_C\colon T(C) \to C$ is an isomorphism.  Using the naturality of $T \to 1_\AA$, one readily verifies that $\TT\subseteq \AA$ is closed under epimorphic quotients (see, for example, \cite[Proposition~2]{BurbanHorbachuk11}).  Indeed, let $f\colon A \to B$ be an epimorphism in $\AA$ with $A \in \TT$.  Naturality of $\eta$ gives the following commutative diagram:
	\begin{equation*}
			\begin{tikzcd}
						T(A) \arrow[tail]{d}{\eta_A} \arrow{r}{T(f)} & T(B) \arrow[tail]{d}{\eta_B} \\
			A\arrow{r}{f} & B
		\end{tikzcd}
	\end{equation*}
As $\eta_A\colon T(A) \to A$ is an isomorphism, we find that the composition $f \circ \eta_A = \eta_B \circ T(f)$ is an epimorphism and, hence, so is $\eta_B\colon T(B) \inflation B$.  This shows that $\eta_B$ is an isomorphism so that $B \in \TT.$  
	
	If $\AA$ is inflation-exact with admissible cokernels and $T$ is a normal preradical functor on $\AA$ (that is, the monomorphisms $\eta_A\colon T(A) \inflation A$ are inflations), then the dual of \Cref{Proposition:ClosedUnderSubjectsInheritsDeflationExactAndAdmissibeKernels} yields that $\TT$ is an inflation-exact category having admissible cokernels.

	As a more specific example, let $R$ be a ring and let $I\triangleleft R$ be a left ideal. Let $\Mod(R)$ be the category of right $R$-modules. The functor $T$ mapping $M\in \Mod(R)$ to $T(M)=MI$ is a normal preradical functor. The corresponding subcategory $\TT = \{M \in \Mod(R)\mid M = MI\}$ of $\Mod(R)$ is an inflation-exact category having admissible cokernels.
\end{example}

\begin{example}
Let $R$ be a commutative artin ring and let $A$ be an artin $R$-algebra.  Let $M \in \smod(A)$ be a finitely generated module.  Denote by $\operatorname{fac}(M)$ the full additive subcategory of $\smod(A)$ consisting of factor modules of finite direct sums of $M$.  It follows from \Cref{Proposition:ClosedUnderSubjectsInheritsDeflationExactAndAdmissibeKernels} that $\operatorname{fac}(M)$ is an inflation-exact category with admissible cokernels.  If $\Hom_A(M, \tau M) = 0$, that is, $M$ is \emph{$\tau$-rigid} (\cite[Definition 0.1]{AdachiIyamaReiten14}), then $\operatorname{fac}(M)$ is an extension-closed subcategory of $\smod(A)$ and hence exact (see \cite[Theorem 5.10]{AuslanderSmalo81}).
\end{example}

\begin{example}
Let $\EE$ be a deflation-exact category and let $\JJ$ be any small category.  The category $\EE^\JJ \coloneqq \Fun(\JJ, \EE)$ inherits a deflation-exact structure from $\EE$ in the following way: a sequence $F \to G \to H$ in $\EE^\JJ$ is a conflation if and only if $F(J) \to G(J) \to H(J)$ is a conflation, for every $J \in \Ob(\JJ).$  If $\EE$ has admissible kernels, then so does $\EE^\JJ.$
\end{example}

\section{The left heart and the exact hull}

Let $\EE$ be a deflation-exact category with admissible kernels.  In this section, we have a closer look at the bounded derived category $\Db(\EE)$ and study two subcategories of $\Db(\EE)$: the left heart and the exact hull.

\subsection{The left heart}\label{Section:SubsectionLeftHeart}

In \Cref{section:LeftHeart}, we described the left heart of a deflation-exact category with kernels.  We did not require any compatibility between the exact structure and the kernels.  In this section, we narrow the scope and consider only those cases where the kernels are inflations.  This allows us to strengthen some results presented in \Cref{section:LeftHeart}.

Throughout this section, let $\EE$ be a deflation-exact category with admissible kernels.

The following proposition is a straightforward adaptation of \cite[Proposition~1.2.19]{Schneiders99} and strengthens \Cref{proposition:DescriptionCohomology}.

\begin{proposition}\label{Proposition:RepresentationsOfObjectsInLeftHeart}
Let $\EE$ be a deflation-exact category with admissible kernels.	Let $C^{\bullet}\in \D(\EE)$. The complex $\LH^n(C^{\bullet})$ is isomorphic to the complex
	\[\dots \to 0\to \coim(d^{n-1})\hookrightarrow \ker(d^n)\to 0 \to \dots\]
	with $\ker(d^n)$ in degree $0$.
\end{proposition}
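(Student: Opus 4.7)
The plan is to leverage the representation from Proposition~\ref{proposition:DescriptionCohomology} of $\LH^n(C^\bullet)$ as the three-term complex $A^\bullet = [K \xrightarrow{\iota} C^{n-1} \xrightarrow{p^{n-1}} L]$ in degrees $-2, -1, 0$, where $K = \ker(d^{n-1})$, $L = \ker(d^n)$, $\iota$ is the canonical inclusion, and $p^{n-1}$ is the canonical factorization of $d^{n-1}$ through $L$. The goal is to exhibit an explicit isomorphism in $\D(\EE)$ between $A^\bullet$ and the two-term complex $B^\bullet = [I \xrightarrow{m} L]$ in degrees $-1, 0$, where $I = \coim(d^{n-1})$.

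The admissible-kernels hypothesis enters through Proposition~\ref{Proposition:InterpretationOfAIForDeflationExactCategories}, which provides a deflation-mono factorization $p^{n-1} = m \circ q$ with $q\colon C^{n-1} \deflation I$ and $m\colon I \hookrightarrow L$ (noting that $\coim(p^{n-1}) = \coim(d^{n-1})$ since $i^{n-1}$ is monic). In particular, $K \inflation C^{n-1} \deflation I$ is a conflation in $\EE$. With this data in hand, I would define the chain map $\phi\colon A^\bullet \to B^\bullet$ by $\phi^{-2} = 0$, $\phi^{-1} = q$, and $\phi^0 = \mathrm{id}_L$. Commutativity of the two relevant squares is automatic: $q \circ \iota = 0$ since $\iota = \ker q$, and $m \circ q = p^{n-1}$ by construction.

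The remaining step is to show that $\phi$ is a quasi-isomorphism by verifying that
\[
\cone(\phi) \;=\; [\, K \to C^{n-1} \xrightarrow{\psi} L \oplus I \xrightarrow{\mu} L \,]
\]
(with $\psi = (-p^{n-1}, q)$ and $\mu(l, i) = l + m(i)$) is acyclic in the sense of Definition~\ref{definition:AcylicComplex}. This is the main obstacle, since acyclicity must be established in the deflation-exact sense rather than as the mere vanishing of abelian cohomology. Two observations drive the verification. First, $\psi$ factors as $C^{n-1} \xrightarrow{q} I \xrightarrow{(-m,\,\mathrm{id})} L \oplus I$ with $\ker\psi = \iota\colon K \inflation C^{n-1}$ (using that $m$ is monic), which installs the conflation $K \inflation C^{n-1} \deflation I$ across degrees $-2$ and $-1$ of the cone. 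Second, the sequence $I \xrightarrow{(-m,\,\mathrm{id})} L \oplus I \xrightarrow{\mu} L$ is a conflation, visibly split after the automorphism $(l, i) \mapsto (l + m(i), i)$ of $L \oplus I$, and hence $\mu$ is a deflation with kernel-inflation $(-m,\,\mathrm{id})$. These two conflations together supply the deflation-mono factorizations required by Definition~\ref{definition:AcylicComplex} at each degree of $\cone(\phi)$, yielding the needed acyclicity and hence the isomorphism $\LH^n(C^\bullet) \cong A^\bullet \cong B^\bullet$ in $\D(\EE)$.
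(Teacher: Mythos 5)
Your proof is correct, and it rests on the same two ingredients as the paper's argument---the three-term description of $\LH^n(C^\bullet)$ from \Cref{proposition:DescriptionCohomology} and the deflation-mono factorization of \Cref{Proposition:InterpretationOfAIForDeflationExactCategories}---but it runs the cone computation in the opposite direction. The paper regards the conflation $\ker(d^{n-1}) \inflation C^{n-1} \deflation \coim(d^{n-1})$ as an acyclic complex $D^\bullet$ and forms the chain map $f^\bullet = (\operatorname{id},\operatorname{id},\gamma)\colon D^\bullet \to \LH^n(C^\bullet)$, where $\gamma$ is your $m$; since $D^\bullet \cong 0$ in $\D(\EE)$, the triangle on $f^\bullet$ makes $\LH^n(C^\bullet) \to \cone(f^\bullet)$ an isomorphism in $\D(\EE)$, and $\cone(f^\bullet)$ collapses, up to homotopy (cancelling the two identity components), onto the two-term complex $\coim(d^{n-1}) \hookrightarrow \ker(d^n)$. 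You instead map $\LH^n(C^\bullet)$ directly onto the two-term complex via $(0,q,\operatorname{id})$ and verify by hand that the resulting four-term cone is acyclic in the sense of \Cref{definition:AcylicComplex}. The two proofs are rotations of one and the same triangle: your $\cone(\phi)$ is homotopy equivalent to $\Sigma D^\bullet$. What the paper's order of operations buys is that all exactness is concentrated in the single remark that a conflation is an acyclic complex, the rest being a purely additive homotopy cancellation; what yours buys is a self-contained argument that never invokes homotopy equivalences, at the price of identifying kernels and cokernels of $\psi$ and $\mu$ degree by degree. Your two conflations do supply exactly those identifications; the only step worth making explicit is the degree-$0$ condition that $\mu$ is not merely a deflation but the cokernel of $\psi$, which is immediate because $\mu$ is the cokernel of the inflation $I \inflation L \oplus I$ and precomposition with the epimorphism $q$ does not change the cokernel.
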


\begin{proof}
By \Cref{proposition:DescriptionCohomology}, the complex $\operatorname{LH}^nC^{\bullet}=\tau^{\leq n}\tau^{\geq n}C^{\bullet}$ is given by 
	\[\dots \to 0\to \ker(d^{n-1})\to C^{n-1} \to \ker(d^n)\to 0\to \dots\]
	We consider the deflation-mono factorization $C^{n-1} \stackrel{\beta}{\deflation}\coim(d^{n-1})\stackrel{\gamma}{\hookrightarrow}\ker(d^n)$ from \Cref{Proposition:InterpretationOfAIForDeflationExactCategories}, giving us the following commutative diagram
		\begin{equation*}
		\begin{tikzcd}
			\cdots\arrow{r} &0\arrow{r}\arrow{d} & \ker(d^{n-1})\arrow[tail]{r}{\alpha} \arrow[equal]{d}& C^{n-1}\arrow[r,"\beta", twoheadrightarrow]\arrow[equal]{d}&\coim(d^{n-1})\arrow{r}\arrow[hook]{d}{\gamma} &0\arrow{d} \arrow{r} &\cdots\\
			\cdots\arrow{r} &0\arrow{r} & \ker(d^{n-1})\arrow[tail]{r}{\alpha}& C^{n-1}\arrow{r}&\ker(d^n)\arrow{r} &0 \arrow{r} &\cdots
		\end{tikzcd}
	\end{equation*}
	We interpret this diagram as a morphism between complexes: $f^\bullet\colon D^\bullet \to \operatorname{LH}^n C^{\bullet}$; the complexes here are given by the rows in the previous diagram.  As the top row is an acyclic complex, the morphism $\operatorname{LH}^n C^{\bullet} \to \cone(f^\bullet)$ is a quasi-isomorphism.  It is easy to see that $\cone(f^\bullet)$ is given by the complex $\dots \to 0\to \coim(d^{n-1})\hookrightarrow \ker(d^n)\to 0 \to \dots$with $\ker(d^n)$ in degree $0$, up to homotopy.
\end{proof}

\begin{proposition}\label{Proposition:TheEmbeddingToHeartProperties}Let $\EE$ be a deflation-exact category with admissible kernels.	Let $\phi\colon \EE\to \mathcal{LH}(\EE)$ be the canonical embedding.
	\begin{enumerate}
		\item\label{Item:Proposition:TheEmbeddingToHeartProperties4} The subcategory $\EE\subseteq\mathcal{LH}(\EE)$ is closed under subobjects. 
		\item\label{Item:Proposition:TheEmbeddingToHeartProperties2} For every object $Z\in \mathcal{LH}(\EE)$, there exists a short exact sequence $X\inflation Y\deflation Z$ with $X,Y\in \EE$.
	\end{enumerate}
\end{proposition}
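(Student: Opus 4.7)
The plan is to reduce (2) to (1). Given $Z \in \mathcal{LH}(\EE)$, Proposition~\ref{Proposition:TheEmbeddingToHeartPropertiesNew}(2) produces an epimorphism $\phi(Y) \twoheadrightarrow Z$ in $\mathcal{LH}(\EE)$ with $Y \in \EE$; let $X$ be its kernel in $\mathcal{LH}(\EE)$. As a subobject of $\phi(Y)$, item (1) forces $X$ to lie in the essential image of $\phi$, so $X \cong \phi(X')$ for some $X' \in \EE$, and the short exact sequence $X \inflation Y \deflation Z$ in $\mathcal{LH}(\EE)$ is the required one.

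For (1), the strategy is to exhibit an arbitrary subobject $M \hookrightarrow \phi(Z)$ (with $Z \in \EE$) as the image of a morphism coming from $\EE$, and then to use admissible kernels to locate this image inside $\EE$. Apply Proposition~\ref{Proposition:TheEmbeddingToHeartPropertiesNew}(2) to pick an epimorphism $\pi\colon \phi(Y) \twoheadrightarrow M$ with $Y \in \EE$, and let $\iota\colon M \hookrightarrow \phi(Z)$ denote the given monomorphism. The composite $\iota \circ \pi\colon \phi(Y) \to \phi(Z)$ is a morphism in $\mathcal{LH}(\EE)$ between objects in the essential image of $\phi$, so by full faithfulness (Proposition~\ref{Proposition:TheEmbeddingToHeartPropertiesNew}(1)) it is of the form $\phi(g)$ for a unique morphism $g\colon Y \to Z$ in $\EE$.

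Since $\EE$ has admissible kernels, Proposition~\ref{Proposition:InterpretationOfAIForDeflationExactCategories} supplies a deflation-mono factorization $g = g'' \circ g'$ in $\EE$ with $g'\colon Y \deflation \coim(g)$ and $g''\colon \coim(g) \hookrightarrow Z$. Because $\phi$ is exact (Proposition~\ref{Proposition:TheEmbeddingToHeartPropertiesNew}(1)), it maps the deflation $g'$ to an epimorphism in the abelian category $\mathcal{LH}(\EE)$; and because $\phi$ preserves monomorphisms (Proposition~\ref{Proposition:TheEmbeddingToHeartPropertiesNew}(3)), $\phi(g'')$ is a monomorphism. Hence $\phi(g) = \phi(g'') \circ \phi(g')$ is an epi-mono factorization of $\phi(g) = \iota \circ \pi$ in $\mathcal{LH}(\EE)$. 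Uniqueness of image factorizations in an abelian category then forces $M \cong \phi(\coim g)$, so $M$ belongs to the essential image of $\phi$, as required.

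The main obstacle—and essentially the only new ingredient beyond the contents of Section~\ref{section:LeftHeart}—is the passage from the abstract morphism $g\colon Y \to Z$ in $\EE$ to its concrete deflation-mono factorization; this is exactly what admissible kernels buy us via Proposition~\ref{Proposition:InterpretationOfAIForDeflationExactCategories}, after which uniqueness of image factorizations in $\mathcal{LH}(\EE)$ handles the rest.
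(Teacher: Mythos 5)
Your proof is correct. For item (1) you follow essentially the same argument as the paper: pick an epimorphism $\pi\colon \phi(Y) \twoheadrightarrow M$ from an object of $\EE$ (via \Cref{Proposition:TheEmbeddingToHeartPropertiesNew}\eqref{Item:Proposition:TheEmbeddingToHeartProperties2x}), recognize $\iota\circ\pi$ as $\phi(g)$ by full faithfulness, take the deflation-mono factorization of $g$ in $\EE$ from \Cref{Proposition:InterpretationOfAIForDeflationExactCategories}, push it into $\mathcal{LH}(\EE)$ using exactness and preservation of monomorphisms, and conclude by uniqueness of epi-mono factorizations in an abelian category. For item (2), however, you take a genuinely different and more economical route. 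The paper invokes \Cref{Proposition:RepresentationsOfObjectsInLeftHeart} to represent $Z$ by a two-term complex $X \hookrightarrow Y$ of objects of $\EE$, forms the associated triangle $i(X) \to i(Y) \to Z \to \Sigma i(X)$ in $\D(\EE)$, and extracts the conflation from the long exact sequence of the cohomology functors $\LH^i$; you instead deduce (2) as a formal consequence of (1), by taking the kernel of an epimorphism $\phi(Y) \twoheadrightarrow Z$ inside the abelian category $\mathcal{LH}(\EE)$ and identifying that kernel as an object of $\EE$ using (1). This is logically sound, since your proof of (1) nowhere uses (2), and it avoids both the complex-level representation and the cohomological long exact sequence. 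What the paper's route buys is the explicit presentation of $Z$ as the cokernel of $\phi$ applied to an actual monomorphism of $\EE$; but this extra information is also recoverable from your argument, since $\phi$ is fully faithful and reflects monomorphisms, so the inclusion $\phi(X') \hookrightarrow \phi(Y)$ produced by (1) comes from a monomorphism $X' \to Y$ in $\EE$.
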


\begin{proof}
	\begin{enumerate}
		%
		\item Let $f\colon X\hookrightarrow Y$ be a monomorphism in $\mathcal{LH}(\EE)$ with $Y\in \EE$. It follows from \Cref{Proposition:TheEmbeddingToHeartPropertiesNew}\eqref{Item:Proposition:TheEmbeddingToHeartProperties2x} that there is an epimorphism $g\colon B\to X$ in $\mathcal{LH}(\EE)$ with $B\in \EE$. Consider the deflation-mono factorization $B\deflation \coim(f\circ g)\hookrightarrow Y$ of the morphism $f\circ g$ in $\EE$.  Embedding this factorization in $\mathcal{LH}(\EE)$ gives a deflation-mono factorization of $f \circ g$ in $\mathcal{LH}(\EE)$ (this uses \Cref{Proposition:TheEmbeddingToHeartPropertiesNew}\eqref{Item:Proposition:TheEmbeddingToHeartProperties1x} and \eqref{Item:Proposition:TheEmbeddingToHeartProperties3x}).  Since such a factorization is unique in the abelian category $\mathcal{LH}(\EE)$, we find $X\cong \coim(f\circ g)\in \EE$.
		\item As $Z\in \mathcal{LH}(\EE)$, we know by \Cref{Proposition:RepresentationsOfObjectsInLeftHeart} that $Z$ can be represented by a complex
\[\dots \to 0\to X \stackrel{f}{\hookrightarrow} Y \to 0 \to \dots\]
This gives a triangle $i(X) \xrightarrow{i(f)} i(Y) \to Z \to \Sigma i(X)$ in $\D(\EE)$, where $i\colon \EE \to \D(\EE)$ is the canonical embedding.  The long exact sequence coming from the cohomology functors $\LH^i$ now give the required short exact sequence.		\qedhere
	\end{enumerate}
\end{proof}

\begin{remark}\label{Remark:DeflationMonoFactorizationCompatibleWithLeftHeart}
For any map $f\colon X\to Y$ in $\EE$, the deflation-mono factorization in $\EE$ (see \Cref{Proposition:InterpretationOfAIForDeflationExactCategories}) coincides with the epi-mono factorization of $f$ in the abelian category $\mathcal{LH}(\EE).$
\end{remark}


\begin{corollary}\label{corollary:EmbeddingInHeartIsTriangleEquivalence}
Let $\EE$ be a deflation-exact category with admissible kernels.	The category $\EE$ is a uniformly preresolving subcategory of $\mathcal{LH}(\EE)$ with $\resdim_{\EE}(\mathcal{LH}(\EE))\leq 1$. Consequently,  the embedding lifts to a triangle equivalence $\Phi\colon\Dstar(\EE)\to \Dstar(\mathcal{LH}(\EE))$ for $*\in \{-,b,\emptyset\}$.
\end{corollary}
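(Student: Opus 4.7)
The plan is to verify the two axioms defining a preresolving subcategory, together with the uniform bound $\resdim_\EE(\mathcal{LH}(\EE))\le 1$, and then invoke \Cref{Theorem:PreResolvingDerivedEquivalence}(3). All the technical work has essentially been done in the previous propositions; the corollary is a repackaging.

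First I would check axiom \ref{PR2}, that $\EE\subseteq \mathcal{LH}(\EE)$ is deflation-closed. Given a short exact sequence $0\to W\to Y\xrightarrow{f} Z\to 0$ in $\mathcal{LH}(\EE)$ with $Y,Z\in\EE$, the morphism $f$ has a kernel in $\EE$ (since $\EE$ has admissible kernels by assumption), and this kernel is an inflation. By \Cref{proposition:EmbeddingToLeftHeartCommutesWithKernels}, the embedding $\phi$ commutes with kernels, so the kernel of $f$ computed in the abelian category $\mathcal{LH}(\EE)$ agrees with $\phi(\ker_\EE f)$, whence $W\cong \phi(\ker_\EE f)\in\EE$. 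Alternatively this follows at once from \Cref{Proposition:TheEmbeddingToHeartProperties}\eqref{Item:Proposition:TheEmbeddingToHeartProperties4}, which states that $\EE$ is closed under subobjects in $\mathcal{LH}(\EE)$.

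Next, I would verify \ref{PR1} and the resolution dimension bound simultaneously. By \Cref{Proposition:TheEmbeddingToHeartProperties}\eqref{Item:Proposition:TheEmbeddingToHeartProperties2}, every $Z\in \mathcal{LH}(\EE)$ fits into a short exact sequence $X\inflation Y\deflation Z$ with $X,Y\in \EE$. In particular there is a deflation $Y\deflation Z$ with $Y\in\EE$, giving \ref{PR1}, and the sequence itself is an $\EE$-resolution of length one, so $\resdim_\EE(Z)\le 1$. Taking the supremum over $Z$ gives $\resdim_\EE(\mathcal{LH}(\EE))\le 1$, so the subcategory is uniformly preresolving.

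Having established that $\EE\subseteq \mathcal{LH}(\EE)$ is uniformly preresolving, the consequence is immediate: part (3) of \Cref{Theorem:PreResolvingDerivedEquivalence} produces the unbounded triangle equivalence $\Phi\colon \D(\EE)\xrightarrow{\simeq}\D(\mathcal{LH}(\EE))$, while parts (1) and (2) of the same theorem supply the $-$ and $b$ cases. There is no real obstacle to speak of — the only point requiring care is ensuring that the kernel formed in $\mathcal{LH}(\EE)$ actually lies in $\EE$, and this is already guaranteed by the kernel-preservation of $\phi$ together with the admissibility assumption on kernels in $\EE$.
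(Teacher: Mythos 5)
Your proposal is correct and takes essentially the same route as the paper: the paper's proof simply cites \Cref{Theorem:EmbeddingInHeartIsTriangleEquivalence} (whose proof establishes \ref{PR2} via kernel-preservation of $\phi$, exactly as you do) and then improves the bound to $\resdim_{\EE}(\mathcal{LH}(\EE))\leq 1$ using \Cref{Proposition:TheEmbeddingToHeartProperties}, which is precisely your use of the short exact sequence $X\inflation Y\deflation Z$ with $X,Y\in\EE$. You have merely unpacked the references inline rather than deferring to the earlier theorem.
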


\begin{proof}
The only improvement over \Cref{Theorem:EmbeddingInHeartIsTriangleEquivalence} is that $\resdim_{\EE}(\mathcal{LH}(\EE))\leq 1$.  This follows from \Cref{Proposition:TheEmbeddingToHeartProperties}.
\end{proof}

\begin{proposition}
Let $\EE$ be a deflation-exact category with admissible kernels and let $\AA$ be any abelian category.  For a conflation-exact functor $F\colon \EE \to \AA$, the following are equivalent:
\begin{enumerate}
	\item\label{enumerate:KernelsMono1} $F$ commutes with kernels,
	\item\label{enumerate:KernelsMono2} $F$ maps monomorphisms to monomorphisms.
\end{enumerate}
\end{proposition}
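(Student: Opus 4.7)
The plan is to prove the two implications separately, using the deflation-mono factorization from \Cref{Proposition:InterpretationOfAIForDeflationExactCategories} as the key tool for the nontrivial direction.

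For \eqref{enumerate:KernelsMono1}$\Rightarrow$\eqref{enumerate:KernelsMono2}, I will use that a morphism in any additive category with kernels is a monomorphism if and only if its kernel vanishes. So if $m\colon X\to Y$ is a monomorphism in $\EE$, then $\ker(m)=0$, and commutation with kernels together with $F(0)=0$ yields $\ker F(m)=F(\ker m)=0$ in the abelian category $\AA$, hence $F(m)$ is a monomorphism.

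For the converse \eqref{enumerate:KernelsMono2}$\Rightarrow$\eqref{enumerate:KernelsMono1}, let $f\colon X\to Y$ be any morphism in $\EE$. By \Cref{Proposition:InterpretationOfAIForDeflationExactCategories}, $f$ admits a deflation-mono factorization
\[
X \stackrel{p}{\deflation} \coim(f) \stackrel{m}{\hookrightarrow} Y,
\]
and the conflation $\ker(f)\inflation X \stackrel{p}{\deflation} \coim(f)$ exhibits $\ker(f)$ as the kernel of $p$. Applying $F$ and using that $F$ is conflation-exact, I obtain a short exact sequence $F(\ker f)\rightarrowtail F(X)\stackrel{F(p)}{\twoheadrightarrow} F(\coim f)$ in $\AA$, so $F(\ker f)=\ker F(p)$. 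By assumption $F(m)$ is a monomorphism, and $F(f)=F(m)\circ F(p)$; since precomposing the factorization of $F(f)$ with a monomorphism does not change the kernel, I conclude $\ker F(f)=\ker F(p)=F(\ker f)$, naturally.

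I do not expect a serious obstacle here: the only subtle point is to verify that the canonical map $F(\ker f)\to \ker F(f)$ produced abstractly really coincides with the one arising from the factorization, which follows from the universal properties and the fact that $F$ is additive. Both directions are short, and the admissible-kernels hypothesis is used solely to guarantee existence of the deflation-mono factorization of an arbitrary morphism.
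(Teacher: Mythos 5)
Your proof is correct and follows essentially the same route as the paper: the easy direction via the kernel-vanishing criterion for monomorphisms, and the converse via the deflation-mono factorization $X \deflation \coim(f) \hookrightarrow Y$ of \Cref{Proposition:InterpretationOfAIForDeflationExactCategories}, computing $\ker F(f) = \ker F(p) = F(\ker p) = F(\ker f)$ exactly as in the paper's chain of equalities.
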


\begin{proof}
A morphism is a monomorphism if and only if the kernel is zero.  This shows the implication \eqref{enumerate:KernelsMono1} $\Rightarrow$ \eqref{enumerate:KernelsMono2}.  For the other implication, let $f\colon X \to Y$ be any morphism in $\EE.$  Let $X \stackrel{p}{\deflation} \coim f \stackrel{i}{\hookrightarrow} Y$ be the deflation-mono factorization.  We have
\[\ker F(f) = \ker (F(i) \circ F(p)) \stackrel{(*)}{=} \ker F(p) \stackrel{(**)}{=} F(\ker p) = F(\ker f),\]
where we have used that $F$ preserves monomorphisms (*) and that $F$ is conflation-exact (**).  This shows that $F$ commutes with kernels, as required.
\end{proof}

The previous proposition allows for a reformulation of the 2-universal property of the left heart (see \Cref{Proposition:UniversalPropertyOfLeftHeart}).

\begin{corollary}\label{corollary:UniversalPropertyOfLeftHeart}
Let $\EE$ be a deflation-exact category with admissible kernels.	The embedding $\phi\colon \EE\to \mathcal{LH}(\EE)$ is 2-universal among conflation-exact functors to abelian categories that preserve monomorphisms.
\end{corollary}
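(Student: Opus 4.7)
The plan is to deduce this corollary by combining the two immediately preceding results. By \Cref{Proposition:UniversalPropertyOfLeftHeart}, the embedding $\phi\colon \EE \to \mathcal{LH}(\EE)$ is already known to be 2-universal among conflation-exact functors to abelian categories that preserve \emph{kernels}. Meanwhile, the proposition just proved shows that, when $\EE$ has admissible kernels, a conflation-exact functor $F\colon \EE \to \AA$ to an abelian category preserves kernels if and only if it preserves monomorphisms. Hence the two classes of functors coincide, and the 2-universal property transfers verbatim.

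Concretely, I would phrase the proof as follows. Given any conflation-exact functor $F\colon \EE \to \AA$ that preserves monomorphisms, the preceding proposition yields that $F$ preserves kernels. By \Cref{Proposition:UniversalPropertyOfLeftHeart}, $F$ admits an essentially unique exact lift $\widetilde{F}\colon \mathcal{LH}(\EE) \to \AA$ with $\widetilde{F} \circ \phi \cong F$, and the induced functor
\[
-\circ \phi\colon \Funex(\mathcal{LH}(\EE), \AA) \longrightarrow \Funex(\EE, \AA)
\]
is fully faithful with essential image the kernel-preserving functors, i.e.\ precisely the monomorphism-preserving functors. To close the loop, one should verify that $\phi$ itself belongs to the target class, which is clear from \Cref{Proposition:TheEmbeddingToHeartPropertiesNew}: $\phi$ is exact and preserves (and reflects) monomorphisms.

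There is essentially no obstacle here, since the hard work was already done in the two preceding results. The only thing worth being careful about is the logical direction in which ``2-universal'' is applied: one must ensure both the lifting (existence) and the uniqueness (fully faithfulness of $-\circ\phi$) statements transport correctly, but this is automatic because the two classes of functors being compared are literally equal as subclasses of $\Funex(\EE,\AA)$.
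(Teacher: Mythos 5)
Your proposal is correct and is exactly the paper's intended argument: the paper presents this corollary as a direct reformulation of \Cref{Proposition:UniversalPropertyOfLeftHeart} via the immediately preceding proposition identifying kernel-preserving and monomorphism-preserving conflation-exact functors. Your additional check that $\phi$ itself lies in the relevant class (via \Cref{Proposition:TheEmbeddingToHeartPropertiesNew}) is a sensible, if routine, supplement.
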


The following proposition is somewhat of a converse to \Cref{corollary:EmbeddingInHeartIsTriangleEquivalence}.

\begin{proposition}\label{proposition:NewEmbedding}
Let $\AA$ be an abelian category.  Let $\EE \subseteq \AA$ be a full subcategory satisfying condition \ref{PR1} (thus, every object in $\AA$ is a quotient of an object in $\EE$).  If $\EE$ is closed under subobjects, then $\EE$ has admissible kernels and $\mathcal{LH}(\EE) \simeq \AA$.
\end{proposition}

\begin{proof}
As $\EE$ is closed under subobjects, we know that $\EE$ is a uniformly preresolving subcategory of $\AA$.  By \Cref{Proposition:ClosedUnderSubjectsInheritsDeflationExactAndAdmissibeKernels}, we know that $\EE$ is a deflation-exact category with admissible kernels.  The rest follows from \Cref{proposition:CharacterizationOfHearts}.
\end{proof}


\begin{remark}
In the language of \cite[Definition~1.1]{Kvamme20}, the previous result, together with \Cref{Proposition:TheEmbeddingToHeartProperties} implies that additive regular categories axiomatize subcategories of abelian categories which are generating (that is, satisfying axiom \ref{PR1}) and are closed under subobjects.
\end{remark}

\subsection{The exact hull}\label{Section:SubsectionTheExactHull}

Recall from \Cref{proposition:EquivalentCharacterizations} that an additive regular category $\EE$ is a deflation-exact category with admissible kernels.  As a deflation-exact category, it admits an exact hull $\EE^{\mathsf{ex}}$ (see \Cref{subsection:ExactHull}).  In this subsection, we show that the exact hull $\EE^{\mathsf{ex}}$ has admissible kernels as well.  In other words, the property of having admissible kernels inherits to taking the exact hull.  This means that the exact hull of an additive regular category is still an additive regular category.

The exact hull of $\EE$ is defined as the extension-closure of $\EE$ in the derived category $\Db(\EE).$  In the following proposition, we describe the exact hull as a subcategory of the left heart of $\EE.$

\begin{proposition}\label{Proposition:PhiFactorsThroughHull}
There is a fully faithful conflation-exact functor $k\colon\EE^{\mathsf{ex}}\to \mathcal{LH}(\EE)$ for which the diagram
\[\xymatrix{\EE \ar[rr]^{\phi} \ar[rd]_{j}&& {\mathcal{LH}(\EE)} \\ &{\EE^{\mathsf{ex}}} \ar[ru]_{k} }\]
is essentially commutative.  In particular, the category $\EE^{\mathsf{ex}}$ is a full and extension-closed subcategory of $\mathcal{LH}(\EE)$. Furthermore, $\phi,j$ and $k$ all lift to derived equivalences $\Dstar(\EE) \to \Dstar(\ex{\EE}) \to \Dstar(\mathcal{LH}(\EE))$ for $\ast \in \{\varnothing, \mathrm{b}, -\}$.
\end{proposition}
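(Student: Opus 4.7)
The plan is to build $k$ via the $2$-universal property of the exact hull and then identify it with the restriction of the derived equivalence $\Phi\colon\Db(\EE)\to\Db(\mathcal{LH}(\EE))$ from \Cref{corollary:EmbeddingInHeartIsTriangleEquivalence}. Since $\mathcal{LH}(\EE)$ is abelian and therefore exact, and $\phi$ is conflation-exact by \Cref{Proposition:TheEmbeddingToHeartPropertiesNew}\eqref{Item:Proposition:TheEmbeddingToHeartProperties1x}, \Cref{Theorem:ExactHull}(1) yields an essentially unique conflation-exact functor $k\colon\ex{\EE}\to\mathcal{LH}(\EE)$ with $k\circ j\cong\phi$; this produces the functor and the (essential) commutativity of the diagram.

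To show $k$ is fully faithful, I would restrict the triangle equivalence $\Phi$ to $\ex{\EE}\subseteq\Db(\EE)$. Since $\Phi$ sends every stalk $E\in\EE$ to $\phi(E)\in\mathcal{LH}(\EE)$, and $\mathcal{LH}(\EE)$ is extension-closed in $\Db(\mathcal{LH}(\EE))$ as the heart of a $t$-structure, the extension-closure $\ex{\EE}$ is mapped into $\mathcal{LH}(\EE)$. The resulting additive functor $\Phi|_{\ex{\EE}}\colon\ex{\EE}\to\mathcal{LH}(\EE)$ is conflation-exact: a conflation in $\ex{\EE}$ corresponds to a triangle in $\Db(\EE)$ whose $\Phi$-image is a triangle in $\Db(\mathcal{LH}(\EE))$ with all terms in $\mathcal{LH}(\EE)$, i.e.\ a short exact sequence there. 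Since $\Phi|_{\ex{\EE}}\circ j\cong\phi$, the uniqueness clause of \Cref{Theorem:ExactHull}(1) forces $\Phi|_{\ex{\EE}}\cong k$. Fully faithfulness of the equivalence $\Phi$ then descends to $k$, and the characterization of $\ex{\EE}$ as a full extension-closed subcategory of $\mathcal{LH}(\EE)$ follows at once, because its essential image agrees with the extension-closure of $\phi(\EE)$ in the abelian category $\mathcal{LH}(\EE)$.

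For the derived equivalences I would combine three facts. First, \Cref{corollary:EmbeddingInHeartIsTriangleEquivalence} gives $\Dstar(\phi)\colon\Dstar(\EE)\xrightarrow{\simeq}\Dstar(\mathcal{LH}(\EE))$ for $\ast\in\{\varnothing,-,\mathrm{b}\}$. Second, since $\EE$ has admissible kernels, its conflations are precisely the kernel-cokernel pairs (\Cref{REM}), so $\EE$ is additive regular and in particular satisfies \ref{R3+} by \Cref{Proposition:DeflationAICategorySatisfiesAxiomR3+}; then \Cref{Theorem:DivisiveDeflationExactIsFinitelyPreResolvingInExactHull} shows $\EE\subseteq\ex{\EE}$ is uniformly preresolving, and \Cref{Theorem:PreResolvingDerivedEquivalence} supplies $\Dstar(j)\colon\Dstar(\EE)\xrightarrow{\simeq}\Dstar(\ex{\EE})$. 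Third, the triangulated lift $\Dstar(k)$ of the conflation-exact $k$ satisfies $\Dstar(\phi)\cong\Dstar(k)\circ\Dstar(j)$, and is therefore an equivalence as the composition of an equivalence with the inverse of one. I expect the main subtlety to be the identification $k\cong\Phi|_{\ex{\EE}}$; once that is in place, each remaining claim reduces to an already established result.
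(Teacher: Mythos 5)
Your proposal is correct and follows essentially the same route as the paper's proof: $k$ is produced by the $2$-universal property of the exact hull, $\ex{\EE}$ is identified as a full extension-closed subcategory of the heart using that hearts of $\operatorname{t}$-structures are extension-closed, and the derived equivalence for $k$ follows by two-out-of-three from the already-established equivalences for $\phi$ and $j$. The only difference is cosmetic: you transport everything along $\Phi$ into $\Db(\mathcal{LH}(\EE))$ and identify $k\cong\Phi|_{\ex{\EE}}$, whereas the paper works directly inside $\Db(\EE)$, where $\mathcal{LH}(\EE)$ and $\ex{\EE}$ both live by definition; your identification does make the full-faithfulness step more explicit than the paper's terse assertion that it follows from full faithfulness of $\phi$ and $j$.
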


\begin{proof}
By the universal property of the embedding $j\colon \EE\to \EE^{\mathsf{ex}}$ (see \Cref{Theorem:ExactHull}), the functor $\phi$ factors (essentially uniquely) as $\EE \xrightarrow{j} \ex{\EE} \xrightarrow{k} \mathcal{LH}(\EE)$ where $k$ is an exact functor.  As $\phi$ and $j$ are fully faithful, so is $k$.  To see that $\EE^{\mathsf{ex}}$ is a full and extension-closed subcategory of $\mathcal{LH}(\EE)$, it suffices to note that $\mathcal{LH}(\EE)$ is an extension-closed subcategory of $\D(\EE)$ and that $\ex{\EE}$ is the extension-closure of $\EE \subseteq \mathcal{LH}(\EE)$ in $\D(\EE).$  Furthermore, as the embeddings $\phi$ and $j$ lift to triangle equivalences $\Dstar(\EE) \to \Dstar(\mathcal{LH}(\EE))$ and $\Dstar(\EE) \to \Dstar(\ex{\EE})$ for $\ast \in \{\varnothing, \mathrm{b}, -\}$ (see \Cref{Theorem:DivisiveDeflationExactIsFinitelyPreResolvingInExactHull} and \Cref{corollary:EmbeddingInHeartIsTriangleEquivalence}), so does $k$.
\end{proof}

In the following proposition, we use the categories $\EE_n$ from Notation \ref{notation:ExactHull}.

\begin{proposition}\label{Proposition:HullLiesClosedUnderSubobjectsInLeftHeart}
Let $\EE$ be a deflation-exact category with admissible kernels.	The subcategory $\EE^{\mathsf{ex}}$ is closed under subobjects in $\mathcal{LH}(\EE)$.
\end{proposition}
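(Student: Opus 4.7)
The plan is to induct along the filtration $\EE^{\mathsf{ex}}=\bigcup_{n\geq 0}\EE_n$ from \Cref{notation:ExactHull}, proving by induction on $n$ that every subobject in $\mathcal{LH}(\EE)$ of an object in $\EE_n$ lies in $\EE^{\mathsf{ex}}$. Since every object of $\EE^{\mathsf{ex}}$ belongs to some $\EE_n$, this yields the claim. The base case $n=0$ is free: $\EE_0=\EE$, and $\EE$ is already closed under subobjects in $\mathcal{LH}(\EE)$ by \Cref{Proposition:TheEmbeddingToHeartProperties}\eqref{Item:Proposition:TheEmbeddingToHeartProperties4}.

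For the inductive step, suppose $Y\in\EE_n$ with $n\geq 1$, and pick a defining triangle $A\to Y\to C\to \Sigma A$ in $\Db(\EE)$ with $A\in\EE_{n-1}$ and $C\in\EE_0=\EE$. Because $\mathcal{LH}(\EE)$ is an abelian subcategory of $\Db(\EE)$ containing $\EE^{\mathsf{ex}}$ (via \Cref{Proposition:PhiFactorsThroughHull}), this triangle is a short exact sequence $0\to A\to Y\to C\to 0$ inside $\mathcal{LH}(\EE)$. Now given a monomorphism $X\hookrightarrow Y$ in $\mathcal{LH}(\EE)$, I would form the pullback $X\cap A := X\times_Y A$ in the abelian category $\mathcal{LH}(\EE)$; the usual abelian diagram chase produces a short exact sequence
\[
0\to X\cap A\to X\to X/(X\cap A)\to 0
\]
in $\mathcal{LH}(\EE)$ together with a monomorphism $X/(X\cap A)\hookrightarrow C$. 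Applying the inductive hypothesis to $X\cap A\hookrightarrow A\in\EE_{n-1}$ gives $X\cap A\in\EE^{\mathsf{ex}}$, and the base case applied to $X/(X\cap A)\hookrightarrow C\in\EE$ gives $X/(X\cap A)\in\EE\subseteq\EE^{\mathsf{ex}}$.

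To close the induction I would invoke the fact from \Cref{Proposition:PhiFactorsThroughHull} that $\EE^{\mathsf{ex}}$ is an extension-closed full subcategory of $\mathcal{LH}(\EE)$: since $X$ is an extension of two objects of $\EE^{\mathsf{ex}}$ inside $\mathcal{LH}(\EE)$, it follows that $X\in\EE^{\mathsf{ex}}$. I expect the only subtle point to be the identification of the triangle $A\to Y\to C\to \Sigma A$ with a genuine short exact sequence in $\mathcal{LH}(\EE)$ and the compatibility of abelian pullbacks with extension-closedness; both are handled cleanly by \Cref{Proposition:PhiFactorsThroughHull}, so no new machinery is needed beyond the induction itself.
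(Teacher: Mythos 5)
Your proof is correct and takes essentially the same approach as the paper: induction along the filtration $\EE^{\mathsf{ex}}=\bigcup_{n\geq 0}\EE_n$, with the same decomposition of a subobject $X\hookrightarrow Y$ obtained by pulling back along $A\inflation Y$ and taking the image in the cokernel term. The only cosmetic difference is that the paper's induction proves the sharper statement $X\in\EE_n$ directly from the definition of $\EE_n$ via triangles, whereas you prove $X\in\EE^{\mathsf{ex}}$ and close the induction by citing the extension-closedness of $\EE^{\mathsf{ex}}$ in $\mathcal{LH}(\EE)$ from \Cref{Proposition:PhiFactorsThroughHull}; both arguments rest on the identical short exact sequence.
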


\begin{proof}
Consider a monomorphism $X\hookrightarrow Y$ in $\mathcal{LH}(\EE)$ and assume that $Y\in \EE^{\mathsf{ex}}$. We need to show that $X\in \EE^{\mathsf{ex}}$. By construction, $Y\in \EE_n$ for some $n\geq 0$.  We show, by induction on $n$, that $X \in \EE_{n}$ as well.  For $n=0$, \Cref{Proposition:TheEmbeddingToHeartProperties}.\eqref{Item:Proposition:TheEmbeddingToHeartProperties4} yields that $X\in \EE_0$. Now assume that $n\geq 1$. By definition, there is a conflation $A\inflation Y\deflation B$ in $\EE^{\mathsf{ex}}$ with $A\in \EE_{n-1}$ and $B\in \EE_0$. Consider the following commutative diagram in $\mathcal{LH}(\EE)$:
\[\begin{tikzcd}
		P\arrow[r]\arrow[d, hookrightarrow] & X\arrow[d, hookrightarrow]\arrow[r] & I\arrow[d, hookrightarrow]\\
		A\arrow[r, rightarrowtail] & Y\arrow[r,twoheadrightarrow] & B
\end{tikzcd}\]
	Here, $I$ is the image of the composition $X\hookrightarrow Y\to B$ and $P \to X$ is the kernel of $X \to I.$  In particular, the top line in this diagram is an exact sequence in $\mathcal{LH}(\EE)$, and thus corresponds to a triangle in $\Db(\EE)$.  By \cite[Proposition I.13.2]{Mitchell65}, the left square is a pullback and hence the induced map $P \to A$ is a monomorphism (as the pullback of a monomorphism is a monomorphism, see \cite[Proposition I.7.1]{Mitchell65}). By the induction hypothesis, $P\in \EE_{n-1}$ and the base case yields that $I\in \EE_0$.  It follows that $X\in \EE_n$ as required.
\end{proof}

\begin{theorem}\label{theorem:ExactHullRegular}
Let $\EE$ be a deflation-exact category with admissible kernels.  The exact hull $\EE^{\mathsf{ex}}$ of $\EE$ also has admissible kernels.
\end{theorem}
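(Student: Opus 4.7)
The plan is to exploit the embedding $k\colon \EE^{\mathsf{ex}} \hookrightarrow \mathcal{LH}(\EE)$ from Proposition \ref{Proposition:PhiFactorsThroughHull} together with the closure-under-subobjects property established in Proposition \ref{Proposition:HullLiesClosedUnderSubobjectsInLeftHeart}. Since $\mathcal{LH}(\EE)$ is abelian (hence trivially a deflation-exact category with admissible kernels), the natural approach is to show that $\EE^{\mathsf{ex}}$ inherits admissible kernels from $\mathcal{LH}(\EE)$ via Proposition \ref{Proposition:ClosedUnderSubjectsInheritsDeflationExactAndAdmissibeKernels}. The only subtlety is to check that the conflation structure inherited from $\mathcal{LH}(\EE)$ coincides with the conflation structure of the exact hull given in Theorem \ref{Theorem:ExactHull}.

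The main steps I would carry out are as follows. First, take an arbitrary morphism $f\colon X \to Y$ in $\EE^{\mathsf{ex}}$ and view it as a morphism in $\mathcal{LH}(\EE)$ via the fully faithful functor $k$. Since $\mathcal{LH}(\EE)$ is abelian, $f$ admits an epi-mono factorization $X \twoheadrightarrow I \hookrightarrow Y$ and a kernel $K \hookrightarrow X$, fitting into a short exact sequence $0 \to K \to X \to I \to 0$ in $\mathcal{LH}(\EE)$. Second, invoke Proposition \ref{Proposition:HullLiesClosedUnderSubobjectsInLeftHeart}: since $I$ is a subobject of $Y \in \EE^{\mathsf{ex}}$ and $K$ is a subobject of $X \in \EE^{\mathsf{ex}}$, both $I$ and $K$ lie in $\EE^{\mathsf{ex}}$. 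As $k$ is fully faithful, $K$ serves as a kernel of $f$ in $\EE^{\mathsf{ex}}$ itself. Third, I would argue that the short exact sequence $0 \to K \to X \to I \to 0$ is a conflation in $\EE^{\mathsf{ex}}$: it corresponds to a triangle in $\Db(\mathcal{LH}(\EE))$, and the triangle equivalence $\Db(\EE) \xrightarrow{\simeq} \Db(\mathcal{LH}(\EE))$ from Corollary \ref{corollary:EmbeddingInHeartIsTriangleEquivalence} transports this to a triangle in $\Db(\EE)$ with all three vertices in $\EE^{\mathsf{ex}}$; by the definition of the conflation structure on $\EE^{\mathsf{ex}}$ recalled just after the Definition in \S \ref{subsection:ExactHull}, this is precisely a conflation in $\EE^{\mathsf{ex}}$. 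Hence $K \to X$ is an inflation in $\EE^{\mathsf{ex}}$, establishing admissible kernels.

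The step requiring the most care is the translation between the two potential conflation structures on $\EE^{\mathsf{ex}}$, namely the one coming from being a deflation-closed subcategory of $\mathcal{LH}(\EE)$ (via Proposition \ref{Proposition:DeflationClosed}, if one were to invoke Proposition \ref{Proposition:ClosedUnderSubjectsInheritsDeflationExactAndAdmissibeKernels} directly) and the intrinsic exact-hull structure from Theorem \ref{Theorem:ExactHull}. Both structures amount to requiring a distinguished triangle in $\Db(\EE)$ whose vertices lie in $\EE^{\mathsf{ex}}$, so they agree, but making this identification explicit via the derived equivalence is the non-trivial verification. Once this is done, the argument is essentially a one-line application of closure under subobjects. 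I would present the proof directly (as in the second paragraph) rather than invoking Proposition \ref{Proposition:ClosedUnderSubjectsInheritsDeflationExactAndAdmissibeKernels} in order to avoid making this comparison of conflation structures explicit in the text.
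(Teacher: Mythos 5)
Your proposal is correct and takes essentially the same route as the paper, whose entire proof is the one-line deduction from \Cref{Proposition:HullLiesClosedUnderSubobjectsInLeftHeart} together with \Cref{Proposition:ClosedUnderSubjectsInheritsDeflationExactAndAdmissibeKernels} applied to the abelian category $\mathcal{LH}(\EE)$. The compatibility of the inherited and intrinsic conflation structures on $\EE^{\mathsf{ex}}$ that you flag is precisely the step the paper leaves implicit (it is covered by $k$ being conflation-exact in \Cref{Proposition:PhiFactorsThroughHull} together with the triangle characterization of short exact sequences in the heart), and your explicit verification of it is a sound way to fill that in.
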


\begin{proof}
This follows from \Cref{Proposition:ClosedUnderSubjectsInheritsDeflationExactAndAdmissibeKernels} and \Cref{Proposition:HullLiesClosedUnderSubobjectsInLeftHeart}.
\end{proof}

\begin{corollary}\label{Corollary:TheHullInheritsAdmissibleKernels}
	\begin{enumerate}
		\item The functor $k\colon \ex{\EE} \to \mathcal{LH}(\EE)$ maps monomorphisms to monomorphisms.
		\item	The subcategory $\EE\subseteq \EE^{\mathsf{ex}}$ is closed under subobjects.
		\item\label{enumerate:TheHullInheritsAdmissibleKernels3} The embedding $j\colon \EE \to \ex{\EE}$ commutes with kernels.
		\item\label{enumerate:TheHullInheritsAdmissibleKernels4} A morphism $X \to Y$ in $\EE$ is a deflation if and only if it is a deflation in $\ex\EE$.
		\item Exact categories with admissible kernels are precisely the extension-closed subcategories of abelian categories that are closed under subobjects.
	\end{enumerate}
\end{corollary}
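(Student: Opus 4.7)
The plan is to prove the five items in the order given, building on \Cref{Proposition:PhiFactorsThroughHull} (which exhibits $k\colon \ex{\EE} \hookrightarrow \mathcal{LH}(\EE)$ as fully faithful and exact, with $\phi = k\circ j$) and \Cref{Proposition:HullLiesClosedUnderSubobjectsInLeftHeart} (which says that $\ex{\EE}$ is closed under subobjects in $\mathcal{LH}(\EE)$). For (1), let $f\colon X\to Y$ be a monomorphism in $\ex{\EE}$ and form the kernel $h\colon M\hookrightarrow k(X)$ of $k(f)$ in $\mathcal{LH}(\EE)$. Since $k(X)\in \ex{\EE}$, \Cref{Proposition:HullLiesClosedUnderSubobjectsInLeftHeart} places $M$ in the essential image of $k$, so $M\cong k(M')$; as fully faithful functors reflect monomorphisms, $h$ corresponds to a monomorphism $h'\colon M'\to X$ in $\ex{\EE}$. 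From $k(f\circ h')=k(f)\circ h=0$ and faithfulness of $k$ we deduce $f\circ h'=0$, whence $h'=0$ (since $f$ is monic) and thus $M'=0$, so $k(f)$ is a monomorphism.

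For (2), any monomorphism $X\hookrightarrow j(E)$ in $\ex{\EE}$ with $E\in \EE$ is sent by (1) to a monomorphism $k(X)\hookrightarrow \phi(E)$ in $\mathcal{LH}(\EE)$; \Cref{Proposition:TheEmbeddingToHeartProperties}.\eqref{Item:Proposition:TheEmbeddingToHeartProperties4} then puts $k(X)$ in $\phi(\EE)$, and full faithfulness of $k$ puts $X$ in $j(\EE)$. For (3), \Cref{theorem:ExactHullRegular} gives a kernel of $j(f)$ in $\ex{\EE}$, and (2) forces this kernel to lie in $j(\EE)$, say $j(K)\inflation j(X)$; full faithfulness of $j$ then transfers the universal property back to $\EE$, identifying $K$ with $\ker_{\EE} f$. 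For (4), the forward implication follows from exactness of $j$. Conversely, if $j(f)$ is a deflation in $\ex{\EE}$, then the conflation $\ker j(f)\inflation j(X)\deflation j(Y)$ equals, by (3), the image under $j$ of $\ker_{\EE} f\to X\to Y$; since $\EE$ satisfies \ref{R3+} and hence \ref{R3}, the last assertion of \Cref{Theorem:ExactHull} makes $j$ reflect conflations, so $f$ is a deflation in $\EE$.

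For (5), the forward direction is obtained by applying \Cref{Proposition:PhiFactorsThroughHull} and \Cref{Proposition:HullLiesClosedUnderSubobjectsInLeftHeart} to an exact category $\AA$ with admissible kernels, noting that $\ex{\AA}\simeq \AA$ by the $2$-universal property of the exact hull; this realises $\AA$ as an extension-closed subcategory of $\mathcal{LH}(\AA)$ that is additionally closed under subobjects. The converse follows from \Cref{Proposition:ClosedUnderSubjectsInheritsDeflationExactAndAdmissibeKernels}: an extension-closed subcategory of an abelian category inherits an exact structure, and the abelian ambient has admissible kernels (every monomorphism is an inflation), so closure under subobjects promotes the inherited exact structure to one with admissible kernels. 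The main subtlety I expect lies in (1): the natural route via the characterisation ``commutes with kernels $\Leftrightarrow$ preserves monomorphisms'' for $k$ turns out to be circular, which is why I lean directly on \Cref{Proposition:HullLiesClosedUnderSubobjectsInLeftHeart} and then bootstrap (2)--(4) from it.
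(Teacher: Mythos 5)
Your proof is correct, and in items (2), (4) and (5) it is essentially the paper's own argument: (2) combines item (1) with the closure of $\EE$ under subobjects in $\mathcal{LH}(\EE)$; (4) identifies the kernel of the deflation as an object of $j(\EE)$ and invokes the reflection of conflations from \Cref{Theorem:ExactHull}; (5) combines extension-closure with \Cref{Proposition:ClosedUnderSubjectsInheritsDeflationExactAndAdmissibeKernels}, the identification $\EE\cong\ex{\EE}$, and \Cref{Proposition:HullLiesClosedUnderSubobjectsInLeftHeart}. You genuinely differ in (1) and (3). For (1) the paper does not take kernels at all: given $t\colon T\to k(X)$ with $k(f)\circ t=0$, it chooses an epimorphism $p\colon Z\deflation T$ with $Z\in\EE$ (\Cref{Proposition:TheEmbeddingToHeartPropertiesNew}\eqref{Item:Proposition:TheEmbeddingToHeartProperties2x}); since $Z$ and $X$ both lie in the full subcategory $\ex{\EE}$ and $f$ is monic there, $t\circ p=0$, and then $t=0$ because $p$ is epic. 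Your variant instead forms $\ker k(f)$ in the abelian category $\mathcal{LH}(\EE)$, pulls it into $\ex{\EE}$ via \Cref{Proposition:HullLiesClosedUnderSubobjectsInLeftHeart}, and kills it by faithfulness; this is valid and of comparable length, but it leans on the stronger subobject-closure proposition where the paper only needs the epimorphic-cover property of the left heart. For (3) the paper is a one-liner: $\phi=k\circ j$ preserves kernels (\Cref{proposition:EmbeddingToLeftHeartCommutesWithKernels}) and $k$ is fully faithful, so $j$ preserves kernels because fully faithful functors reflect the relevant universal property; your route through \Cref{theorem:ExactHullRegular} (existence of kernels in $\ex{\EE}$) plus item (2) proves the same identification by hand, at the cost of invoking a heavier result. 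Neither deviation introduces circularity, since \Cref{Proposition:HullLiesClosedUnderSubobjectsInLeftHeart} and \Cref{theorem:ExactHullRegular} are established before this corollary; and your closing remark about avoiding the ``preserves kernels iff preserves monomorphisms'' circularity for $k$ is precisely why the paper, too, argues item (1) from first principles rather than quoting that equivalence.
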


\begin{proof}
	\begin{enumerate}
		\item Consider a monomorphism $f\colon X \hookrightarrow Y$ in $\ex{\EE} \subseteq \mathcal{LH}(\EE).$  Take a morphism $t\colon T \to X$ such that $f \circ t = 0.$  By \Cref{Proposition:TheEmbeddingToHeartPropertiesNew}.\eqref{Item:Proposition:TheEmbeddingToHeartProperties2x}, there is an epimorphism $p\colon Z \to T$ with $Z \in \EE \subseteq \ex{\EE}.$  As $f$ is a monomorphism in $\ex{\EE}$, it follows from $f \circ t \circ p$ that $t \circ p = 0.$  As $p$ is an epimorphism, we find that $f=0$.  This shows that $f$ is a monomorphism in $\mathcal{LH}(\EE).$
		\item	Consider a monomorphism $f\colon X \hookrightarrow Y$ in $\ex{\EE}$ with $Y \in \EE.$  We have shown that $f$ is also a monomorphism in $\mathcal{LH}(\EE).$  It follows from \Cref{Proposition:TheEmbeddingToHeartProperties}.\eqref{Item:Proposition:TheEmbeddingToHeartProperties4} that $X \in \EE.$
		\item Follows directly from the fact that $\phi\colon \EE \to \mathcal{LH}(\EE)$ preserves kernels.
		\item Consider the conflation $K \inflation X \deflation Y$ in $\ex\EE$.  As $\EE \subseteq \ex\EE$ is closed under subobjects, we find $K \in \EE$.  We can now use that $j\colon \EE \to \ex\EE$ reflects conflations (see \Cref{Theorem:ExactHull}).
		\item Clearly any extension-closed subcategory of an exact category is exact. Combining this fact with \Cref{Proposition:ClosedUnderSubjectsInheritsDeflationExactAndAdmissibeKernels} yields that any extension-closed subcategory of an abelian category closed under subobjects is an exact category with admissible kernels. Conversely, any exact category $\EE$ equals its hull $\EE\cong \EE^{\mathsf{ex}}$. Additionally, if $\EE$ has admissible kernels, then $\EE\subseteq \mathcal{LH}(\EE)$ is closed under subobjects by \Cref{Proposition:HullLiesClosedUnderSubobjectsInLeftHeart}. By construction, $\EE^{\mathsf{ex}}$ lies extension-closed in $\mathcal{LH}(\EE)$. This concludes the proof. \qedhere
	\end{enumerate}
\end{proof}

It follows from \Cref{Lemma:MonosAndEpisInExactHull} that a morphism $f\colon X \to Y$ in a deflation-exact category that becomes an inflation in $\ex{\EE}$ is necessarily a monomorphism.  However, it gives no criterion for which monomorphisms become inflations.  The following provides such a criterion for deflation-exact categories with kernels; these kernels need not be admissible.

\begin{proposition}\label{Corollary:HullOfAIIsL1Closure} Let $\EE$ be a deflation-exact category satisfying axiom \ref{R3}.  Assume that $\EE$ admits all kernels.  Any inflation $f\colon X\inflation Y$ in $\EE^{\mathsf{ex}}$ with $X,Y\in \EE$ is a finite composition of inflations in $\EE$.
\end{proposition}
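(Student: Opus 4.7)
The plan is to induct on the smallest integer $n \geq 0$ with $Z := \coker_{\ex{\EE}}(f) \in \EE_n$; such an $n$ exists because $\ex{\EE} = \bigcup_{n \geq 0} \EE_n$. For the base case $n = 0$, the cokernel $Z$ lies in $\EE$ itself, so the conflation $X \inflation Y \deflation Z$ in $\ex{\EE}$ has all three terms in $\EE$. Since $\EE$ satisfies \ref{R3}, the embedding $j$ reflects conflations by \Cref{Theorem:ExactHull}, and hence $f$ is already an inflation in $\EE$.

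For the inductive step ($n \geq 1$), I will use the definition of $\EE_n$ to pick a conflation $A \inflation Z \deflation C$ in $\ex{\EE}$ with $A \in \EE_{n-1}$ and $C \in \EE$, and pull it back along the deflation $Y \deflation Z$ inside the exact category $\ex{\EE}$. A standard $3\times 3$-style argument produces an object $P \in \ex{\EE}$ fitting into two conflations $X \inflation P \deflation A$ and $P \inflation Y \deflation C$. Once $P \in \EE$ is established, the induction wraps up: the latter conflation has all terms in $\EE$ and so is a conflation in $\EE$ (again by R3-reflection), giving an inflation $P \inflation Y$ in $\EE$; the former conflation exhibits $X \inflation P$ as an inflation in $\ex{\EE}$ between objects of $\EE$ with cokernel in $\EE_{n-1}$, so by the inductive hypothesis it decomposes into finitely many inflations in $\EE$; concatenating yields the desired decomposition of $f$.

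The main obstacle is showing $P \in \EE$. Because we do not assume admissible kernels in $\EE$, the closure-under-subobjects argument of \Cref{Proposition:HullLiesClosedUnderSubobjectsInLeftHeart} is unavailable, and the fact that $P \to Y$ is a subobject in $\ex{\EE}$ with $Y \in \EE$ does not a priori force $P \in \EE$. To get around this, I will identify $P$ with $K := \ker_{\EE}(Y \to C)$, which exists by hypothesis. By \Cref{proposition:EmbeddingToLeftHeartCommutesWithKernels}, the embedding $\phi\colon \EE \to \mathcal{LH}(\EE)$ commutes with kernels, so $K$ is the kernel of $Y \to C$ in the abelian category $\mathcal{LH}(\EE)$. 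On the other hand, $\ex{\EE}$ is extension-closed in $\mathcal{LH}(\EE)$ by \Cref{Proposition:PhiFactorsThroughHull}, so the conflation $P \inflation Y \deflation C$ is also a short exact sequence in $\mathcal{LH}(\EE)$, exhibiting $P$ as the same kernel. Uniqueness of kernels then forces $P \cong K$, and since $K \in \EE$ we conclude $P \in \EE$, closing the induction.
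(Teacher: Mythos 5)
Your proof is correct, and it shares the paper's overall skeleton: induction on the smallest $n$ with $\coker(f)\in\EE_n$, the base case handled by the fact that $j$ reflects conflations when $\EE$ satisfies \ref{R3} (\Cref{Theorem:ExactHull}), and the inductive step handled by pulling back $A\inflation Z\deflation C$ along $Y\deflation Z$ to get the two conflations $X\inflation P\deflation A$ and $P\inflation Y\deflation C$. Where you genuinely diverge is the crucial step $P\in\EE$: the paper simply cites \cite[Proposition~5.5]{HenrardvanRoosmalen20Preresolving}, which says that $\EE$ lies deflation-closed in $\ex{\EE}$, whereas you reprove exactly this deflation-closedness internally, by identifying $P$ with $\ker_{\EE}(Y\to C)$ via the left heart: the conflation $P\inflation Y\deflation C$ is a short exact sequence in $\mathcal{LH}(\EE)$ because $\ex{\EE}$ sits in $\mathcal{LH}(\EE)$ as a full, extension-closed subcategory with compatible exact structure, while $\phi$ preserves kernels by \Cref{proposition:EmbeddingToLeftHeartCommutesWithKernels}; uniqueness of kernels plus full faithfulness of $k\colon\ex{\EE}\to\mathcal{LH}(\EE)$ then forces $P$ into the essential image of $\EE$. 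This buys a self-contained argument that stays within the machinery of this paper instead of outsourcing the key lemma, at the cost of one verification you should make explicit: \Cref{Proposition:PhiFactorsThroughHull} is stated under the standing hypothesis of Section~\ref{Section:SubsectionTheExactHull} that $\EE$ has \emph{admissible} kernels, while the present proposition assumes only kernels and \ref{R3} --- indeed you correctly discard \Cref{Proposition:HullLiesClosedUnderSubobjectsInLeftHeart} for precisely this reason, so the same scrutiny must be applied to \Cref{Proposition:PhiFactorsThroughHull}. Fortunately, the parts you use (the existence of the fully faithful exact functor $k$ with $k\circ j\cong\phi$, and extension-closedness of $\ex{\EE}$ in $\mathcal{LH}(\EE)$) are established using only the universal property of the hull and the results of Section~\ref{section:LeftHeart}, which are valid for strongly deflation-exact categories with kernels (\Cref{Proposition:TheEmbeddingToHeartPropertiesNew}); admissible kernels enter only in the derived-equivalence clause, which you do not need. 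With that hypothesis check spelled out, your argument is complete and non-circular.
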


\begin{proof}
 We show that for any conflation $X\inflation Y\deflation Z$ in $\EE^{\mathsf{ex}}$ with $X,Y\in \EE$, the map $X\to Y$ is a finite composition of inflations in $\EE$. As $Z\in \EE^{\mathsf{ex}}$, there is an $n\geq 0$ such that $Z\in \EE_n$. We proceed by induction on $n\geq 0$. If $n=0$, $X\inflation Y$ is an inflation in $\EE$ as the embedding $j\colon \EE\to \EE^{\mathsf{ex}}$ reflects exactness (see \Cref{Theorem:ExactHull}). If $n\geq 1$, then there exists a conflation $A\inflation Z\deflation B$ in $\EE^{\mathsf{ex}}$ such that $A\in \EE_{n-1}$ and $B\in \EE$. Consider the following commutative diagram
	\[\xymatrix{
		X\ar@{>->}[r]\ar@{=}[d] & P\ar@{->>}[r]\ar@{>->}[d] & A\ar@{>->}[d]\\
		X\ar@{>->}[r] & Y\ar@{->>}[r]\ar@{->>}[d] & Z\ar@{->>}[d]\\
		& B\ar@{=}[r] & B
	}\] in $\EE^{\mathsf{ex}}$ where the upper-right square is bicartesian.  By \cite[Proposition~5.5]{HenrardvanRoosmalen20Preresolving}, we know that $\EE$ lies deflation-closed in $\EE.$  As $Y,B \in \EE$, we find that $P \in \EE$.  The induction hypothesis now shows that $X \inflation P$ is a finite string of inflations.
\end{proof}
\section{Quotients of additive regular categories}\label{section:Quotients}

In \cite{HenrardvanRoosmalen19b,HenrardvanRoosmalen19a}, a quotient/localization theory for (one-sided) exact categories at percolating subcategories is studied. This localization theory simultaneously generalizes localization theories for exact categories developed in \cite{Cardenas98,Schlichting04} and provides new examples (even for exact categories).  As additive regular categories are deflation-exact, this framework allows to take quotients of additive regular categories.  The main result is that a quotient of an additive regular category is again regular as an additive category, and that the induced deflation-exact structure on the quotient consists of all kernel-cokernel pairs.  In addition, we provide an easy characterization of percolating subcategories for additive regular categories (see \Cref{Proposition:ClassificationOfPercolatingSubcategoriesOfExactAICategory}).

\subsection{Basic definitions and results}

We recall the basic definitions and results from \cite{HenrardvanRoosmalen19b,HenrardvanRoosmalen19a}.

\begin{definition}\label{definition:GeneralPercolatingSubcategory}
	Let $\EE$ be a conflation category. A non-empty full subcategory $\AA$ of $\EE$ is called a \emph{deflation-percolating subcategory} of $\EE$ if the following axioms are satisfied:
	\begin{enumerate}[label=\textbf{P\arabic*},start=1]
		\item\label{P1} $\AA$ is a \emph{Serre subcategory}, meaning:
		\[\mbox{ If } A'\inflation A \deflation A'' \mbox{ is a conflation in $\EE$, then } A\in \AA \mbox{ if and only if } A',A''\in \AA.\]
		\item\label{P2} For all morphisms $X\rightarrow A$ with $X \in \EE$ and $A\in \AA$, there exists a commutative diagram
			\begin{equation*}
				\begin{tikzcd}
					A'\arrow{rd} & \\
					X\arrow{r}\arrow[two heads]{u} & A
				\end{tikzcd}
			\end{equation*}
				with $A'\in \AA$ and where $X \deflation A'$ is a deflation.
		\item\label{P3} For any composition $\xymatrix@1{X\ar@{>->}[r]^i & Y\ar[r]^t & T}$ which factors through $\AA$, there exists a commutative diagram 
		\begin{equation*}
			\begin{tikzcd}
				X\arrow[tail]{r}{i}\arrow[two heads]{d}{f} & Y\arrow[two heads]{d}{f'}\arrow[bend left]{ddr}{t} &\\
				A\arrow[tail]{r}{i'}\arrow[bend right]{rrd} & P\arrow[dotted]{rd} &\\
				&&T
			\end{tikzcd}
		\end{equation*}
		with $A \in \AA$ and such that the square $XYAP$ is a pushout square.	
		\item\label{P4} For all maps $X\stackrel{f}{\rightarrow} Y$ that factor through $\AA$ and for all inflations $A\stackrel{i}{\inflation} X$ (with $A \in \AA)$ such that $f\circ i=0$, the induced map $\coker(i)\to Y$ factors through $\AA$.
	\end{enumerate}
	By dualizing the above axioms one obtains a similar notion of an \emph{inflation-percolating subcategory} or an \emph{inflation-percolating subcategory}.
\end{definition}

\begin{remark}\label{Remark:P3RedundantForExact}
	If $\EE$ is exact, axiom \ref{P3} in the above definition is redundant (see \cite[Remark~4.4]{HenrardvanRoosmalen19a}).
\end{remark}

\begin{definition}\label{definition:WeakIsomorphism}
	Let $\AA$ be a full additive subcategory of $\EE$. A morphism $f\in \Mor(\EE)$ is called a \emph{weak $\AA$-isomorphism} if it is a finite composable string of inflations with cokernels in $\AA$ and deflations with kernels in $\AA$. The weak $\AA$-isomorphisms are denoted by $S_{\AA}$.
\end{definition}

The following theorem summarizes the results of \cite{HenrardvanRoosmalen19b,HenrardvanRoosmalen19a}. We write $i\colon \EE \to \Db(\EE)$ for the canonical embedding.

\begin{theorem}\label{Theorem:LocalizationTheorem}
	Let $\EE$ be a deflation-exact category and let $\AA\subseteq \EE$ be a deflation-percolating subcategory. 
	\begin{enumerate}
		\item The set $S_{\AA}$ is a right multiplicative system.
		\item The smallest conflation structure on $\EE[S_{\AA}^{-1}]$ such that the localization functor $Q\colon \EE\to \EE[S_{\AA}^{-1}]$ is conflation-exact, is a deflation-exact structure.
		\item The functor $Q$ satisfies the $2$-universal property of the quotient $\EE/\AA$ of deflation-exact categories.
		\item The localization sequence $\AA\to \EE\to \EE/\AA$ induces a Verdier localization sequence 
		\[\DAb(\EE)\to \Db(\EE)\to \Db(\EE/\AA),\]
		here $\DAb(\EE)$ is the thick triangulated subcategory of $\Db(\EE)$ generated by $i(\AA)$.
	\end{enumerate}
	If, in addition, $\EE$ is two-sided exact, $\EE[S_{\AA}^{-1}]^{\mathsf{ex}}$ satisfies the $2$-universal property of a quotient of exact categories. We write $\EE[S_{\AA}^{-1}]^{\mathsf{ex}}=\EE\dqq\AA$ to distinguish it from the one-sided quotient.
\end{theorem}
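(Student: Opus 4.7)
The plan is to proceed statement by statement, using the axioms \ref{P1}--\ref{P4} of a deflation-percolating subcategory at every stage, and to treat the right multiplicative system property as the technical heart of the argument from which everything else flows.

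For (1), I would verify the four axioms of a right multiplicative system: closure under composition (immediate from the definition of $S_{\AA}$ as strings), containing identities (immediate), the right Ore condition, and the right cancellation property. The Ore condition I would prove by induction on the length of the string defining a weak $\AA$-isomorphism $s$, reducing to two base cases. When $s$ is a single deflation with kernel in $\AA$, I complete a given cospan using axiom \ref{R2} and observe that the pullback deflation still has kernel in $\AA$ by stability of $\AA$ under pullbacks of $\AA$-deflations (provided by \ref{P1} applied to the bicartesian square). When $s$ is a single inflation with cokernel in $\AA$, I construct the required pushout by applying \ref{P3} to the composite of an arbitrary morphism $f$ with the deflation $Y\deflation \coker s\in \AA$; the resulting square provides both the completion and the needed inflation with cokernel in $\AA$. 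The right cancellation property then follows from \ref{P4}, which lets one absorb a parallel-difference kernel into a weak $\AA$-isomorphism.

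For (2), I would declare a conflation in $\EE[S_{\AA}^{-1}]$ to be any sequence isomorphic to the image $Q(X\inflation Y\deflation Z)$ of a conflation in $\EE$. Axioms \ref{R0} and \ref{R1} follow from the corresponding axioms in $\EE$ together with the Ore condition (two composable deflations in the quotient can be lifted to composable deflations in $\EE$ after passing to a common roof). Axiom \ref{R2} follows from \ref{R2} in $\EE$ by a standard roof-calculus argument, replacing the given morphism by a representative in $\EE$ and pulling back there. For (3), I would note that any conflation-exact functor $F\colon \EE\to \FF$ sending every object of $\AA$ to zero automatically sends every inflation with cokernel in $\AA$ and every deflation with kernel in $\AA$ to an isomorphism; hence $F$ inverts $S_{\AA}$ and factors uniquely through $Q$ by the universal property of Gabriel--Zisman localization. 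Exactness of the factorization is automatic from the minimality in the definition.

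For (4), I would let $\bar Q\colon \Db(\EE)\to \Db(\EE/\AA)$ be the triangle functor induced by $Q$. It clearly annihilates $i(\AA)$, hence $\DAb(\EE)$, so descends to a functor $\Db(\EE)/\DAb(\EE)\to \Db(\EE/\AA)$. Essential surjectivity follows because every object of $\EE/\AA$ is represented by an object of $\EE$, hence a stalk complex lies in the image. Fully faithfulness reduces, via the calculus of fractions on both sides, to comparing $\Hom$-sets through roofs: a roof in $\Db(\EE/\AA)$ lifts, using (1), to a roof in $\Db(\EE)$ whose denominator becomes invertible modulo $\DAb(\EE)$. Finally, in the two-sided exact case, \Cref{Theorem:ExactHull} identifies $\EE[S_{\AA}^{-1}]^{\mathsf{ex}}$ as 2-universal among conflation-exact functors from $\EE[S_{\AA}^{-1}]$ to exact categories; combined with (3), this yields the claimed 2-universal property of $\EE\dq\AA$.

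I expect the main obstacle to be the inflation case of the right Ore condition in (1): one must simultaneously invoke \ref{P3} to produce the required pushout and verify that the cokernel of the newly constructed inflation lies in $\AA$, which is precisely where the four percolating axioms jointly do their work. The remaining steps are largely formal once this foundation is in place, and the Verdier statement (4) is a routine calculus-of-fractions argument once (1)--(3) are established.
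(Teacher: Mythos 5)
Before anything else: the paper does not actually prove this theorem — it is stated as a summary of results from \cite{HenrardvanRoosmalen19b,HenrardvanRoosmalen19a} — so your proposal has to be measured against the proofs in those papers. Your skeleton for (1) (composition closure by definition, then induction on string length with two atomic cases) matches their strategy, but your atomic inflation case is wrong as written. For the right Ore condition you must complete a cospan $X \xrightarrow{f} Y \xleftarrow{s} Z$, where $s$ is an inflation with $\coker(s)=A\in\AA$, by a commutative square on the $X$-side; axiom \ref{P3} cannot do this: it produces a pushout on the far side of $Y$, and its hypothesis requires the first morphism of the composition to be an inflation, whereas your $f$ is arbitrary. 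The correct tool is \ref{P2}: factor the composite $X \xrightarrow{f} Y \deflation A$ as $X \deflation A' \to A$ with $A'\in\AA$, let $k\colon W\inflation X$ be the kernel of $X\deflation A'$ (so $k\in S_{\AA}$); then $(Y\deflation A)\circ f\circ k=0$, and since $s$ is the kernel of its cokernel $Y\deflation A$, the map $f\circ k$ factors through $s$, giving the Ore square. Similarly, cancellation is not where \ref{P4} enters: if $s(f-g)=0$ for a deflation $s$ with kernel in $\AA$, then $f-g$ factors through $\ker(s)\in\AA$, and precomposing with the kernel of the \ref{P2}-deflation $X\deflation A'$ kills it (the inflation case is trivial since inflations are monic). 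The roles of \ref{P3} and \ref{P4} lie elsewhere, namely in controlling morphisms factoring through $\AA$ and the interaction of $S_{\AA}$ with deflations.

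That points to the more serious gap: you treat (2) and (4) as formal consequences, when they are the technical core — and precisely the place where \ref{P3} and \ref{P4} are indispensable. For (2), declaring conflations to be sequences isomorphic to images of conflations is the right definition, but ``standard roof-calculus'' does not verify \ref{R1} and \ref{R2}: isomorphisms in $\EE[S_{\AA}^{-1}]$ are roofs, so a composite of two deflations in the quotient only agrees with a composite of images of deflations after conjugation by weak isomorphisms, and one must show that such a conjugate is again isomorphic to the image of a deflation; this compatibility is the bulk of the cited localization paper. For (4), essential surjectivity of $\Db(\EE)/\DAb(\EE)\to\Db(\EE/\AA)$ is not the observation that stalk complexes are hit: an object of $\Db(\EE/\AA)$ is a complex whose \emph{differentials are fractions}, and lifting such complexes, together with chain maps and homotopies, to honest complexes over $\EE$ is a genuine obstruction; likewise, identifying the kernel of $\Db(\EE)\to\Db(\EE/\AA)$ with the thick subcategory generated by $i(\AA)$ requires an argument, not just the remark that $\AA$ is annihilated. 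Only your final step — deducing the exact $2$-universal property of $\EE\dq\AA$ by composing (3) with the universal property of the exact hull (\Cref{Theorem:ExactHull}) — is correct as stated.
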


\subsection{Percolating subcategories of deflation-exact categories having admissible kernels}

We start with the following proposition, stating that having admissible kernels is stable under quotients.

\begin{proposition}\label{Proposition:QuotientsPreserveTheAIProperty}
	Let $\EE$ be a deflation-exact category and let $\AA\subseteq \EE$ be a deflation-percolating subcategory. If $\EE$ has admissible kernels, so does $\EE/\AA$. Furthermore, $\EE[S_{\AA}^{-1}]^{\mathsf{ex}}$ has admissible kernels as well.
\end{proposition}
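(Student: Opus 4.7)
The plan is to verify the characterization from \Cref{Proposition:InterpretationOfAIForDeflationExactCategories} for the quotient, namely that every morphism in $\EE/\AA$ admits a deflation-mono factorization. Since $S_\AA$ is a right multiplicative system by \Cref{Theorem:LocalizationTheorem}, every morphism in $\EE/\AA$ can be written as $Q(f) Q(s)^{-1}$ with $s \in S_\AA$, and $Q(s)$ is invertible, so it suffices to factor $Q(f)$ for each $f\colon X \to Y$ in $\EE$. Using admissible kernels in $\EE$, I would pick the deflation-mono factorization $f = m \circ p$ with $p\colon X \deflation C$ a deflation and $m\colon C \hookrightarrow Y$ a monomorphism in $\EE$, and set $k = \ker(p) = \ker(f)$. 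Since $Q$ is conflation-exact, $Q(p)$ is a deflation in $\EE/\AA$, and the conflation $K \stackrel{k}{\inflation} X \stackrel{p}{\deflation} C$ is sent to a conflation in $\EE/\AA$, making $Q(k)$ an inflation. Thus it remains to show $Q(m)$ is a monomorphism in $\EE/\AA$, or equivalently, that $Q(k)$ is a kernel of $Q(f)$.

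The main technical step will be the following lemma: for any monomorphism $g\colon X' \hookrightarrow X''$ in $\EE$, the morphism $Q(g)$ is a monomorphism in $\EE/\AA$. Given $t\colon T \to Q(X')$ with $Q(g) t = 0$, I would realize $t = Q(t_0) Q(u)^{-1}$ via right fractions, with $t_0\colon T' \to X'$ and $u \in S_\AA$. Then $Q(g t_0) = 0$ forces $g t_0$ to factor through an object $A \in \AA$, say $g t_0 = \alpha \circ \beta$ with $\beta\colon T' \to A$. Invoking axiom \ref{P2} produces a commutative diagram with a deflation $q\colon T' \deflation A'$, $A' \in \AA$, through which $\beta$ factors. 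Let $k'\colon K' \inflation T'$ be the kernel of $q$ (an inflation since $K' \inflation T' \deflation A'$ is a conflation). Since $g t_0 k' = 0$ and $g$ is a monomorphism in $\EE$, we get $t_0 k' = 0$, so $t_0$ factors as $t_0 = s \circ q$ with $s\colon A' \to X'$ (using the cokernel property of the conflation). Because $Q(A') = 0$, we obtain $Q(t_0) = Q(s) Q(q) = 0$ and hence $t = 0$. The main obstacle is precisely this step — it is where all the percolation axioms, the conflation structure, and the calculus of right fractions must conspire; once it is in place, the rest is bookkeeping.

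Applying the lemma to $m$ yields that $Q(m)$ is a monomorphism in $\EE/\AA$, producing the desired deflation-mono factorization $Q(f) = Q(m) \circ Q(p)$; an entirely parallel factoring argument (replacing the hypothesis $Q(g) t = 0$ by $Q(f) t = 0$ and then factoring $t_0$ through $k$ via the universal property of the admissible kernel in $\EE$) shows that $Q(k)$ is actually the kernel of $Q(f)$ in $\EE/\AA$. By \Cref{Proposition:InterpretationOfAIForDeflationExactCategories} this gives admissible kernels in $\EE/\AA$. For the final assertion, $\EE[S_\AA^{-1}]^{\mathsf{ex}} = (\EE/\AA)^{\mathsf{ex}}$ by construction, so applying \Cref{theorem:ExactHullRegular} to the deflation-exact category $\EE/\AA$ — which has admissible kernels by what we just showed — yields that $(\EE/\AA)^{\mathsf{ex}}$ has admissible kernels as well.
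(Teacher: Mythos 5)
Your proof is correct in substance and reaches the same two reference points as the paper (\Cref{Proposition:InterpretationOfAIForDeflationExactCategories} for the first claim, \Cref{theorem:ExactHullRegular} for the second), but the route through the first claim is genuinely different and much longer than the paper's. The paper's argument is essentially three lines: since $S_{\AA}$ is a right multiplicative system, the localization functor $Q\colon \EE \to \EE[S_{\AA}^{-1}]$ commutes with kernels (a standard fact from the calculus of fractions); hence every morphism of $\EE/\AA$, being of the form $Q(f)Q(s)^{-1}$ with $Q(s)$ invertible, has a kernel, namely $Q(\ker f)$, and this kernel is an inflation because $\ker f$ is an inflation in $\EE$ and $Q$ is conflation-exact. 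You instead rebuild kernels in $\EE/\AA$ by hand via right fractions, monomorphism preservation, and the percolation axioms; this works, but two steps deserve attention. First, your claim that $Q(gt_0)=0$ forces $gt_0$ to factor through an object of $\AA$ is true, but it is nowhere stated in this paper -- it is a nontrivial result of the cited localization theory -- so as written it is an unreferenced external input. It can be avoided entirely: for a right multiplicative system one has $Q(h)=0$ if and only if $hs=0$ for some $s\in S_{\AA}$; with $g$ a monomorphism this gives $t_0s=0$ at once, hence $Q(t_0)=0$ and $t=0$, which shortens your lemma and removes the appeal to \ref{P2} and the kernel $k'$ altogether. Second, in the kernel verification it is $t_0k'$ (not $t_0$) that factors through $k=\ker f$; to conclude that $t$ factors through $Q(k)$ you must additionally use that $Q(k')$ is invertible, which holds because $k'$ is an inflation with cokernel $A'\in\AA$, i.e.\ $k'\in S_{\AA}$ (with the zero-criterion above, the same role is played by the invertibility of $Q(s)$). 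With these repairs your argument is complete; what the paper's approach buys is brevity, outsourcing all fraction bookkeeping to the general statement that localization at a right multiplicative system preserves kernels, while your version has the mild virtue of exhibiting the kernels in the quotient explicitly.
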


\begin{proof}
	By \Cref{Theorem:LocalizationTheorem} the quotient $\EE/\AA$ is a deflation-exact category. By \Cref{Proposition:InterpretationOfAIForDeflationExactCategories}, it suffices to show that $\EE/\AA$ admits kernels and that kernels are inflations. Since $S_{\AA}$ is a right multiplicative system, the localization functor $Q\colon \EE\to \EE/\AA\simeq \EE[S_{\AA}^{-1}]$ commutes with kernels. Hence every morphism in $\EE/\AA$ has a kernel, moreover, as $Q$ is a conflation-exact functor and every kernel in $\EE$ is an inflation, kernels in $\EE/\AA$ are inflations as well. The last part follows from \Cref{theorem:ExactHullRegular}.
\end{proof}

\begin{proposition}\label{Proposition:ClassificationOfPercolatingSubcategoriesOfExactAICategory}
	Let $\EE$ be a deflation-exact category having admissible kernels and let $\AA\subseteq \EE$ be a strictly full additive subcategory. If either $\EE$ is exact, or if $\EE$ is pre-abelian, the following are equivalent:
	\begin{enumerate}
		\item $\AA\subseteq \EE$ is a deflation-percolating subcategory.
		\item $\AA\subseteq \EE$ is a Serre subcategory which is closed under subobjects.
	\end{enumerate}
\end{proposition}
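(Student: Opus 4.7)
The plan is to verify the four axioms \ref{P1}--\ref{P4} of \Cref{definition:GeneralPercolatingSubcategory} in both directions.

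For the forward implication $(1)\Rightarrow(2)$, axiom \ref{P1} immediately gives that $\AA$ is Serre. For closure under subobjects, let $m\colon X \hookrightarrow A$ be a monomorphism with $A \in \AA$. Apply \ref{P2} to obtain $A'\in \AA$, a deflation $d\colon X \deflation A'$, and a morphism $\alpha\colon A' \to A$ with $m = \alpha \circ d$. A pullback argument along $d$, using axiom \ref{R2} together with the facts that $d$ is epic and $m$ is monic, shows that $\alpha$ is monic. Setting $K \coloneqq \ker(d)$, the inflation $K \inflation X$ satisfies $m \circ (K \to X) = \alpha \circ d \circ (K \to X) = 0$, whence $K \to X = 0$ by monicity of $m$, and therefore $K = 0$. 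The conflation $0 \inflation X \deflation A'$ with $d$ as its deflation then forces $d$ to be an isomorphism, so that $X \cong A' \in \AA$.

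For the converse $(2)\Rightarrow(1)$, axiom \ref{P1} is the Serre hypothesis, and axiom \ref{P2} is obtained immediately from the deflation-mono factorization of \Cref{Proposition:InterpretationOfAIForDeflationExactCategories}: the mono part of the factorization of any $X \to A$ with $A \in \AA$ lies in $\AA$ by closure under subobjects. For \ref{P4}, suppose $f = \beta \circ \alpha\colon X \to B \to Y$ with $B \in \AA$ and let $i\colon A \inflation X$ with $A \in \AA$ satisfy $f \circ i = 0$. The subobject $J \coloneqq \ker(\beta) \inflation B$ lies in $\AA$ by closure, and $\beta \alpha i = 0$ forces $\alpha i$ to factor through $J$. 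In the exact case inflations admit cokernels, and in the pre-abelian case every morphism does, so in either situation we obtain a conflation $J \inflation B \deflation B/J$ with $B/J \in \AA$ by Serre-ness. The map $\beta$ descends through $B/J$, and $\overline{\alpha} \coloneqq (B \twoheadrightarrow B/J) \circ \alpha$ satisfies $\overline{\alpha} \circ i = 0$, so it factors as $X \deflation X/A \to B/J$, exhibiting the induced map $X/A \to Y$ as a factorization through $B/J \in \AA$.

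The remaining axiom \ref{P3} is redundant in the exact case by \Cref{Remark:P3RedundantForExact}, so the substance, and the main obstacle, lies in the pre-abelian case. Given $X \inflation Y \xrightarrow{t} T$ with $t \circ i$ factoring through $\AA$, the argument for \ref{P2} reduces us to the case where the factorization is $t \circ i = gf$ with $f\colon X \deflation B$ a deflation and $B \in \AA$. The pre-abelian structure yields a pushout $P \coloneqq Y \sqcup_X B$ in $\EE$, and the universal property applied to the cospan $(t, g)$ supplies the map $P \to T$. The key technical task is to verify that $B \inflation P$ is an inflation and $Y \deflation P$ is a deflation, that is, that $B \inflation P \deflation Y/X$ is a conflation in $\EE$. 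I would apply the snake lemma---available by axiom \ref{R3+} via \Cref{Remark:BasicDefinitions}---to the $3\times 3$ diagram whose left column is the conflation $\ker(f) \inflation X \deflation B$ and whose middle row is the conflation $X \inflation Y \deflation Y/X$; this should identify $P$ with the strict quotient of $Y$ by the image of $\ker(f) \to Y$ and exhibit the desired conflation $B \inflation P \deflation Y/X$. The subtle point here is that in general the composition $\ker(f) \inflation X \inflation Y$ of two inflations need not remain an inflation in a pre-abelian deflation-exact category, and the delicate part of the proof is to use the presence of the deflation $f$ together with axiom \ref{R3+} to show that this particular composition is in fact an inflation.
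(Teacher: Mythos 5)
Your forward implication and your verifications of axioms \ref{P1}, \ref{P2} and \ref{P4} are correct. The forward direction is essentially the paper's argument (the paper is quicker: since $m = \alpha \circ d$ is monic, the deflation $d$ is itself monic and hence an isomorphism; your pullback argument for $\alpha$ being monic is never actually used). Your proof of \ref{P4} takes a genuinely different route from the paper's: where the paper first reduces to a deflation-mono presentation of $f$ and then invokes uniqueness of deflation-mono factorizations, you quotient the middle object $B$ by $J = \ker(\beta) \in \AA$ and use the Serre property. Your version is valid and avoids the reduction step entirely.

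The gap is in \ref{P3}, exactly at the step you flag as delicate, and it is not a missing detail but a false claim. Since every inflation $K \inflation X$ in $\EE$ is the kernel of its cokernel $X \deflation X/K$, which is a deflation, the situation you call ``particular'' is completely general: your claim that $\ker(f) \inflation X \inflation Y$ is an inflation is precisely axiom \ref{L1} for $\EE$, and \ref{L1} fails for pre-abelian deflation-exact categories with admissible kernels. The category $\LB$ (squarely within the pre-abelian case of the proposition, see \Cref{LB-THM-1}) gives a counterexample: since $\mathbb{C}_{\mathsf{all}}$ is not exact, we have $\mathbb{E}_{\mathsf{max}} \subsetneq \mathbb{C}_{\mathsf{all}}$, so there is a conflation $X \stackrel{i}{\inflation} Y \deflation Z$ and a linear functional $\phi$ on $X \cong i(X)^{\flat}$ which is continuous for the $\flat$-topology but not for the subspace topology induced by $Y$. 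Then $K \coloneqq \ker(\phi) \inflation X$ is an inflation, yet the image of $K \to Y$ is a hyperplane of $i(X)$ that is dense for the subspace topology; hence $K \to Y$ has non-closed image and is not an inflation. Worse for your construction: $\coker(K \to Y) = Y/\overline{i(K)} = Y/i(X) \cong Z$, so your pushout is $P = Y \sqcup_X (X/K) \cong Z$ and the induced map $X/K \to P$ is the \emph{zero} map, while $X/K \cong k \neq 0$. It is not even a monomorphism, so no snake-lemma argument can produce your conflation $B \inflation P \deflation Y/X$. (This input does occur in the proof: take $\AA$ the finite-dimensional spaces, $t \colon Y \deflation Y/X$, and the factorization $t \circ i = 0 \circ (X \deflation X/K)$ through $X/K \in \AA$.) The paper's proof circumvents exactly this point: instead of pushing out along $X \deflation X/K$, it forms $P = \coker(K \to Y)$ and passes to $M = \ker(Y \to P)$ --- which \emph{is} an inflation into $Y$, being a kernel --- and pushes out along the coarser deflation $X \deflation B$ with $\ker(X \deflation B) = M$; this $B$ still lies in $\AA$ because it is a deflation-quotient of $X/K \in \AA$ and $\AA$ is Serre, and the Nine Lemma then yields the conflation $B \inflation Q \deflation Z$ required for \ref{P3}.
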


\begin{proof}
	Assume first that $\AA\subseteq \EE$ is a deflation-percolating subcategory. In particular, $\AA$ is a Serre subcategory. Consider a monomorphism $X\stackrel{f}{\hookrightarrow}A$ in $\EE$ with $A\in \AA$. By axiom \ref{P2}, $f$ factors as $X\deflation A' \to A$ with $A'\in \AA$. As $f$ is monic, so is $X\deflation A'$ and hence this map is an isomorphism, thus $X\in \AA$.
	
	Conversely, assume that $\AA$ is a Serre subcategory which is closed under subobjects. Axiom \ref{P1} holds by assumption. Axiom \ref{P2} follows immediately from \Cref{Proposition:InterpretationOfAIForDeflationExactCategories}.\eqref{lemma:InterpretationOfAIForDeflationExactCategories3}. 
	
	We now show axiom \ref{P4}. Let $f\colon X\to Y$ be a map which factors through an object $B\in \AA$ and let $i\colon A\inflation X$ be an inflation such that $f\circ i=0$. We first claim that we may assume $X\to B$ to be a deflation and $B\to Y$ to be a monomorphism. Indeed, by axiom \ref{P2}, the map $X\to B$ factors as $X\deflation B'\to B$ with $B'\in \AA$. By \Cref{Proposition:InterpretationOfAIForDeflationExactCategories}.\eqref{lemma:InterpretationOfAIForDeflationExactCategories3}, we find that the composition $B'\to B\to Y$ factors as $B'\deflation B''\hookrightarrow Y$. By axiom \ref{P1}, $B''\in \AA$ and by axiom \ref{R1}, the composition $X\deflation B'\deflation B''$ is a deflation. This shows the claim.  Let $p\colon X\deflation X'$ be the cokernel of $i\colon A\inflation X$. As $f\circ i=0$, we obtain a factorization $X\deflation X'\to Y$ of $f$. Again, by \Cref{Proposition:InterpretationOfAIForDeflationExactCategories}.\eqref{lemma:InterpretationOfAIForDeflationExactCategories3}, the map $X'\to Y$ factors as $X'\deflation X''\hookrightarrow Y$. By axiom \ref{R1} we obtain the deflation-mono factorization $X\deflation X''\hookrightarrow Y$ of $f$. As deflation-mono factorizations are unique, we conclude that $X''\cong B$ and thus axiom \ref{P4} holds.

	It remains to verify axiom \ref{P3}. If $\EE$ is exact, axiom \ref{P3} is automatic (see \Cref{Remark:P3RedundantForExact}) and there is nothing to prove. Now assume that $\EE$ is pre-abelian. Let $i\colon X\inflation Y$ be an inflation and let $t\colon Y\to T$ be a map such that $t\circ i$ factors as $X\to A\to T$ with $A\in \AA$. By axiom \ref{P2} we may assume that $X\to A$ is a deflation. Write $K\inflation X\deflation A$ for the corresponding conflation. As $\EE$ is pre-abelian, the cokernel $P$ of the composition $K\inflation X\inflation Y$ exists. Hence we obtain the following commutative diagram:
	\begin{equation*}
		\begin{tikzcd}
			K\arrow[equal]{r}\arrow[tail]{d} & K\arrow{d} &\\
			X\arrow[two heads]{d}\arrow[tail]{r} & Y\arrow[two heads]{r}\arrow{d} & Z\arrow[equal]{d}\\
			A\arrow{r} & P\arrow{r} & Z
		\end{tikzcd}
	\end{equation*}
	Axiom \ref{R3}, which is satisfied by \Cref{Proposition:DeflationAICategorySatisfiesAxiomR3+}, implies that $P\to Z$ is a deflation. Write $L\inflation P\deflation Z$ for the corresponding conflation. By \cite[Proposition~3.7]{HenrardvanRoosmalen19a}, the square $XYLP$ is bicartesian. In particular, $\ker(X\to L)\cong M\cong \ker(Y\to P)$. As the map $X\to L$ factors through $A$, we find that $\coim(X\to L)\in \AA$. We write $B=\coim(X\to L)$ and we write $Q=\coim(Y\to P)$. We obtain the following commutative diagram:
	\begin{equation*}
		\begin{tikzcd}
			M \arrow[equal]{r}\arrow[tail]{d}& M\arrow[tail]{d} & \\
			X\arrow[tail]{r}\arrow[two heads]{d} & Y\arrow[two heads]{r}\arrow[two heads]{d} & Z\arrow[equal]{d}\\
			B\arrow[tail]{r}\arrow[hook]{d} & Q\arrow[two heads]{r}\arrow[hook]{d} & Z\arrow[equal]{d}\\
			L\arrow[tail]{r} & P\arrow[two heads]{r} & Z
		\end{tikzcd}
	\end{equation*}
	Here $B\inflation Q\deflation Z$ is a conflation by the Nine Lemma.	It is now straightforward to check that $XYBQ$ is the desired square for axiom \ref{P3}.
\end{proof}

\begin{example}
	Consider the category $\mathsf{LCA}$ of locally compact abelian groups and let $\mathsf{LCA}_{\mathsf{D}}\subseteq \mathsf{LCA}$ be the full subcategory of discrete abelian groups. By \cite{HoffmannSpitzweck07}, $\mathsf{LCA}$ is a quasi-abelian category. As $\mathsf{LCA}_{\mathsf{D}}\subseteq \mathsf{LCA}$ is a Serre subcategory closed under subobjects, \Cref{Proposition:ClassificationOfPercolatingSubcategoriesOfExactAICategory} yields that $\mathsf{LCA}_{\mathsf{D}}\subseteq \mathsf{LCA}$ is a deflation-percolating subcategory. By \Cref{Proposition:QuotientsPreserveTheAIProperty}, the quotient $\mathsf{LCA}/\mathsf{LCA}_{\mathsf{D}}$ is a deflation-exact category having admissible kernels.  In particular, it is an additive regular category.
	
	Furthermore, \cite[Corollary~6.6]{HenrardvanRoosmalen19b} yields that $\mathsf{LCA}/\mathsf{LCA}_{\mathsf{D}}$ is in fact two-sided exact. Thus $\mathsf{LCA}/\mathsf{LCA}_{\mathsf{D}}\simeq \mathsf{LCA}\dqq \mathsf{LCA}_{\mathsf{D}}$. By Pontryagin duality, the quotient $\mathsf{LCA}/\mathsf{LCA}_{\mathsf{C}}$ is an exact category having admissible cokernels. Here, $\mathsf{LCA}_{\mathsf{C}}\subseteq \mathsf{LCA}$ is the full subcategory of compact abelian groups.
\end{example}

\subsection{Admissibly percolating subcategories}

We recall the following special kind of percolating subcategories from \cite{HenrardvanRoosmalen19a}. This type of percolating subcategories will appear in the next section.

\begin{definition}
	Let $\EE$ be a conflation category. An \emph{admissibly deflation-percolating subcategory} is a subcategory $\AA\subseteq \EE$ such that the following axioms hold:
	\begin{enumerate}[label=\textbf{A\arabic*},start=1]
		\item\label{A1} $\AA$ is a \emph{Serre subcategory} (see \Cref{definition:GeneralPercolatingSubcategory}).
		\item\label{A2} Every morphism $X\to A$ with $A\in \AA$ is admissible with image in $\AA$ (that is, the morphism $f\colon X \to A$ has a deflation-inflation factorization $X \deflation A' \inflation A$ with $A' \in \AA.$)
		\item\label{A3} If $f\colon X\deflation A$ is a deflation with $A\in \AA$ and $g\colon X\inflation Y$ is an inflation, the pushout of $f$ along $g$ exists and is of the following form:
\begin{equation*}
			\begin{tikzcd}
				X\arrow[tail]{r}{g}\arrow[two heads]{d}{f} & Y\arrow[two heads]{d}{f'}\\
				A\arrow[tail]{r}{i'} & P
				\end{tikzcd}
		\end{equation*}
	\end{enumerate}
	An \emph{admissibly inflation-percolating subcategory} is defined dually. A \emph{two-sided admissibly percolating subcategory} is both admissibly inflation-percolating and admissibly deflation-percolating.
\end{definition}

\begin{remark}\label{Remark:AnyExactCategoryYieldsA3}
	For an exact category $\EE$, any subcategory $\AA$ satisfies axiom \ref{A3} (see the dual of \cite[Proposition~2.15]{Buhler10}). 
\end{remark}

\begin{example}
	Given a filtered ring $FR$, one can consider a type of filtered representation theory called \emph{glider representations} as in \cite{CaenepeelVanOystaeyen19book}. The category $\Glid(FR)$ of glider representations is obtained as a quotient of the quasi-abelian category $\Preglid(FR)$ of pregliders by the subcategory $\Mod(R)$ (see \cite{HenrardvanRoosmalen20Glider}). Here, the subcategory $\Mod(R)\subseteq \Glid(FR)$ is an admissibly deflation-percolating subcategory. It follows that $\Glid(FR)$ is a deflation-exact category having admissible kernels.
	
	Following \cite{HenrardvanRoosmalen20Glider}, there is an embedding $\Glid(FR)\to \Mod(\FF R)$ of $\Glid(FR)$ into an abelian category $\Mod(\FF R)$ which reflects kernels and lifts to a derived equivalence (here, $\FF R$ is the filtered companion category, see \cite[Definition~3.1]{HenrardvanRoosmalen20Glider}).  It follows that this lift restricts to an equivalence on the left hearts, i.e.~$\mathcal{LH}(\Glid(FR))\simeq \Mod(\FF R)$. This recovers and generalizes \cite[Theorem~4.20]{SchapiraSchneiders16}.
\end{example}

The following proposition explains the terminology (see \cite[Section~6]{HenrardvanRoosmalen19a}).

\begin{proposition}\label{proposition:AboutStrictlyPercolating}
	Let $\EE$ be a deflation-exact category and let $\AA\subseteq \EE$ be an admissibly deflation-percolating subcategory. The following properties hold.
	\begin{enumerate}
		\item The category $\AA$ is abelian and is a deflation-percolating subcategory of $\EE$.
		\item The weak $\AA$-isomorphisms are precisely the admissible morphisms $f\in \Mor(\EE)$ with $\ker f, \coker f \in \AA$.
		\item The set $S_{\AA}$ of weak isomorphisms satisfies the 2-out-of-3-property and is saturated.
	\end{enumerate}
\end{proposition}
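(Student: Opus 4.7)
The plan is to establish the three assertions in order, leveraging axioms \ref{A1}--\ref{A3} together with the general observation (valid in any deflation-exact category) that an inflation which is an epimorphism, or dually a deflation which is a monomorphism, must be an isomorphism, since its cokernel or kernel is then forced to vanish.

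For (1), I would first show $\AA$ is abelian. Any morphism $f\colon A\to B$ in $\AA$ admits by \ref{A2} an admissible factorization $A\deflation I\inflation B$ with $I\in \AA$; completing the deflation and the inflation to conflations in $\EE$ and applying the Serre axiom \ref{A1} produces $\ker f$ and $\coker f$ inside $\AA$, computed as in $\EE$. The identifications ``monic $=$ kernel'' and ``epic $=$ cokernel'' in $\AA$ then follow directly from the general observation above. To verify that $\AA\subseteq \EE$ is deflation-percolating, \ref{P1} is precisely \ref{A1}; \ref{P2} is an immediate consequence of \ref{A2}; \ref{P3} is obtained by first applying \ref{A2} to produce a deflation $X\deflation A$ with $A\in \AA$ from the given factorization through $\AA$, then applying \ref{A3} to form the required pushout square and using its universal property to factor the remaining map into $T$; and \ref{P4} is handled by a similar combination of \ref{A2} with the Serre axiom.

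For (2), the implication ``$\Leftarrow$'' is immediate, as the deflation-inflation factorization of an admissible morphism with kernel and cokernel in $\AA$ already exhibits it as a weak $\AA$-isomorphism of length two. For the converse, I would induct on the length of a composable string expressing $f$ as a weak $\AA$-isomorphism; the crucial inductive step reduces to showing that composing a deflation-with-$\AA$-kernel with an inflation-with-$\AA$-cokernel (or vice versa) again yields an admissible morphism whose kernel and cokernel lie in $\AA$. To achieve this, I would pass through the pushout configuration furnished by \ref{A3} applied to the relevant deflation-inflation pairs, then repeatedly invoke \ref{A1} to place the newly-constructed intermediate objects in $\AA$.

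For (3), with the characterization from (2) in hand, the $2$-out-of-$3$ property reduces to a diagram chase: given $h=g\circ f$ with two of the three morphisms admissible and having kernel and cokernel in $\AA$, one compares the known deflation-inflation factorizations inside the pushout diagrams supplied by \ref{A3} and reads off the kernel and cokernel of the third morphism as objects of $\AA$. Saturation is then formal: if $f$ becomes invertible in $\EE[S_{\AA}^{-1}]$, then the right calculus of fractions for $S_\AA$ (part of \Cref{Theorem:LocalizationTheorem}) produces morphisms in $S_\AA$ whose compositions with $f$ again lie in $S_\AA$, and two applications of $2$-out-of-$3$ place $f$ itself in $S_\AA$. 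The main obstacle is the closure-under-composition step in (2): turning a ``wrong-way'' composition (inflation followed by deflation, or vice versa) into a canonical deflation-inflation factorization whose kernel and cokernel both lie in $\AA$ requires combining \ref{A3} and \ref{A1} in exactly the right configuration; once this technical core is set up, parts (1) and (3) follow by largely formal manipulations.
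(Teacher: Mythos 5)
Two remarks before the main point. The paper itself contains no proof of this proposition: it is imported from \cite[Section~6]{HenrardvanRoosmalen19a} (see the sentence immediately preceding the statement), so your proposal can only be judged against what such a proof must contain. Measured that way, your part (1) is correct and is the standard argument: \ref{A2} together with the Serre axiom \ref{A1} places kernels and cokernels of morphisms of $\AA$ inside $\AA$, the observation that a monic deflation or an epic inflation is an isomorphism gives normality and conormality, \ref{P1} and \ref{P2} are immediate, \ref{P3} follows from \ref{A2} followed by an \ref{A3}-pushout and its universal property, and \ref{P4} from \ref{A2} combined with \ref{A1}. In part (2), however, your reduction of the induction to the ``wrong-way'' composition is incomplete: you also need each one-sided class to be closed under composition. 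For deflations with kernel in $\AA$ this is routine (\ref{R1}, \ref{R2}, \ref{A1}), but for inflations with cokernel in $\AA$ it is genuinely nontrivial, since in a deflation-exact category a composite of inflations need not be an inflation (axiom \ref{L1} is not assumed); one must push the cokernel-deflation of the first inflation out along the second using \ref{A3}, apply \ref{A1} to the resulting extension, and identify the composite as the kernel of the new deflation. Your toolkit covers this, but the step has to be named --- it is exactly where one-sidedness bites.

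The genuine gap is the saturation claim in part (3). The right calculus of fractions granted by \Cref{Theorem:LocalizationTheorem} does \emph{not} produce ``morphisms in $S_{\AA}$ whose compositions with $f$ again lie in $S_{\AA}$''. Unwinding $Q(f)Q(f)^{-1}=1$ and $Q(f)^{-1}Q(f)=1$ only yields morphisms $\alpha$ and $\alpha'$, \emph{not} known to lie in $S_{\AA}$, such that $f\alpha\in S_{\AA}$ and $\alpha\alpha'\in S_{\AA}$. Deducing $f\in S_{\AA}$ from these two memberships is an instance of the 2-out-of-6 property, which is strictly stronger than 2-out-of-3; no number of applications of 2-out-of-3 will bridge it. This is not a pedantic distinction: the even numbers inside the additive monoid $\bN$, viewed as a one-object category, form a multiplicative system satisfying Ore and 2-out-of-3, yet the localization is the group $\bZ$, so every morphism becomes invertible while the odd numbers are outside the system; there one has $f=1$, $\alpha=\alpha'=1$ with $f\alpha$ and $\alpha\alpha'$ in the system but $f$ not in it. To repair part (3) you must either prove 2-out-of-6 for $S_{\AA}$ directly from the characterization in part (2) --- whose core is: if $gh$ and $fg$ are admissible with kernel and cokernel in $\AA$, then $g$ is \emph{admissible} (locating $\ker g$ and $\coker g$ in $\AA$ is the easy half, via \ref{A1} and the closure of $\AA$ under subobjects supplied by \ref{A2}) --- or prove saturation directly by showing that invertibility of $Q(f)$ forces $f$ itself to be admissible with kernel and cokernel in $\AA$. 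The same caveat applies to your 2-out-of-3 sketch: the substance there is producing a deflation-inflation factorization of the third morphism, not ``reading off'' its kernel and cokernel.
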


We conclude this section by recalling two useful properties of two-sided admissibly percolating subcategories of an exact category.

\begin{theorem}[\protect{\cite[Theorem~2.16]{HenrardKvammevanRoosmalen20}}]\label{Theorem:QReflectsAdmissibles}
	Let $\EE$ be an exact category and let $\AA\subseteq \EE$ be a two-sided admissibly percolating subcategory. A map $f\colon X\to Y$ is admissible in $\EE$ if and only if $Q(f)$ is admissible in $\EE/\AA$. In other words, the exact localization functor $Q$ reflects admissible morphisms. 
\end{theorem}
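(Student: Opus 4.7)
The forward direction is immediate: by \Cref{Theorem:LocalizationTheorem}(2), the localization functor $Q\colon \EE\to \EE/\AA$ is conflation-exact, so it sends deflations to deflations and inflations to inflations. Any deflation-inflation factorization $X\deflation Z\inflation Y$ in $\EE$ therefore maps to a deflation-inflation factorization of $Q(f)$ in $\EE/\AA$.

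For the converse, assume $Q(f) = \beta\circ\alpha$ in $\EE/\AA$ with $\alpha$ a deflation and $\beta$ an inflation. The plan is to lift this factorization back to $\EE$. Since the conflation structure on $\EE/\AA$ is by definition the smallest making $Q$ conflation-exact, every deflation in $\EE/\AA$ is, up to composition with elements of $S_\AA$, the $Q$-image of a deflation in $\EE$, and dually for inflations. Using that $\AA$ is two-sided admissibly percolating, \Cref{proposition:AboutStrictlyPercolating} guarantees that $S_\AA$ is saturated and satisfies 2-out-of-3, and the elements of $S_\AA$ are precisely the admissible morphisms with kernel and cokernel in $\AA$. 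This flexibility lets me choose a single object $Z\in\EE$ together with a deflation $p\colon X'\deflation Z$ and inflation $m\colon Z\inflation Y'$ in $\EE$, and weak $\AA$-isomorphisms $s\colon X'\to X$, $t\colon Y\to Y'$, such that $\alpha = Q(p)\circ Q(s)^{-1}$ and $\beta = Q(t)^{-1}\circ Q(m)$, so that $Q(t\circ f\circ s) = Q(m\circ p)$.

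The second step is to absorb $s$ and $t$. Since $s$ is a finite composition of inflations with cokernel in $\AA$ and deflations with kernel in $\AA$, I iteratively apply axiom \ref{A3} to form pushouts of $p$ along the inflation parts of $s$ (and the fact that deflations compose, \ref{R1}, to handle the deflation parts of $s^{-1}$), producing a deflation $\tilde p\colon X\deflation \widetilde Z$ in $\EE$. Dually, applying the inflation-side version of \ref{A3} to $m$ along $t$ yields an inflation $\tilde m\colon \widetilde Z\inflation Y$. A diagram chase — comparing $\tilde m\circ\tilde p$ with $f$ in $\EE$ using axioms \ref{A1}--\ref{A2} to track the images in $\AA$, and then invoking saturation of $S_\AA$ (which guarantees that a morphism becoming an isomorphism in $\EE/\AA$ is itself a weak $\AA$-isomorphism) — allows a final adjustment making the composite literally equal to $f$, giving the desired admissible factorization.

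The main obstacle is the bookkeeping in the second step: one must ensure that each pushout or pullback adjustment along a weak $\AA$-isomorphism preserves the deflation (resp.\ inflation) property, and that successive adjustments compose to a genuine factorization of $f$ in $\EE$ rather than merely a morphism representing $Q(f)$. Both halves of the two-sided admissibly percolating hypothesis (axiom \ref{A3} and its dual) are essential here, as is the saturation of $S_\AA$, which is what ultimately promotes an equality in $\EE/\AA$ to an equality in $\EE$ up to absorbable weak $\AA$-isomorphisms.
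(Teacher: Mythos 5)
A preliminary remark: the paper offers no proof of this statement at all --- it is imported verbatim from \cite[Theorem~2.10]{HenrardKvammevanRoosmalen20} --- so your argument has to stand on its own. Your forward direction is fine, and your overall strategy (lift the factorization of $Q(f)$ through fractions, then absorb the weak isomorphisms) is the right one in spirit. But the first pivotal step is asserted rather than proved. You claim that, because the conflation structure on $\EE/\AA$ is the smallest one making $Q$ conflation-exact, every deflation $\alpha$ of $\EE/\AA$ can be written as $Q(p)\circ Q(s)^{-1}$ with $p$ a deflation of $\EE$ and $s\in S_{\AA}$ (and dually for inflations). What minimality actually gives is only that the conflation of $\EE/\AA$ containing $\alpha$ is \emph{isomorphic in $\EE/\AA$} to the image of a conflation of $\EE$; the comparison isomorphisms are themselves fractions $Q(a)Q(b)^{-1}$, and after clearing denominators via the Ore conditions one arrives merely at $\alpha = Q(h)Q(s)^{-1}$ where all one knows about $h$ is that $Q(h)$ is a deflation in the quotient. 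That is, your step presupposes exactly the reflection statement (for deflations, up to weak isomorphism) that the theorem asserts; it is a substantive lemma requiring the machinery of \cite{HenrardvanRoosmalen19a}, not a formal consequence of \Cref{Theorem:LocalizationTheorem}.

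The second gap is in the absorption step, and here the argument as stated would fail. The relation $Q(t\circ f\circ s) = Q(m\circ p)$ is an equality in the localization; by the calculus of fractions it only yields an equality in $\EE$ after composing with a \emph{further} weak isomorphism $r\in S_{\AA}$, which you never account for. More importantly, the announced goal of the diagram chase --- ``making the composite literally equal to $f$'' --- is neither achievable nor what is needed. The correct endgame is a cancellation statement: if $t\circ f\circ u$ is admissible with $t,u\in S_{\AA}$, then $f$ is admissible; together with closure statements such as ``a deflation composed with a weak isomorphism is admissible'' (needed even to see that $t\circ f\circ u$ is admissible from $tfu=m(pr)$). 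Saturation of $S_{\AA}$ (\Cref{proposition:AboutStrictlyPercolating}) gives none of this: it only says that a morphism of $\EE$ whose image under $Q$ is invertible lies in $S_{\AA}$; it does not transfer an admissible factorization across a weak isomorphism. These cancellation and closure lemmas --- proved using \ref{A1}--\ref{A3} and their duals, for instance by analysing a deflation composed with an inflation whose cokernel lies in $\AA$ --- are the real content of the theorem, and they are precisely what is missing. (Relatedly, invoking \ref{R1} ``to handle the deflation parts of $s^{-1}$'' does not parse: deflations cannot be inverted in $\EE$, so absorbing $Q(q)^{-1}$ for a deflation $q$ with kernel in $\AA$ requires pushing the lifted data forward along $q$, yet another instance of the missing lemmas.)
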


We end this section with a criterion for percolating subcategories using the language of torsion pairs in a conflation category, which is a direct adaptation from the abelian \cite{Dickson66}, the exact \cite{HenrardvanRoosmalen19a,Tattar21}, the extriangulated \cite{HeHuZhou22}, and the homological \cite{BournGran06} setting.

\begin{definition}\label{definition:TorsionPair}
	Let $\CC$ be a conflation category.  A \emph{torsion pair} or a \emph{torsion theory} is a pair $(\TT,\FF)$ of full and replete subcategories of $\CC$ such that
	\begin{enumerate}
		\item $\Hom(T,F)=0$ for all $T\in \TT$ and $F\in \FF$,
		\item every object $M\in \EE$ fits into a conflation $T\inflation M\deflation F$ with $T\in \TT$ and $F\in \FF$.
	\end{enumerate}
	A torsion pair $(\TT, \FF)$ is said to be \emph{hereditary} if $\TT$ is closed under subobjects.
\end{definition}

\begin{proposition}[\protect{\cite[Proposition~2.22]{HenrardKvammevanRoosmalen20}}]\label{Proposition:Torsion+A2IsTwoSidedPercolating}
Let $\EE$ be an exact category with a torsion pair $(\TT,\FF)$. If $\TT\subseteq \EE$ satisfies axiom \ref{A2}, then the subcategory $\TT\subseteq \EE$ is two-sided admissibly percolating.
\end{proposition}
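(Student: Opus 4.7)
The plan is to verify the three defining axioms of an admissibly deflation-percolating subcategory together with their duals. Two of these come almost for free: axiom \ref{A3} (and dually) holds automatically for any subcategory of an exact category by Remark~\ref{Remark:AnyExactCategoryYieldsA3}, and axiom \ref{A2} is assumed. Hence what remains is to establish the Serre property \ref{A1} and the dual of axiom \ref{A2}.

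For \ref{A1}, I would first argue that $\TT$ is closed under \emph{arbitrary} subobjects. Given any monomorphism $Y\hookrightarrow T$ with $T\in\TT$, axiom \ref{A2} provides a factorization $Y\deflation Y'\inflation T$ with $Y'\in\TT$; because the composition is monic, the deflation $Y\deflation Y'$ itself is monic, hence an isomorphism in $\EE$ (its kernel, being a kernel of a mono, is zero). Thus $Y\cong Y'\in\TT$. Closure of $\TT$ under deflation-quotients and under extensions is then a routine torsion-pair exercise: given a candidate object $M$, take its torsion decomposition $T_M\inflation M\deflation F_M$, use $\Hom(\TT,\FF)=0$ to show that every map from something in $\TT$ through $M$ into $F_M$ is zero, and deduce from the appropriate epimorphism property that $F_M\cong 0$, so $M\cong T_M\in\TT$. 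Combining closure under subobjects, deflation-quotients, and extensions yields \ref{A1}.

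For the dual of \ref{A2}, consider $f\colon T\to Y$ with $T\in\TT$, and the torsion decomposition $T_Y\inflation Y\deflation F_Y$ of $Y$. The composite $T\to Y\deflation F_Y$ vanishes by $\Hom(\TT,\FF)=0$, so $f$ factors as $T\xrightarrow{g} T_Y\inflation Y$. Now \ref{A2} applies to $g$ (its codomain lies in $\TT$) and produces a factorization $T\deflation T'\inflation T_Y$ with $T'\in\TT$. The concatenation $T'\inflation T_Y\inflation Y$ is again an inflation in the exact category $\EE$ by \ref{L1}, yielding the desired admissible factorization $f=T\deflation T'\inflation Y$ with $T'\in\TT$.

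I expect the dual of \ref{A2} to be the principal obstacle, as it is the only step that genuinely requires combining both pieces of data: one uses the torsion-pair structure to reduce to a map whose codomain sits inside $\TT$, and only then can one apply the hypothesis \ref{A2}. The Serre property, by contrast, reduces to standard torsion-pair bookkeeping once closure under monomorphic subobjects is extracted from \ref{A2}; and \ref{A3} requires no work in the exact setting.
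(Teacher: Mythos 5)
Your proof is correct, and in fact there is nothing in the paper to compare it against: the paper states this proposition with a citation to \cite[Proposition~2.16]{HenrardKvammevanRoosmalen20} and gives no in-text argument, so your blind attempt must stand on its own merits --- which it does. The reduction is the right one: \ref{A3} and its dual are free in an exact category, \ref{A2} is hypothesis, so only \ref{A1} and the dual of \ref{A2} need work. Your three key steps all check out. First, closure under subobjects: for a monomorphism $Y \hookrightarrow T$ with $T \in \TT$, axiom \ref{A2} gives $Y \deflation Y' \inflation T$ with $Y' \in \TT$, the deflation is monic because the composite is, and a monic deflation has zero kernel and is therefore an isomorphism, so $Y \cong Y' \in \TT$ (repleteness of $\TT$ is used here, and it is part of \Cref{definition:TorsionPair}). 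Second, the torsion-pair bookkeeping: for a quotient $T \deflation M$ the composite $T \to M \deflation F_M$ is zero and $T \deflation M$ is epic, forcing $F_M = 0$; for an extension $A' \inflation M \deflation A''$ with $A', A'' \in \TT$, the map $M \deflation F_M$ kills $A'$, hence factors through $A''$ via a map which is zero by $\Hom(\TT,\FF)=0$, again forcing $F_M = 0$. Third, the dual of \ref{A2}: factoring $T \to Y$ through $T_Y = \ker(Y \deflation F_Y)$ (legitimate since the composite to $F_Y$ vanishes), applying \ref{A2} to $T \to T_Y$, and composing the resulting inflation with $T_Y \inflation Y$ using \ref{L1}. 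This last step is indeed where the hypothesis and the torsion structure genuinely interact, as you observe.

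One small point of precision: \Cref{Remark:AnyExactCategoryYieldsA3} as stated in the paper only asserts \ref{A3}, not its dual. Your parenthetical ``(and dually)'' is justified, but it deserves a half-sentence: the axioms of an exact category are self-dual, so the dual of \cite[Proposition~2.15]{Buhler10} gives the dual of \ref{A3} for any subcategory as well.
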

\section{Constructions using Auslander's formula}\label{Section:ConstructionOfTheLeftHeart}

Throughout this section, let $\EE$ be a deflation-exact category with admissible kernels.  Auslander's formula (see \cite[p. 1]{Lenzing98} or \cite[Theorem~2.2]{Krause15}) states that any small abelian category $\AA$ can be recovered as the quotient $\mod (\AA) / \eff (\AA)$.  The description of the left heart given in \Cref{Theorem:AlternativeDescriptionOfLeftHeart} as $\mathcal{LH}(\EE) \simeq \mod(\EE) / \eff(\EE)$ has the same flavor.  In this section, we consider two subcategories, $\modinf(\EE)$ and $\modad(\EE)$, and consider similar quotients by the subcategory of the effaceable functors.  In addition, we show that the effaceable functors form a torsion subcategory of these categories.

More specifically, we show that $\eff \EE$ is a torsion class in $\mod \EE$; the corresponding torsionfree class is the full subcategory $\modone(\EE)$ consisting of all modules of projective dimension at most one. This observation will play a part in the next section.

The main idea is the following.  By \Cref{proposition:TorsionPart}, the deflation-mono factorization $f = m \circ p$ of a morphism $f$ in $\EE$ gives rise to a short exact sequence $0 \to \coker \Yoneda(p) \to \coker \Yoneda(f) \to \coker \Yoneda(m) \to 0$ in $\mod(\EE).$  It will follow that $\Yoneda(m) \in \modone(\EE)$ and $\Yoneda(p) \in \eff(\EE)$, so that this sequence gives the required decomposition of $\Yoneda (f)$ into a torsion submodule and a torsionfree quotient module.

We also identify two interesting subcategories of $\mod \EE$ by imposing further conditions on the presenting morphism $f = m \circ p$: we consider $\smodad \EE$ of objects of the form $\coker \Yoneda(f)$ where $m$ is an inflation in $\EE$, and the subcategory $\smodinf \EE$ consisting of those objects of the form $\coker \Yoneda(f)$ where $m$ is the composition of inflations in $\EE$.

The torsion theory $(\eff(\EE), \modone(\EE))$ in $\smod \EE$ then induces a torsion theory $(\eff(\EE), \modadone(\EE))$ in $\smodad \EE$ and a torsion theory $(\eff(\EE), \modinfone(\EE))$ in $\modinf \EE$.

Finally, the categories $\modad \EE$ and $\modinf \EE$ are not abelian, but one can nonetheless consider their quotients by the subcategory of $\eff \EE$ of effaceable functors.   By taking these quotients, the inclusions $\modad \EE \subseteq \modinf \EE \subseteq \mod \EE$ give a sequence $\EE \subseteq \ex{\EE} \subseteq \mathcal{LH}(\EE).$

\subsection{Preparatory notions} We start by formally introducing the categories $\modad(\EE)$ and $\modinf(\EE)$ mentioned before.

%


\begin{definition}\label{definition:VariousMods}
	\begin{enumerate}
	  \item A morphism in $\EE$ which is the composition of a finite string of inflations is called a \emph{weak inflation}.  A morphism $X \to Y$ is called a \emph{weakly admissible} morphism if it is the composition of a deflation $X \deflation Z$ and a weak inflation $Z \to Y.$
		\item We write $\smodad(\EE)$ for the full subcategory of $\smod(\EE)$ consisting of those functors $F\cong \coker(\Upsilon(f))$ where $f$ is admissible in $\EE$.  We write $\modadone(\EE)$ for the full subcategory of $\smodad(\EE)$ consisting of those functors $F\cong \coker(\Upsilon(f))$ where $f$ is an inflation.
		\item We write $\smodinf(\EE)$ for the full subcategory of $\smod(\EE)$ consisting of those functors $F\cong \coker(\Upsilon(f))$ such that $f$ is weakly admissible in $\EE$.  We write $\modoneinf(\EE)$ for the full subcategory of $\smodinf(\EE)$ consisting of those functors $F\cong \coker(\Upsilon(f))$ where $f$ is a weak inflation.
	\end{enumerate}
\end{definition}

\begin{remark}
We have $\eff(\EE) \subseteq \smodad(\EE) \subseteq \smodadinf(\EE) \subseteq \smod(\EE).$
\end{remark}

The following proposition explains the notation of $\modadone(\EE)$ and $\modoneinf(\EE)$.

\begin{proposition}\label{remark:AboutGlobalDimension}
For any deflation-exact category $\EE$ with admissible kernels, we have:
\begin{enumerate}
  \item $\modadone(\EE) = \modone(\EE) \cap \modad(\EE)$,
  \item $\modoneinf(\EE) = \modone(\EE) \cap \smodinf(\EE)$.
\end{enumerate}
\end{proposition}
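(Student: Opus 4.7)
The plan is to prove the forward inclusions immediately from the definitions and to derive both reverse inclusions from a single key observation about deflation-mono factorizations in the presence of the projective dimension bound.

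For the forward inclusions, any inflation is a monomorphism (being the kernel in its defining conflation), and any inflation $h$ is admissible via the trivial factorization $h = h\circ\mathrm{id}$, so $\modadone(\EE)\subseteq \modone(\EE)\cap\modad(\EE)$. The same argument with ``inflation'' replaced by ``weak inflation'' handles the forward inclusion in (2), using that a composition of inflations is still a monomorphism.

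For the reverse inclusions I would isolate the following key claim: \emph{if $f\colon X\to Y$ factors as $f = m\circ p$ with $m\colon Z\hookrightarrow Y$ a monomorphism and $p\colon X\deflation Z$ a deflation, and if $F \coloneqq \coker\Upsilon(f)$ has projective dimension at most one, then $\coker\Upsilon(p) = 0$.} Granting this, statement (1) follows: for $F\in \modone(\EE)\cap\modad(\EE)$, choose an admissible presentation $f$ of $F$ and apply \Cref{Proposition:InterpretationOfAIForDeflationExactCategories} to obtain the deflation-mono factorization $f = m\circ p$ in which $m$ is an inflation. The short exact sequence of \Cref{proposition:TorsionPart} then collapses to $F\cong\coker\Upsilon(m)$, placing $F$ in $\modadone(\EE)$. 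The proof of (2) is verbatim identical, starting from a weakly admissible presentation $f = w\circ p$ where $w$ is a weak inflation (hence still a monomorphism).

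To prove the key claim I would compute the canonical projective resolution of $F$ explicitly. Since $m$ is a monomorphism so is $\Upsilon(m)$, whence $\ker\Upsilon(f) = \ker\Upsilon(p) = \Upsilon(\ker p)$; the resulting exact sequence
\[0\to \Upsilon(\ker p)\to \Upsilon(X)\to \Upsilon(Y)\to F\to 0\]
is a projective resolution of $F$. The hypothesis on projective dimension is then equivalent to $\Upsilon(\ker p)\to\Upsilon(X)$ being a split monomorphism, which by full faithfulness of $\Upsilon$ is equivalent to the inclusion $\ker p\hookrightarrow X$ splitting in $\EE$. Writing $X\cong \ker p\oplus X'$ accordingly, the split projection $\pi\colon X\deflation X'$ is a deflation and $p$ factors uniquely as $p = \bar p\circ \pi$ for some $\bar p\colon X'\to Z$, which a short chase (using $\ker p\cap X'=0$) shows to be a monomorphism. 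Axiom \ref{R3+}, available via \Cref{Proposition:DeflationAICategorySatisfiesAxiomR3+}, applied to $p = \bar p\circ \pi$ then forces $\bar p$ to be a deflation; a deflation which is simultaneously a monomorphism has zero kernel, so its defining conflation reduces to $0\inflation X'\deflation Z$ and $\bar p$ must be an isomorphism. Consequently $p$ is a split epimorphism and $\coker\Upsilon(p) = 0$.

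The main technical step is translating the homological condition ``$F$ has projective dimension at most one'' into the concrete condition that $\ker p\hookrightarrow X$ splits in $\EE$; once this translation is in place, axiom \ref{R3+} and the observation that a mono deflation is an isomorphism finish the argument without further difficulty.
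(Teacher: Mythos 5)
Your proof is correct, and its overall skeleton matches the paper's: the forward inclusions are immediate, and the reverse inclusions reduce to showing that in a (weakly) admissible presentation $f = m\circ p$ of $F$, the hypothesis that $F$ has projective dimension at most one forces the deflation $p$ to be a retraction, whence $F\cong\coker\Upsilon(m)$ by \Cref{proposition:TorsionPart}. The difference lies in how that key step is justified. The paper simply cites \Cref{proposition:GlobalDimensionAtMostOne}(3) (itself left as a ``straightforward adaptation'' of Auslander--Reiten) together with the uniqueness of deflation-mono factorizations from \Cref{Proposition:InterpretationOfAIForDeflationExactCategories}, whereas you prove the needed statement from scratch: the explicit projective resolution $0\to \Upsilon(\ker p)\to \Upsilon(X)\to \Upsilon(Y)\to F\to 0$, the translation (Schanuel) of the dimension bound into the splitting of $\Upsilon(\ker p)\to\Upsilon(X)$, full faithfulness of $\Upsilon$ to descend that splitting to $\ker p\hookrightarrow X$ in $\EE$, and finally axiom \ref{R3+} to conclude that the induced morphism $\bar p$ is a deflation, hence an isomorphism. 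Your version buys self-containedness --- it effectively supplies the proof that the paper delegates to \Cref{proposition:GlobalDimensionAtMostOne} --- and it cleanly unifies (1) and (2) in a single key claim requiring only that $m$ be a monomorphism; the cost is length. One simplification: the appeal to \ref{R3+} is avoidable. Since $p$ is a deflation, it is the cokernel of its kernel, and the split projection $\pi\colon X\deflation X'$ is likewise a cokernel of $\ker p\to X$; uniqueness of cokernels then gives at once that $\bar p$ is an isomorphism, so $p$ is a retraction without any chase or obscure-axiom input.
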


\begin{proof}
We only show the first statement, the proof of the second statement is similar.  We start by showing the inclusion $\modadone(\EE) \subseteq \modone(\EE) \cap \modad(\EE)$.  Let $M \in \modadone(\EE)$, say $M \cong \coker \Yoneda(f)$ for an inflation $f$ in $\EE$.  As $f$ is a monomorphism, we have $M \in \modone(\EE)$ by \Cref{proposition:GlobalDimensionAtMostOne} and, as $f$ is admissible, we have $M \in \modone(\EE)$.

For the inclusion $\modone(\EE) \cap \modad(\EE) \subseteq \modadone(\EE)$, let $M \in \modad(\EE)$, say $M \cong \coker \Yoneda(f)$ for an admissible $f = m \circ p$ in $\EE$.  Here, $p$ is a deflation and $m$ is an inflation.  Since $M \in \modone(\EE)$, we know, by \Cref{proposition:GlobalDimensionAtMostOne} and the uniqueness of a deflation-inflation factorization, that $p$ is a retraction.  Hence $\coker \Yoneda(f) = \coker \Yoneda(m) \in \modadone(\EE).$
\end{proof}

The following lemma is an adaptation of \cite[Lemma~3.27]{HenrardKvammevanRoosmalen20}.

\begin{lemma}\label{Lemma:MorphismsRepresentingAnEffaceableAreDeflations}
Let $G \cong \coker \Yoneda(g) \in \smod \EE$ for some morphism $g\colon B \to C$ in $\EE$.  If $G \in \eff(\EE)$, then:
\begin{enumerate}
	\item there is a $B' \in \EE$ such that $\begin{pmatrix} g & 0\end{pmatrix}\colon B \oplus B' \deflation C$ is a deflation,
	\item if $\EE$ satisfies axiom \ref{R3}, then $g\colon B \deflation C$ is a deflation.
\end{enumerate}
\end{lemma}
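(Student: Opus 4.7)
The plan is to exploit the definition of effaceability: since $G \in \eff(\EE)$, we may write $G \cong \coker \Yoneda(h)$ for some deflation $h\colon X \deflation Y$ in $\EE$. This yields a second projective presentation of $G$ in $\smod(\EE)$, and since representables are projective, the identity $1_G$ lifts to chain maps between the two presentations in both directions. By the Yoneda lemma, each lifted component comes from a morphism in $\EE$: one obtains $v\colon Y \to C$ and $u\colon X \to B$ with $vh = gu$, together with $t\colon C \to Y$ such that $\Yoneda(vt - 1_C)$ vanishes on $G$ and hence factors through $\Yoneda(g)$; a final application of Yoneda produces $r\colon C \to B$ with $vt - gr = 1_C$.

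This identity exhibits $\begin{pmatrix} g & v\end{pmatrix}\colon B\oplus Y \to C$ as a retraction (split by $\begin{pmatrix} -r \\ t\end{pmatrix}$), and in particular as a deflation. Composing with the deflation $\begin{pmatrix} 1_B & 0 \\ 0 & h\end{pmatrix}\colon B \oplus X \deflation B \oplus Y$ and substituting $vh = gu$, one sees that $\begin{pmatrix} g & gu\end{pmatrix}\colon B \oplus X \to C$ is a deflation. Since $\begin{pmatrix} g & gu\end{pmatrix} = \begin{pmatrix} g & 0\end{pmatrix} \circ \begin{pmatrix} 1_B & u \\ 0 & 1_X\end{pmatrix}$ and the second factor is an automorphism of $B \oplus X$, we conclude that $\begin{pmatrix} g & 0\end{pmatrix}\colon B \oplus X \deflation C$ is a deflation, proving~(1) with $B' = X$.

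For~(2), under axiom \ref{R3} we write $\begin{pmatrix} g & 0\end{pmatrix} = g \circ \pi_B$ with $\pi_B\colon B \oplus X \deflation B$ the (split) projection; since $\EE$ has admissible kernels, $g$ admits a kernel, and axiom \ref{R3} then forces $g$ itself to be a deflation. The delicate point lies in part~(1), namely producing the identities $vh = gu$ and $vt - gr = 1_C$ inside $\EE$ through two successive applications of the Yoneda lemma to the chain-map lifts of $1_G$; once these are in hand, the rest is a matrix manipulation whose key trick is to absorb the nonzero entry $v$ into a unipotent change of coordinates after precomposing with the deflation~$h$.
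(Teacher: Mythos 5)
Your proof is correct, and it takes a genuinely different route from the paper's. The paper also starts from a deflation presenting $G$ (there written $X\inflation Y\stackrel{f}{\deflation}Z$), but then works at the level of complexes: by the Comparison Theorem, $0\to\ker(g)\to B\to C\to 0$ is homotopy equivalent to the acyclic complex $0\to X\to Y\to Z\to 0$, and the conclusions are extracted from derived-category machinery --- for (2), the homotopy-invariance of acyclicity under axiom \ref{R3} (\Cref{Proposition:BasicPropertiesDerivedCategoryNew}); for (1), passing to the closure of the conflation structure under axiom \ref{R3} and then descending back to the original structure at the cost of the summand $B'$, citing \cite{HenrardvanRoosmalen20Obscure}. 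Your argument instead stays entirely inside $\EE$ and $\smod(\EE)$: a Schanuel-type comparison of the two projective presentations yields the explicit identities $vh=gu$ and $vt-gr=1_C$, after which everything is matrix algebra. This buys self-containedness (no derived category, no appeal to external results on \ref{R3}-closures), an explicit witness $B'=X$, and a cleaner logical structure in which (2) follows from (1) by one application of axiom \ref{R3} to the factorization $\begin{psmallmatrix}g&0\end{psmallmatrix}=g\circ\pi_B$, whereas the paper proves (2) independently. The paper's proof is shorter given the machinery it has already set up, and it matches the homotopy-theoretic viewpoint used throughout that text.

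Two facts you use implicitly are worth naming, though both are harmless under the section's standing hypothesis that $\EE$ has admissible kernels: the step ``$\begin{psmallmatrix}g&v\end{psmallmatrix}$ is a split epimorphism, hence a deflation'' requires weak idempotent completeness, which holds here because $\EE$ has kernels; and the step ``$\begin{psmallmatrix}1_B&0\\0&h\end{psmallmatrix}$ is a deflation'' uses that a direct sum of deflations is a deflation, which follows from axioms \ref{R1} and \ref{R2}.
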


\begin{proof}
As $G\in \eff(\EE)$, there is a conflation $X\inflation Y\stackrel{f}{\deflation}Z$ such that $G\cong\coker(\Upsilon(f))$. By the Comparison Theorem (\cite[Theorem 12.4]{Buhler10}) and the fact that the Yoneda embedding is fully faithful, the sequence $0\to\ker(g)\to B\to C\to 0$ is homotopy equivalent to the acyclic sequence $0\to X\inflation Y\deflation Z\to 0$.

If $\EE$ satisfies axiom \ref{R3}, then by \Cref{Proposition:BasicPropertiesDerivedCategoryNew}, we find that $g\colon B\to C$ is a deflation.  Without the assumption of axiom \ref{R3}, we may still enlarge the conflation structure on $\EE$ until it satisfies axiom \ref{R3} (as $\EE$ is weakly idempotent complete, we obtain the closure under axiom \ref{R3} as the closure under axiom $\mathbf{\text{R3}^-}$, see \cite[Proposition~3.3 and Corollary~7.14]{HenrardvanRoosmalen20Obscure}).  We can now use \Cref{Proposition:BasicPropertiesDerivedCategoryNew} to see that $0\to\ker(g)\to B\to C\to 0$ is a conflation in the new conflation structure.  By \cite[Proposition~7.18]{HenrardvanRoosmalen20Obscure}, we find that there exists an object $B' \in B$ such that $0\to\ker(g) \oplus B'\inflation B \oplus B'\deflation C\to 0$ is a conflation in the original conflation structure.
\end{proof}

\subsection{A torsion theory for exact categories}

Let $\FF$ be an exact category with admissible kernels.  Our main example will be $\FF = \ex\EE$ where $\EE$ is a deflation-exact category with admissible kernels.  We show that the subcategory $\eff(\FF)$ of effaceable functors is a torsion class in $\mod(\FF)$.  This serves as a starting point for the other torsion pairs we will give in this section.

\begin{proposition}\label{proposition:HereditaryTorsionForExact}
Let $\FF$ be an exact category with admissible kernels.  There is a hereditary torsion pair $(\eff(\FF), \modone(\FF))$ on $\mod({\FF})$.
\end{proposition}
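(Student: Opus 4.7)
The plan is to check the three conditions defining a hereditary torsion pair: (1) a short exact sequence $0 \to T \to M \to F \to 0$ with $T \in \eff(\FF)$ and $F \in \modone(\FF)$ for every $M \in \mod(\FF)$, (2) the vanishing $\Hom(\eff(\FF), \modone(\FF)) = 0$, and (3) the stability of $\eff(\FF)$ under subobjects.

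For (1), I would write $M \cong \coker \Yoneda(f)$ and apply \Cref{Proposition:InterpretationOfAIForDeflationExactCategories} to factor $f = m \circ p$ as a deflation followed by a monomorphism. Then \Cref{proposition:TorsionPart} immediately delivers the short exact sequence
\[0 \to \coker \Yoneda(p) \to M \to \coker \Yoneda(m) \to 0\]
in $\mod(\FF)$. The left term is effaceable by definition, and \Cref{proposition:GlobalDimensionAtMostOne} places the right term in $\modone(\FF)$.

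For (2), I would fix $T \cong \coker \Yoneda(g)$ for some deflation $g \colon B \deflation C$ and $F \in \modone(\FF)$ with projective resolution $0 \to \Yoneda(A) \xrightarrow{\Yoneda(m)} \Yoneda(B') \to F \to 0$; here $m$ is a monomorphism by \Cref{proposition:GlobalDimensionAtMostOne}, and hence an inflation since $\FF$ has admissible kernels. Given $\phi \colon T \to F$, the projectivity of $\Yoneda(C)$ lifts the composite $\Yoneda(C) \twoheadrightarrow T \xrightarrow{\phi} F$ along $\Yoneda(B') \to F$ to a morphism $\Yoneda(c)\colon \Yoneda(C) \to \Yoneda(B')$, and the vanishing of $\Yoneda(B) \to T$ forces $c \circ g = m \circ a$ for some $a\colon B \to A$. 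Since $\FF$ has kernels, the pullback of $m$ along $c$ exists, producing a monomorphism $m'\colon P \to C$ through which $g$ factors by the universal property. The crucial step is to invoke axiom \ref{R3}: because $g$ is a deflation and $m'$ admits a (zero) kernel, $m'$ itself must be a deflation, hence an isomorphism (being simultaneously a deflation and a monomorphism). This forces $c$ to factor through $m$, so $\Yoneda(c)$ lands in $\ker(\Yoneda(B') \to F)$ and therefore $\phi = 0$.

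Condition (3) will be immediate from \Cref{proposition:EffaceableSerre}: since $\FF$ is exact (so it satisfies axiom \ref{R3}) and has kernels, $\eff(\FF)$ is a Serre subcategory of $\mod(\FF)$ and in particular closed under subobjects. The main obstacle is step (2), where the admissibility of kernels (used to form the pullback $P$) has to be combined with the obscure axiom \ref{R3} (to promote the pullback monomorphism $m'$ to an isomorphism) in just the right way to kill all morphisms from an effaceable functor to a functor of projective dimension at most one.
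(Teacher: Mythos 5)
Your proposal is correct and takes essentially the same route as the paper: the torsion/torsion-free sequence comes from \Cref{proposition:TorsionPart} applied to the deflation-mono factorization, heredity comes from \Cref{proposition:EffaceableSerre}, and the vanishing $\Hom(\eff(\FF),\modone(\FF))=0$ is obtained by lifting a morphism to projective presentations, pulling back the presenting monomorphism along the comparison map, and using the obscure axiom to conclude that this pullback is simultaneously a monomorphism and a deflation, hence an isomorphism. The paper phrases this last step via \Cref{Lemma:TheFamousDiagramChase} and axiom \ref{R3+} to identify $\im(\eta)=0$, whereas you use axiom \ref{R3} and conclude directly that the lift factors through the presenting monomorphism; these differences are cosmetic. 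One correction: your parenthetical claim that the monomorphism $m$ is an inflation because $\FF$ has admissible kernels is false---admissible kernels only make \emph{kernels} into inflations, and a monomorphism need not be a kernel (e.g.\ a dense, non-surjective inclusion of Banach spaces)---but since this claim is never used, the proof is unaffected.
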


\begin{proof}
It follows from \Cref{proposition:EffaceableSerre} that $\eff(\FF)$ is a Serre subcategory of $\mod(\FF)$.  \Cref{proposition:TorsionPart} shows that every $M \in \mod({\FF})$ is the extension of a torsion-free module by a torsion module.  We only need to show that $\Hom(\eff(\FF), \modone(\FF))=0.$  For this, consider a morphism $\eta\colon F \to G$ with $F \in \eff(\FF)$ and $G \in \modone(\FF)$.  We can choose projective presentations of $F$ and $G$ as follows:
\[
\mbox{
$\xymatrix@1{\Yoneda(A) \ar[r]^{\Yoneda(f)} & \Yoneda(B) \ar[r] & F \ar[r] & 0} \quad$ and
$\quad\xymatrix@1{\Yoneda(C) \ar[r]^{\Yoneda(g)} & \Yoneda(D) \ar[r] & G \ar[r] & 0}$}
\]
where $f\colon A \deflation B$ is a deflation and $g\colon C \hookrightarrow D$ is a monomorphism.  A morphism $\eta\colon F \to G$ lifts to a commutative diagram
	\[\begin{tikzcd}
		A\arrow{r}{\beta}\arrow[two heads]{d}{f} & C\arrow[hook]{d}{g}\\
		B\arrow{r}{\alpha} & D
	\end{tikzcd}\] 
Note that $\EE$ admits all pullbacks as it has admissible kernels. Using the notation of \Cref{Lemma:TheFamousDiagramChase}, we find that $g'$ is an isomorphism as it is both a monomorphism (as pullback of a monomorphism) and a deflation (by applying axiom \ref{R3+} to the composition $f = g' \circ \beta''$). It follows that $\im(\eta) = 0$ and hence that $\Hom(F,G) = 0$.
\end{proof}

\subsection{A torsion theory on \texorpdfstring{$\mod \EE$}{mod(E)}}

By \Cref{proposition:HereditaryTorsionForExact}, we know there is a hereditary torsion theory $(\eff(\ex\EE), \modone(\ex\EE))$ on $\mod({\ex\EE})$.  We intersect this torsion theory with $\mod(\EE) \subseteq \mod(\ex\EE)$ to find a torsion theory on $\mod(\EE)$.

\begin{remark}
For any $\EE$ deflation-exact category with admissible kernels.  As $j\colon \EE \to \ex\EE$ commutes with kernels, the natural fully faithful functor $-\otimes_\EE \ex\EE\colon \mod(\EE) \to \mod(\ex\EE)$ is exact (see \cite[Lemma~2.6.(2)]{Krause98}).
\end{remark}

\begin{lemma}\label{lemma:IntersectingWithMod}
Let $\EE$ be a deflation-exact category with admissible kernels.  We have
\begin{enumerate}
	\item $\eff(\EE) = \eff(\ex\EE) \cap \mod(\EE)$, and
	\item $\modone(\EE) = \modone(\ex\EE) \cap \mod(\EE)$.
\end{enumerate}
\end{lemma}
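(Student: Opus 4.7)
The plan is to prove both inclusions of each equality by comparing presentations of finitely presented functors across the two categories, exploiting the properties of the embedding $j\colon\EE\to\ex{\EE}$ recorded in \Cref{Theorem:ExactHull}, \Cref{Lemma:MonosAndEpisInExactHull} and \Cref{Corollary:TheHullInheritsAdmissibleKernels}. Throughout I use that the exact embedding $-\otimes_{\EE}\ex{\EE}\colon\mod(\EE)\to\mod(\ex{\EE})$ sends $\coker\Yoneda_{\EE}(f)$ to $\coker\Yoneda_{\ex{\EE}}(j(f))$.

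For part (1), the inclusion $\eff(\EE)\subseteq\eff(\ex{\EE})\cap\mod(\EE)$ is essentially immediate: if $F\cong\coker\Yoneda_{\EE}(f)$ with $f$ a deflation in $\EE$, then conflation-exactness of $j$ makes $j(f)$ a deflation in $\ex{\EE}$, so the image of $F$ in $\mod(\ex{\EE})$ lies in $\eff(\ex{\EE})$. For the reverse inclusion I would start with $F\cong\coker\Yoneda_{\EE}(g)$ for some morphism $g\colon B\to C$ in $\EE$ and assume $F\in\eff(\ex{\EE})$. Since $\ex{\EE}$ is exact it satisfies \ref{R3+} (hence \ref{R3}), so \Cref{Lemma:MorphismsRepresentingAnEffaceableAreDeflations}(2) applied inside $\ex{\EE}$ forces $j(g)$ to be a deflation in $\ex{\EE}$. \Cref{Corollary:TheHullInheritsAdmissibleKernels}\eqref{enumerate:TheHullInheritsAdmissibleKernels4} then shows $g$ is already a deflation in $\EE$, so $F\in\eff(\EE)$.

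For part (2), the forward inclusion uses that \Cref{proposition:GlobalDimensionAtMostOne} characterises objects of $\modone(\EE)$ as cokernels of monomorphisms, combined with \Cref{Lemma:MonosAndEpisInExactHull} to transfer monomorphic presentations from $\EE$ to $\ex{\EE}$. The reverse inclusion is the substantive step. I take $F\cong\coker\Yoneda_{\EE}(g)$ with $g\colon B\to C$ in $\EE$ and assume $F\in\modone(\ex{\EE})$. Applying \Cref{proposition:GlobalDimensionAtMostOne}(3) inside $\ex{\EE}$ to the presenting morphism $j(g)$ yields a factorization $j(g)=m\circ p$ in $\ex{\EE}$ with $p\colon B\to I$ a retraction and (as a consequence of the proof of \Cref{proposition:GlobalDimensionAtMostOne}) $m\colon I\to C$ a monomorphism. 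Then $I$ is a direct summand of $B$, in particular a subobject of $B\in\EE$; since $\EE$ is closed under subobjects in $\ex{\EE}$ by \Cref{Corollary:TheHullInheritsAdmissibleKernels}, we conclude $I\in\EE$. By fullness of $j$, both a section of $p$ and the map $m$ lie in $\EE$, so the factorization $g=m\circ p$ takes place in $\EE$ with $m$ a monomorphism (monomorphisms are reflected along the fully faithful $j$). Thus $F\cong\coker\Yoneda_{\EE}(m)\in\modone(\EE)$.

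The main obstacle I expect is precisely this descent step in (2): pulling the retraction--monomorphism factorization back from $\ex{\EE}$ into $\EE$. It is resolved by the closure of $\EE$ under subobjects in $\ex{\EE}$, which is where the admissible kernels hypothesis enters essentially. The rest is routine bookkeeping about how $-\otimes_{\EE}\ex{\EE}$ interacts with representable functors and about how $j$ preserves and reflects (co)monomorphisms and (de)inflations.
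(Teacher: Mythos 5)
Your proof is correct and follows essentially the same route as the paper's: both directions of (1) via \Cref{Lemma:MorphismsRepresentingAnEffaceableAreDeflations} and \Cref{Corollary:TheHullInheritsAdmissibleKernels}\eqref{enumerate:TheHullInheritsAdmissibleKernels4}, and the substantive direction of (2) via the retraction--monomorphism factorization from \Cref{proposition:GlobalDimensionAtMostOne} descended to $\EE$ using closure under subobjects. One cosmetic remark: exactness of $\ex{\EE}$ by itself only gives axiom \ref{R3}, not \ref{R3+} (that also needs weak idempotent completeness, which here follows since $\ex{\EE}$ has kernels by \Cref{theorem:ExactHullRegular}), but this is harmless since \Cref{Lemma:MorphismsRepresentingAnEffaceableAreDeflations}(2) only requires \ref{R3}.
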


\begin{proof}
\begin{enumerate}
	\item The inclusion $\eff(\EE) \subseteq \eff(\ex\EE) \cap \mod(\EE)$ uses only that $j\colon \EE \to \ex\EE$ is conflation-exact.  For the inclusion $\eff(\ex\EE) \cap \mod(\EE) \subseteq \eff(\EE)$, consider an object $M \cong \coker \Yoneda(f) \in \mod(\EE)$ where $f$ is a morphism in $\EE$.  By \Cref{Lemma:MorphismsRepresentingAnEffaceableAreDeflations}, we know that $f$ is a deflation in $\ex\EE.$  Hence, by \Cref{Corollary:TheHullInheritsAdmissibleKernels}.\eqref{enumerate:TheHullInheritsAdmissibleKernels4}, we know that $f$ is a deflation in $\EE$.
	\item Again, the inclusion $\modone(\EE) \subseteq \modone(\ex\EE) \cap \mod(\EE)$ uses only that $j\colon \EE \to \ex\EE$ preserves monomorphisms (see \Cref{Corollary:TheHullInheritsAdmissibleKernels}.\eqref{enumerate:TheHullInheritsAdmissibleKernels3}).  For the other inclusion, let $M \in \modone(\ex\EE) \cap \mod(\EE)$.  As $M \in \mod(\EE)$, we know that $M\cong \coker \Yoneda(f)$ for a morphism $f$ in $\EE \subseteq \ex\EE$.  It follows from \Cref{proposition:GlobalDimensionAtMostOne} that $f = m \circ p$ in $\ex\EE$ where $p$ is a retraction and $m$ a monomorphism.  As $\EE$ is closed under subobjects in $\ex\EE$, we find that the factorization $f = m \circ p$ also holds in $\EE$.  The statement now follows from the isomorphism $\coker \Yoneda(f) \cong \coker \Yoneda(m)$. \qedhere
\end{enumerate}
\end{proof}

\begin{proposition}\label{proposition:HereditaryTorsionForExactHull}
Let $\EE$ be a deflation-exact category with admissible kernels.  There is a hereditary torsion pair $(\eff(\EE), \modone(\EE))$ on $\mod(\EE)$.
\end{proposition}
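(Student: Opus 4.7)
The plan is to deduce this from the exact-hull version (Proposition \ref{proposition:HereditaryTorsionForExact}) via Lemma \ref{lemma:IntersectingWithMod}. Recall from Theorem \ref{theorem:ExactHullRegular} that $\ex\EE$ is an exact category with admissible kernels, so Proposition \ref{proposition:HereditaryTorsionForExact} furnishes a hereditary torsion pair $(\eff(\ex\EE), \modone(\ex\EE))$ on $\mod(\ex\EE)$. The inclusion $-\otimes_\EE \ex\EE\colon \mod(\EE) \hookrightarrow \mod(\ex\EE)$ is fully faithful and exact since $j\colon \EE \to \ex\EE$ commutes with kernels (Corollary \ref{Corollary:TheHullInheritsAdmissibleKernels}.\eqref{enumerate:TheHullInheritsAdmissibleKernels3}). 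Combining this with the identities $\eff(\EE) = \eff(\ex\EE) \cap \mod(\EE)$ and $\modone(\EE) = \modone(\ex\EE) \cap \mod(\EE)$ from Lemma \ref{lemma:IntersectingWithMod}, the Hom-vanishing is immediate: for $F \in \eff(\EE)$ and $G \in \modone(\EE)$, one has $F \in \eff(\ex\EE)$ and $G \in \modone(\ex\EE)$, whence $\Hom_{\mod(\EE)}(F,G) = \Hom_{\mod(\ex\EE)}(F,G) = 0$.

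For the torsion decomposition, I would construct it directly inside $\mod(\EE)$ rather than relying on the one in $\mod(\ex\EE)$. Given $M \in \mod(\EE)$, write $M \cong \coker \Yoneda(f)$ for some morphism $f\colon X \to Y$ in $\EE$. By Proposition \ref{Proposition:InterpretationOfAIForDeflationExactCategories}.\eqref{lemma:InterpretationOfAIForDeflationExactCategories3}, the morphism $f$ admits a deflation-mono factorization $f = m \circ p$ in $\EE$. Corollary \ref{proposition:TorsionPart} then produces a short exact sequence
\[
0 \to \coker \Yoneda(p) \to \coker \Yoneda(f) \to \coker \Yoneda(m) \to 0
\]
in $\mod(\EE)$. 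Here $\coker \Yoneda(p) \in \eff(\EE)$ by definition (since $p$ is a deflation), and $\coker \Yoneda(m) \in \modone(\EE)$ by Proposition \ref{proposition:GlobalDimensionAtMostOne} (since $m$ is a monomorphism). This exhibits $M$ as the required extension.

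Finally, heredity of the torsion class follows directly from Proposition \ref{proposition:EffaceableSerre}: since $\EE$ has admissible kernels, it in particular has kernels, and by Proposition \ref{Proposition:DeflationAICategorySatisfiesAxiomR3+} it satisfies axiom \ref{R3+} hence \ref{R3}, so $\eff(\EE)$ is a Serre subcategory of $\mod(\EE)$, in particular closed under subobjects.

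There is no real obstacle; the work is assembling already-established pieces. The only mildly subtle point is ensuring that the torsion decomposition of an object of $\mod(\EE)$ stays inside $\mod(\EE)$ and is not merely inherited from $\mod(\ex\EE)$ — this is why I would use the explicit deflation-mono factorization in $\EE$ together with Corollary \ref{proposition:TorsionPart} rather than restricting the torsion triangle from $\mod(\ex\EE)$.
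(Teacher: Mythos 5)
Your proposal is correct and follows essentially the same route as the paper: the paper also obtains the torsion decomposition from \Cref{proposition:TorsionPart} applied to the deflation-mono factorization of a presenting morphism, and deduces the remaining properties from the identities $\eff(\EE) = \eff(\ex\EE) \cap \mod(\EE)$ and $\modone(\EE) = \modone(\ex\EE) \cap \mod(\EE)$ of \Cref{lemma:IntersectingWithMod} together with \Cref{proposition:HereditaryTorsionForExact}. The only cosmetic difference is that you invoke \Cref{proposition:EffaceableSerre} directly for heredity (valid, since admissible kernels give kernels and axiom \ref{R3+} implies \ref{R3}), whereas the paper extracts the Serre property from the same intersection identities.
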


\begin{proof}
It follows from \Cref{proposition:TorsionPart} that every object in $\mod(\EE)$ is the extension of a torsion-free object by a torsion object.  The other properties (i.e.~that $\eff(\EE)$ is a Serre subcategory and that $\Hom(\eff(\EE), \modone(\EE)) = 0$) follow easily from $\eff(\EE) = \eff(\ex\EE) \cap \mod(\EE)$ and $\modone(\EE) = \modone(\ex\EE) \cap \mod(\EE)$.
\end{proof}

\subsection{The exact hull and a torsion theory \texorpdfstring{on $\modinf(\EE)$}{}}  In this subsection, we consider the category $\modinf(\EE)$, given as the full subcategory of $\mod(\EE)$ consisting of those objects with are presentable by a weak admissible morphism in $\EE$.  We will show in \Cref{Proposition:SmodinfIsExact} that this is an extension-closed subcategory, and hence exact.  Next, we show that $(\eff(\EE), \modinfone(\EE))$ is a torsion theory on $\modinf(\EE)$ by intersecting the torsion theory from \Cref{proposition:HereditaryTorsionForExact} with $\modinf(\EE)$.  Finally, we consider the quotient $\modinf(\EE) / \eff(\EE)$ and show that it is equivalent to the exact hull $\ex\EE$.

\begin{lemma}\label{lemma:IntersectingWithModinf}
Let $\EE$ be a deflation-exact category with admissible kernels.  We have
\begin{enumerate}
	\item $\eff(\EE) = \eff(\ex\EE) \cap \modinf(\EE)$, and
	\item $\modinfone(\EE) = \modone(\ex\EE) \cap \modinf(\EE)$.
\end{enumerate}
\end{lemma}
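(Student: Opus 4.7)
The plan is to leverage the hereditary torsion pair $(\eff(\ex\EE), \modone(\ex\EE))$ on $\mod(\ex\EE)$, provided by \Cref{proposition:HereditaryTorsionForExact} applied to the exact category $\ex\EE$ (which has admissible kernels by \Cref{theorem:ExactHullRegular}), together with the exact fully faithful embedding $-\otimes_\EE \ex\EE\colon \mod(\EE)\to\mod(\ex\EE)$. The first identity then follows almost immediately: since $\modinf(\EE)\subseteq \mod(\EE)$ by construction, combining with \Cref{lemma:IntersectingWithMod}(1) gives $\eff(\ex\EE)\cap \modinf(\EE)\subseteq \eff(\ex\EE)\cap \mod(\EE)=\eff(\EE)$. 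Conversely, any deflation is trivially weakly admissible (take the identity as its weak inflation part), so $\eff(\EE)\subseteq \modinf(\EE)$; together with $\eff(\EE)\subseteq \eff(\ex\EE)$ (by conflation-exactness of $j$), this yields the reverse inclusion.

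For the second identity, the forward inclusion is straightforward: if $M \in \modinfone(\EE)$, then $M\cong\coker\Yoneda(m)$ for a weak inflation $m$, which is a monomorphism as a composition of monomorphisms; hence $M\in \modone(\EE)\subseteq \modone(\ex\EE)$ by \Cref{lemma:IntersectingWithMod}(2), and $M\in \modinf(\EE)$ trivially. For the reverse inclusion, I would take $M\in \modone(\ex\EE)\cap \modinf(\EE)$ together with a presentation $M\cong \coker\Yoneda(f)$ where $f = m\circ p$, with $p$ a deflation in $\EE$ and $m$ a weak inflation. Since $m$ is a monomorphism, \Cref{proposition:TorsionPart} produces a short exact sequence
\[0 \to \coker \Yoneda(p) \to M \to \coker \Yoneda(m) \to 0\]
in $\mod(\EE)$, which the exact embedding transports to $\mod(\ex\EE)$. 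In the latter, $\coker\Yoneda(p)$ is a subobject of the torsionfree module $M$, hence is itself torsionfree; but $jp$ is a deflation in $\ex\EE$, so $\coker\Yoneda(p)$ is also effaceable over $\ex\EE$, hence torsion. Therefore $\coker\Yoneda(p)=0$, and $M\cong \coker\Yoneda(m)\in\modinfone(\EE)$.

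The main obstacle is minimal; the argument is routine bookkeeping once the hereditary torsion pair on $\mod(\ex\EE)$ is in place. The only subtle point is confirming that the image of $\coker\Yoneda(p) \in \mod(\EE)$ under $-\otimes_\EE \ex\EE$ coincides with $\coker\Yoneda(jp) \in \mod(\ex\EE)$, so that effaceability over $\ex\EE$ can be invoked; this follows from the cocontinuity of $-\otimes_\EE \ex\EE$ on representable generators and the exactness of the embedding. Everything else is formal manipulation of the torsion pair.
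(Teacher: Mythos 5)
Your proof is correct, and every ingredient you invoke (\Cref{proposition:HereditaryTorsionForExact} applied to $\ex\EE$, which is legitimate by \Cref{theorem:ExactHullRegular}; \Cref{proposition:TorsionPart}; \Cref{lemma:IntersectingWithMod}; the exactness and full faithfulness of $-\otimes_\EE \ex\EE$) is established in the paper before this lemma, so there is no circularity. Your route is, however, genuinely different from the paper's, whose entire proof is that ``the same argument as in \Cref{lemma:IntersectingWithMod} works'': i.e.\ for (1) one recognizes the presenting morphism of an effaceable $M \in \modinf(\EE)$ as a deflation in $\ex\EE$ via \Cref{Lemma:MorphismsRepresentingAnEffaceableAreDeflations} and then as a deflation in $\EE$ via \Cref{Corollary:TheHullInheritsAdmissibleKernels}.\eqref{enumerate:TheHullInheritsAdmissibleKernels4}, and for (2) one factors the presenting morphism through a retraction using \Cref{proposition:GlobalDimensionAtMostOne} together with closure of $\EE$ under subobjects in $\ex\EE$. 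Your part (1) avoids re-running any of this: since $\modinf(\EE) \subseteq \mod(\EE)$ and $\eff(\EE) \subseteq \modinf(\EE)$, the equality is a purely formal consequence of \Cref{lemma:IntersectingWithMod}(1) --- arguably cleaner than the paper's intended repetition. Your part (2) replaces the retraction/factorization argument by a torsion-theoretic one: the subobject $\coker \Yoneda(p)$ of the torsionfree module $M$ is torsionfree (note you only need closure of the torsionfree class under subobjects, which holds for any torsion pair; hereditariness is not actually used), yet it is effaceable over $\ex\EE$, hence zero. This buys a structural explanation --- it shows the deflation part of any weakly admissible presentation of such an $M$ is necessarily split --- at the cost of invoking the heavier torsion machinery, and your care about identifying $(-\otimes_\EE\ex\EE)(\coker\Yoneda(p))$ with $\coker\Yoneda(jp)$ is exactly the right point to check. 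Finally, observe that part (2) also admits the same formal shortcut as your part (1): intersecting \Cref{lemma:IntersectingWithMod}(2) with $\modinf(\EE)$ gives $\modone(\ex\EE) \cap \modinf(\EE) = \modone(\EE) \cap \modinf(\EE)$, which equals $\modinfone(\EE)$ by \Cref{remark:AboutGlobalDimension}; this would make both halves one-line reductions.
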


\begin{proof}
The same argument as in \Cref{lemma:IntersectingWithMod} works.
\end{proof}

In addition, we have the following description of $\modinf(\EE).$

\begin{lemma}\label{lemma:IntersectingModadEE}
Let $\EE$ be a deflation-exact category with admissible kernels.  We have $\modinf(\EE) = \modad(\ex\EE) \cap \mod(\EE)$.
\end{lemma}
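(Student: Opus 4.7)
The plan is to establish both inclusions. For $\modinf(\EE) \subseteq \modad(\ex\EE) \cap \mod(\EE)$, suppose $M \cong \coker \Upsilon_\EE(f)$ with $f$ weakly admissible in $\EE$, so $f = m \circ p$ with $p$ a deflation in $\EE$ and $m = m_1 \circ \cdots \circ m_k$ a composition of inflations in $\EE$. Each $m_i$ remains an inflation in $\ex\EE$ since $j\colon \EE \to \ex\EE$ is conflation-exact, and their composition $m$ is an inflation in $\ex\EE$ by axiom \textbf{L1} in the exact category $\ex\EE$. Hence $f$ is admissible in $\ex\EE$ and $M$ lies in $\modad(\ex\EE) \cap \mod(\EE)$.

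For the reverse inclusion, let $M \in \modad(\ex\EE) \cap \mod(\EE)$. Pick any $\EE$-presentation $M \cong \coker \Upsilon_\EE(h)$ and use \Cref{Proposition:InterpretationOfAIForDeflationExactCategories} to factor $h = m \circ p$ with $p\colon X \deflation I$ a deflation and $m\colon I \to Y$ a monomorphism in $\EE$. The crucial reduction is that it suffices to show $m$ is an inflation in $\ex\EE$: by \Cref{Corollary:HullOfAIIsL1Closure} this would exhibit $m$ as a finite composition of inflations in $\EE$, making $h$ weakly admissible and giving $M \in \modinf(\EE)$. To prove it, compare $h = m \circ p$ with an admissible $\ex\EE$-presentation $M \cong \coker \Upsilon_{\ex\EE}(g)$, $g = n \circ q$, witnessing $M \in \modad(\ex\EE)$. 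Applying \Cref{proposition:TorsionPart} in $\mod(\ex\EE)$ to both factorizations yields two short exact sequences whose outer terms are effaceable and of projective dimension at most one respectively; by uniqueness of the torsion decomposition for the hereditary torsion pair $(\eff(\ex\EE), \modone(\ex\EE))$ of \Cref{proposition:HereditaryTorsionForExact}, we deduce $\coker \Upsilon_{\ex\EE}(m) \cong \coker \Upsilon_{\ex\EE}(n) \in \modadone(\ex\EE)$.

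Finally, apply Schanuel's lemma in the abelian category $\mod(\ex\EE)$ to these two projective presentations of the same module, and use that $\Upsilon$ is fully faithful, to obtain an isomorphism in $\ex\EE$ that intertwines $m \oplus \mathrm{id}_{\bar Y}$ with $n \oplus \mathrm{id}_Y$ in the arrow category. Since $n \oplus \mathrm{id}_Y$ is an inflation in the exact category $\ex\EE$, so is $m \oplus \mathrm{id}_{\bar Y}$; let $Q$ be its cokernel, which equals $\coker m$ computed in the abelian category $\mathcal{LH}(\EE)$, as direct summing by an identity does not change the cokernel. The conflation $I \oplus \bar Y \inflation Y \oplus \bar Y \deflation Q$ shows $Q \in \ex\EE$, and the short exact sequence $0 \to I \to Y \to Q \to 0$ in $\mathcal{LH}(\EE)$ corresponds to a triangle in $\Db(\EE)$ whose three terms all lie in $\ex\EE$; by the definition of the conflation structure on $\ex\EE$, this triangle is a conflation, so $m$ is an inflation in $\ex\EE$. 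The main obstacle is this Schanuel--Yoneda step: Schanuel's lemma is standard, but one must choose the splittings in the pullback construction carefully so that the resulting $\ex\EE$-isomorphism genuinely intertwines the morphisms $m \oplus \mathrm{id}$ and $n \oplus \mathrm{id}$ (and not merely matches their presented cokernels up to automorphism), which amounts to tracking the lifts between the presentations supplied by projectivity of $\Upsilon(Y)$ and $\Upsilon(\bar Y)$.
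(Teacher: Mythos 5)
Your proof is correct, but it takes a genuinely different route from the paper's for the reverse inclusion. The paper also compares an admissible $\ex\EE$-presentation with an $\EE$-presentation, but it does so by invoking the Comparison Theorem (B\"uhler, Theorem 12.4) to get a homotopy equivalence between the two Yoneda resolutions, and then the homotopy-invariance of acyclicity in weakly idempotent complete exact categories (B\"uhler, Proposition 10.14, or \Cref{Proposition:BasicPropertiesDerivedCategoryNew}) to conclude that the $\EE$-presenting morphism is itself admissible in $\ex\EE$; closure of $\EE$ under subobjects in $\ex\EE$ then puts its kernel and coimage in $\EE$, and \Cref{Corollary:HullOfAIIsL1Closure} finishes. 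You instead strip the effaceable torsion off both presentations via \Cref{proposition:TorsionPart}, identify the torsionfree quotients by uniqueness of the decomposition for the hereditary torsion pair of \Cref{proposition:HereditaryTorsionForExact}, and then use Schanuel's lemma to transfer the inflation property from $n$ to $m \oplus \mathrm{id}$, descending to $m$ itself through the triangle characterization of conflations in $\ex\EE$. Both proofs share the same skeleton (the easy forward inclusion and the final appeal to \Cref{Corollary:HullOfAIIsL1Closure}), and indeed Schanuel's lemma is essentially the module-level shadow of the Comparison Theorem, so the arguments are cousins; what yours buys is that it stays entirely inside the torsion-theoretic machinery the paper itself develops (and, by factoring the $\EE$-presentation as $h = m \circ p$ inside $\EE$ from the start, it avoids needing closure under subobjects to land $\ker$ and $\coim$ back in $\EE$), at the price of the Schanuel bookkeeping you rightly flag---which does work, since the pullback construction exhibits both $\Yoneda(m)\oplus\mathrm{id}$ and $\Yoneda(n)\oplus\mathrm{id}$ as the inclusion of the pullback up to a unipotent automorphism of the target, and full faithfulness of $\Yoneda$ brings the intertwining isomorphisms down to $\ex\EE$. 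Two small citation points you should add: $m$ remains a monomorphism in $\ex\EE$ (needed so that $\coker\Yoneda_{\ex\EE}(m)$ lies in $\modone(\ex\EE)$ and the torsion-uniqueness applies), which is \Cref{Lemma:MonosAndEpisInExactHull}; and the identification of $M$ with $\coker\Yoneda_{\ex\EE}(h)$ uses that $-\otimes_\EE\ex\EE\colon \mod(\EE)\to\mod(\ex\EE)$ is exact and fully faithful, as recorded in the remark preceding \Cref{lemma:IntersectingWithMod}.
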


\begin{proof}
The inclusion $\modinf(\EE) \subseteq \modad(\ex\EE) \cap \mod(\EE)$ only uses that $j\colon \EE \to \ex\EE$ is conflation-exact and that $\ex\EE$ satsifies axiom \ref{L1}.  For the other inclusion, take $M\in \smod(\EE)\cap\smodad(\ex\EE)$. As $M\in \smodad(\EE^{\mathsf{ex}})$, we can write $M\cong \coker(\Upsilon(f))$ where $f\in \Hom_{\EE^{\mathsf{ex}}}(X,Y)$ is an admissible morphism in $\EE^{\mathsf{ex}}$. As $M\in \smod(\EE)$, we can write $F\cong \coker(\Upsilon(g))$ for some $g\in \Hom_{\EE}(U,V)$. By the Comparison Theorem (see \cite[Theorem 12.4]{Buhler10}), the two resolutions of $M$ are homotopy equivalent in $\smod(\ex{\EE})$.  As the Yoneda embedding is fully faithful and left exact, we obtain the following commutative diagram in $\EE^{\mathsf{ex}}$ which defines a homotopy equivalence between the rows:
\[\begin{tikzcd}
\ker f \arrow[r] \arrow[d] & X \arrow[r, "f"] \arrow[d] & Y \arrow[d] \arrow[r] & \coker (f) \arrow[d] \\
\ker g \arrow[r] & U \arrow[r, "g"] & V \arrow[r] & \coker (f) \\
\end{tikzcd}\]
As the lower row is acyclic in $\EE^{\mathsf{ex}}$, \cite[Proposition~10.14]{Buhler10} (or \Cref{Proposition:BasicPropertiesDerivedCategoryNew}) implies that the upper row is acyclic in $\EE^{\mathsf{ex}}$ as well (this uses that $\ex{\EE}$ has kernels, see \Cref{theorem:ExactHullRegular} and hence is weakly idempotent complete).  In particular, $g$ is an admissible morphism in $\EE^{\mathsf{ex}}$.  As $\EE \subseteq\EE^{\mathsf{ex}}$ is closed under subobjects (see \Cref{Corollary:TheHullInheritsAdmissibleKernels}), one sees that $\ker(g), \coim(g)\in \EE$. By \Cref{Corollary:HullOfAIIsL1Closure}, the map $\coim(g)\to V$ is a finite composition of inflations in $\EE$. This shows that $M\cong \coker(\Upsilon(g))\in \smodinf(\EE)$.
\end{proof}

\begin{proposition}\label{Proposition:SmodinfIsExact}
	The category $\smodinf(\EE)$ lies extension-closed in $\smod(\EE)$. In particular, $\smodinf(\EE)$ is an exact category.
\end{proposition}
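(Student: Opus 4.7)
The plan is to exploit \Cref{lemma:IntersectingModadEE}, which identifies $\smodinf(\EE)=\smodad(\ex\EE)\cap\smod(\EE)$. Given a short exact sequence $0\to A\to B\to C\to 0$ in $\smod(\EE)$ with $A,C\in\smodinf(\EE)$, the functor $-\otimes_\EE\ex\EE\colon\smod(\EE)\to\smod(\ex\EE)$ is exact (since $j\colon\EE\to\ex\EE$ preserves kernels, by \Cref{Corollary:TheHullInheritsAdmissibleKernels}\eqref{enumerate:TheHullInheritsAdmissibleKernels3}, as noted in the remark preceding \Cref{lemma:IntersectingModadEE}), so the sequence transports to a short exact sequence in $\smod(\ex\EE)$ with outer terms in $\smodad(\ex\EE)$. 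It then suffices to prove the auxiliary fact that $\smodad(\FF)$ is extension-closed in $\smod(\FF)$ for every exact category $\FF$ with admissible kernels: applying this with $\FF=\ex\EE$ gives $B\in\smodad(\ex\EE)$, and combining with $B\in\smod(\EE)$ yields $B\in\smodinf(\EE)$.

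For the auxiliary fact, I would employ the hereditary torsion pair $(\eff(\FF),\modone(\FF))$ on $\smod(\FF)$ from \Cref{proposition:HereditaryTorsionForExact}. By \Cref{proposition:TorsionPart} applied to an admissible presentation $f=m\circ p$ of $M\in\smodad(\FF)$, one has $tM\cong\coker\Upsilon(p)\in\eff(\FF)$ and $M/tM\cong\coker\Upsilon(m)\in\modadone(\FF)$; conversely, a Horseshoe application to the torsion decomposition $0\to tM\to M\to M/tM\to 0$ (with $tM$ presented by a deflation and $M/tM$ by an inflation) exhibits $M$ as the cokernel of an upper-triangular admissible morphism. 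Thus $M\in\smodad(\FF)$ if and only if $M/tM\in\modadone(\FF)$.

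It therefore remains to prove that $\modadone(\FF)$ is extension-closed in $\smod(\FF)$, for then the chain $A,C\in\smodad(\FF)\Rightarrow A/tA, C/tC\in\modadone(\FF)\Rightarrow B/tB\in\modadone(\FF)\Rightarrow B\in\smodad(\FF)$ completes the argument; the middle implication is the main obstacle and uses the Serre property of $\eff(\FF)$ from \Cref{proposition:EffaceableSerre} together with exactness of the Serre quotient functor to produce an induced short exact sequence $0\to A/tA\to B/tB\to C/tC\to 0$ whose terms still lie in $\modone(\FF)$. For the extension-closure of $\modadone(\FF)$ itself, the Horseshoe Lemma in the abelian category $\smod(\FF)$ presents any such extension $N$ by a morphism $f_N=\begin{psmallmatrix}m_M & \gamma\\ 0 & m_{M'}\end{psmallmatrix}$ in $\FF$ whose diagonal entries are inflations. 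Factoring
\[
f_N=\begin{psmallmatrix}1&0\\0&m_{M'}\end{psmallmatrix}\circ\begin{psmallmatrix}m_M&\gamma\\0&1\end{psmallmatrix}
\]
and recognizing the second factor as the composition of the codomain change-of-basis isomorphism $\begin{psmallmatrix}1&\gamma\\0&1\end{psmallmatrix}$ with the direct sum $m_M\oplus 1$, both factors are inflations in $\FF$, so $f_N$ is an inflation by axiom L1 of the exact structure on $\FF$, and hence $N\in\modadone(\FF)$.

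Finally, the ``in particular'' statement is standard: the category $\smod(\EE)$ is abelian (as $\EE$ has kernels, hence weak kernels), and any extension-closed full additive subcategory of an abelian category inherits the structure of a Quillen exact category whose conflations are precisely the short exact sequences of the ambient abelian category with all three terms in the subcategory.
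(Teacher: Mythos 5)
Your first paragraph reproduces the paper's own proof: the paper deduces the proposition from \Cref{lemma:IntersectingModadEE} together with the extension-closedness of $\smodad(\ex\EE)$, which it simply cites from \cite[Proposition~3.4]{HenrardKvammevanRoosmalen20}. You instead attempt to prove this auxiliary fact, and that is where there is a genuine gap. Your reduction ``$M\in\smodad(\FF)$ if and only if $M/tM\in\modadone(\FF)$'' is correct in both directions, and your Horseshoe-plus-triangular-factorization proof that $\modadone(\FF)$ is extension-closed is also correct. The flaw is the middle implication: you claim that a short exact sequence $0\to A\to B\to C\to 0$ in $\smod(\FF)$ induces a short exact sequence $0\to A/tA\to B/tB\to C/tC\to 0$ of torsion-free quotients. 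This is false. For a hereditary torsion pair the torsion radical $t$ is left exact, which gives monicity of $A/tA\to B/tB$, and the right-hand map is epic; but exactness at $B/tB$ fails in general, since the kernel of $B/tB\to C/tC$ is the preimage of $tC$ in $B$ modulo $tB$, which may be strictly larger than $(A+tB)/tB$. Exactness of the Serre quotient functor $\smod(\FF)\to\smod(\FF)/\eff(\FF)$ only gives exactness after localization; it does not produce a short exact sequence in $\smod(\FF)$ itself, because the functor $M\mapsto M/tM$ is not exact.

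A counterexample lives in the very setting you need. Take $\FF=\smod(\bZ)$ (abelian, hence exact with admissible kernels), let $\pi\colon\bZ\deflation\bZ/2$ be the projection, and set $C=\coker\Yoneda(\pi)\in\eff(\FF)$, $B=\Yoneda(\bZ/2)$, and $A=\ker(B\to C)$. Then $0\to A\to B\to C\to 0$ is exact, $A\cong\coker\Yoneda(2\bZ\hookrightarrow\bZ)\in\modadone(\FF)$, and $tA=0=tB$ (subobjects of the torsion-free object $B$ are torsion-free), while $tC=C$. Your induced sequence would be $0\to A\to B\to 0\to 0$, which is not exact at $B$ because $A\to B$ is not an epimorphism (its cokernel is $C\neq 0$). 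So the hypotheses of your middle implication are met, yet there is no short exact sequence to feed into the extension-closure of $\modadone(\FF)$, and the argument collapses at precisely the step you call the main obstacle. What is true is only that $B/tB$ is an extension of $B/(A+tB)$ by $A/tA$, and $B/(A+tB)$ is in general not torsion-free, so repairing the argument drives one back to extension-closure of $\smodad(\FF)$ itself --- the statement being proved. The route taken in \cite[Proposition~3.4]{HenrardKvammevanRoosmalen20} runs the Horseshoe argument directly on admissible presentations and shows that the resulting upper-triangular matrix with admissible diagonal entries is admissible; this is genuinely harder than your inflation case (already for zero diagonal entries such a matrix is admissible only if the off-diagonal entry is), and the torsion-theoretic detour does not circumvent it.
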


\begin{proof}
	By \cite[Proposition~3.5]{HenrardKvammevanRoosmalen20}, the category $\smodad(\ex\EE)$ lies extension-closed in $\Mod(\ex\EE)$.  The statement now follows from the equality $\modinf(\EE) = \modad(\ex\EE) \cap \mod(\EE)$ (see \Cref{lemma:IntersectingModadEE}).
\end{proof}

\begin{proposition}\label{Proposition:TorsionTheoryInSmodinf}
	The pair $(\eff(\EE),\FFinf(\EE))$ is a hereditary torsion pair in $\smodinf(\EE)$. The category $\eff(\EE)$ is a two-sided admissibly percolating subcategory of $\smodinf(\EE)$.
\end{proposition}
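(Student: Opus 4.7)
The plan is to deduce both claims by restricting from the ambient abelian category $\mod(\ex\EE)$ and from the extension-closed exact subcategory $\modad(\ex\EE)$. The essential bookkeeping is furnished by the identifications $\modinf(\EE)=\modad(\ex\EE)\cap\mod(\EE)$, $\eff(\EE)=\eff(\ex\EE)\cap\modinf(\EE)$, and $\modinfone(\EE)=\modone(\ex\EE)\cap\modinf(\EE)$ from \Cref{lemma:IntersectingModadEE,lemma:IntersectingWithModinf}, together with the extension-closedness of $\modinf(\EE)$ in $\mod(\ex\EE)$ established in \Cref{Proposition:SmodinfIsExact}.

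For the hereditary torsion pair I would first produce the torsion decomposition of an arbitrary $M\in\modinf(\EE)$. Writing $M\cong\coker\Yoneda(f)$ with $f=m\circ p$, where $p$ is a deflation and $m$ a weak inflation, \Cref{proposition:TorsionPart} gives a short exact sequence $0\to\coker\Yoneda(p)\to M\to\coker\Yoneda(m)\to 0$ in $\mod(\EE)$, with $\coker\Yoneda(p)\in\eff(\EE)$ and $\coker\Yoneda(m)\in\modinfone(\EE)$; since both outer terms lie in $\modinf(\EE)$, extension-closedness promotes this to a conflation there. The Hom-orthogonality $\Hom_{\modinf(\EE)}(\eff(\EE),\modinfone(\EE))=0$ is inherited from the hereditary torsion pair $(\eff(\ex\EE),\modone(\ex\EE))$ on $\mod(\ex\EE)$ of \Cref{proposition:HereditaryTorsionForExact}, and hereditariness follows because a subobject of an object of $\eff(\EE)$ in the exact category $\modinf(\EE)$ is also a subobject in $\mod(\ex\EE)$ (conflations in $\modinf(\EE)$ remain conflations there), hence lies in $\eff(\ex\EE)\cap\modinf(\EE)=\eff(\EE)$.

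For the two-sided admissibly percolating claim I would apply \Cref{Proposition:Torsion+A2IsTwoSidedPercolating} to the exact category $\modinf(\EE)$, reducing matters to verifying axiom \ref{A2}. Given $f\colon X\to A$ in $\modinf(\EE)$ with $A\in\eff(\EE)$, choose presentations $X\cong\coker\Yoneda(g)$ (with $g=m\circ p\colon G_1\to G_0$ weakly admissible) and $A\cong\coker\Yoneda(h)$ (with $h\colon H_1\deflation H_0$ a deflation), and lift $f$ to a commutative square $h\psi=\phi g$. \Cref{Lemma:TheFamousDiagramChase} yields the epi-mono factorization $X\twoheadrightarrow\im(\eta)\hookrightarrow A$ in $\mod(\EE)$, where $\im(\eta)\cong\coker\Yoneda(g')$ for $g'$ the pullback of $h$ along $\phi$ (a deflation in $\EE$ by \ref{R2}), and $\coker(\eta)\cong\coker\Yoneda(\begin{pmatrix}h & \phi\end{pmatrix})$ with $\begin{pmatrix}h & \phi\end{pmatrix}$ a deflation by \ref{R3+} (\Cref{Proposition:DeflationAICategorySatisfiesAxiomR3+}) applied to the identity $\begin{pmatrix}h & \phi\end{pmatrix}\circ\begin{pmatrix}1\\0\end{pmatrix}=h$. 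Both terms lie in $\eff(\EE)$, so $\im(\eta)\hookrightarrow A$ is an inflation in $\modinf(\EE)$.

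The step I expect to be the main obstacle is placing $\ker(\eta)$ in $\modinf(\EE)$, which is needed to turn $X\twoheadrightarrow\im(\eta)$ into a deflation in $\modinf(\EE)$. I would resolve this by viewing $f$ inside $\modad(\ex\EE)$ and invoking the exact-category analogue of the present proposition, namely that $\eff(\ex\EE)$ is two-sided admissibly percolating in $\modad(\ex\EE)$ as established in \cite{HenrardKvammevanRoosmalen20}. Its axiom \ref{A2} produces a conflation $\ker(f)\inflation X\deflation\im(\eta)$ in $\modad(\ex\EE)$, so $\ker(f)\in\modad(\ex\EE)$; and since the embedding $-\otimes_\EE\ex\EE\colon\mod(\EE)\to\mod(\ex\EE)$ is exact and fully faithful, also $\ker(f)\in\mod(\EE)$. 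Hence $\ker(f)\in\modad(\ex\EE)\cap\mod(\EE)=\modinf(\EE)$ by \Cref{lemma:IntersectingModadEE}, completing the verification of \ref{A2}.
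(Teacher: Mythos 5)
Your proof is correct, and its skeleton is the same as the paper's: the decomposition of an object of $\modinf(\EE)$ comes from \Cref{proposition:TorsionPart} together with extension-closedness (\Cref{Proposition:SmodinfIsExact}), the remaining torsion-pair axioms are inherited from the hull through the intersection lemmas, and the percolating statement is reduced to axiom \ref{A2} via \Cref{Proposition:Torsion+A2IsTwoSidedPercolating}, using \cite[Proposition~3.5]{HenrardKvammevanRoosmalen20} for $\eff(\ex{\EE})\subseteq\modad(\ex{\EE})$. Where you genuinely deviate is in the verification of \ref{A2}: the paper performs the whole verification inside $\modad(\ex{\EE})$ and pulls every term of the resulting sequence $\ker(\eta)\inflation F\deflation\im(\eta)\inflation G\deflation\coker(\eta)$ back via \Cref{lemma:IntersectingModadEE} and \Cref{lemma:IntersectingWithMod}, whereas you establish $\im(\eta),\coker(\eta)\in\eff(\EE)$ by a direct chase in $\mod(\EE)$ (\Cref{Lemma:TheFamousDiagramChase} plus axioms \ref{R2} and \ref{R3+}) and invoke the reduction to the hull only for the kernel term. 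This hybrid costs a little extra computation but buys precision at the one spot where the paper's own proof is sloppy: the paper asserts in passing that $\ker(\eta)\in\eff(\ex{\EE})$, which is false in general (take $\eta=0$ with $F$ not effaceable), while all that is needed --- and all that is true --- is $\ker(\eta)\in\modad(\ex{\EE})\cap\mod(\EE)=\modinf(\EE)$, which is exactly what you prove; the conflation $\ker(\eta)\inflation F\deflation\im(\eta)$ then lives in $\modinf(\EE)$ and \ref{A2} follows. Your direct argument for hereditariness (monomorphisms in $\modinf(\EE)$ remain monomorphisms in $\mod(\ex{\EE})$, then intersect) is also valid; the paper instead extracts hereditariness at the end as part of the Serre property supplied by the percolating conclusion.
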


\begin{proof}
It follows from \Cref{proposition:TorsionPart} that every object in $\modinf(\EE)$ is the extension of an object in $\FFinf(\EE)$ by an object in $\eff(\EE)$.  The other properties of a torsion pair follow easily from combining \Cref {proposition:HereditaryTorsionForExact} and \Cref{lemma:IntersectingWithModinf} (taking $\FF = \ex\EE$).

	To show that $\eff(\EE)$ is a two-sided percolating subcategory of $\modinf(\EE)$, it suffices to check that it satisfies axiom \ref{A2} (see \Cref{Proposition:Torsion+A2IsTwoSidedPercolating}).  To that end, consider a map $\eta\colon F\to G$ in $\smodinf(\EE)$ with $G\in \eff(\EE)$. By \cite[Proposition~3.6]{HenrardKvammevanRoosmalen20}, $\eff(\EE^{\mathsf{ex}})\subseteq \smodad(\EE^{\mathsf{ex}})$ satisfies axiom \ref{A2}. Hence we obtain a sequence 
	\[\ker(\eta)\inflation F\deflation \im(\eta)\inflation G\deflation \coker(\eta)\]
in $\smodad(\EE^{\mathsf{ex}})$ with $\coker(\eta),\im(\eta)\in \eff(\EE^{\mathsf{ex}})$. Note that $\coker(\eta)\in \smod(\EE)$ as $\smod(\EE)$ is closed under cokernels and hence, by \Cref{lemma:IntersectingModadEE}, we know that $\coker(\eta)\in \modinf(\EE)$. As $\smod(\EE)$ is closed under kernels in $\mod(\ex\EE)$, we find that $\im(\eta),\ker(\eta)\in \smod(\EE) \cap \eff(\ex\EE)$ and hence, by \Cref{lemma:IntersectingWithMod}, $\im(\eta),\ker(\eta)\in \eff(\EE)$. This shows axiom \ref{A2}.  By \Cref{Proposition:Torsion+A2IsTwoSidedPercolating}, we know that $\eff(\EE)$ is a two-sided admissibly percolating subcategory of $\modinf(\EE)$.  Specifically, we know that that $\eff(\EE)$ is a Serre subcategory of $\modinf(\EE)$ and thus $(\eff(\EE),\FFinf(\EE))$ is a hereditary torsion theory. This concludes the proof.
\end{proof}

\begin{corollary}\label{Corollary:HullAsQuotient}
	The quotient $\smodinf(\EE)/\eff(\EE)$ is an exact category. Moreover, the exact categories $\smodinf(\EE)/\eff(\EE)$ and $\EE^{\mathsf{ex}}$ are equivalent.
\end{corollary}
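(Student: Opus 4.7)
The plan is to handle the two assertions separately. For the first, I would combine \Cref{Proposition:SmodinfIsExact} (showing that $\smodinf(\EE)$ is an exact category) with \Cref{Proposition:TorsionTheoryInSmodinf} (showing $\eff(\EE)$ is a two-sided admissibly percolating subcategory of $\smodinf(\EE)$) and then invoke the final clause of \Cref{Theorem:LocalizationTheorem}: the two-sided quotient $\smodinf(\EE)\dq \eff(\EE)$ of an exact category by a two-sided admissibly percolating subcategory is again exact.

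For the equivalence, my plan is to construct an exact functor $\Phi\colon \smodinf(\EE)\to \EE^{\mathsf{ex}}$ that annihilates $\eff(\EE)$ and induces the desired equivalence on the quotient. I would build $\Phi$ as a composition: first the restriction of $-\otimes_{\EE}\EE^{\mathsf{ex}}$ to $\smodinf(\EE)$, which by \Cref{lemma:IntersectingModadEE} lands in $\smodad(\EE^{\mathsf{ex}})$ and by \Cref{lemma:IntersectingWithMod} sends $\eff(\EE)$ into $\eff(\EE^{\mathsf{ex}})$; and second the Auslander-type equivalence $\smodad(\EE^{\mathsf{ex}})/\eff(\EE^{\mathsf{ex}})\simeq \EE^{\mathsf{ex}}$ for the exact category $\EE^{\mathsf{ex}}$, which on torsionfree representatives is given by $\coker\Upsilon(X\inflation Y)\mapsto \coker_{\EE^{\mathsf{ex}}}(X\inflation Y)$. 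This second equivalence is the standard exact-category incarnation of the Auslander formula, known for exact categories with admissible kernels in the spirit of \cite{HenrardKvammevanRoosmalen20}, and applies here because $\EE^{\mathsf{ex}}$ has admissible kernels by \Cref{Corollary:TheHullInheritsAdmissibleKernels}. Composing yields the sought functor $\bar{\Phi}\colon \smodinf(\EE)/\eff(\EE)\to \EE^{\mathsf{ex}}$.

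Essential surjectivity is then immediate: given $Z\in \EE^{\mathsf{ex}}$, \Cref{Theorem:ExactHull} produces a conflation $X\inflation Y\deflation Z$ in $\EE^{\mathsf{ex}}$ with $X,Y\in \EE$, and \Cref{Corollary:HullOfAIIsL1Closure} guarantees that $X\inflation Y$ is a weak inflation in $\EE$; the object $\coker\Upsilon(X\to Y)$ of $\modoneinf(\EE)$ then maps to $Z$ under $\bar{\Phi}$.

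The main obstacle, as with any Auslander-style formula, will be full faithfulness of $\bar{\Phi}$. The cleanest route I foresee is to reduce to the corresponding statement already available for the exact category $\EE^{\mathsf{ex}}$. Concretely, I would show that the inclusion $\smodinf(\EE)\hookrightarrow \smodad(\EE^{\mathsf{ex}})$ descends to an equivalence $\smodinf(\EE)/\eff(\EE)\xrightarrow{\simeq}\smodad(\EE^{\mathsf{ex}})/\eff(\EE^{\mathsf{ex}})$; combined with the known equivalence on the right-hand side, this yields the claim. Essential surjectivity of this induced functor reduces, via the presentation of objects in $\modadone(\EE^{\mathsf{ex}})$ as $\coker\Upsilon(X\inflation Y)$ with $X,Y\in \EE^{\mathsf{ex}}$, to replacing such a presentation (up to effaceables) by one with $X,Y\in \EE$, which is again handled by \Cref{Theorem:ExactHull} and \Cref{Corollary:HullOfAIIsL1Closure}. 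For fullness and faithfulness, I would use the hereditary torsion pair $(\eff(\EE),\modoneinf(\EE))$ from \Cref{Proposition:TorsionTheoryInSmodinf} to identify hom-sets in the quotient via a calculus of fractions along the saturated system $S_{\eff(\EE)}$ (satisfying the 2-out-of-3 property by \Cref{proposition:AboutStrictlyPercolating}), and then match these fractions with the analogous fractions computing morphisms in $\smodad(\EE^{\mathsf{ex}})/\eff(\EE^{\mathsf{ex}})$, using \Cref{lemma:IntersectingWithMod} to see that the relevant kernels and cokernels of weak $\eff(\EE^{\mathsf{ex}})$-isomorphisms with source or target in $\smodinf(\EE)$ again belong to $\eff(\EE)$.
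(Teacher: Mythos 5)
Your treatment of the first assertion coincides with the paper's: \Cref{Proposition:SmodinfIsExact}, \Cref{Proposition:TorsionTheoryInSmodinf}, and \Cref{Theorem:LocalizationTheorem} (with its dual) give exactness of the quotient. For the equivalence you take a genuinely different route. The paper never computes hom-sets in the localization: it shows that $Q\circ\Upsilon\colon \EE\to\smodinf(\EE)/\eff(\EE)$ is conflation-exact and satisfies the $2$-universal property of the exact hull (via \Cref{Theorem:ExactHull} and \cite[Theorem~3.7]{HenrardKvammevanRoosmalen20}), so the equivalence with $\EE^{\mathsf{ex}}$ follows from uniqueness of $2$-universal objects. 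You instead build a comparison functor $\smodinf(\EE)/\eff(\EE)\to\smodad(\EE^{\mathsf{ex}})/\eff(\EE^{\mathsf{ex}})\simeq\EE^{\mathsf{ex}}$ and check essential surjectivity and full faithfulness. Your construction of the functor (via $-\otimes_{\EE}\EE^{\mathsf{ex}}$, \Cref{lemma:IntersectingWithMod}, \Cref{lemma:IntersectingModadEE}) and your essential surjectivity argument (via \Cref{Theorem:ExactHull} and \Cref{Corollary:HullOfAIIsL1Closure}) are correct.

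The gap is in full faithfulness. Your key claim --- that kernels and cokernels of weak $\eff(\EE^{\mathsf{ex}})$-isomorphisms with source \emph{or} target in $\smodinf(\EE)$ lie in $\eff(\EE)$ --- fails in general in the ``target only'' case: if $E\in\eff(\EE^{\mathsf{ex}})$ does not lie in $\smod(\EE)$, then the projection $E\oplus F\deflation F$ with $F\in\smodinf(\EE)$ is a weak isomorphism whose kernel is $E$; more generally the kernel sits inside the apex $F'\in\smodad(\EE^{\mathsf{ex}})$, which need not belong to $\smod(\EE)$, so \Cref{lemma:IntersectingWithMod} cannot be invoked. Moreover, even the corrected statement would not suffice: to match fractions one must \emph{refine the apex}, i.e.\ show that every weak $\eff(\EE^{\mathsf{ex}})$-isomorphism $s\colon F'\to F$ with $F\in\smodinf(\EE)$ admits $g\colon F''\to F'$ with $F''\in\smodinf(\EE)$ and $s\circ g$ again a weak isomorphism (the hypothesis of \cite[Proposition~7.2.1]{KashiwaraSchapira06}). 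Using closure of weak isomorphisms under composition, it suffices that every object of $\smodad(\EE^{\mathsf{ex}})$ receives a weak isomorphism from an object of $\smodinf(\EE)$; this is true, but proving it is genuine work --- one replaces a presentation $\Yoneda(C)\to\Yoneda(D)\to F'\to 0$ with $C,D\in\EE^{\mathsf{ex}}$ by one over objects of $\EE$ using deflations from $\EE$ (\Cref{Theorem:ExactHull}), controls the resulting comparison maps with \Cref{Lemma:TheFamousDiagramChase}, and uses closure statements such as \Cref{Lemma:ExtensionOfEffaceableAndSmodadIsSmodad} --- and none of it appears in your sketch. Alternatively, you could bypass hom-set computations entirely by constructing the inverse: the universal property of the exact hull yields $\Psi\colon\EE^{\mathsf{ex}}\to\smodinf(\EE)/\eff(\EE)$ extending $Q\circ\Upsilon$, and the uniqueness clauses of the two universal properties identify $\Psi$ and your $\bar{\Phi}$ as mutually inverse; this is essentially the paper's argument.
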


\begin{proof}
	By \Cref{Proposition:TorsionTheoryInSmodinf}, \Cref{Theorem:LocalizationTheorem} and its dual, $\smodinf(\EE)/\eff(\EE)$ is an exact category. We write $Q\colon \smodinf(\EE)\to \smodinf(\EE)/\eff(\EE)$ for the corresponding localization functor.
	
	We first claim that the composition $\EE \stackrel{\Upsilon}{\rightarrow}\smodinf(\EE)\stackrel{Q}{\rightarrow}\smodinf(\EE)/\eff(\EE)$ is a conflation-exact functor. To that end, let $X\stackrel{f}{\inflation}Y\stackrel{g}{\deflation}Z$ be a conflation in $\EE$. As the Yoneda embedding is left exact, we obtain an exact sequence $\Upsilon(X)\inflation \Upsilon(Y) \to \Upsilon(Z)\deflation \coker(\Upsilon(g))$ in $\smodinf(\EE)$. Applying $Q$ to this sequence, we obtain the conflation $Q\Upsilon(X)\inflation Q\Upsilon(Y) \deflation Q\Upsilon(Z)$ as $\coker(\Upsilon(g))\in \eff(\EE)$. This shows that $Q\circ \Upsilon$ is conflation-exact.
	
	We show that $Q\circ \Upsilon$ satisfies the universal property of $j\colon \EE\to \EE^{\mathsf{ex}}$ and thus the desired equivalence. Let $\Phi\colon \EE\to \FF$ be a conflation-exact functor to an exact category $\FF$. We construct an exact functor $\overline{\Phi}\colon \smodinf(\EE)\to \FF$ as follows. By the universal property of the exact hull, there is a unique (up to isomorphism) exact functor $\Phi^{\mathsf{ex}}\colon \EE^{\mathsf{ex}}\to \FF$ such that $\Phi^{\mathsf{ex}}\circ j=\Phi$. By \cite[Theorem~3.9]{HenrardKvammevanRoosmalen20}, there is a unique (up to isomorphism) exact functor $\overline{\Phi^{\mathsf{ex}}}\colon \smodad(\EE^{\mathsf{ex}})\to \FF$ such that $\overline{\Phi^{\mathsf{ex}}}\circ \Upsilon^{\mathsf{ex}}=\Phi^{\mathsf{ex}}$.  Here, $\Upsilon^{\mathsf{ex}}\colon \EE^{\mathsf{ex}} \to \smod(\EE^{\mathsf{ex}})$ is the Yoneda embedding of $\EE^{\mathsf{ex}}$.  Clearly $\overline{\Phi^{\mathsf{ex}}}(\eff(\EE^{\mathsf{ex}})) \cong 0$. The restriction of $\overline{\Phi^{\mathsf{ex}}}$ to $\smodinf(\EE)$ is still an exact functor and maps $\eff(\EE)$ to zero. Hence this functor further factors through $Q\colon \smodinf(\EE)\to \smodinf(\EE)/\eff(\EE)$ via an exact functor $\overline{\Phi}\colon \smodinf(\EE)/\eff(\EE)\to \FF$ as required. This concludes the proof.
\end{proof}

\subsection{One-sided Auslander's formula} Let $\EE$ be a deflation-exact category with admissible kernels.  We start with the following straightforward observation.

\begin{lemma}\label{Lemma:AdmissiblesStableUnderPullbacks}
\begin{enumerate}
	\item Inflation and admissible morphisms are stable under pullbacks in $\EE.$
	\item Weak inflation and weak admissible morphisms are stable under pullbacks in $\EE.$
\end{enumerate}
\end{lemma}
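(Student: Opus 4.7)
The plan is to reduce everything to the stability of inflations under pullback; once that is established, the deflation half of any admissible factorization is pullback-stable by axiom \ref{R2}, and both statements follow by stacking pullback squares and iterating.

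\textbf{Step 1: Inflations are stable under pullback.} Given an inflation $i\colon A\inflation B$ and a morphism $f\colon C\to B$, I would use the conflation $A\stackrel{i}{\inflation}B\stackrel{p}{\deflation}D$ provided by the definition of a conflation category, form the composition $g = p\circ f\colon C\to D$, and take a kernel $k\colon K\inflation C$ of $g$ (which is an inflation precisely because $\EE$ has admissible kernels). Since $p\circ(f\circ k) = g\circ k = 0$ and $i$ is the kernel of $p$, the morphism $f\circ k$ factors uniquely as $i\circ h$ for some $h\colon K\to A$. Checking the universal property of the pullback square
\[\begin{tikzcd}
K \arrow[d, "h"'] \arrow[r, rightarrowtail, "k"] & C \arrow[d, "f"] \\
A \arrow[r, rightarrowtail, "i"] & B
\end{tikzcd}\]
is a routine diagram chase using that $i = \ker p$: any compatible pair $(u\colon T\to A,\ v\colon T\to C)$ satisfies $g\circ v = p\circ i\circ u = 0$, giving the unique factorization through $K$.

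\textbf{Step 2: Admissible morphisms are stable under pullback.} For an admissible morphism $f = m\circ q$ with $q$ a deflation and $m$ an inflation, I would pull back $f$ along $g\colon Z\to Y$ in two stages: first pull back $m$ along $g$ (an inflation by Step 1), then pull back $q$ along the resulting map into the middle object (a deflation by axiom \ref{R2}). The two pullback squares stack to a single pullback square of $f$ along $g$, and the left column is the composition of a deflation followed by an inflation, hence admissible.

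\textbf{Step 3: The weak versions.} Writing a weak inflation as a finite composition $X_0\inflation X_1 \inflation \cdots \inflation X_n = Y$, I would pull back along $g\colon Z\to Y$ one inflation at a time, starting from the top; each intermediate pullback exists and is an inflation by Step 1, and composing the resulting string of pullback squares yields a pullback square whose left column is a composition of inflations, i.e.\ a weak inflation. The weakly admissible case is then handled exactly as in Step 2, replacing the single inflation $m$ by a weak inflation and invoking the statement for weak inflations just obtained.

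The only nontrivial ingredient is Step 1, and in particular the verification that the kernel $K$ produced there already serves as the pullback (rather than merely sitting over it). This is where the hypothesis of admissible kernels is used essentially; the rest of the proof is purely formal manipulation of pullback squares combined with axiom \ref{R2}.
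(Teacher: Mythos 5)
Your proof is correct and takes essentially the same approach as the paper: its one-line argument that ``kernels are stable under pullbacks and $\EE$ has admissible kernels, so the pullback of an inflation is an inflation'' is exactly what your Step 1 establishes in detail (the pullback of $i=\ker p$ along $f$ is $\ker(pf)$, which exists and is an inflation by hypothesis). Your Steps 2 and 3, stacking pullback squares via the pasting lemma and invoking axiom \ref{R2} for the deflation part, likewise match the paper's treatment of the admissible and weak cases.
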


\begin{proof}
Since kernels are stable under pullbacks and $\EE$ has admissible kernels, the pullback of an inflation is an inflation.  That weak inflations are stable under pullbacks follows from the first statement together with the Pullback Lemma.  That (weak) admissible morphisms are stable under pullbacks then follows from the Pullback Lemma, axiom \ref{R2}, and the first statement. 
\end{proof}

\begin{proposition}\label{Proposition:smodad(EE)IsDeflationExactWithAdmissibleKernels}
The subcategory $\smodad(\EE)$ of $\smod(\EE)$ is closed under subobjects. In particular, $\smodad(\EE)$ inherits a deflation-exact structure having admissible kernels.
\end{proposition}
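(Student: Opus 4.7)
The plan is to show that $\smodad(\EE)$ is closed under subobjects in $\smod(\EE)$; the ``in particular'' statement then follows immediately from \Cref{Proposition:ClosedUnderSubjectsInheritsDeflationExactAndAdmissibeKernels}, applied to the abelian category $\smod(\EE)$. Let $\eta\colon G \hookrightarrow F$ be a monomorphism in $\smod(\EE)$ with $F \in \smodad(\EE)$. Write $F \cong \coker \Upsilon(h_1)$ for an admissible morphism $h_1 = m_1 \circ p_1$ with $p_1\colon X_1 \deflation Z_1$ a deflation and $m_1\colon Z_1 \inflation Y_1$ an inflation, and write $G \cong \coker \Upsilon(h_2)$ for some $h_2\colon X_2 \to Y_2$. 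Applying the Comparison Theorem to these projective presentations and using that the Yoneda embedding is fully faithful, I obtain a commutative square $h_1 \circ \beta = \alpha \circ h_2$ in $\EE$ for certain morphisms $\beta\colon X_2 \to X_1$ and $\alpha\colon Y_2 \to Y_1$.

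Since $h_1$ is admissible, \Cref{Lemma:AdmissiblesStableUnderPullbacks} guarantees the pullback $E$ of $\alpha$ along $h_1$, and the induced map $g'\colon E \to Y_2$ is itself admissible. Factor $g' = m' \circ p'$ with $p'\colon E \deflation W$ and $m'\colon W \inflation Y_2$, and let $\gamma\colon X_2 \to E$ be the unique morphism obtained from the universal property of the pullback; then $g' \circ \gamma = h_2$. The proof reduces to showing that $p' \circ \gamma$ is a deflation, since then $h_2 = m' \circ (p' \circ \gamma)$ is an admissible factorization, and hence $G \in \smodad(\EE)$.

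For this reduction I would invoke \Cref{Lemma:TheFamousDiagramChase}, which identifies $\ker \eta$ with $\coker \Upsilon \begin{pmatrix} k' & \gamma \end{pmatrix}$, where $k'\colon \ker h_1 \to E$ is the kernel of $g'$. Since $\eta$ is a monomorphism, this cokernel vanishes and is in particular effaceable. As $\EE$ has admissible kernels, it satisfies axiom \ref{R3+} by \Cref{Proposition:DeflationAICategorySatisfiesAxiomR3+}, so \Cref{Lemma:MorphismsRepresentingAnEffaceableAreDeflations} upgrades $\begin{pmatrix} k' & \gamma \end{pmatrix}$ to an honest deflation $\ker h_1 \oplus X_2 \deflation E$. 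Composing with $p'$ yields a deflation $\ker h_1 \oplus X_2 \deflation W$. Because $k'$ is the kernel of $g' = m' p'$ and $m'$ is monic, $p' k' = 0$, so this composite equals $p' \circ \gamma$ precomposed with the split deflation $\ker h_1 \oplus X_2 \deflation X_2$ onto the second summand. Axiom \ref{R3+} then forces $p' \circ \gamma$ itself to be a deflation, as required.

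The delicate point in this plan is the double use of \ref{R3+}: first (implicitly, through \Cref{Lemma:MorphismsRepresentingAnEffaceableAreDeflations}) to promote vanishing of a Yoneda-cokernel to a genuine deflation in $\EE$, and then directly to descend the deflation from $\ker h_1 \oplus X_2$ down to $X_2$. Everything else amounts to bookkeeping around the factorization $g' = m' p'$, the stability of admissibles under pullbacks, and the identification of $\ker g'$ with $\ker h_1$ furnished by \Cref{Lemma:TheFamousDiagramChase}.
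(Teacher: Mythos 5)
Your argument is correct, and its skeleton coincides with the paper's: lift the monomorphism to a commutative square between projective presentations, form the pullback $E$, and combine \Cref{Lemma:TheFamousDiagramChase} with \Cref{Lemma:AdmissiblesStableUnderPullbacks}. The difference lies in how you finish. The paper is done the moment $g'$ is known to be admissible: since $\eta$ is a monomorphism, $\ker \eta = 0$, so the epi-mono factorization produced by \Cref{Lemma:TheFamousDiagramChase} already identifies the subobject $G$ with $\im(\eta) \cong \coker \Yoneda(g')$, and membership in $\smodad(\EE)$ only requires the existence of \emph{some} admissible presenting morphism. You overlook this shortcut and instead prove the stronger assertion that the originally chosen presentation $h_2$ is itself admissible; this forces the extra steps through \Cref{Lemma:MorphismsRepresentingAnEffaceableAreDeflations} (applied to $\ker \eta = 0$, which is indeed effaceable) and the two uses of axiom \ref{R3+}, all of which are sound as written: $\begin{psmallmatrix} k' & \gamma \end{psmallmatrix}$ becomes a deflation, $p'k' = 0$ because $m'$ is monic, and \ref{R3+} descends the deflation to $p'\gamma$. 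So your route proves a little more --- a statement in the spirit of \Cref{proposition:GlobalDimensionAtMostOne}, namely that any morphism presenting a subobject of an object of $\smodad(\EE)$ is automatically admissible --- at the price of invoking \ref{R3+} and the effaceable machinery, which the paper's one-line conclusion avoids entirely. Your treatment of the ``in particular'' clause via \Cref{Proposition:ClosedUnderSubjectsInheritsDeflationExactAndAdmissibeKernels} applied to the abelian category $\smod(\EE)$ is exactly right.
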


\begin{proof}
	Let $\eta\colon F\hookrightarrow G$ be a monomorphism in $\smod(\EE)$ and assume that $G\in \smodad(\EE)$. Let $f\colon A\to B$ and $g\colon C\to D$ be morphisms in $\EE$ such that $F\cong \coker(\Upsilon(f))$ and $G\cong \coker(\Upsilon(g))$. We may assume that $g$ is admissible in $\EE$. The map $\eta\colon F\hookrightarrow G$ induces a commutative square 
	\[\xymatrix{
		A\ar[r]\ar[d]^{f} & C\ar[d]^g\\
		B\ar[r]^{h} & D
	}\] in $\EE$. By \Cref{Lemma:TheFamousDiagramChase}, $F\cong \coker \Upsilon(f')$ where $f'$ is the pullback of $g$ along $h$. The result now follows from \Cref{Lemma:AdmissiblesStableUnderPullbacks}.
\end{proof}

%

\begin{lemma}\label{Lemma:ExtensionOfEffaceableAndSmodadIsSmodad}
	Let $E\inflation P \deflation H$ be a short exact sequence in $\smod(\EE)$. If $E\in \eff(\EE)$ and $H\in \smodad(\EE)$, then $P\in \smodad(\EE)$.
\end{lemma}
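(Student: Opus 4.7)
The plan is to apply the horseshoe lemma inside the abelian category $\smod(\EE)$ to build a projective presentation of $P$, and then to recognize the presenting morphism as an admissible morphism in $\EE$ by a direct matrix decomposition.

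First, I select compatible projective presentations of the endpoints. Because $\EE$ has admissible kernels, it satisfies axiom \ref{R3+} by \Cref{Proposition:DeflationAICategorySatisfiesAxiomR3+}, so \Cref{Lemma:MorphismsRepresentingAnEffaceableAreDeflations}(2) lets me write $E \cong \coker \Yoneda(e)$ for an actual deflation $e\colon A \deflation B$. By hypothesis $H \cong \coker \Yoneda(h)$ for an admissible morphism $h\colon C \to D$, which \Cref{Proposition:InterpretationOfAIForDeflationExactCategories} factors uniquely as $h = i \circ p$ with $p\colon C \deflation C'$ a deflation and $i\colon C' \inflation D$ an inflation. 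Applying the classical horseshoe lemma in $\smod(\EE)$ to the short exact sequence $0 \to E \to P \to H \to 0$ and these two presentations produces a projective presentation
\[\Yoneda(A) \oplus \Yoneda(C) \xrightarrow{\;\Phi\;} \Yoneda(B) \oplus \Yoneda(D) \longrightarrow P \longrightarrow 0,\]
in which $\Phi$ is upper triangular with diagonal entries $\Yoneda(e)$ and $\Yoneda(h)$. Full faithfulness of $\Yoneda$ lifts the off-diagonal entry to some $a\colon C \to B$ in $\EE$, so $\Phi = \Yoneda(\phi)$ with $\phi = \left(\begin{smallmatrix} e & a \\ 0 & h\end{smallmatrix}\right)\colon A \oplus C \to B \oplus D$.

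To finish, I verify that $\phi$ is admissible. Using $h = i \circ p$, I write
\[\phi = \begin{pmatrix} 1_B & 0 \\ 0 & i \end{pmatrix} \circ \begin{pmatrix} e & a \\ 0 & p \end{pmatrix}.\]
The outer (leftmost) factor is an inflation as a direct sum of $1_B$ with the inflation $i$. The inner factor is a deflation because it equals the composition
\[\begin{pmatrix} 1_B & 0 \\ 0 & p \end{pmatrix} \circ \begin{pmatrix} 1_B & a \\ 0 & 1_C \end{pmatrix} \circ \begin{pmatrix} e & 0 \\ 0 & 1_C \end{pmatrix}\]
of two split deflations sandwiching an isomorphism, which is itself a deflation by axiom \ref{R1}. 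Hence $\phi$ admits a deflation–inflation factorization, so $\phi$ is admissible and $P \cong \coker \Yoneda(\phi) \in \smodad(\EE)$. The only place where a genuine hypothesis is used beyond the formal horseshoe construction and matrix manipulation is the upgrade from "$E$ is an effaceable functor" to "$E$ is presented by an honest deflation"; this is precisely what requires axiom \ref{R3}, supplied here by the admissible-kernels assumption.
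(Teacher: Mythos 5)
Your proof is correct, but it follows a genuinely different route from the paper's. The paper fixes an \emph{arbitrary} morphism $g\colon A\to B$ in $\EE$ with $P\cong\coker\Yoneda(g)$ and proves that this given $g$ is admissible: it applies \Cref{Lemma:TheFamousDiagramChase} to the deflation $P\deflation H$, invokes \Cref{Lemma:MorphismsRepresentingAnEffaceableAreDeflations} to see that the \emph{induced} presenting morphism of $E$ is a deflation, uses pullback-stability of admissible morphisms (\Cref{Lemma:AdmissiblesStableUnderPullbacks}) together with axiom \ref{R1}, and finally needs axiom \ref{R3+} (via \cite[Theorem~1.2]{HenrardvanRoosmalen20Obscure}) to cancel a split epimorphism and conclude that $g$ itself is admissible. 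You instead \emph{construct} one admissible presentation: the horseshoe lemma in the abelian category $\smod(\EE)$ glues chosen presentations of $E$ and $H$ into a presentation of $P$ by the triangular matrix $\phi=\begin{psmallmatrix} e & a\\ 0 & h\end{psmallmatrix}$, whose deflation--inflation factorization you exhibit by hand; this uses only axioms \ref{R1} and \ref{R2}, since your factors $e\oplus 1_C$ and $1_B\oplus p$ are deflations because they are pullbacks of $e$ and $p$ along the relevant projections (calling them ``split deflations'' is a misnomer---they need not be split---but the fact you use is correct). Since membership in $\smodad(\EE)$ only requires the \emph{existence} of an admissible presenting morphism (\Cref{definition:VariousMods}), this suffices. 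Two remarks. First, your closing sentence overstates the role of axiom \ref{R3}: by \Cref{Definition:EffaceableFunctors}, an effaceable functor admits, by definition, a presentation by a deflation, so you may simply choose one; \Cref{Lemma:MorphismsRepresentingAnEffaceableAreDeflations}(2), which upgrades \emph{every} presentation, is not needed. Consequently your argument goes through in any deflation-exact category with kernels, so it is both more elementary and slightly more general than the paper's. Second, what the paper's heavier machinery buys in exchange is a stronger conclusion: \emph{every} morphism presenting $P$ is admissible, not merely some well-chosen one.
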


\begin{proof}
	Let $p\colon A\to B$ and $h\colon C\to D$ be maps in $\EE$ such that $P\cong \coker(\Upsilon(p))$, $H\cong \coker(\Upsilon(h))$ and $h$ is admissible. By \Cref{Lemma:TheFamousDiagramChase}, the map $P \deflation H$ induces the following commutative diagram in $\EE$:
	\[\xymatrix{
		\ker(h)\oplus A\ar@{->>}[r]^-{\begin{psmallmatrix}0&1\end{psmallmatrix}}\ar[d]^{\alpha} & A\ar[r]\ar[d]^{p} & Q\ar[r]\ar[d]^{h'} & C\ar[d]^{h}\\
		Q\ar[r]^{h'} & B\ar@{=}[r] & B\ar[r] & D
	}\]
	where the right square is a pullback square and $E\cong \coker(\Upsilon(\alpha))$. As $E \in \eff(\EE)$, \Cref{Lemma:MorphismsRepresentingAnEffaceableAreDeflations} shows that $\alpha$ is a deflation. Since $h'$ is obtained from the admissible morphism $h$ via a pullback, $h'$ itself is admissible (see \Cref{Lemma:AdmissiblesStableUnderPullbacks}).  Hence, using axiom \ref{R1}, we see that $h' \circ \alpha$ is admissible and hence so is the composition $\ker(h)\oplus A\xrightarrow{\begin{psmallmatrix}0&1\end{psmallmatrix}} A\xrightarrow{p} B$.  Hence, this composition is equal to a composition $ker(h)\oplus A\xrightarrow{[0,p']}B'\xrightarrow{p''} B$ where $[0,p']$ is a deflation and $p''$ is an inflation.  Since $\EE$ satisfies axiom \ref{R3+} by \Cref{Proposition:DeflationAICategorySatisfiesAxiomR3+}, it follows from \cite[Theorem 1.2]{HenrardvanRoosmalen20Obscure} that $p'\colon A\to B'$ is a deflation. Since $p=p'\circ p''$, this shows that $p$ is admissible.
\end{proof}

\begin{proposition}\label{Proposition:TorisionPairInModad}
	The pair $(\eff(\EE),\modadone(\EE))$ defines a torsion pair in $\smodad(\EE)$ and $\eff(\EE)$ is an admissibly deflation-percolating subcategory of $\smodad(\EE)$.
\end{proposition}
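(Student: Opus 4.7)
The strategy is to deduce the torsion pair from the ambient torsion pair $(\eff(\EE),\modone(\EE))$ on $\smod(\EE)$ (\Cref{proposition:HereditaryTorsionForExactHull}) by producing the required decomposition of an object of $\smodad(\EE)$ directly from an admissible presentation, and then to check the three percolating axioms using that $\smodad(\EE)$ is closed under subobjects in $\smod(\EE)$ (\Cref{Proposition:smodad(EE)IsDeflationExactWithAdmissibleKernels}) together with the extension-closure statement \Cref{Lemma:ExtensionOfEffaceableAndSmodadIsSmodad}.

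For the torsion pair, given $M\in \smodad(\EE)$ I would choose an admissible presentation $M\cong \coker \Upsilon(f)$ with $f=m\circ p$, $p$ a deflation and $m$ an inflation. \Cref{proposition:TorsionPart} then yields a short exact sequence
\[0\to \coker \Upsilon(p)\to M\to \coker \Upsilon(m)\to 0\]
in $\smod(\EE)$; the outer terms lie in $\eff(\EE)$ and $\modadone(\EE)$ respectively, the latter by \Cref{remark:AboutGlobalDimension} since $m$ is a monomorphism and admissible. As all three terms lie in $\smodad(\EE)$, this is a conflation in $\smodad(\EE)$ providing the torsion--torsionfree decomposition. The Hom-vanishing $\Hom(\eff(\EE),\modadone(\EE))=0$ then follows from $\modadone(\EE)\subseteq \modone(\EE)$ combined with \Cref{proposition:HereditaryTorsionForExactHull}.

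For the percolating axioms, axiom \ref{A1} reduces to \Cref{proposition:EffaceableSerre}, since a conflation in $\smodad(\EE)$ is in particular a short exact sequence in $\smod(\EE)$ and $\eff(\EE)$ is Serre there. For \ref{A2}, given $\eta\colon P\to E$ in $\smodad(\EE)$ with $E\in \eff(\EE)$, I would form the image factorization $P\twoheadrightarrow \im \eta \hookrightarrow E$ in $\smod(\EE)$; the subobject $\im\eta\subseteq E$ lies in $\eff(\EE)$ by Serreness, and $\ker \eta$ lies in $\smodad(\EE)$ by closure under subobjects. The cokernel of $\im\eta\hookrightarrow E$ is a quotient of $E$, hence in $\eff(\EE)\subseteq \smodad(\EE)$, so the factorization is a deflation--inflation factorization in $\smodad(\EE)$. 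For \ref{A3}, given a deflation $f\colon X\deflation A$ in $\smodad(\EE)$ with $A\in \eff(\EE)$ and an inflation $g\colon X\inflation Y$ in $\smodad(\EE)$, I would form the pushout $P$ inside the abelian category $\smod(\EE)$; standard abelian-category identities for pushouts of epis along monos give $\ker(Y\to P)\cong \ker f$ and $\coker(A\to P)\cong \coker g$, both of which lie in $\smodad(\EE)$ by assumption. The short exact sequence $0\to A\to P\to \coker g\to 0$, together with \Cref{Lemma:ExtensionOfEffaceableAndSmodadIsSmodad}, then places $P$ inside $\smodad(\EE)$, so the pushout has the required form.

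The main technical obstacle is axiom \ref{A3}: one must verify that the pushout formed in the abelian overcategory $\smod(\EE)$ stays inside $\smodad(\EE)$, which is exactly the content of \Cref{Lemma:ExtensionOfEffaceableAndSmodadIsSmodad} established immediately before this proposition. Once that point is secured, the remaining verifications amount to identifying the kernel and cokernel of the induced maps in the pushout square and appealing to subobject-closure.
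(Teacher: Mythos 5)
Your proposal is correct and follows essentially the same route as the paper's proof: the torsion/torsionfree decomposition comes from \Cref{proposition:TorsionPart} applied to an admissible presentation, the Hom-vanishing is inherited from an ambient torsion pair, axiom \ref{A2} is settled by closure under subobjects (\Cref{Proposition:smodad(EE)IsDeflationExactWithAdmissibleKernels}), and axiom \ref{A3} by forming the pushout in the ambient category and invoking \Cref{Lemma:ExtensionOfEffaceableAndSmodadIsSmodad}. The only (harmless) difference is one of routing: you verify the Hom-vanishing and axioms \ref{A1}--\ref{A2} directly inside $\smod(\EE)$ (via \Cref{proposition:HereditaryTorsionForExactHull}, \Cref{proposition:EffaceableSerre} and the image factorization), whereas the paper restricts these facts from the intermediate category $\smodinf(\EE)$ using \Cref{Proposition:TorsionTheoryInSmodinf}.
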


\begin{proof}
Since $\modadone(\EE) \subseteq \modinfone(\EE)$, it follows from \Cref{Proposition:TorsionTheoryInSmodinf} that $\Hom(\eff(\EE), \modadone(\EE)) = 0.$  The existence of a torsion/torsion-free sequence follows again from \Cref{proposition:TorsionPart}.

It remains to show that $\eff(\EE)\subseteq \smodad(\EE)$ is an admissibly deflation-percolating subcategory. Axiom \ref{A1} follows directly from \Cref{Proposition:TorsionTheoryInSmodinf}.  For axiom \ref{A2}, consider a morphism $f\colon F \to E$ with $F \in \smodad(\EE)$ and $E \in \eff(\EE).$  As $\eff(\EE)$ satisfies axiom \ref{A2} in $\smodinf(\EE)$, it suffices to show that $\ker f \in \smodad(\EE).$  This is automatic since $\smodad(\EE)$ is closed under subobjects in $\mod(\EE)$, see \Cref{Proposition:smodad(EE)IsDeflationExactWithAdmissibleKernels}.

It remains to show axiom \ref{A3}. To that end, consider a conflation $F\inflation G\deflation H$ in $\smodad(\EE)$ and a map $F\deflation E$ with $E\in \eff(\EE)$. We obtain the following commutative diagram in $\Mod(\EE)$
	\[\xymatrix{
		F\ar@{>->}[r]\ar@{->>}[d] & G\ar@{->>}[r]\ar@{>->}[d] & H\ar@{=}[d]\\
		E\ar@{>->}[r] & P\ar@{->>}[r] & H
	}\] where the left square is a pushout. By \Cref{Lemma:ExtensionOfEffaceableAndSmodadIsSmodad}, $P\in \smodad(\EE)$ as required. This completes the proof.
\end{proof}

\begin{theorem}\label{Theorem:UniversalPropertyDeflationExactsmodad}
	\begin{enumerate}
		\item The Yoneda embedding $\Upsilon\colon \EE\to \smodad(\EE)$ is left conflation-exact (see \Cref{definition:Conflation}) and maps admissible morphisms to admissible morphisms;
		\item If $\FF$ is a deflation-exact category having admissible kernels and $\Phi\colon \EE\to \FF$ is a left exact functor that preserves admissible morphisms, then there exists a functor $\overline{\Phi}\colon \smodad(\EE)\to \FF$, unique up to isomorphism, which is exact and satisfies $\overline{\Phi}\circ \Upsilon=\Phi$.
	\end{enumerate} 
\end{theorem}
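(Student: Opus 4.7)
For part~(1), the idea is to exploit that $\modad(\EE)$ is closed under subobjects in $\smod(\EE)$ (Proposition~\ref{Proposition:smodad(EE)IsDeflationExactWithAdmissibleKernels}). Given a conflation $A\inflation B \stackrel{g}{\deflation} C$ in $\EE$, left exactness of the Yoneda functor in the ambient abelian category $\smod(\EE)$ yields an exact sequence $0\to \Upsilon(A)\to \Upsilon(B)\stackrel{\Upsilon(g)}{\to}\Upsilon(C)$ with $\coker\Upsilon(g)\in\eff(\EE)\subseteq\modad(\EE)$. The abelian image-factorization $\Upsilon(B)\twoheadrightarrow\im\Upsilon(g)\hookrightarrow\Upsilon(C)$ lives inside $\modad(\EE)$, since $\im\Upsilon(g)$ is a subobject of $\Upsilon(C)\in\modad(\EE)$. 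The two resulting short exact sequences then have all terms in $\modad(\EE)$ and are therefore conflations there, exhibiting $\Upsilon(g)$ as admissible in $\modad(\EE)$ with kernel $\Upsilon(A)$. Preservation of admissibles follows by the same argument applied to a general admissible $f\colon X\to Y$: since $M:=\coker\Upsilon(f)\in\modad(\EE)$ by definition, the image-factorization of $\Upsilon(f)$ is automatically a deflation-inflation factorization inside $\modad(\EE)$.

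For part~(2), I plan to route the extension through the abelian left heart of the target; write $\phi_\FF\colon\FF\to\mathcal{LH}(\FF)$. First I check that $\Phi$ commutes with kernels: for $f\colon X\to Y$ in $\EE$, use the deflation-mono factorization $f=m\circ p$ from Proposition~\ref{Proposition:InterpretationOfAIForDeflationExactCategories}; left conflation-exactness applied to the conflation with inflation $m$ forces $\Phi(m)$ to be an inflation, hence a monomorphism, in $\FF$, and applied to the conflation with deflation $p$ yields $\Phi(\ker f)=\Phi(\ker p)=\ker\Phi(p)=\ker(\Phi(m)\Phi(p))=\ker\Phi(f)$. Composing with $\phi_\FF$, which commutes with kernels by Proposition~\ref{proposition:EmbeddingToLeftHeartCommutesWithKernels}, gives a kernel-preserving functor $\phi_\FF\circ\Phi\colon\EE\to\mathcal{LH}(\FF)$, which by Theorem~\ref{theorem:UniversalFreyd} and Proposition~\ref{proposition:UniversalLiftExact} extends uniquely to an exact functor $\widetilde\Phi\colon\mod(\EE)\to\mathcal{LH}(\FF)$. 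On an object $M\cong\coker\Upsilon(f)\in\modad(\EE)$ with $f$ admissible, $\widetilde\Phi(M)=\coker\phi_\FF\Phi(f)$; since $\Phi$ preserves admissibles, $\Phi(f)$ has a cokernel in $\FF$, and exactness of $\phi_\FF$ identifies this cokernel with the one computed in $\mathcal{LH}(\FF)$. Hence $\widetilde\Phi$ restricts to a functor $\bar\Phi\colon\modad(\EE)\to\FF$ with $\bar\Phi\circ\Upsilon\cong\Phi$.

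It remains to check exactness and uniqueness. A conflation in $\modad(\EE)$ is a short exact sequence in $\mod(\EE)$, which $\widetilde\Phi$ sends to a short exact sequence in $\mathcal{LH}(\FF)$ whose three terms lie in $\phi_\FF(\FF)$; since $\phi_\FF$ reflects conflations (Proposition~\ref{Proposition:TheEmbeddingToHeartPropertiesNew}), this sequence is a conflation in $\FF$, so $\bar\Phi$ is conflation-exact. For uniqueness, any exact extension must agree with $\Phi$ on $\Upsilon(\EE)$, and by right-exactness it is forced on a general $M\cong\coker\Upsilon(f)$ to equal $\coker\Phi(f)$, and similarly on morphisms via chosen presentations. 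The main obstacle is verifying that the abstract extension $\widetilde\Phi$ actually lands inside the (not necessarily abelian) subcategory $\FF\subseteq\mathcal{LH}(\FF)$ on all of $\modad(\EE)$: this is precisely where the hypothesis that $\Phi$ preserves admissibles is indispensable, since without it the cokernel of $\Phi(f)$ need not exist in $\FF$ and the restriction of $\widetilde\Phi$ to $\modad(\EE)$ could escape the target.
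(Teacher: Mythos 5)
Part~(1) of your proposal is correct and is, in substance, the paper's own argument: the paper produces the deflation-inflation factorization of $\Yoneda(f)$ through $\coker\Yoneda(\ker f \inflation X)$ via \Cref{Lemma:TheFamousDiagramChase}, whereas you identify it with the image factorization and invoke closure of $\smodad(\EE)$ under subobjects (\Cref{Proposition:smodad(EE)IsDeflationExactWithAdmissibleKernels}); in both cases the key point is that all terms of the two resulting short exact sequences lie in $\smodad(\EE)$, hence these are conflations there.

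Part~(2), however, has a genuine gap, located at your very first step. In the deflation-mono factorization $f = m\circ p$ of \Cref{Proposition:InterpretationOfAIForDeflationExactCategories}, the morphism $m$ is only a \emph{monomorphism}; it is an inflation precisely when $f$ is admissible. For a general $f$ there is no ``conflation with inflation $m$'' to which left conflation-exactness of $\Phi$ could be applied, so your deduction that $\Phi(m)$ is a monomorphism fails. Worse, the intermediate claim that $\Phi$ commutes with kernels is \emph{false} under the stated hypotheses. Take $\EE$ to be a quasi-abelian category admitting a bimorphism $j$ that is not an isomorphism, e.g.\ $\EE = \mathsf{Ban}$ and $j\colon \ell^1\hookrightarrow\ell^2$; let $\FF = \mathcal{RH}(\EE) \coloneqq \mathcal{LH}(\EE^{\mathrm{op}})^{\mathrm{op}}$ be the right heart and $\Phi$ the canonical embedding. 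Since $\FF$ is abelian, it is deflation-exact with admissible kernels and every morphism of $\FF$ is admissible, so $\Phi$ preserves admissible morphisms vacuously; moreover $\Phi$ is conflation-exact, hence left conflation-exact. But $\ker j = 0$, while $\Phi(j)$ is an epimorphism of $\FF$ (dual of \Cref{Proposition:TheEmbeddingToHeartPropertiesNew}) that is not an isomorphism, hence not a monomorphism, so $\ker\Phi(j)\neq 0$. This is exactly why the paper imposes kernel (equivalently, monomorphism) preservation as a \emph{separate} hypothesis in \Cref{Proposition:UniversalPropertyOfLeftHeart} and \Cref{corollary:UniversalPropertyOfLeftHeart}. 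The example does not contradict the theorem itself — the exact lift of such a $\Phi$ does exist — it shows only that no correct proof can begin by establishing kernel preservation.

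Consequently your route collapses: without kernel preservation, \Cref{proposition:UniversalLiftExact} does not provide the exact functor $\widetilde\Phi$ on $\mod(\EE)$, and your verification that $\overline\Phi$ is conflation-exact (which sends a conflation of $\smodad(\EE)$ to a short exact sequence of $\mathcal{LH}(\FF)$ using exactness of $\widetilde\Phi$) cannot even be started. Indeed, the whole point of $\smodad(\EE)$, as opposed to $\mod(\EE)$ or $\mathcal{LH}(\EE)$, is that its universal property governs functors that need \emph{not} preserve kernels. The proof must instead work directly with admissible presentations, as in the paper's cited adaptation of \cite[Theorem~3.7]{HenrardKvammevanRoosmalen20}: set $\overline\Phi(\coker\Yoneda(f)) \coloneqq \coker_{\FF}(\Phi(f))$, which exists in $\FF$ because $\Phi(f)$ is admissible, then check independence of the chosen admissible presentation and functoriality by lifting morphisms to presentations, and finally check conflation-exactness using compatible (Horseshoe-type) admissible presentations of a conflation in $\smodad(\EE)$.
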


\begin{proof}
We show that the Yoneda embedding $\Upsilon\colon \EE\to \smodad(\EE)$ maps admissible morphisms to admissible morphisms; the remainder of the proof is then a straightforward adaptation of \cite[Theorem~3.9]{HenrardKvammevanRoosmalen20}.  Let $f\colon X \to Y$ be any admissible morphism in $\EE$, and let $k\colon K \inflation X$ be the kernel.  Using \Cref{Lemma:TheFamousDiagramChase}, starting from the commutative square
\[
\xymatrix{
0 \ar[r] \ar[d] & 0\ar[d] \\
X \ar[r]^f & Y}\]
gives a diagram 
\[\xymatrix{0 \ar[r] & \Yoneda(K) \ar[r] &\Yoneda(X) \ar[rr]^{\Yoneda(f)} \ar[rd] && \Yoneda(Y) \ar[r] & \coker \Yoneda(f) \ar[r] & 0 \\
&&&\coker \Yoneda(k) \ar[ru]&}\]
in $\Mod \EE.$  As the objects in this diagram all lie in $\smodad(\EE)$, we see that the morphism $\Yoneda(f)\colon \Yoneda(X) \to \Yoneda(Y)$ factors as $\Yoneda(X) \deflation \coker \Yoneda(k) \inflation \Yoneda(Y)$.  This establishes that $\Yoneda(f)$ is admissible.
\end{proof}

\begin{corollary}
	Let $\EE$ be a deflation-exact category having admissible kernels.  The Yoneda embedding $\Upsilon\colon \EE\to \smodad(\EE)$ has a left adjoint, sending an object $M \cong \coker \Yoneda(f) \in \modad(\EE)$ to $\coker(f)$, for each admissible morphism $f \in \EE$.
\end{corollary}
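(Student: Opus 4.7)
The plan is to exhibit the adjunction directly via representability. Given $M \in \smodad(\EE)$, by definition we can choose an admissible morphism $f\colon X \to Y$ in $\EE$ with a presentation $\Yoneda(X) \to \Yoneda(Y) \to M \to 0$. Using \Cref{Proposition:InterpretationOfAIForDeflationExactCategories}, factor $f = m \circ p$ with $p\colon X \deflation Z$ a deflation and $m\colon Z \inflation Y$ an inflation. Since $p$ is an epimorphism, $\coker(f) = \coker(m)$; since $m$ is an inflation, it is the kernel part of a conflation, so $\coker(m)$ exists in $\EE$. I propose to set $L(M) = \coker(f)$ on objects.

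For the adjunction, I would fix $E \in \EE$ and apply $\Hom_{\smodad(\EE)}(-,\Yoneda(E))$ to the presentation of $M$. By the Yoneda lemma we obtain an exact sequence
\[
0 \to \Hom_{\smodad(\EE)}(M,\Yoneda(E)) \to \Hom_\EE(Y,E) \xrightarrow{\,-\circ f\,} \Hom_\EE(X,E).
\]
A morphism $g\colon Y \to E$ lies in the kernel of the right-hand map precisely when $g \circ m \circ p = 0$; since $p$ is an epimorphism this is equivalent to $g \circ m = 0$, which by the universal property of $\coker(m) = \coker(f)$ amounts to $g$ factoring (uniquely) through the deflation $Y \deflation \coker(f)$. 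This furnishes a natural isomorphism
\[
\Hom_{\smodad(\EE)}(M,\Yoneda(E)) \cong \Hom_\EE(\coker(f),E),
\]
natural in $E$, which is exactly the defining adjunction property.

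Well-definedness and functoriality are then automatic consequences of representability. The natural isomorphism above identifies $\coker(f)$ with the object representing the functor $\Hom_{\smodad(\EE)}(M,\Yoneda(-))\colon \EE \to \Ab$, so Yoneda's lemma forces $\coker(f)$ to be canonically independent of the chosen presentation of $M$. Likewise, any morphism $\eta\colon M \to M'$ induces a natural transformation of representable functors $\Hom_\EE(L(M'),-) \to \Hom_\EE(L(M),-)$, hence a unique $L(\eta)\colon L(M) \to L(M')$ in $\EE$, and this assignment is functorial. I expect no serious obstacle: the only point that requires care is verifying that the kernel computation for $-\circ f$ genuinely reduces to $-\circ m$ (requiring that $p$ be epi) and then to factoring through $\coker m$ (requiring that $m$ admit a cokernel realized by a conflation), both of which hold immediately from admissibility of $f$ and the inflation-exactness data available in $\EE$.
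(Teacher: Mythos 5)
Your proof is correct, but it takes a genuinely different route from the paper. You establish the adjunction by direct pointwise representability: applying $\Hom_{\Mod(\EE)}(-,\Yoneda(E))$ to the presentation $\Yoneda(X)\xrightarrow{\Yoneda(f)}\Yoneda(Y)\to M\to 0$, using that the deflation part $p$ of $f=m\circ p$ is epic and that the inflation part $m$ has a cokernel, to get the natural isomorphism $\Hom_{\smodad(\EE)}(M,\Yoneda(E))\cong\Hom_\EE(\coker(f),E)$; existence, well-definedness and functoriality of $L$ then follow from the standard pointwise criterion for adjoints. The paper instead applies \Cref{Theorem:UniversalPropertyDeflationExactsmodad} to the identity functor $\EE\to\EE$ (which is left exact and preserves admissible morphisms) to produce an exact functor $L\colon\smodad(\EE)\to\EE$ with $L\circ\Yoneda\cong 1_\EE$, and reads off the formula $L(M)\cong\coker(f)$ from the fact that $L$ commutes with cokernels, citing an earlier work for adjointness. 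Your argument is more elementary and self-contained (only Yoneda's lemma, the universal property of the cokernel, and the pointwise adjoint criterion), while the paper's is shorter given the machinery already in place and yields conflation-exactness of $L$ as a byproduct. One small citation quibble: the factorization $f=m\circ p$ with $m$ an \emph{inflation} comes directly from the definition of an admissible morphism (\Cref{definition:Conflation}), not from \Cref{Proposition:InterpretationOfAIForDeflationExactCategories}, which only provides a deflation-\emph{mono} factorization; since your argument genuinely needs $m$ to admit a cokernel, the correct source is admissibility itself, as your wording otherwise reflects.
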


\begin{proof}
The proof is as in \cite[Corollary~3.10]{HenrardKvammevanRoosmalen20}.  The left adjoint $L\colon \modad(\EE) \to \EE$ is obtained by applying \Cref{Theorem:UniversalPropertyDeflationExactsmodad} to the identity $\EE \to \EE$, that is, $L \circ \Yoneda \cong 1$.  The explicit description is obtained using that $L$ commutes with cokernels.
\end{proof}

\begin{theorem}
Let $\EE$ be a deflation-exact category having admissible kernels.  The Yoneda embedding $\Upsilon\colon \EE\to \smodad(\EE)$ induces an equivalence $\EE\simeq \smodad(\EE)/\eff(\EE)$.
\end{theorem}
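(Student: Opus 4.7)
The plan is to exhibit an explicit quasi-inverse to $Q\circ \Upsilon$ using the two universal properties that have just been established, namely \Cref{Theorem:UniversalPropertyDeflationExactsmodad} (universal property of $\Upsilon$) and \Cref{Theorem:LocalizationTheorem} (universal property of the localization $Q$), together with the torsion pair from \Cref{Proposition:TorisionPairInModad}.

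First, I would apply \Cref{Theorem:UniversalPropertyDeflationExactsmodad} to the identity functor $\id_{\EE}\colon \EE \to \EE$, which is trivially left conflation-exact and preserves admissible morphisms. This yields a conflation-exact functor $L\colon \smodad(\EE)\to \EE$ with $L\circ \Upsilon \cong \id_{\EE}$; explicitly, $L(\coker \Upsilon(f)) = \coker f$ whenever $f$ is admissible (this is the left adjoint described in the corollary preceding the theorem). Next, observe that $L$ kills $\eff(\EE)$: if $M\cong \coker \Upsilon(f)$ with $f$ a deflation, then $L(M)=\coker f = 0$ since deflations are epic. Because $\eff(\EE)$ is admissibly deflation-percolating in $\smodad(\EE)$ by \Cref{Proposition:TorisionPairInModad}, the universal property of $Q$ furnishes a unique (up to isomorphism) conflation-exact functor $\bar L\colon \smodad(\EE)/\eff(\EE)\to \EE$ with $\bar L\circ Q \cong L$. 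In particular, $\bar L\circ (Q\circ \Upsilon)\cong L\circ \Upsilon\cong \id_{\EE}$.

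The remaining task is to produce a natural isomorphism $(Q\circ \Upsilon)\circ \bar L \cong \id$ on $\smodad(\EE)/\eff(\EE)$. For this, given $M\in \smodad(\EE)$ with admissible presentation $M\cong \coker \Upsilon(f)$ and deflation-mono factorization $f = m\circ p$, I would invoke \Cref{proposition:TorsionPart} to obtain a short exact sequence
\[
0\to \coker \Upsilon(p) \to \coker \Upsilon(f) \to \coker \Upsilon(m)\to 0
\]
with $\coker \Upsilon(p)\in \eff(\EE)$, so that $Q(M)\cong Q(\coker\Upsilon(m))$. Dually, applying the left-exact Yoneda embedding to the conflation $\coim(f)\stackrel{m}{\inflation} Y\stackrel{\pi}{\deflation}\coker m$ (with $\pi$ a deflation) produces a short exact sequence
\[
0\to \coker \Upsilon(m)\to \Upsilon(\coker m)\to \coker \Upsilon(\pi)\to 0
\]
with $\coker \Upsilon(\pi)\in \eff(\EE)$, whence $Q(\coker \Upsilon(m))\cong Q\Upsilon(\coker m) = (Q\circ \Upsilon)\bar L(Q M)$. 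Splicing these two quasi-isomorphisms gives the desired isomorphism pointwise.

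The main obstacle will be to ensure naturality of this last isomorphism. The cleanest route is to recognize that both isomorphisms above are instances of the unit $\eta_M\colon M\to \Upsilon L(M)$ of the adjunction $L\dashv \Upsilon$ (whose counit is an iso since $L\Upsilon\cong \id_{\EE}$): the first step identifies $M$ with its torsionfree quotient in the pair $(\eff(\EE),\modadone(\EE))$, and the second identifies $\coker \Upsilon(m)$ with its essential extension $\Upsilon(\coker m)$, the kernel and cokernel of $\eta_M$ both lying in $\eff(\EE)$. Therefore $Q(\eta)\colon Q\Rightarrow Q\Upsilon L = (Q\Upsilon)\bar L\,Q$ is a natural isomorphism, and by the universal property of $Q$ it descends to a natural isomorphism $\id \cong (Q\circ \Upsilon)\circ \bar L$, completing the equivalence.
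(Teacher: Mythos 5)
Your proposal is correct and shares its skeleton with the paper's proof: both apply the universal property to the identity functor to obtain the exact functor $L\colon \smodad(\EE)\to \EE$ with $L\circ \Upsilon\cong 1_{\EE}$, observe that $L$ kills $\eff(\EE)$ and hence descends to $\overline{L}$ on the quotient, and both rely on the same pair of weak isomorphisms $M \to \coker\Upsilon(m)\to \Upsilon(\coker m)$ coming from \Cref{proposition:TorsionPart} and the left exactness of the Yoneda embedding. The difference is in how the equivalence is concluded. The paper never builds a two-sided quasi-inverse: it shows that $Q\circ\Upsilon$ is left adjoint to $\overline{L}$ (descending the adjunction along $Q$, citing Gabriel--Zisman), deduces full faithfulness of $Q\circ \Upsilon$ from $\overline{L}\circ Q\circ\Upsilon\cong 1_{\EE}$, and then needs the weak isomorphisms only pointwise, for essential surjectivity, so no naturality question ever arises. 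You instead prove $(Q\circ\Upsilon)\circ\overline{L}\cong \mathrm{id}$ directly, and your device for handling naturality---identifying the composite $M\to\coker\Upsilon(m)\to\Upsilon(\coker m)$ with the unit $\eta_M$ of $L\dashv \Upsilon$ and then descending $Q(\eta)$ along the localization---is valid: both maps restrict to $\Upsilon(\pi)$ along the canonical epimorphism $\Upsilon(Y)\deflation M$, where $\pi\colon Y\deflation \coker m$ is the cokernel of $m$, so they coincide. The two routes cost about the same; yours trades the descended-adjunction citation for the unit identification. One point you should make explicit: to conclude that $Q$ inverts $\eta_M$, it is not enough in the paper's framework to know $\ker\eta_M,\coker\eta_M\in\eff(\EE)$; by \Cref{proposition:AboutStrictlyPercolating} one also needs $\eta_M$ to be admissible in $\smodad(\EE)$. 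This is, however, immediate from your construction, since $\eta_M$ is by design a deflation with kernel in $\eff(\EE)$ followed by an inflation with cokernel in $\eff(\EE)$, hence a weak $\eff(\EE)$-isomorphism.
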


\begin{proof}
It follows from the above description of $L\colon \modad(\EE) \to \EE$ that $L(\eff(\EE)) = 0$.  Hence, by the universal property of the quotient, $L$ factors as $\modad(\EE) \xrightarrow{Q} \modad(\EE) / \eff(\EE) \xrightarrow{\overline{L}} \EE$.  We find that $Q \circ \Yoneda$ is left adjoint to $\overline{L}$ (see \cite[Lemma~1.3.1]{GabrielZisman67} with $F = \overline{L}$, $G = Q$, and $D = \Yoneda$, or \cite[Proposition~2.11.(1a)]{HenrardvanRoosmalen20Glider} with $F = \Yoneda$, $G = Q$, and $H = \overline{L}$).
		
		As $\overline{L} \circ Q \circ \Yoneda \cong 1_\EE$, we know that $Q \circ \Yoneda$ is fully faithful.  We only need to show that $Q \circ \Yoneda$ is essentially surjective.  For this, take an arbitrary $M \in \Ob(\modad(\EE)) = \Ob(\modad(\EE) / \eff(\EE))$.  Let $f\colon X \to Y$ be an admissible morphism in $\EE$ with $M \cong \coker \Yoneda (f)$.
		
				From the deflation-inflation factorization $X \stackrel{p}{\deflation} X' \stackrel{m}{\inflation} Y$ of $f$, we obtain the following conflation (see \Cref{proposition:TorsionPart}):
		\[0 \to \coker \Yoneda(p) \to M \xrightarrow{g} \coker \Yoneda(m) \to 0.\]
		As $\coker \Yoneda(p) \in \eff(\EE)$, the map $g\colon M \to \coker \Yoneda(m)$ is a weak isomorphism, i.e. $Q(g)$ is an isomorphism.  Consider now the conflation $X' \stackrel{m}{\inflation} Y \stackrel{q}{\deflation} Z$ in $\EE$.  As the Yoneda embedding is left conflation-exact, we get the following diagram
		\[\begin{tikzcd}
		  \Yoneda(X') \arrow[r, rightarrowtail, "\Yoneda(m)"] & \Yoneda(Y) \arrow[rr, "\Yoneda(q)"]\arrow[rd, twoheadrightarrow] && \Yoneda(Z) \\
			&&\coker(m) \arrow[ru, rightarrowtail, "h"]
		\end{tikzcd}\]
As $\coker(h) \cong \coker \Yoneda(q) \in \eff(\EE)$, we see that $h$ is a weak isomorphism as well.  We find that $Q(M) \cong Q(\coker \Yoneda(m)) \cong Q \circ \Yoneda (Z).$  Hence, $Q \circ \Yoneda\colon \EE \to \modad(\EE) / \eff(\EE)$ is essentially surjective, as required.
		\end{proof}

\begin{corollary}\label{Corollary:OneSidedAuslanderForAdmissibleKernels}
	Let $\EE$ be a deflation-exact category having admissible kernels. 
	\begin{enumerate}
		\item The Yoneda embedding $\Upsilon\colon \EE\to \smodad(\EE)$ induces triangle equivalences $\Kstar(\EE)\to  \Dstar(\smodad(\EE))$ for $\ast \in \{\text{b}, \text{-}, \varnothing\}$.
		\item There is a natural commutative diagram
		\[\xymatrix{
			\mathbf{D}_{\eff(\EE)}^b(\smodad(\EE))\ar[r]\ar[d]^{\simeq} & \Db(\smodad(\EE))\ar[r]\ar[d]^{\simeq} & \Db(\smodad(\EE)/\eff(\EE))\ar[d]^{\simeq}\\
			\Acb(\EE)\ar[r] & \Kb(\EE)\ar[r] & \Db(\EE)
		}\]
	\end{enumerate}
\end{corollary}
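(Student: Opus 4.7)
The strategy is to realise $\EE$, through Yoneda, as the projective objects of the deflation-exact category $\smodad(\EE)$; then (1) becomes a version of the classical comparison between homotopy categories of projectives and derived categories, while (2) follows by combining this with the admissibly percolating subcategory $\eff(\EE)\subseteq \smodad(\EE)$.

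First, \Cref{remark:AboutGlobalDimension} together with the deflation-mono factorization of \Cref{Proposition:InterpretationOfAIForDeflationExactCategories} presents every $M\in \smodad(\EE)$ by an exact sequence $0 \to \Upsilon(X') \to \Upsilon(Y) \to M \to 0$ with $X'\hookrightarrow Y$ a monomorphism in $\EE$; this is a genuine conflation in $\smodad(\EE)$ because conflations are inherited from $\smod(\EE)$ (\Cref{Proposition:smodad(EE)IsDeflationExactWithAdmissibleKernels}).  Hence every object of $\smodad(\EE)$ has projective dimension at most one, resolved by representables, and weak idempotent completeness of $\EE$ (a consequence of \ref{R3+}, see \Cref{Remark:BasicDefinitions}(6) and \Cref{Proposition:DeflationAICategorySatisfiesAxiomR3+}) identifies $\Upsilon(\EE)$ with the class of all projective objects of $\smodad(\EE)$.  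For (1), the Yoneda embedding extends to a triangle functor $\K^*(\EE) \to \D^*(\smodad(\EE))$; essential surjectivity reduces to constructing termwise projective resolutions, which the uniform bound $\mathrm{pd}\leq 1$ keeps within each boundedness class (including the unbounded case), while full faithfulness follows from the standard computation $\Hom_{\D^*(\smodad(\EE))}(\Upsilon(E), \Upsilon(F)[n]) = \delta_{n,0}\Hom_\EE(E,F)$.

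For (2), the middle vertical equivalence is the bounded case of (1), and the rightmost vertical equivalence is the derived lift of the equivalence $\EE \simeq \smodad(\EE)/\eff(\EE)$ established in the theorem immediately preceding this corollary.  The top row is a Verdier localization sequence: by \Cref{Proposition:TorisionPairInModad}, $\eff(\EE)\subseteq \smodad(\EE)$ is admissibly deflation-percolating, so \Cref{Theorem:LocalizationTheorem}(4) applies.  The bottom row is the defining Verdier localization for $\Db(\EE)$, using that $\Acb(\EE)$ is thick by \Cref{Proposition:BasicPropertiesDerivedCategoryNew}(2).  Commutativity of the right square is by construction of the equivalence $\EE \simeq \smodad(\EE)/\eff(\EE)$ as $Q\circ \Upsilon$, and the middle square commutes tautologically.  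The leftmost equivalence is then forced: $\Acb(\EE)$ is the kernel of $\Kb(\EE)\to \Db(\EE)$ and $\mathbf{D}^b_{\eff(\EE)}(\smodad(\EE))$ is the kernel of $\Db(\smodad(\EE))\to \Db(\smodad(\EE)/\eff(\EE))$, and the middle and right vertical equivalences identify these two localization functors.

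The main technical point is the unbounded case of (1); although the uniform bound $\mathrm{pd}\leq 1$ guarantees one can construct a termwise projective resolution, some care is required to verify that this yields a genuine triangle equivalence for complexes unbounded in both directions, which is where K-projectivity of the resulting complexes of representables enters.
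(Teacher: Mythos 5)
Your part (2) is essentially identical to the paper's argument (Verdier localization rows from \Cref{Theorem:LocalizationTheorem} and the thickness of $\Acb(\EE)$, right verticals from part (1) and the preceding theorem, left vertical forced as the kernel of the same localization), and your overall strategy for part (1) — view $\Upsilon(\EE)$ as projective objects resolving all of $\smodad(\EE)$ with a uniform finite bound — is also the paper's. However, the key numerical claim in your part (1) is false: it is \emph{not} true that every $M\in\smodad(\EE)$ admits a presentation $0\to\Upsilon(X')\to\Upsilon(Y)\to M\to 0$, i.e.\ has projective dimension at most one. \Cref{remark:AboutGlobalDimension} asserts $\modadone(\EE)=\modone(\EE)\cap\smodad(\EE)$; this characterizes the strictly smaller subcategory $\modadone(\EE)$, not all of $\smodad(\EE)$. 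Concretely, for any non-split deflation $p\colon X\deflation Z$ the effaceable functor $E=\coker\Yoneda(p)$ lies in $\eff(\EE)\subseteq\smodad(\EE)$ and has projective dimension exactly two: its minimal resolution is $0\to\Yoneda(\ker p)\to\Yoneda(X)\to\Yoneda(Z)\to E\to 0$, and if $E$ had projective dimension at most one, then \Cref{proposition:GlobalDimensionAtMostOne} together with the uniqueness of deflation-inflation factorizations would force $p$ to be a retraction. So the length-one conflation you build your argument on does not exist for such $M$, and every step you justify by the bound $\mathrm{pd}\leq 1$ rests on a false premise.

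The repair is cheap, and it is exactly what the paper does: the correct uniform bound is projective dimension at most two, obtained by resolving $M\cong\coker\Yoneda(f)$, with $f$ admissible, by $0\to\Yoneda(\ker f)\to\Yoneda(X)\to\Yoneda(Y)\to M\to 0$ (left exactness of Yoneda plus \Cref{proposition:TorsionPart}); your argument never needs the bound to be one, only to be uniform and finite. With this fix, $\Upsilon(\EE)\subseteq\smodad(\EE)$ is uniformly preresolving with $\resdim_{\Upsilon(\EE)}(\smodad(\EE))\leq 2$, and since an epimorphism $\Yoneda(Y)\to\Yoneda(Z)$ between representables is automatically split (evaluate at $Z$), the induced conflation structure on $\Upsilon(\EE)$ is the split one, so $\Dstar(\Upsilon(\EE))\simeq\Kstar(\EE)$. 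At that point \Cref{Theorem:PreResolvingDerivedEquivalence} gives all three equivalences $\Kstar(\EE)\to\Dstar(\smodad(\EE))$ at once, including the unbounded case; the K-projectivity issue you flag as the main technical point is already packaged inside that theorem, so there is no need to redo the comparison-of-resolutions argument by hand.
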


\begin{proof}
	\begin{enumerate}
		\item As every object of $\smod(\EE)$ has projective dimension at most two, we can apply \Cref{Theorem:PreResolvingDerivedEquivalence}.
		\item By \Cref{Theorem:LocalizationTheorem} one obtains the upper row. The right equivalences follow from the above. By \Cref{Proposition:BasicPropertiesDerivedCategoryNew}, $\Acb(\EE)$ is a thick triangulated subcategory of $\Kb(\EE)$ and thus $\mathbf{D}_{\eff(\EE)}^b(\smodad(\EE))$ is equivalent to $\Acb(\EE)$ as both categories are obtained as the kernel of the same Verdier localization.\qedhere
	\end{enumerate}
\end{proof}

\subsection{Some derived equivalences}  Let $\EE$ be a deflation-exact category with admissible kernels.  In \Cref{definition:VariousMods}, we introduced $\modad(\EE)$ and $\modinf(\EE)$, as well as their subcategories $\modadone(\EE)$ and $\modinfone(\EE)$ of objects of projective dimension at most one.  We now show that these categories are derived equivalent.  We start with the following observation.

\begin{lemma}\label{lemma:ModadinfClosedUnderSubjects}
The subcategory $\modinf(\EE)$ of $\mod(\EE)$ is closed under subobjects.
\end{lemma}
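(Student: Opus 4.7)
The plan is to leverage the identification $\modinf(\EE) = \modad(\ex{\EE}) \cap \mod(\EE)$ established in Lemma~\ref{lemma:IntersectingModadEE}. Since $\ex{\EE}$ is an exact category with admissible kernels by Theorem~\ref{theorem:ExactHullRegular}, applying Proposition~\ref{Proposition:smodad(EE)IsDeflationExactWithAdmissibleKernels} to $\ex{\EE}$ tells us that $\modad(\ex{\EE})$ is closed under subobjects in $\mod(\ex{\EE})$. The strategy is to transfer this closure property back to $\modinf(\EE)$ through the embedding $\mod(\EE) \hookrightarrow \mod(\ex{\EE})$.

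Concretely, let $\eta\colon F \hookrightarrow G$ be a monomorphism in $\mod(\EE)$ with $G \in \modinf(\EE)$. First I would apply the fully faithful functor $- \otimes_{\EE} \ex{\EE}\colon \mod(\EE) \to \mod(\ex{\EE})$, which is exact since $j\colon \EE \to \ex{\EE}$ commutes with kernels (see the remark preceding Lemma~\ref{lemma:IntersectingWithMod}). Exactness forces $\eta$ to remain a monomorphism in $\mod(\ex{\EE})$. Since $G \in \modinf(\EE) \subseteq \modad(\ex{\EE})$ and the latter is closed under subobjects in $\mod(\ex{\EE})$, we obtain $F \in \modad(\ex{\EE})$. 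Combined with the hypothesis $F \in \mod(\EE)$, this gives $F \in \modad(\ex{\EE}) \cap \mod(\EE) = \modinf(\EE)$.

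No significant obstacle is expected, because all ingredients have already been assembled earlier in the paper; the argument reduces to a routine intersection of two closure properties. As an alternative, more intrinsic approach, one could mimic the proof of Proposition~\ref{Proposition:smodad(EE)IsDeflationExactWithAdmissibleKernels} directly: present $G \cong \coker \Yoneda(g)$ with $g$ weakly admissible in $\EE$, write $F \cong \coker \Yoneda(f)$, use Lemma~\ref{Lemma:TheFamousDiagramChase} to realize $F$ as $\coker \Yoneda(f')$ where $f'$ is obtained as the pullback of $g$ along the induced map, and then invoke Lemma~\ref{Lemma:AdmissiblesStableUnderPullbacks} to deduce that $f'$ is weakly admissible. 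Both routes yield the claim, but the route via the exact hull is cleaner since it recycles a result already proved.
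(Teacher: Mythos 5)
Your proposal is correct, but your primary argument is not the paper's proof --- in fact, the paper's proof is precisely your ``alternative'' route: it reads, essentially, ``as in \Cref{Proposition:smodad(EE)IsDeflationExactWithAdmissibleKernels}, now using that the pullback of a weakly admissible morphism is weakly admissible (\Cref{Lemma:AdmissiblesStableUnderPullbacks}).'' That is, one presents $G \cong \coker\Yoneda(g)$ with $g$ weakly admissible, lifts the monomorphism $F \hookrightarrow G$ to a commutative square, and uses \Cref{Lemma:TheFamousDiagramChase} to realize $F$ as the cokernel of the Yoneda image of a pullback of $g$, which is again weakly admissible. Your main route instead goes through the exact hull: you combine $\modinf(\EE) = \modad(\ex{\EE}) \cap \mod(\EE)$ (\Cref{lemma:IntersectingModadEE}), the fact that $\ex{\EE}$ has admissible kernels (\Cref{theorem:ExactHullRegular}), closure of $\modad(\ex{\EE})$ under subobjects (\Cref{Proposition:smodad(EE)IsDeflationExactWithAdmissibleKernels} applied to $\ex{\EE}$), and exactness of the fully faithful embedding $-\otimes_{\EE}\ex{\EE}\colon \mod(\EE) \to \mod(\ex{\EE})$ to preserve monomorphisms. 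This is a valid argument, and all ingredients do precede the lemma in the paper, so there is no circularity. What each approach buys: your hull argument is purely formal (an intersection of two closure properties, no new diagram chase), but it leans on \Cref{lemma:IntersectingModadEE}, whose proof itself invokes the Comparison Theorem and \Cref{Corollary:HullOfAIIsL1Closure}, so the total machinery is heavier; the paper's direct argument is more self-contained and works entirely inside $\EE$, which is exactly why the paper proves pullback-stability of weakly admissible morphisms in \Cref{Lemma:AdmissiblesStableUnderPullbacks} in the first place.
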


\begin{proof}
As in \Cref{Proposition:smodad(EE)IsDeflationExactWithAdmissibleKernels}, now using that the pullback of a weakly admissible morphism is weakly admissible (see \Cref{Lemma:AdmissiblesStableUnderPullbacks}).
\end{proof}

\begin{proposition}
Let $\EE$ be a deflation-exact category with admissible kernels.  For each $* \in \{\text{b}, \text{-}, \varnothing\}$, there are triangle equivalences
\[\begin{tikzcd}
\Dstar(\modad(\EE)) \arrow[r, "\simeq"] &\Dstar(\modinf(\EE)) \arrow[r, "\simeq"]& \Dstar(\mod(\EE)) \\
\Dstar(\modadone(\EE)) \arrow[u, "\simeq"] &\Dstar(\modinfone(\EE)) \arrow[u, "\simeq"]& \Dstar(\modone(\EE)) \arrow[u, "\simeq"]
\end{tikzcd}\]
\end{proposition}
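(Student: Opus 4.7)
The strategy is to verify, for each of the five inclusions $\AA\subseteq\BB$ appearing in the diagram, that $\AA$ is a uniformly preresolving subcategory of $\BB$ with bounded resolution dimension, after which the claimed triangle equivalences for $\ast \in \{b, -, \varnothing\}$ follow at once from \Cref{Theorem:PreResolvingDerivedEquivalence}. Two global facts make this feasible. First, since $\EE$ has kernels, every object of $\mod(\EE)$ has projective dimension at most two, and every projective is a summand of some $\Yoneda(X)$. Second, for each $X\in\EE$ the projective $\Yoneda(X)=\coker\Yoneda(0\to X)$ belongs to $\modadone(\EE)$ (hence to all six categories), because $0\to X$ is a split inflation and thus both admissible and a weak inflation.

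Axiom \ref{PR1} is uniform across all five inclusions: given any object $M$ of the larger category, choose a representation $M\cong \coker\Yoneda(f)$ and take the epimorphism $\Yoneda(X) \twoheadrightarrow M$ obtained from the source of $f$. This is a deflation in $\mod(\EE)$, and since $\modinf(\EE)$ (by \Cref{lemma:ModadinfClosedUnderSubjects}) and $\modad(\EE)$ (by \Cref{Proposition:smodad(EE)IsDeflationExactWithAdmissibleKernels}) are closed under subobjects in $\mod(\EE)$, the kernel of this map lies in whichever ambient category we are working in, so this is a deflation there as well.

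For axiom \ref{PR2}, I treat the two kinds of inclusions separately. For the horizontal inclusions $\modinf(\EE)\subseteq \mod(\EE)$ and $\modad(\EE)\subseteq \modinf(\EE)$, closure under kernels of deflations is immediate from the two closed-under-subobjects results just cited. For the vertical inclusions $\modone(\EE)\subseteq\mod(\EE)$, $\modinfone(\EE)\subseteq\modinf(\EE)$, and $\modadone(\EE)\subseteq\modad(\EE)$, the key is the standard projective-dimension estimate: if $0\to X\to Y\to Z\to 0$ is short exact with $Y,Z$ of projective dimension at most one, then so is $X$. Combined with the identities $\modinfone(\EE)=\modone(\EE)\cap\modinf(\EE)$ and $\modadone(\EE)=\modone(\EE)\cap\modad(\EE)$ of \Cref{remark:AboutGlobalDimension}, this gives deflation-closedness in each case.

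Finally, for the bound on resolution dimension, note that given $M$ in the larger category we may take a projective cover $P\twoheadrightarrow M$ with $P=\Yoneda(X)\in\AA$ and kernel $K$ in the larger category (by the subobject-closedness invoked above). For the vertical inclusions the kernel $K$ automatically has projective dimension $\leq 1$ and lies in the appropriate intersection, yielding $\resdim_{\AA}\leq 1$; for the horizontal inclusions one iterates once more, giving $\resdim_{\AA}\leq 2$. Either way the inclusion is uniformly preresolving, so \Cref{Theorem:PreResolvingDerivedEquivalence} produces the triangle equivalences in all three flavours. The only point that requires genuine care---and is the sole place where the admissible-kernels hypothesis plays a role beyond ensuring $\mod(\EE)$ is abelian---is the verification that $\modinf(\EE)$ and $\modad(\EE)$ really are closed under subobjects in $\mod(\EE)$; but these are precisely the content of the preceding \Cref{Proposition:smodad(EE)IsDeflationExactWithAdmissibleKernels} and \Cref{lemma:ModadinfClosedUnderSubjects}, which themselves rest on the stability of (weak) admissible morphisms and (weak) inflations under pullback.
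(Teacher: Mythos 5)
Your proposal is correct, and it rests on the same backbone as the paper's proof: exhibit each of the five inclusions as a uniformly preresolving subcategory and apply \Cref{Theorem:PreResolvingDerivedEquivalence}. For the horizontal inclusions your argument coincides with the paper's (the projectives $\Yoneda(X)$ lie in all six categories, giving \ref{PR1}; subobject-closedness from \Cref{Proposition:smodad(EE)IsDeflationExactWithAdmissibleKernels} and \Cref{lemma:ModadinfClosedUnderSubjects} gives \ref{PR2}). Where you genuinely diverge is in the vertical inclusions: the paper disposes of these by citing the torsion pairs $(\eff(\EE),\modadone(\EE))$, $(\eff(\EE),\modinfone(\EE))$ and $(\eff(\EE),\modone(\EE))$ of \Cref{Proposition:TorisionPairInModad}, \Cref{Proposition:TorsionTheoryInSmodinf} and \Cref{proposition:HereditaryTorsionForExactHull}, using that torsionfree classes are deflation-closed and absorb syzygies, whereas you replace this torsion-theoretic input by the elementary dimension-shifting estimate (in a short exact sequence $0 \to X \to Y \to Z \to 0$ in $\mod(\EE)$ with $\operatorname{pd} Y, \operatorname{pd} Z \leq 1$ one has $\operatorname{pd} X \leq 1$) combined with the intersection identities of \Cref{remark:AboutGlobalDimension}. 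Your route is more self-contained, needing only the projective-dimension characterization of the "$1$"-categories rather than the torsion-pair propositions; the paper's route simply recycles results it has already proved for other purposes, so neither is strictly stronger. Two small inaccuracies, neither affecting correctness: the canonical epimorphism onto $\coker \Yoneda(f)$ comes from $\Yoneda$ applied to the \emph{target} of $f$, not its source; and for the horizontal inclusions no iteration is needed, since the first syzygy $K \subseteq \Yoneda(X)$ already lies in $\modinf(\EE)$ (resp.\ $\modad(\EE)$) by the very subobject-closedness you invoke, so $\resdim \leq 1$ there as well, matching the paper's bound---your bound of $2$ is still finite and uniform, so the conclusion stands.
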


\begin{proof}
Note that $\mod(\EE)$ has enough projective objects.  As these projective objects are contained in $\modinf(\EE)$, it follows that the embedding $\modinf(\EE) \to \mod(\EE)$ satisfies axiom \ref{PR1}.  By \Cref{lemma:ModadinfClosedUnderSubjects}, the category $\modinf(\EE)$ is closed under subobjects in $\mod(\EE)$, hence axiom \ref{PR2} holds.  Evenmoreso, $\resdim_{\modinf(\EE)} \mod(\EE) \leq 1$.  The required triangle equivalence $\Dstar(\modinf(\EE)) \to \Dstar(\mod(\EE))$ holds by \Cref{Theorem:PreResolvingDerivedEquivalence}.

The other equivalences are shown in a similar way.  For $\Dstar(\modad(\EE)) \to \Dstar(\modinf(\EE))$, we use \Cref{Proposition:smodad(EE)IsDeflationExactWithAdmissibleKernels}.  For the vertical maps, we use, from left to right, \Cref{Proposition:TorisionPairInModad}, \Cref{Proposition:TorsionTheoryInSmodinf}, and \Cref{proposition:HereditaryTorsionForExactHull}.%
%
%
%
\end{proof}
\section{The left heart as a localization of \texorpdfstring{$\hMon(\EE)$}{the monomorphisms}}\label{Section:AsLocalizationOfMonomorphisms}

Let $\EE$ be an additive regular category.  In \Cref{Section:ConstructionOfTheLeftHeart}, we showed that the left heart $\mathcal{LH}(\EE)$ can be obtained as the quotient $\smod(\EE) / \eff(\EE).$  When $\EE$ is quasi-abelian, then it has been shown in \cite{Schneiders99, Wegner17} that the left heart of $\EE$ can be described as a localization of the category $\hMon(\EE)$ of monomorphisms in $\EE$ (up to homotopy).  In this section, we give a similar description of the left heart of a deflation-exact category with admissible kernels.

Our approach is the following.  Let $(\TT, \FF)$ be a hereditary torsion theory in an abelian category $\AA$.  It follows from \Cref{Proposition:LocalizationAtTorsion} below that the quotient $\AA / \TT$ can be described as a localization of the torsionfree class $\FF$; specifically, one formally inverts all bimorphisms in $\FF$.

Applying this to the torsion pair $(\eff \EE, \modone \EE)$ in $\mod \EE$ shows that the quotient $\mod \EE / \eff \EE (\simeq \mathcal{LH}(\EE))$ can be obtained as a localization of $\modone(\EE)$ at the class of all \emph{bimorphisms} (= morphisms that are both epimorphisms and monomorphisms).  All that is left, is then to study the map $\Mon(\EE) \to \modone(\EE).$


The following observation allows us to obtain \Cref{Theorem:SIsMSinhMon} from the results in Section \ref{Section:ConstructionOfTheLeftHeart}.

\begin{proposition}\label{Proposition:LocalizationAtTorsion}
Let $(\TT, \FF)$ be a hereditary torsion theory in an abelian category $\AA$. Let $\Sigma_\TT \subseteq \AA$ be the set of all morphisms $f$ such that $\ker f, \coker f \in \TT$. 
\begin{enumerate}
	\item $\Sigma_\TT \cap \Mor \FF$ is a multiplicative system in $\FF$,
	\item $f \in \Mor(\FF)$ lies in $\Sigma_\TT$ if and only if $f$ is a bimorphism in $\FF$,
	\item the functor $\FF \to \AA$ induces an equivalence $\FF[(\Sigma_\TT \cap \Mor \FF)^{-1}] \xrightarrow{\simeq} \AA[\Sigma_\TT^{-1}]$.
	\end{enumerate}
\end{proposition}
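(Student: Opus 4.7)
\textbf{Proof plan for Proposition \ref{Proposition:LocalizationAtTorsion}.} I would prove the three parts in the order (2), (1), (3), using throughout the torsion-free reflection $\rho\colon \AA\to \FF$, $A\mapsto A/t(A)$, where $t(A)$ is the maximal torsion subobject of $A$. Hereditarity of $(\TT,\FF)$ is equivalent to $\TT$ being closed under subobjects; this makes $t$ left exact, and hence $\rho$ exact. The key structural fact I will use repeatedly is that any morphism $g\colon A\to F$ in $\AA$ with $F\in\FF$ annihilates $t(A)$ (since $\Hom(\TT,\FF)=0$), and therefore factors uniquely as $g=\bar g\circ q_A$ with $q_A\colon A\deflation\rho(A)$ the reflection.

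For (2), since $\FF$ is closed under subobjects, kernels of morphisms in $\FF$ are computed in $\AA$, so $f$ is monic in $\FF$ iff $\ker_\AA f=0$. If $\ker_\AA f\in\TT$, hereditarity places it in $\TT\cap\FF=0$, so $f$ is monic; conversely $0\in\TT$. For cokernels, $\coker_\FF f=\rho(\coker_\AA f)$, and $\rho M=0$ iff $M\in\TT$, so $f$ is epic in $\FF$ iff $\coker_\AA f\in\TT$. Combining gives $f\in\Sigma_\TT$ iff $f$ is a bimorphism in $\FF$.

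For (1), it is standard that the Serre subcategory $\TT\subseteq\AA$ gives rise to a two-sided multiplicative system $\Sigma_\TT$ in $\AA$. To descend to $W:=\Sigma_\TT\cap\Mor\FF$ I would proceed case by case: given an Ore diagram $F\xrightarrow{f} F'\xleftarrow{s} F''$ in $\FF$ with $s\in W$, apply the Ore condition in $\AA$ to obtain $F\xleftarrow{t}A\xrightarrow{f'}F''$ with $t\in\Sigma_\TT$, then replace $A$ by $\rho(A)$ using the factorization property above to produce $F\xleftarrow{\bar t}\rho(A)\xrightarrow{\bar{f'}}F''$ entirely in $\FF$. A direct computation gives $\ker\bar t=\ker t/t(A)\in\TT$ and $\coker\bar t=\coker t\in\TT$, so $\bar t\in W$. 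The cancellation axiom is treated identically, and closure of $W$ under composition and identities is immediate from the analogous properties of $\Sigma_\TT$.

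For (3), the inclusion sends $W$ into $\Sigma_\TT$, inducing $\bar\iota\colon \FF[W^{-1}]\to\AA[\Sigma_\TT^{-1}]$. Essential surjectivity: for any $A\in\AA$ the reflection $q_A\colon A\to\rho(A)$ has $\ker=t(A)\in\TT$ and $\coker=0$, so $q_A\in\Sigma_\TT$ and $A\cong\rho(A)$ in the localization. For full faithfulness, represent a morphism $F\to F'$ in $\AA[\Sigma_\TT^{-1}]$ by a roof $F\xleftarrow{s} A\xrightarrow{g} F'$ with $s\in\Sigma_\TT$; the factorization property replaces this by $F\xleftarrow{\bar s}\rho(A)\xrightarrow{\bar g}F'$ in $\FF$, with $\bar s\in W$, representing the same morphism (the equivalence is witnessed by $q_A$ itself). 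Injectivity on $\Hom$-sets follows by applying the same reflection to the middle vertex of an equivalence of roofs in $\AA$.

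The main technical obstacle I expect is (1): verifying the multiplicative system axioms cleanly without an avalanche of diagram chases. The uniform device of lifting to $\AA$, applying the known system there, and projecting back via $\rho$ keeps the argument short, but one must be careful to check that the reflected morphisms still lie in $W$ (which is exactly where exactness of $\rho$ and the $\Hom(\TT,\FF)=0$ factorization do the work).
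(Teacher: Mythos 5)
Your proposal is correct, but it takes a genuinely different route from the paper. The paper's proof is citation-driven: it notes that $\Sigma_\TT$ is a multiplicative system in $\AA$ since $\TT$ is Serre, invokes Gabriel--Zisman to get that $Q\colon \AA \to \AA[\Sigma_\TT^{-1}]$ is exact, observes that $Q \cong Q \circ \ff$ (where $\ff$ is the torsion-free functor, your $\rho$) because $Q$ kills $\TT$, checks that $\FF$ has kernels (computed in $\AA$) and cokernels (given by $\ff \circ \coker_\AA$) and that $Q|_\FF$ commutes with both, and then cites \cite[Proposition I.3.4]{GabrielZisman67} for part (1) and \cite[Corollary 7.2.2]{KashiwaraSchapira06} for part (3); part (2) is done directly, exactly as you do it. You instead inline the content of those citations: you verify the Ore and cancellation axioms by solving the lifting problem in $\AA$ and reflecting the solution vertex back into $\FF$ via $q_A\colon A \to \rho(A)$, and you prove the equivalence in (3) by explicit manipulation of roofs, with essential surjectivity witnessed by $q_A \in \Sigma_\TT$. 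Your key computations ($\coker \bar t = \coker t$ since $q_A$ is epi, and $\ker \bar t = \ker t / t(A) \in \TT$ since torsion classes are closed under quotients) are exactly what makes the descent work, and the same device handles fullness and faithfulness. What the paper's approach buys is brevity and reuse of general machinery; what yours buys is a self-contained argument that makes visible precisely where the reflection and the hypothesis $\Hom(\TT,\FF)=0$ enter --- indeed, your essential-surjectivity step is exactly the verification of the hypothesis of the Kashiwara--Schapira result the paper cites.

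One small correction to part (2): when $\ker_\AA f \in \TT$, the reason it also lies in $\FF$ (forcing it to be zero) is not hereditarity of $\TT$ but closure of $\FF$ under subobjects, which holds for \emph{any} torsion pair: $\ker_\AA f$ is a subobject of the domain of $f$, which lies in $\FF$. Hereditarity is genuinely needed elsewhere --- it is what makes $\TT$ a Serre subcategory, so that $\Sigma_\TT$ is a multiplicative system in $\AA$ in the first place --- so your proof does use the hypothesis, just not at the spot where you invoke it by name.
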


\begin{proof}
	As $\TT$ is a Serre subcategory of $\AA$, we know that $\Sigma_\TT$ is a multiplicative system. By \cite[Proposition~3.1]{GabrielZisman67}, the localization functor $Q\colon \AA\to \AA[\Sigma_\TT^{-1}]$ commutes with kernels and cokernels (and thus is exact). Now write $\mathfrak{t}\colon \AA\to \TT$ for the torsion functor and write $\ff\colon \AA\to \FF$ for the torsion-free functor. For any object $A\in \AA$, the short exact sequence $\mathfrak{t}(A)\inflation A\deflation \ff(A)$ is mapped to $0\inflation Q(A)\deflation (Q\circ \ff)(A)$ under $Q$. This shows that the natural transformation $Q\to Q\circ \ff$ is a natural isomorphism. Note that $\FF$ has kernels (these coincide with kernels in $\AA$) and cokernels (these are given by $\ff \circ \coker_\AA$). Hence, $Q|_\FF\colon \FF \to \AA[\Sigma_\TT^{-1}]$ commutes with kernels and cokernels. It now follows from \cite[Proposition I.3.4]{GabrielZisman67} that $\Sigma_\TT \cap \Mor \FF$ is a multiplicative system in $\FF$.

Note that a morphism $f \in \Mor(\FF)$ lies in $\Sigma_\TT$ if and only if $\ker_\AA(f), \coker_\AA(f) \in \TT$, which is equivalent to $\ker_\FF(f), \coker_\FF(f) = 0$. This is then equivalent to $f$ being both a monomorphism and an epimorphism in $\FF$.

The last statement follows from \cite[Corollary 7.2.2]{KashiwaraSchapira06}.
\end{proof}

\begin{definition}\label{definition:Mon}
	We write $\Mon(\EE)$ for the category of monomorphisms in $\EE$, i.e.~the objects are monomorphisms $\delta_E\colon E^{-1}\hookrightarrow E^0$ in $\EE$ and morphisms are commutative squares.  Consider the ideal $\II$ in $\Mon(\EE)$ consisting of all squares
	\[\begin{tikzcd}
		{E^{-1}}\arrow[r, hookrightarrow, "\delta_E"] \arrow[d, "u_{-1}"] & {E^0} \arrow[d,"{u_0}"]\\
		F^{-1}\arrow[r, hookrightarrow, "\delta_F"] & F^0
		\end{tikzcd}\]
	for which there exists a morphism $t\colon E^0 \to F^{-1}$ satisfying $t \circ \delta_E = u_{-1}$ and $\delta_F \circ t = u_0.$  We define the category $\hMon(\EE)$ as $\Mon(\EE) / \II.$
\end{definition}

\begin{remark}\label{remark:MonEmbeddings}
There is a natural full embedding $\Mon(\EE) \to \C(\EE)$, mapping a monomorphism $\delta_E\colon E^{-1}\hookrightarrow E^0$ in $\EE$ to a complex with $E^{-1}$ and $E^0$ in degrees $-1$ and $0$, respectively, and zero elsewhere.  For the category $\hMon(\EE)$, there is a similar full embedding into $K(\EE).$
\end{remark}

\begin{proposition}\label{Proposition:hMonModOneEquivalent}
	The functor $\coker \circ \Upsilon \colon \hMon(\EE) \to \smod(\EE)$, mapping a monomorphism $\delta_E\colon E^{-1}\hookrightarrow E^0$ to $\coker \Yoneda(\delta_E) \in \smod(\EE)$, induces an equivalence $\hMon(\EE) \to \modone(\EE)$.
\end{proposition}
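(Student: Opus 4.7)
The plan is to check that the functor $F := \coker \circ \Upsilon$ is well-defined on $\hMon(\EE)$, lands in $\modone(\EE)$, and is essentially surjective, full, and faithful. The whole argument rests on a single clean observation: for a monomorphism $\delta_E \colon E^{-1} \hookrightarrow E^{0}$, left exactness of $\Upsilon$ yields a short exact sequence
\[
0 \to \Upsilon(E^{-1}) \xrightarrow{\Upsilon(\delta_E)} \Upsilon(E^{0}) \to \coker\Upsilon(\delta_E) \to 0
\]
in $\mod \EE$, which is a length-one projective resolution of $\coker \Upsilon(\delta_E)$. In particular $\coker\Upsilon(\delta_E) \in \modone(\EE)$, so $F$ indeed lands there; and essential surjectivity is the content of \Cref{proposition:GlobalDimensionAtMostOne}\,(1)$\Leftrightarrow$(2).

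For fullness, given $\eta \colon \coker\Upsilon(\delta_E) \to \coker\Upsilon(\delta_F)$ in $\modone(\EE)$, I lift $\eta$ to a chain map of the two projective resolutions (using projectivity of $\Upsilon(E^{0})$ and $\Upsilon(E^{-1})$, the latter for injectivity of the lift at the left). This chain map is a commutative square in $\mod \EE$ between the representable objects, and by full faithfulness of $\Upsilon$ it descends to a commutative square
\[
\begin{tikzcd}
E^{-1}\arrow[r, hookrightarrow, "\delta_E"] \arrow[d, "u_{-1}"] & E^{0} \arrow[d, "u_0"]\\
F^{-1}\arrow[r, hookrightarrow, "\delta_F"] & F^{0}
\end{tikzcd}
\]
in $\EE$, which defines the required preimage morphism in $\Mon(\EE)$ and hence in $\hMon(\EE)$.

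For faithfulness, it suffices by additivity to show that a morphism $(u_{-1}, u_0) \in \Mon(\EE)(\delta_E, \delta_F)$ is null-homotopic whenever $F(u_{-1}, u_0) = 0$. The condition $F(u_{-1}, u_0) = 0$ says exactly that the composition $\Upsilon(E^{0}) \xrightarrow{\Upsilon(u_0)} \Upsilon(F^{0}) \twoheadrightarrow \coker\Upsilon(\delta_F)$ vanishes, i.e.\ $\Upsilon(u_0)$ factors through $\Upsilon(\delta_F)$. By full faithfulness of $\Upsilon$, this factorization comes from a morphism $t \colon E^{0} \to F^{-1}$ with $\delta_F \circ t = u_0$. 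Using $\delta_F \circ u_{-1} = u_0 \circ \delta_E = \delta_F \circ t \circ \delta_E$ together with $\delta_F$ being a monomorphism, I conclude $u_{-1} = t \circ \delta_E$, so $t$ is the sought homotopy.

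No step is really hard: the only subtlety is ensuring the lift of $\eta$ in the fullness step can be chosen so that the left vertical is of the form $\Upsilon(u_{-1})$, but this is automatic since every morphism between representables lies in the image of $\Upsilon$. All three verifications collapse to the combination of Yoneda's fully faithful, left-exact behaviour with the fact that objects of $\modone(\EE)$ have two-term projective resolutions by representables.
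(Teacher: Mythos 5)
Your proof is correct and follows essentially the same route as the paper: the paper's proof simply cites the Comparison Theorem (\cite[Theorem~12.4]{Buhler10}), whose two relevant consequences---that any morphism between the cokernels lifts to a chain map of the two-term representable resolutions, and that such a lift is unique up to homotopy---are exactly what you verify by hand in your fullness and faithfulness steps. Your explicit appeal to \Cref{proposition:GlobalDimensionAtMostOne} for essential surjectivity, and the observation that $\Upsilon(\delta_E)$ is monic so that $0 \to \Upsilon(E^{-1}) \to \Upsilon(E^0) \to \coker\Upsilon(\delta_E) \to 0$ is a length-one projective resolution, make explicit what the paper leaves implicit.
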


\begin{proof}
	This follows from the Comparison Theorem (see \cite[Theorem 12.4]{Buhler10}).
\end{proof}

\begin{remark}
By \Cref{proposition:HereditaryTorsionForExactHull}, $\modone(\EE)$ is a cotilting torsion-free class in $\smod(\EE)$ and thus \cite[Proposition~B.3]{BondalvandenBergh03} yields that $\hMon(\EE)\simeq \modone(\EE)$ is a quasi-abelian category. By \Cref{Theorem:PreResolvingDerivedEquivalence}, we have $\Dstar(\hMon(\EE)) \simeq \Dstar(\smod(\EE)) \simeq \Kstar(\EE)$, for $* \in \{\varnothing, \text{b}, -\}$.
\end{remark}

\begin{proposition}\label{Proposition:BicartesianMeansRegular}
A morphism $\delta_E \to \delta_F$ in $\Mon(\EE)$ is a bimorphism in $\hMon(\EE)$ if and only if it is a bicartesian square.
\end{proposition}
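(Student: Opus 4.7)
By \Cref{Proposition:hMonModOneEquivalent} the functor $\coker\circ\Upsilon$ identifies $\hMon(\EE)$ with the torsion-free class $\modone(\EE)$ of the hereditary torsion pair $(\eff(\EE),\modone(\EE))$ on $\mod(\EE)$ from \Cref{proposition:HereditaryTorsionForExactHull}. I will then feed this into \Cref{Proposition:LocalizationAtTorsion}, which says that a morphism $\eta$ in $\modone(\EE)$ is a bimorphism if and only if $\ker\eta$ and $\coker\eta$, computed in $\mod(\EE)$, both lie in $\eff(\EE)$. Thus the task reduces to rephrasing these two conditions in terms of the square.

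To compute $\ker\eta$ and $\coker\eta$ I will apply \Cref{Lemma:TheFamousDiagramChase} to the commutative square
\[
\xymatrix{E^{-1}\ar[r]^{u_{-1}}\ar[d]_{\delta_E}&F^{-1}\ar[d]^{\delta_F}\\ E^0\ar[r]_{u_0}&F^0,}
\]
so that $\eta=\coker\Upsilon(\delta_E)\to\coker\Upsilon(\delta_F)$ is exactly the map in question. Existence of the pullback $P$ of $E^0\xrightarrow{u_0}F^0\xleftarrow{\delta_F}F^{-1}$ is guaranteed by embedding into $\mathcal{LH}(\EE)$: the pullback in $\mathcal{LH}(\EE)$ sits as a subobject of $F^{-1}$, hence lies in $\EE$ by \Cref{Proposition:TheEmbeddingToHeartProperties}\eqref{Item:Proposition:TheEmbeddingToHeartProperties4}. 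Since $\delta_F$ is monic we have $\ker\delta_F=0$, so \Cref{Lemma:TheFamousDiagramChase} yields
\[
\ker\eta\;\cong\;\coker\Upsilon\bigl(\alpha\colon E^{-1}\to P\bigr),\qquad
\coker\eta\;\cong\;\coker\Upsilon\bigl((\delta_F,u_0)\colon F^{-1}\oplus E^0\to F^0\bigr),
\]
where $\alpha$ is the canonical map induced by the universal property of $P$.

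Now I combine \ref{R3+} (which holds by \Cref{Proposition:DeflationAICategorySatisfiesAxiomR3+}) with \Cref{Lemma:MorphismsRepresentingAnEffaceableAreDeflations} to translate effaceability into deflations. For the cokernel, $\coker\eta\in\eff(\EE)$ is equivalent to $(\delta_F,u_0)\colon F^{-1}\oplus E^0\to F^0$ being a deflation, i.e.\ a cokernel; but in an additive regular category, this is precisely the condition that $F^0$ is the cokernel of $(u_{-1},-\delta_E)\colon E^{-1}\to F^{-1}\oplus E^0$, which is the definition of the square being a pushout. For the kernel, I will use that $P\to E^0$ is a monomorphism (as a pullback of $\delta_F$), and hence $\alpha$ itself is a monomorphism because $(P\to E^0)\circ\alpha=\delta_E$ is a monomorphism. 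Then $\ker\eta\in\eff(\EE)$ iff $\alpha$ is a deflation; but a morphism that is simultaneously a monomorphism and a deflation is the cokernel of its kernel with zero kernel, hence an isomorphism. Thus $\ker\eta\in\eff(\EE)$ exactly when $\alpha$ is invertible, which is exactly when the square is a pullback.

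Putting the two halves together, $\eta$ is a bimorphism in $\hMon(\EE)$ iff the square is both a pullback and a pushout, i.e.\ bicartesian. The only mildly delicate step is checking that $\alpha$ is automatically a monomorphism, but this follows immediately from the fact that $\delta_E=(P\to E^0)\circ\alpha$ is monic; no subtler homological input is required.
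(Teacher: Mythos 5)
Your overall route is the same as the paper's: identify $\hMon(\EE)$ with $\modone(\EE)$ via $\coker\circ\Upsilon$ (\Cref{Proposition:hMonModOneEquivalent}), use the hereditary torsion pair $(\eff(\EE),\modone(\EE))$ of \Cref{proposition:HereditaryTorsionForExactHull} together with \Cref{Proposition:LocalizationAtTorsion} to characterize bimorphisms, and compute $\ker\eta$ and $\coker\eta$ with \Cref{Lemma:TheFamousDiagramChase}. Your readings of the lemma, $\ker\eta\cong\coker\Upsilon(\alpha\colon E^{-1}\to P)$ and $\coker\eta\cong\coker\Upsilon\bigl((\delta_F\ u_0)\colon F^{-1}\oplus E^0\to F^0\bigr)$, are correct, and your kernel half is sound: $\alpha$ is automatically monic since $\delta_E=(P\to E^0)\circ\alpha$ is monic, so ``$\alpha$ is a deflation'' forces ``$\alpha$ is an isomorphism,'' which is exactly the pullback condition.

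There is, however, a genuine error in your cokernel half. You claim that $(\delta_F\ u_0)$ being a deflation ``is precisely the condition that $F^0$ is the cokernel of $\binom{u_{-1}}{-\delta_E}\colon E^{-1}\to F^{-1}\oplus E^0$,'' i.e.\ that the square is a pushout. This standalone equivalence is false: a deflation is the cokernel of \emph{its own kernel}, and the kernel of $(\delta_F\ u_0)$ is the pullback $P$ (up to sign), not $E^{-1}$; the map $\binom{u_{-1}}{-\delta_E}$ only factors through that kernel via $\alpha$. Concretely, take $E^{-1}=0$, $E^0=F^{-1}=F^0=X\neq 0$, $\delta_F=u_0=\mathrm{id}_X$: then $(\mathrm{id}\ \mathrm{id})\colon X\oplus X\to X$ is a split deflation, so $\coker\eta\in\eff(\EE)$, yet the square is not a pushout (its pushout would be $X\oplus X$). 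So your two halves cannot be established as independent equivalences and then conjoined. The repair is to interleave them in the direction ``bimorphism $\Rightarrow$ bicartesian,'' which is exactly what the paper does: first use $\ker\eta\in\eff(\EE)$ (hence $=0$) to conclude that $\alpha$ is an isomorphism, so the square is a pullback and $E^{-1}\cong P$ \emph{is} the kernel of the deflation $(\delta_F\ u_0)$; then $E^{-1}\inflation F^{-1}\oplus E^0\deflation F^0$ is a conflation, which yields the pushout. The converse direction is unaffected, since pushout $\Rightarrow$ $(\delta_F\ u_0)$ is a cokernel $\Rightarrow$ it is a deflation $\Rightarrow$ $\coker\eta\in\eff(\EE)$, and pullback $\Rightarrow$ $\ker\eta=0$.
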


\begin{proof}
By \Cref{Proposition:hMonModOneEquivalent}, it suffices to show that bicartesion squares in $\hMon(\EE)$ correspond to bimorphisms in $\modone(\EE)$ under the functor $\coker \circ \Upsilon \colon \hMon(\EE) \to \smod(\EE).$  Since $(\eff(\EE), \modone(\EE))$ is a torsion pair in $\smod(\EE)$, a bimorphism in $\modone(\EE)$ is a morphism $\phi\colon F \to G$ such that $\ker \phi, \coker \phi \in \eff(\EE)$, equivalently, such that $\ker \phi = 0$ and $\coker \phi \in \eff(\EE)$.

Consider first a morphism $\phi\colon F \to G$ in $\modone(\EE)$.  Let $f\colon A \hookrightarrow B$ and $g\colon C \hookrightarrow D$ be monomorphisms in $\EE$ such that $\coker \Yoneda(f) \cong F$ and $\coker \Yoneda(g) \cong G$.  The morphism $\phi$ can be lifted to a commutative diagram 
	\[\xymatrix{
		A\ar[r]^{\beta}\ar[d]^f & C\ar[d]^{g}\\
		B\ar[r]^{\alpha} & D
	}\]
	in $\EE$.  We show that this square is bicartesian. Since $\coker \phi\in \eff(\EE)$, it follows from \Cref{Lemma:MorphismsRepresentingAnEffaceableAreDeflations} and \Cref{Lemma:TheFamousDiagramChase} that $\begin{pmatrix} g & \alpha \end{pmatrix} \colon C \oplus B \to D$ is a deflation. Next we take the pullback of $g$ along $\alpha$ and use the notation from \Cref{Lemma:TheFamousDiagramChase}.  Using that $\phi$ is a monomorphism and that $\ker(g)=0$, we have that $\beta''\colon A \to E$ is a retraction.  Furthermore, using that $g' \circ \beta'' = f$ is a monomorphism, we see that $\beta''$ is an isomorphism.  This shows that the square $ABCD$ is a pullback.  Hence, $A \to C \oplus B$ is the kernel of the deflation $C \oplus B \deflation D$, so that $A \inflation C \oplus B \deflation D$ is a conflation and may conclude that the square $ABCD$ is both a pullback and a pushout.
	
	The other implication, that a bicartesion square in $\EE$ corresponds to a bimorphism in $\modone(\EE)$, follows easily from \Cref{Lemma:TheFamousDiagramChase}.
\end{proof}

The previous proposition motivates introducing the following notation.

\begin{notation}\label{notation:BicartesionSquares}
We write $S$ for those morphisms $u\colon \delta_E\to\delta_F$ such that 
	\[\begin{tikzcd}
		{E^{-1}}\arrow[r, hookrightarrow, "\delta_E"] \arrow[d, "u_{-1}"] & {E^0} \arrow[d,"{u_0}"]\\
		F^{-1}\arrow[r, hookrightarrow, "\delta_F"] & F^0
		\end{tikzcd}\] is a bicartesian square. Furthermore, we write $\theta\colon \hMon(\EE)\to \K(\EE)$ for the embedding functor in \Cref{remark:MonEmbeddings}, mapping a monomorphism $\delta_E\colon E^{-1}\hookrightarrow E^0$ in $\EE$ to a complex with $E^{-1}$ and $E^0$ in degrees $-1$ and $0$, respectively, and zero elsewhere.
\end{notation}

By \Cref{Proposition:RepresentationsOfObjectsInLeftHeart}, there is a natural functor $\hMon(\EE)\to \mathcal{LH}(\EE)$.

\begin{theorem}\label{Theorem:SIsMSinhMon}
In the category $\hMon(\EE)$, the class $S$ of all bicartesian squares is a left and right multiplicative system.  The natural functor $\hMon(\EE) \to \mathcal{LH}(\EE)$ induces an equivalence $\hMon(\EE)[S^{-1}] \to \mathcal{LH}(\EE)$.
\end{theorem}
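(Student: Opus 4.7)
The strategy is to transfer the result of \Cref{Proposition:LocalizationAtTorsion} through the equivalence $\hMon(\EE)\simeq\modone(\EE)$ of \Cref{Proposition:hMonModOneEquivalent}, and then identify the resulting localization with $\mathcal{LH}(\EE)$ via \Cref{Theorem:AlternativeDescriptionOfLeftHeart}. By \Cref{proposition:HereditaryTorsionForExactHull}, the pair $(\eff(\EE),\modone(\EE))$ is a hereditary torsion theory in $\mod(\EE)$, so \Cref{Proposition:LocalizationAtTorsion} (applied with $\AA=\mod(\EE)$, $\TT=\eff(\EE)$, $\FF=\modone(\EE)$) tells us that the class $\Sigma$ of bimorphisms in $\modone(\EE)$ is a (left and right) multiplicative system, and that the natural inclusion induces an equivalence
\[
\modone(\EE)[\Sigma^{-1}]\;\xrightarrow{\;\simeq\;}\;\mod(\EE)[\Sigma_\TT^{-1}].
\]

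Under the equivalence $\coker\circ\Upsilon\colon\hMon(\EE)\to\modone(\EE)$, the class $S$ of bicartesian squares in $\hMon(\EE)$ corresponds precisely to $\Sigma$ by \Cref{Proposition:BicartesianMeansRegular}. Hence $S$ is a multiplicative system in $\hMon(\EE)$ (proving (1)), and there is an induced equivalence $\hMon(\EE)[S^{-1}]\simeq\modone(\EE)[\Sigma^{-1}]$. On the other side, $\mod(\EE)[\Sigma_\TT^{-1}]$ is (essentially by definition) the Serre quotient $\mod(\EE)/\eff(\EE)$, which by \Cref{Theorem:AlternativeDescriptionOfLeftHeart} is equivalent to $\mathcal{LH}(\EE)$ via the functor $\overline{\phi}$. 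Chaining these equivalences yields $\hMon(\EE)[S^{-1}]\simeq\mathcal{LH}(\EE)$.

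It remains to check that this composite equivalence agrees with the natural functor $\theta\colon\hMon(\EE)\to\mathcal{LH}(\EE)$ alluded to before the statement (i.e.\ the one sending a monomorphism $\delta_E\colon E^{-1}\hookrightarrow E^0$ to the associated two-term complex in degrees $-1,0$, which by \Cref{Proposition:RepresentationsOfObjectsInLeftHeart} lies in $\mathcal{LH}(\EE)$). Tracing through the identifications, the other candidate sends $\delta_E$ to $\overline{\phi}(\coker\Upsilon(\delta_E))$. Applying $\overline{\phi}$ to the short exact sequence
\[
0\to\Upsilon(E^{-1})\xrightarrow{\Upsilon(\delta_E)}\Upsilon(E^0)\to\coker\Upsilon(\delta_E)\to 0
\]
and using that $\overline{\phi}\circ\Upsilon=\phi$ and that $\overline{\phi}$ is right exact, we see that $\overline{\phi}(\coker\Upsilon(\delta_E))$ is canonically isomorphic to the cokernel of $\phi(\delta_E)$ in $\mathcal{LH}(\EE)$. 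Since $\phi(\delta_E)$ is a monomorphism in $\mathcal{LH}(\EE)$ by \Cref{Proposition:TheEmbeddingToHeartPropertiesNew}, this cokernel is represented precisely by the two-term complex $E^{-1}\hookrightarrow E^0$, matching $\theta(\delta_E)$.

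The only non-trivial points in the plan are the verification of naturality of this identification (straightforward from the universal property of $\overline{\phi}$ in \Cref{theorem:UniversalFreyd}) and, more subtly, the check that $S$ transports to $\Sigma$ under the equivalence from \Cref{Proposition:hMonModOneEquivalent}; but the latter is exactly the content of \Cref{Proposition:BicartesianMeansRegular}. The main conceptual obstacle is thus not in (2), which reduces cleanly, but in making sure that the description of bimorphisms in $\modone(\EE)$ actually coincides with bicartesian squares in the ambient $\EE$; this is where admissibility of kernels and \Cref{Lemma:MorphismsRepresentingAnEffaceableAreDeflations} intervene, and is already handled in \Cref{Proposition:BicartesianMeansRegular}.
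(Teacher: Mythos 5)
Your proposal is correct and follows essentially the same route as the paper's proof: both combine \Cref{proposition:HereditaryTorsionForExactHull} and \Cref{Proposition:LocalizationAtTorsion} to localize the torsionfree class $\modone(\EE)$ at its bimorphisms, transfer this to $\hMon(\EE)$ via \Cref{Proposition:hMonModOneEquivalent} and \Cref{Proposition:BicartesianMeansRegular}, and conclude with \Cref{Theorem:AlternativeDescriptionOfLeftHeart}. The only difference is that you additionally verify that the composite equivalence agrees with the natural functor $\hMon(\EE)\to\mathcal{LH}(\EE)$, a compatibility the paper leaves implicit; this is a harmless (and welcome) extra check, not a different argument.
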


\begin{proof}
By  \Cref{proposition:HereditaryTorsionForExactHull}, the pair $(\eff(\EE), \modone(\EE))$ is a hereditary torsion pair in $\smod(\EE).$  Let $S'$ be the class of all bimorphisms in $\modone(\EE).$  It follows from \Cref{Proposition:LocalizationAtTorsion} that $S'$ is a multiplicative system in $\modone(\EE)$ and that $\modone(\EE)[(S')^{-1}] \simeq \smod(\EE) / \eff(\EE)$.

By \Cref{Proposition:hMonModOneEquivalent,Proposition:BicartesianMeansRegular}, we have $\modone(\EE)[(S')^{-1}] \simeq \hMon(\EE)[S^{-1}]$ and by \Cref{Theorem:AlternativeDescriptionOfLeftHeart} we have $\mathcal{LH}(\EE) \simeq \smod(\EE) / \eff(\EE)$.  This finishes the proof.
\end{proof}

\begin{lemma}\label{Lemma:BicartesianSquaresCorrespondToQuasiIsomorphisms}
	Let $u\colon \delta_E\to \delta_F$ be a morphism in $\hMon(\EE)$. Then $\theta(u)$ is a quasi-isomorphism if and only if $u\in S$. 
\end{lemma}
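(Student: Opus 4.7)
The plan is to reduce the statement to facts we have already established about the left heart and the torsion theory $(\eff(\EE),\modone(\EE))$ on $\smod(\EE)$, rather than computing cones and cohomology by hand. The starting observation is that, for every monomorphism $\delta_E\colon E^{-1}\hookrightarrow E^0$ in $\EE$, the two-term complex $\theta(\delta_E)$ already lies in the left heart $\mathcal{LH}(\EE)\subseteq \D(\EE)$. Indeed, applying Proposition~\ref{Proposition:RepresentationsOfObjectsInLeftHeart} to this complex, one sees that $\LH^{-1}(\theta(\delta_E))=0$ (because $\ker(\delta_E)=0$) and that $\LH^{0}(\theta(\delta_E))$ is represented by $\delta_E$ itself, all other $\LH^i$ vanishing trivially. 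Consequently, the composite $\hMon(\EE)\xrightarrow{\theta}\K(\EE)\to \D(\EE)$ factors through $\mathcal{LH}(\EE)$, and a morphism $\theta(u)$ is a quasi-isomorphism if and only if it becomes an isomorphism in $\mathcal{LH}(\EE)$.

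Next, I would identify this composite with the natural functor $\hMon(\EE)\to \mathcal{LH}(\EE)$ appearing in Theorem~\ref{Theorem:SIsMSinhMon}. Under the equivalence $\coker\circ\Upsilon\colon\hMon(\EE)\xrightarrow{\simeq}\modone(\EE)$ of Proposition~\ref{Proposition:hMonModOneEquivalent}, the functor $\hMon(\EE)\to\mathcal{LH}(\EE)$ corresponds to the composition $\modone(\EE)\hookrightarrow \smod(\EE)\twoheadrightarrow \smod(\EE)/\eff(\EE)\simeq \mathcal{LH}(\EE)$ (see Theorem~\ref{Theorem:AlternativeDescriptionOfLeftHeart}). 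So the question becomes: for which morphisms $\bar u$ in $\modone(\EE)$ does the image in $\smod(\EE)/\eff(\EE)$ become an isomorphism?

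The heart of the argument is then the hereditary torsion pair $(\eff(\EE),\modone(\EE))$ of Proposition~\ref{proposition:HereditaryTorsionForExactHull}. If $\bar u$ becomes an isomorphism in the quotient, its kernel and cokernel computed in $\smod(\EE)$ both lie in $\eff(\EE)$; since $\modone(\EE)$ is closed under subobjects (being the torsion-free class of a hereditary torsion pair) and $\eff(\EE)\cap\modone(\EE)=0$, the kernel vanishes, so $\bar u$ is monic in $\smod(\EE)$ with cokernel in $\eff(\EE)$. This is precisely what it means to be a bimorphism in $\modone(\EE)$, which by Proposition~\ref{Proposition:BicartesianMeansRegular} is equivalent to $u$ being a bicartesian square, i.e.\ $u\in S$. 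The converse direction is immediate from the same two propositions: for $u\in S$, the corresponding $\bar u$ is a bimorphism in $\modone(\EE)$, and the exact quotient functor $\smod(\EE)\to \smod(\EE)/\eff(\EE)\simeq\mathcal{LH}(\EE)$ sends bimorphisms of $\modone(\EE)$ to isomorphisms.

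The only genuine obstacle is the bookkeeping around where kernels and cokernels are computed: one must check that the kernel of $\bar u$ taken in $\modone(\EE)$ coincides with the one taken in $\smod(\EE)$ (which follows from $\modone(\EE)$ being closed under subobjects), and that the cokernel in $\modone(\EE)$ is obtained by applying the torsion-free functor. These are formal consequences of the torsion pair, so no new work is required; the proof is essentially an assembly of Propositions~\ref{Proposition:RepresentationsOfObjectsInLeftHeart}, \ref{Proposition:hMonModOneEquivalent}, \ref{Proposition:BicartesianMeansRegular}, and \ref{proposition:HereditaryTorsionForExactHull}, together with Theorem~\ref{Theorem:AlternativeDescriptionOfLeftHeart}.
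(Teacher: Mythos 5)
Your proof is correct, and it takes a genuinely different route from the paper's. The paper argues directly at the level of complexes: it writes out $\cone(\theta(u))$ as the three-term complex $E^{-1}\to E^{0}\oplus F^{-1}\to F^{0}$, invokes \Cref{Proposition:DeflationAICategorySatisfiesAxiomR3+} and \Cref{Proposition:BasicPropertiesDerivedCategoryNew} to identify quasi-isomorphisms with maps whose cone is acyclic, and then observes that acyclicity of this particular complex says exactly that $E^{-1}\inflation E^{0}\oplus F^{-1}\deflation F^{0}$ is a kernel-cokernel pair, i.e.\ that the square is bicartesian. You instead transport the question along the equivalences $\hMon(\EE)\simeq\modone(\EE)$ and $\mathcal{LH}(\EE)\simeq\smod(\EE)/\eff(\EE)$, where it becomes the assertion that the Serre quotient inverts precisely those morphisms of $\modone(\EE)$ whose kernel and cokernel lie in $\eff(\EE)$, i.e.\ the bimorphisms of the torsion-free class (\Cref{proposition:HereditaryTorsionForExactHull} and \Cref{Proposition:LocalizationAtTorsion}), after which \Cref{Proposition:BicartesianMeansRegular} translates bimorphisms back into bicartesian squares. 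All of your inputs precede the lemma in the paper, so there is no circularity; the trade-off is that the paper's argument is elementary and self-contained, whereas yours exhibits the lemma as a formal corollary of the dictionary already assembled for \Cref{Theorem:SIsMSinhMon} --- keeping in mind that the genuine computation in your route is not avoided but hidden inside \Cref{Proposition:BicartesianMeansRegular}. Two steps you treat as automatic should be spelled out in a final write-up. First, the identification of the composite $\hMon(\EE)\xrightarrow{\theta}\K(\EE)\to\D(\EE)$ with $\modone(\EE)\hookrightarrow\smod(\EE)\to\smod(\EE)/\eff(\EE)$ under the two equivalences: this holds because both functors send $\delta_E$ to the cokernel of $\phi(\delta_E)$ computed in $\mathcal{LH}(\EE)$, naturally in $\delta_E$ --- on one side by \Cref{corollary:GeneralHeart} (using $\ker\delta_E=0$), on the other by right exactness of $\overline{\phi}$ from \Cref{Theorem:AlternativeDescriptionOfLeftHeart}. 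Second, your reading of ``quasi-isomorphism'' as ``becomes invertible in $\D(\EE)$'' coincides with the cone-acyclicity reading used in the paper's proof only because $\Ac(\EE)$ is a thick subcategory of $\K(\EE)$ in the present setting, which is again \Cref{Proposition:BasicPropertiesDerivedCategoryNew} together with the fact that $\EE$ has kernels and satisfies axiom \ref{R3+}; one sentence making this identification explicit closes the remaining gap between your argument and the lemma as stated.
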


\begin{proof}
	Consider a morphism $u\colon \delta_E\to\delta_F$ given by the commutative diagram:
	\[\begin{tikzcd}
		E^{-1}\arrow[hook]{r}{\delta_E}\arrow{d}{u_{-1}} & E^0\arrow{d}{u_0}\\
		F^{-1}\arrow[hook]{r}{\delta_F} & F^0
	\end{tikzcd}\] The cone of $\theta(u)$ is given by 
	\[\xymatrix{
		\cdots\ar[r] & 0\ar[r] & E^{-1}\ar[r]^-{\begin{psmallmatrix}-\delta_E\\u_{-1}\end{psmallmatrix}} & E^{0}\oplus F^{-1} \ar[r]^-{\begin{psmallmatrix}u_0&\delta_F\end{psmallmatrix}}& F^0\ar[r] & 0\ar[r] & \cdots
	}\] Combining \Cref{Proposition:DeflationAICategorySatisfiesAxiomR3+} and \Cref{Proposition:BasicPropertiesDerivedCategoryNew}, we see that $u$ is a quasi-isomorphism if and only if $\cone(u)\in\Ac(\EE)$.  Hence, $u$ is a quasi-isomorphism if and only if $\xymatrix{E^{-1}\ar[r]^-{\begin{psmallmatrix}-\delta_E\\u_{-1}\end{psmallmatrix}} & E^0\oplus F^{-1} \ar[r]^-{\begin{psmallmatrix}u_0&\delta_F\end{psmallmatrix}} &F^0}$ is a conflation (equivalently, a kernel-cokernel pair by \Cref{REM}). The latter is clearly equivalent to requiring the above square to be bicartesian. This completes the proof.
\end{proof}

\begin{remark}
	We turn our attention back to the category $\Mon(\EE)$.  Let $\NN$ be the class of all objects $X \hookrightarrow Y$ which are isomorphisms, and let $[\NN]$ be the ideal of $\Mon(\EE)$ consisting of all morphisms factoring through an object of $\NN$.  It is straightforward to verify that $\hMon(\EE) \simeq \Mon(\EE) / [\NN]$.  It follows from \Cref{Proposition:BicartesianMeansRegular} that the set $S \subseteq \Mor (\Mon(\EE))$ of bicartesian squares is precisely the set of all bimorphisms in $\hMon(\EE)$.  In this case, the localization $\Mon(\EE)[S^{-1}]$ has also been denoted by $\Mon(\EE) / \NN$ in \cite{Rump07} (this notion differs from the one used in Section \ref{section:Quotients} as $\NN$ is neither an inflation- nor deflation-percolating subcategory of $\Mon(\EE)$).
\end{remark}

With a small abuse of notation, we write $S$ for the class of bicartesian squares in both $\Mon(\EE)$ and $\hMon(\EE)$, cf.~Notation \ref{notation:BicartesionSquares}.

\begin{proposition}\label{proposition:hMonVsMon}
The quotient functor $\Mon(\EE) \to \hMon(\EE)$ induces an equivalence $\Mon(\EE)[S^{-1}] \xrightarrow{\simeq} \hMon(\EE)[S^{-1}].$
\end{proposition}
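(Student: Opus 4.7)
The plan is to construct mutually inverse equivalences $F\colon \Mon(\EE)[S^{-1}] \to \hMon(\EE)[S^{-1}]$ and $G$ in the opposite direction using the universal properties of the two localizations. In both cases the construction reduces to verifying an appropriate universal mapping property, and the hard part will be showing that the localization $L\colon \Mon(\EE) \to \Mon(\EE)[S^{-1}]$ factors through the homotopy quotient $Q\colon \Mon(\EE) \to \hMon(\EE)$; once this is established, the remaining steps are formal.

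One direction is immediate: by definition the class $S$ consists of the same bicartesian squares in either category, so the composite $\Mon(\EE) \xrightarrow{Q} \hMon(\EE) \to \hMon(\EE)[S^{-1}]$ sends every element of $S$ to an isomorphism and descends, by the universal property of $\Mon(\EE)[S^{-1}]$, to a functor $F$.

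The converse direction rests on the following observation: for every $X \in \EE$, the unique morphism $\iota_X\colon \mathrm{id}_X \to 0$ in $\Mon(\EE)$, represented by the square
\[
\begin{tikzcd}
X \arrow[r, equal] \arrow[d] & X \arrow[d] \\
0 \arrow[r, equal] & 0
\end{tikzcd}
\]
is trivially bicartesian in $\EE$, so $\iota_X \in S$ and $\mathrm{id}_X \cong 0$ in $\Mon(\EE)[S^{-1}]$. Now any null-homotopic morphism $w\colon \delta_E \to \delta_F$, witnessed by $t\colon E^0 \to F^{-1}$ with $w_{-1} = t\delta_E$ and $w_0 = \delta_F t$, factors in $\Mon(\EE)$ as
\[
\delta_E \xrightarrow{(\delta_E,\,\mathrm{id}_{E^0})} \mathrm{id}_{E^0} \xrightarrow{(t,\,\delta_F t)} \delta_F,
\]
and hence is zero in $\Mon(\EE)[S^{-1}]$. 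This shows that $L$ kills the ideal of null-homotopic maps and factors uniquely through $Q$ via a functor $\hMon(\EE) \to \Mon(\EE)[S^{-1}]$ which still inverts $S$, and so descends to $G\colon \hMon(\EE)[S^{-1}] \to \Mon(\EE)[S^{-1}]$.

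Finally, $F$ and $G$ are mutually inverse: both composites, when precomposed with the canonical functor into the appropriate localization, return that same canonical functor, so by the uniqueness clause of the universal property they are naturally isomorphic to the identity. The main obstacle is thus the identification $\mathrm{id}_X \simeq 0$ in $\Mon(\EE)[S^{-1}]$; everything else is a formal manipulation of universal properties.
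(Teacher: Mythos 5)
Your proof is correct and follows essentially the same route as the paper: the key step in both is to factor a null-homotopic morphism through an identity object $\mathrm{id}_X$ of $\Mon(\EE)$ and observe that $\mathrm{id}_X \to 0$ is a bicartesian square, hence an isomorphism in $\Mon(\EE)[S^{-1}]$. The only (cosmetic) differences are that the paper factors through $\mathrm{id}_{F^{-1}}$ via $(u_{-1},h)$ and $(\mathrm{id},\delta_F)$ where you factor through $\mathrm{id}_{E^0}$ via $(\delta_E,\mathrm{id})$ and $(t,\delta_F t)$, and that you spell out the universal-property bookkeeping which the paper compresses into ``one readily deduces.''
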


\begin{proof}
	Consider a map $u\colon\delta_E\to \delta_F$ in $\Mon(\EE)$ and assume that $u$ is null-homotopic. Then there exists a map $h\colon E^{0}\to F^{-1}$ such that the diagram
	\[\begin{tikzcd}
	{E^{-1}} \arrow[r, "u_{-1}"] \arrow[d, hookrightarrow, "\delta_E"] & {F^{-1}} \arrow[d, hookrightarrow, "\delta_F"] \\
	{E^{0}} \arrow[r, "u_{0}"] \arrow[ru, "h"] & {F^{0}}
	\end{tikzcd}\]
 commutes. It follows that $u$ factors as follows:
	\[\begin{tikzcd}
	{E^{-1}} \arrow[r, "u_{-1}"] \arrow[d, hookrightarrow, "\delta_E"] & {F^{-1}} \arrow[d, equal] \arrow[r, equal]& {F^{-1}} \arrow[d, hookrightarrow, "\delta_F"] \\
	{E^{0}} \arrow[r, "h"] & {F^{-1}} \arrow[r, hookrightarrow, "\delta_F"] & {F^0}
	\end{tikzcd}\]
	As the square 
	\[\xymatrix{F^{-1}\ar@{=}[d]\ar[r] & 0\ar[d]\\ F^{-1}\ar[r] & 0}\]
	is bicartesian, $u=0$ in $\Mon(\EE)[S^{-1}]$. From this one readily deduces that $\Mon(\EE)[S^{-1}]\simeq \hMon(\EE)[S^{-1}]$, and the result follows.
\end{proof}

Similar results now hold for the full subcategories $\hWInf(\EE)$ (or $\hInf(\EE)$) of $\hMon(\EE)$ consisting of objects $\delta_E\colon E^{-1} \hookrightarrow E^0$ which are weak inflations (or inflations).

\begin{corollary}
	\begin{enumerate}
		\item The set $S_{\hWInf(\EE)} \coloneqq S \cap \Mor(\WInf(\EE))$ is a right multiplicative system in $\hWInf(\EE).$  Moreover, we have $\hWInf(\EE)[S^{-1}_{\hWInf(\EE)}] \simeq \ex{\EE}.$
		\item The set $S_{\hInf(\EE)} \coloneqq S \cap \Mor(\Inf(\EE))$ is a right multiplicative system in $\hInf(\EE).$  Moreover, we have $\hInf(\EE)[S^{-1}_{\hInf(\EE)}] \simeq {\EE}.$
	\end{enumerate}
\end{corollary}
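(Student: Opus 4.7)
The plan is to mirror the argument used to establish Theorem \ref{Theorem:SIsMSinhMon}, now relativized to the subcategories $\hWInf(\EE) \subseteq \hMon(\EE)$ and $\hInf(\EE) \subseteq \hMon(\EE)$, and to replace the torsion pair $(\eff(\EE), \modone(\EE))$ in $\smod(\EE)$ by the torsion pairs $(\eff(\EE), \modinfone(\EE))$ in $\modinf(\EE)$ from \Cref{Proposition:TorsionTheoryInSmodinf} and $(\eff(\EE), \modadone(\EE))$ in $\modad(\EE)$ from \Cref{Proposition:TorisionPairInModad}. The endpoints of the chain are then provided by \Cref{Corollary:HullAsQuotient}, giving $\modinf(\EE)/\eff(\EE) \simeq \ex{\EE}$, and by the equivalence $\modad(\EE)/\eff(\EE) \simeq \EE$ obtained from the Yoneda embedding at the end of the preceding subsection.

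I would first upgrade \Cref{Proposition:hMonModOneEquivalent} to equivalences
\[ \hWInf(\EE) \xrightarrow{\simeq} \modinfone(\EE), \qquad \hInf(\EE) \xrightarrow{\simeq} \modadone(\EE), \]
both induced by $\coker \circ \Upsilon$. Essential surjectivity is automatic from the definitions of $\modinfone(\EE)$ and $\modadone(\EE)$ in \Cref{definition:VariousMods} together with \Cref{remark:AboutGlobalDimension}, and full faithfulness is inherited because both sides are full subcategories of $\hMon(\EE)$ and $\modone(\EE)$ respectively. Next, I would apply \Cref{Proposition:LocalizationAtTorsion} to each torsion pair to get
\[ \modinfone(\EE)[\Sigma_1^{-1}] \simeq \modinf(\EE)/\eff(\EE) \simeq \ex{\EE}, \qquad \modadone(\EE)[\Sigma_2^{-1}] \simeq \modad(\EE)/\eff(\EE) \simeq \EE, \]
where $\Sigma_1$ and $\Sigma_2$ are the classes of bimorphisms in $\modinfone(\EE)$ and $\modadone(\EE)$ respectively, which are automatically right (and left) multiplicative systems.

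Finally, I would show that the equivalences of the first step send $S_{\hWInf(\EE)}$ onto $\Sigma_1$ and $S_{\hInf(\EE)}$ onto $\Sigma_2$. This is a direct adaptation of \Cref{Proposition:BicartesianMeansRegular}: a bimorphism $\phi\colon F \to G$ in the torsion-free class $\modinfone(\EE)$ (resp.\ $\modadone(\EE)$) is characterized by $\ker \phi = 0$ and $\coker \phi \in \eff(\EE)$, and lifting $\phi$ to a square between weak inflations (resp.\ inflations) via the Comparison Theorem, the chase of \Cref{Lemma:TheFamousDiagramChase} together with \Cref{Lemma:MorphismsRepresentingAnEffaceableAreDeflations} produces precisely the bicartesian condition on the square. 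The main obstacle is verifying that in this correspondence one can always choose representatives staying inside $\WInf(\EE)$ or $\Inf(\EE)$: for the pullback/pushout legs appearing in the bicartesian diagram one must invoke the stability statements of \Cref{Lemma:AdmissiblesStableUnderPullbacks} on the one hand, and on the other hand that $\WInf(\EE)$ is closed under composition by \ref{L1} for $\ex{\EE}$ via \Cref{Corollary:HullOfAIIsL1Closure}, while $\Inf(\EE)$ is closed under composition by \ref{L1} applied inside the conflation category. Once these closure properties are in place, \Cref{proposition:hMonVsMon}-style reduction and passage through \Cref{Proposition:LocalizationAtTorsion} yield the desired equivalences $\hWInf(\EE)[S_{\hWInf(\EE)}^{-1}] \simeq \ex{\EE}$ and $\hInf(\EE)[S_{\hInf(\EE)}^{-1}] \simeq \EE$.
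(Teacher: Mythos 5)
Your endpoints are the right ones, and your first step (upgrading \Cref{Proposition:hMonModOneEquivalent} to equivalences $\hWInf(\EE)\simeq\modinfone(\EE)$ and $\hInf(\EE)\simeq\modadone(\EE)$ via $\coker\circ\Upsilon$) is sound. The gap is in the middle step: \Cref{Proposition:LocalizationAtTorsion} is stated, and proved, only for a hereditary torsion theory in an \emph{abelian} ambient category, whereas you invoke it for the torsion pairs of \Cref{Proposition:TorsionTheoryInSmodinf} and \Cref{Proposition:TorisionPairInModad}, which live in the exact category $\modinf(\EE)$ and the deflation-exact category $\smodad(\EE)$. These categories are genuinely not abelian: a Serre quotient of an abelian category is abelian, so if $\smodad(\EE)$ (resp.\ $\modinf(\EE)$) were abelian, then $\smodad(\EE)/\eff(\EE)\simeq\EE$ (resp.\ $\modinf(\EE)/\eff(\EE)\simeq\ex{\EE}$) would be abelian, which fails already for $\EE=\LB$. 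Consequently everything you draw from that proposition is unsupported in this setting: that the bimorphism classes $\Sigma_1,\Sigma_2$ are ``automatically'' multiplicative systems, that bimorphisms in the torsion-free classes coincide with morphisms having kernel and cokernel in $\eff(\EE)$ (this used kernels/cokernels and the torsion-free reflection in $\AA$), and that localizing the torsion-free class at bimorphisms recovers the quotient --- where the quotients $\modinf(\EE)/\eff(\EE)$ and $\smodad(\EE)/\eff(\EE)$ are, in this paper, \emph{defined} as localizations at the weak $\eff(\EE)$-isomorphisms via \Cref{Theorem:LocalizationTheorem}, so the comparison with your $\Sigma_i$-localizations is precisely what would have to be proved. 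A further unaddressed point of the same nature: a bimorphism in the full subcategory $\modinfone(\EE)$ need not a priori be a bimorphism in $\modone(\EE)$, so even the identification of $S_{\hWInf(\EE)}$ with $\Sigma_1$ under your equivalence has a nontrivial direction.

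The paper's proof avoids this entirely by never leaving the localization already constructed in \Cref{Theorem:SIsMSinhMon}. Since weak inflations (resp.\ inflations) are stable under pullback (\Cref{Lemma:AdmissiblesStableUnderPullbacks}), any bicartesian square in $\hMon(\EE)$ whose target lies in $\hWInf(\EE)$ has source in $\hWInf(\EE)$; the cofinality criterion of \cite[Proposition~7.2.1]{KashiwaraSchapira06} then gives simultaneously that $S_{\hWInf(\EE)}$ is a right multiplicative system and that $\hWInf(\EE)[S^{-1}_{\hWInf(\EE)}]\to\hMon(\EE)[S^{-1}]\simeq\mathcal{LH}(\EE)$ is fully faithful. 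The essential image is identified with the subcategory $\ex{\EE}\subseteq\mathcal{LH}(\EE)$ of \Cref{Proposition:PhiFactorsThroughHull}, because by \Cref{Theorem:ExactHull} every $Z\in\ex{\EE}$ sits in a conflation $X\inflation Y\deflation Z$ with $X,Y\in\EE$ and by \Cref{Corollary:HullOfAIIsL1Closure} the inflation $X\inflation Y$ is a weak inflation of $\EE$; the case of $\EE$ itself is analogous. If you want to salvage your route, the natural repair is to apply \Cref{Proposition:LocalizationAtTorsion} only to the torsion pair $(\eff(\EE),\modone(\EE))$ in the abelian category $\smod(\EE)$ (which is \Cref{Theorem:SIsMSinhMon}) and then restrict along the full subcategories using pullback-stability and the same cofinality criterion --- at which point your argument has become the paper's.
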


\begin{proof}
Following \Cref{Lemma:AdmissiblesStableUnderPullbacks}, we know that weak inflations are stable under pullbacks.  Hence, for any morphism $f\colon \delta_E \to \delta_F$ in $\hMon(\EE)$, if $f$ is a pullback square and $\delta_F \in \hWInf(\EE)$, we know that $\delta_E \in \hWInf(\EE).$  It now follows from \cite[Proposition 7.2.1]{KashiwaraSchapira06} that $S_{\hWInf(\EE)}$ is a right multiplicative set and the induced functor $\hWInf(\EE)[S^{-1}_{\hWInf(\EE)}] \to \hMon(\EE)[S^{-1}]$ is fully faithful.

It follows from \Cref{Theorem:ExactHull} that every object $Z \in \ex{\EE}$ fits in a conflation $X \inflation Y \deflation Z$ in $\ex{\EE}$ with $X,Y\in \EE$.  It follows from \Cref{Corollary:HullOfAIIsL1Closure} that the inflation $X \inflation Y$ in $\ex{\EE}$ is a finite composition of inflations in $\EE$.  Therefore the restriction of the functor $\coker \circ \Yoneda\colon \hMon(\EE) \to\mathcal{LH}(\EE)$ to the subcategory $\hWInf(\EE)$ gives an equivalence between $\hWInf(\EE)[S^{-1}_{\hWInf(\EE)}]$ and the subcategory $\ex{\EE}$ of $\mathcal{LH}(\EE).$

The second statement is proven in a similar fashion.  
\end{proof}

%
\section{The heart of the LB-spaces}\label{Section:LBSpaces}

For this section, let $k$ be either the field of real or the field of complex numbers.  Let us denote by $\LB$ the category of LB-spaces. Its objects are by definition all those Hausdorff locally convex topological $k$-vector spaces $(X,\tau)$ that can be represented by an $\bN$-indexed direct limit of Banach spaces, meaning that there are Banach spaces $X_0\hookrightarrow X_1\hookrightarrow X_2\hookrightarrow\cdots$ with continuous injective linking maps such that $X=\bigcup_{n=1}^{\infty}X_n$ holds as linear spaces and $\tau$ is the finest linear topology that makes all inclusion maps $X_n\hookrightarrow(X,\tau)$ continuous. A morphism between LB-spaces is by definition a linear and continuous map.

It is well-known that $\LB$ is a pre-abelian category. Indeed, given a morphism $f\colon X\rightarrow Y$, then its cokernel is given by $\coker(f)\colon Y\rightarrow Y/\overline{f(X)}$ where $\overline{f(X)}$ is the topological closure of $f(X)$ and where $Y/\overline{f(X)}$ is endowed with the locally convex quotient topology (this is then again of the $\operatorname{LB}$-type explained above). Its kernel is given by $\ker(f)\colon f^{-1}(0)\rightarrow X$
where $f^{-1}(0)$ carries the direct limit topology of the sequence $X_0\cap f^{-1}(0)\hookrightarrow X_1\cap f^{-1}(0)\hookrightarrow X_2\cap f^{-1}(0)\hookrightarrow\cdots$ of Banach spaces. The latter can be strictly finer than the subspace topology, see \cite[Example 6.8.13]{BPC} for an example. To indicate that we are not using the subspace topology, we will write $f^{-1}(0)^{\flat}$ for the kernel in $\LB$. From this discussion it follows that a pair of composable morphisms
\[
X\stackrel{f}{\longrightarrow}Y\stackrel{g}{\longrightarrow}Z
\]
is a kernel-cokernel pair in $\LB$ if and only if $f$ is injective, $g$ is surjective and $f(X)=g^{-1}(0)$ holds as linear spaces. Observe that, in this case, $f(X)\subseteq Y$ is automatically closed, but that $f(X)$ (or, equivalently, $g^{-1}(0)$) endowed with the induced topology of $Y$ is in general \emph{not} an LB-space.  We write $\bC_{\mathsf{all}}$ for the class of all kernel-cokernel pairs in $\LB$.

\begin{theorem}\label{LB-THM-1} The category $\LB$ is a deflation-exact category with respect to the conflation structure $\mathbb{C}_{\mathsf{all}}$ of all kernel-cokernel pairs.  In particular, $(\LB,\mathbb{C}_{\mathsf{all}})$ has admissible kernels. The conflation structure $\mathbb{C}_{\mathsf{all}}$ is not exact.
\end{theorem}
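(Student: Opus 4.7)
The strategy is to apply \Cref{proposition:EquivalentCharacterizations}: the first two assertions follow once I verify that $\LB$ is an additive regular category, and the non-exactness will follow from exhibiting a failure of Keller's axiom \textbf{L2}.

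For axiom \ref{DR1}, I would use that $\LB$ is pre-abelian: any morphism $f\colon X\to Y$ admits the canonical factorization $X\twoheadrightarrow X/f^{-1}(0)^{\flat}\hookrightarrow Y$, where the first map is the cokernel in $\LB$ of $\ker(f)$ and hence a cokernel, while the second map is injective and continuous, hence a monomorphism in $\LB$. For axiom \ref{DR2}, I would invoke the open mapping theorem for $\LB$-spaces to identify cokernels in $\LB$ with continuous open surjections. Given such a cokernel $q\colon X\twoheadrightarrow Y$ and an arbitrary morphism $g\colon Z\to Y$, the pullback in $\LB$ is realized as the $\LB$-kernel $P\coloneqq (q,-g)^{-1}(0)^{\flat}\subseteq X\oplus Z$, and the second projection $P\to Z$ is a continuous open surjection (one lifts $z\in Z$ to any $x$ with $q(x)=g(z)$, using that $q$ is open), hence a cokernel in $\LB$. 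With \ref{DR1} and \ref{DR2} in place, \Cref{proposition:EquivalentCharacterizations} immediately yields that $(\LB,\bC_{\mathsf{all}})$ is deflation-exact with admissible kernels.

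For the final statement, I recall from \Cref{Remark:BasicDefinitions}(3) that, in the presence of \ref{R0}, \ref{R1}, \ref{R2}, the conflation structure is exact precisely when axiom \textbf{L2} holds (Keller's criterion). So it suffices to exhibit an inflation whose pushout along some morphism is not an inflation. I would appeal to \cite[Theorem~3.4]{HassounShahWegner20}, where exactly this failure is documented for $\LB$; the underlying mechanism is the well-known phenomenon (illustrated by \cite[Example~6.8.13]{BPC}, referenced in the excerpt itself) that the $\flat$-topology on a linear subspace of an $\LB$-space can be strictly finer than the inherited subspace topology, so that pushing out destroys the compatibility between the inflation topology and the ambient topology.

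The easy part is \ref{DR1}, which is essentially built into the construction of kernels and cokernels in $\LB$. The main obstacle is axiom \ref{DR2}, since one must verify that pullbacks computed in $\LB$ (as opposed to in the larger category of Hausdorff locally convex spaces) still inherit the correct $\LB$-structure and that the open mapping theorem applies; this is where care with the $\flat$-topology is required. The non-exactness then relies on a concrete counterexample, which I expect to be the most delicate point and for which I would defer to the cited reference rather than reconstruct it here.
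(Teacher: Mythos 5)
Your proposal is correct, but it takes a more self-contained route for the positive half of the statement. The paper's entire proof is a citation: \cite[Theorem~3.4]{HassounShahWegner20} states that $\LB$ is deflation quasi-abelian but not inflation quasi-abelian, and the paper reads deflation-exactness and admissible kernels off the first clause and non-exactness off the second (a pushout of a kernel that fails to be a kernel violates axiom \ref{L2}). You instead reprove the first clause directly --- axiom \ref{DR1} from the explicit kernel/cokernel constructions in $\LB$, axiom \ref{DR2} by realizing the pullback of a cokernel $q$ along $g$ as the $\LB$-kernel of $(q,-g)$ and applying the open mapping theorem for $\LB$-spaces to the projection $P\to Z$ --- and then invoke \Cref{proposition:EquivalentCharacterizations}; this amounts to unfolding the proof of the cited result, and it buys a functional-analytically explicit argument at the cost of genuinely needing the open mapping theorem. (Note the theorem is doing real work there: your parenthetical lifting argument only gives surjectivity of $P\to Z$, and openness cannot be checked naively by lifting neighborhoods, precisely because the $\flat$-topology on $P$ may be strictly finer than the subspace topology.) For the negative half your argument and the paper's coincide: both reduce non-exactness to the failure of pushout-stability of kernels recorded in \cite[Theorem~3.4]{HassounShahWegner20}; your framing via Keller's criterion (\Cref{Remark:BasicDefinitions}) and the paper's direct remark are the same observation, since in the presence of \ref{R0}--\ref{R2} exactness of the conflation structure is equivalent to \ref{L2}.
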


\begin{proof} By \cite[Theorem~3.4]{HassounShahWegner20}, which had been mentioned without proof in \cite[p.~540]{KopylovWegner12}, the category $\LB$ is deflation quasi-abelian but not inflation quasi-abelian. This means explicitly, that in every pullback diagram
\begin{equation*}
\begin{tikzcd}
A\arrow{r}{a}\arrow{d}[swap]{b} \commutes[\text{PB}]{dr}& B \arrow{d}{c}\\ C \arrow{r}[swap]{d} & D
\end{tikzcd}
\end{equation*}
$a$ is a cokernel whenever this is true for $d$, and that there exists a pushout diagram
\begin{equation*}
\begin{tikzcd}
A\arrow{r}{a}\arrow{d}[swap]{b} \commutes[\text{PO}]{dr}& B \arrow{d}{c}\\ C \arrow{r}[swap]{d} & D
\end{tikzcd}
\end{equation*}
in which $a$ is a kernel but $d$ is not. The latter statement implies immediately that $\mathbb{C}_{\mathsf{all}}$ cannot be an exact structure.
\end{proof}

\begin{remark}
At first sight, and in light of \Cref{Lemma:InterpretationOfAI}, the previous result might appear to be inconsistent with \cite[Theorem~6.1]{HassounShahWegner20} which reads `every pre-abelian category with the admissible intersection property is quasi-abelian'. Notice however, that in \cite[Theorem~6.1]{HassounShahWegner20} the admissible intersection property is required with respect to a conflation structure which is exact.
\end{remark}

Applying our results from the previous sections, the category $\LB$ admits a heart which is by \Cref{Theorem:SIsMSinhMon} equivalent to the localization of its monomorphism category modulo homotopy (denoted earlier in this paper by $\hMon(\LB)$) by the class of bicartesian squares. Writing the latter down explicitly for the LB-spaces gives
\[
\mathcal{LH}(\LB)\simeq ({\operatorname{hMon}\LB})\bigl[\{\text{bicartesian squares}\}^{-1}\bigr]
\]
where the right hand side coincides with the category that was defined in an ad hoc fashion and established to be abelian in \cite[Theorem~10 and Proposition 14]{Wegner17} (see also \cite{Schneiders99}). In addition to recovering this ad hoc approach, our results show that the category defined in \cite{Wegner17} is indeed derived equivalent to the category we started with.

\begin{theorem}\label{LB-COR-1} With respect to the conflation structure $\mathbb{C}_{\mathsf{all}}$, the embeddings $\LB\to\LB^{\mathsf{ex}}\to \mathcal{LH}(\LB)$ lift to triangle equivalences $\Dstar(\LB)\to \Dstar(\LB^{\mathsf{ex}})\to\Dstar(\mathcal{LH}(\LB))$ with $*\in \{-,b,\emptyset\}$.
\end{theorem}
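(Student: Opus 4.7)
The theorem is essentially a direct corollary of the general theory developed earlier in the paper, applied to the specific category $(\LB,\mathbb{C}_{\mathsf{all}})$. The plan is to verify the hypotheses needed to invoke the general derived-equivalence results, and then to read off the conclusion.

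First, I would observe that Theorem \ref{LB-THM-1} establishes that $(\LB,\mathbb{C}_{\mathsf{all}})$ is a deflation-exact category with admissible kernels; indeed, it is even deflation quasi-abelian. In particular, by \Cref{Proposition:DeflationAICategorySatisfiesAxiomR3+}, it satisfies axiom \ref{R3+}, and therefore also axiom \ref{R3}. This puts $(\LB,\mathbb{C}_{\mathsf{all}})$ squarely in the setting of Section~4, where the left heart and the exact hull are compared via the derived category.

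Next, for the embedding into the exact hull, I would invoke \Cref{Theorem:DivisiveDeflationExactIsFinitelyPreResolvingInExactHull}: since $\LB$ satisfies \ref{R3+}, the subcategory $\LB \subseteq \LB^{\mathsf{ex}}$ is uniformly preresolving with resolution dimension at most one, so by \Cref{Theorem:PreResolvingDerivedEquivalence} the inclusion lifts to a triangle equivalence $\Dstar(\LB) \xrightarrow{\simeq} \Dstar(\LB^{\mathsf{ex}})$ for $\ast\in\{-,b,\varnothing\}$. For the embedding into the left heart, I would invoke \Cref{corollary:EmbeddingInHeartIsTriangleEquivalence}: since $\LB$ has admissible kernels, it is uniformly preresolving in $\mathcal{LH}(\LB)$ with $\resdim_{\LB}(\mathcal{LH}(\LB))\leq 1$, and again \Cref{Theorem:PreResolvingDerivedEquivalence} yields the triangle equivalence $\Dstar(\LB)\xrightarrow{\simeq}\Dstar(\mathcal{LH}(\LB))$ for $\ast\in\{-,b,\varnothing\}$.

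Finally, to see that the second map $\Dstar(\LB^{\mathsf{ex}}) \to \Dstar(\mathcal{LH}(\LB))$ is a triangle equivalence, I would appeal to \Cref{Proposition:PhiFactorsThroughHull}, which asserts precisely that the factorization $\phi = k \circ j$ through the exact hull yields derived equivalences on all three terms. Alternatively, one obtains the equivalence for $k$ by the two-out-of-three property applied to the triangle equivalences for $\phi$ and $j$. No obstacle of substance arises here, since everything has been done in the general framework; the only thing to check is the applicability of that framework, and this is ensured by \Cref{LB-THM-1}.
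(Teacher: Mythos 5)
Your proposal is correct and takes essentially the same route as the paper: both first use \Cref{LB-THM-1} to place $(\LB,\mathbb{C}_{\mathsf{all}})$ in the general framework of a deflation-exact category with admissible kernels, and then cite the general derived-equivalence results, with the paper quoting \Cref{Theorem:EmbeddingInHeartIsTriangleEquivalence} and \Cref{Proposition:PhiFactorsThroughHull} while you additionally unfold the ingredients (\Cref{Theorem:DivisiveDeflationExactIsFinitelyPreResolvingInExactHull}, \Cref{corollary:EmbeddingInHeartIsTriangleEquivalence}, \Cref{Theorem:PreResolvingDerivedEquivalence}, and two-out-of-three) that the proof of \Cref{Proposition:PhiFactorsThroughHull} itself relies on. The extra detail is accurate but already contained in the cited results, so the two arguments coincide in substance.
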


\begin{proof} By \Cref{LB-THM-1} the category $\LB$ is deflation-exact and has admissible kernels. Thus, \Cref{Theorem:EmbeddingInHeartIsTriangleEquivalence} and \Cref{Proposition:PhiFactorsThroughHull} imply the result.
\end{proof}

\begin{remark}
It is shown in \Cref{Theorem:SIsMSinhMon} that the class $S$ of all bicartesian squares is a multiplicative system in $\hMon(\LB)$.  It follows from \Cref{proposition:hMonVsMon} that $\mathcal{LH}(\LB) \simeq \Mon(\LB)[S^{-1}]$, so one can opt to describe $\mathcal{LH}(\LB)$ starting from $\Mon(\LB)$ instead of $\hMon(\LB).$  However, the class of bicartesian squares $S$ is not a multiplicative system in $\Mon(\LB)$.  Indeed, the localization $\Mon(\LB) \to \Mon(\LB)[S^{-1}]$ does not commute with kernels as can be seen from the following example.  Let $E \in \LB$ be a nonzero object and consider the following two objects in $\Mon(\LB)$: the zero morphism $\delta\colon 0 \to E$ and the identity $\delta'\colon E \to E.$  The morphism $f\colon \delta \to \delta'$ given by
\[
\xymatrix{ 0 \ar[r]^{\delta} \ar[d]^{f^{-1}} & E \ar@{=}[d]^{f^0} \\
E \ar[r]^{\delta'} & E}\]
is a monomorphism in $\Mon(\LB)$ (as both components of $f\colon \delta \to \delta'$ are monomorphisms), but not in $\Mon(\EE)[S^{-1}]$ (as $\delta'$ is zero in $\Mon(\LB)[S^{-1}]$ but $\delta$ is not).  This implies that the localization $\Mon(\LB) \to \Mon(\LB)[S^{-1}]$ does not commute with kernels.
\end{remark}

In addition to the natural, but non-exact, conflation structure $\mathbb{C}_{\mathsf{all}}$, the category $\LB$ admits at least two natural conflation structures that are exact. Let us denote by $\mathbb{E}_{\mathsf{top}}$ the class of \emph{topologically exact sequences} which consists by definition of all pairs $(f,g)$ of composable morphisms that form an exact sequence of vector spaces in which $f$ is closed and $g$ is open. Notice that, due to the Open Mapping Theorem for LB-spaces, the second condition is satisfied automatically.  On the other hand, let us write $\mathbb{E}_{\max}$ for the conflation structure given by all kernel-cokernel pairs $(f,g)$ in which every pushout of $f$ is again a kernel and every pullback of $g$ is again a cokernel.  By \cite{RichmanWalker77, SiegWegner11}, the latter is the maximal exact structure.

\begin{proposition} Consider the category $\LB$ of LB-spaces.
\begin{enumerate}
	
\item \cite[Proposition~3.3]{DierolfSieg12} The conflation category $(\LB, \mathbb{E}_{\mathsf{top}})$ is exact and we have
\[
\mathbb{E}_{\mathsf{top}}=\bigl\{(f,g)\in\mathbb{C}_{\mathsf{all}}\:\big|\: g^{-1}(0)^{\flat} = g^{-1}(0) \text{ as topological spaces}\bigr\}
\]
where we understand that $g^{-1}(0)$ carries the subspace topology.

\item \cite[Proposition~2.2.4 and Remark 2.2.6]{Dierolf14} The conflation category $(\LB,\mathbb{E}_{\mathsf{max}})$ is exact, we have
\[
\mathbb{E}_{\mathsf{max}}=\bigl\{(f,g)\in\mathbb{C}_{\mathsf{all}}\:\big|\: \Hom(g^{-1}(0)^{\flat},k) = \Hom(g^{-1}(0),k) \text{ as vector spaces}\bigr\}
\]
and $\mathbb{E}_{\mathsf{top}}\subset\mathbb{E}_{\mathsf{max}}$ is a proper subclass.\qedhere
\end{enumerate}
\end{proposition}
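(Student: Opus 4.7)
The plan is to handle the two parts separately, leaning heavily on the cited works of Dierolf--Sieg and Dierolf while making the underlying logic explicit. For part (1), the strategy is first to verify Quillen's axioms for $\mathbb{E}_{\mathsf{top}}$ directly, then to prove the characterization inside $\mathbb{C}_{\mathsf{all}}$. Closure of $\mathbb{E}_{\mathsf{top}}$ under pullbacks of deflations and pushouts of inflations reduces to two standard functional-analytic facts: the Open Mapping Theorem for LB-spaces (which ensures that any continuous linear surjection of LB-spaces is automatically open), and the fact that the subspace topology on a closed linear subspace of an LB-space is compatible with the standard pullback and pushout constructions in $\LB$. For the set-theoretic characterization, I would argue tautologically: for $(f,g) \in \mathbb{C}_{\mathsf{all}}$, the morphism $f$ factors as $X \xrightarrow{\cong} g^{-1}(0)^{\flat} \hookrightarrow Y$, and calling $(f,g)$ topologically exact amounts precisely to saying that this first arrow, viewed as a map into $g^{-1}(0)$ with the \emph{subspace} topology of $Y$, is still a homeomorphism; equivalently $g^{-1}(0)^{\flat} = g^{-1}(0)$ as topological spaces.

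For part (2), I would invoke the general description of the maximal exact structure on a pre-abelian category (Richman--Walker, Sieg--Wegner), according to which $(f,g) \in \mathbb{E}_{\max}$ iff every pushout of $f$ is again a kernel and every pullback of $g$ again a cokernel. Translating this stability condition via the explicit description of pushouts in $\LB$ (a suitable locally convex quotient of a direct sum) reduces the problem to whether a continuous functional on $g^{-1}(0)^{\flat}$ extends continuously to $Y$. The criterion $\Hom(g^{-1}(0)^{\flat},k) = \Hom(g^{-1}(0),k)$ then expresses exactly the absence of obstructions to such extensions via Hahn--Banach: functionals that lift in the subspace topology automatically lift to $Y$, whereas functionals existing only on the (finer) LB topology are precisely those that witness pushout-instability. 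Once this criterion is established, the proper inclusion $\mathbb{E}_{\mathsf{top}}\subsetneq \mathbb{E}_{\max}$ is proved by exhibiting an LB-pair in which the topologies $g^{-1}(0)^{\flat}$ and $g^{-1}(0)$ genuinely differ yet carry the same continuous dual; classical examples within the theory of (LF)-spaces, as collected in \cite{Dierolf14}, supply such configurations.

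The main obstacle will be the duality analysis in part (2): verifying that stability under pushouts is \emph{exactly} encoded by the $\Hom(-,k)$ equality requires balancing the Hahn--Banach extension theorem against an explicit computation of the pushout topology, and producing the separating example relies on nontrivial constructions from the theory of locally convex inductive limits. A self-contained proof would essentially reproduce arguments from \cite{Dierolf14}; the honest plan here is to recall the general maximal-exact-structure framework, then to cite Dierolf--Sieg and Dierolf for the functional-analytic core, indicating carefully how their statements match the formulation given here.
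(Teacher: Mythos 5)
The paper offers no argument of its own for this proposition: the citations to Dierolf--Sieg and to Dierolf's dissertation \emph{are} the proof (note the QED symbol placed inside the statement itself), and your proposal, after sketching how the verifications would go, honestly concludes by deferring the functional-analytic core to exactly those same sources. So your approach coincides with the paper's, and your supplementary sketches (Open Mapping Theorem and the tautological identification $X \cong g^{-1}(0)^{\flat}$ for part (1); the Richman--Walker/Sieg--Wegner description of the maximal exact structure plus the Hahn--Banach extension analysis for part (2)) are consistent with what those references actually do.
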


Let us mention that the exact structure $\mathbb{E}_{\mathsf{top}}$ is inherited by $\LB$ from the category of all Hausdorff locally convex spaces, see \cite{DierolfSieg12}. The latter category is quasi-abelian and its topologically exact sequences are precisely all kernel-cokernel pairs.  Our final theorem shows however, that no exact structure on $\LB$ does induce a derived equivalence with $(\LB,\mathbb{C}_{\mathsf{all}})$. 

\begin{theorem}\label{theorem:NoDerivedEquivalenceForExact}
 Let $\mathbb{E}$ be any exact structure on $\LB$. Then $(\LB,\mathbb{E})\to (\LB,\mathbb{C}_{\mathsf{all}})$ does \emph{not} lift to a triangle equivalence $\Dstar(\LB,\mathbb{E})\to \Dstar(\LB,\mathbb{C}_{\mathsf{all}})$. Consequently, none of the natural functors $\Dstar(\LB,\mathbb{E}_{\mathsf{top/max}})\to \Dstar(\mathcal{LH}(\LB,\mathbb{C}_{\mathsf{all}}))$ is a triangle equivalence, either.
\end{theorem}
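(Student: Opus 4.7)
The plan is to exploit axiom \ref{R3} on both sides to show that any putative triangle equivalence would force $\mathbb{E}=\mathbb{C}_{\mathsf{all}}$, contradicting the fact that $\mathbb{C}_{\mathsf{all}}$ is not exact (\Cref{LB-THM-1}).

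First I would note the easy fact that, if $\mathbb{E}$ is any exact structure on $\LB$, then the conflations in $\mathbb{E}$ are, in particular, kernel-cokernel pairs in $\LB$, so $\mathbb{E}\subseteq \mathbb{C}_{\mathsf{all}}$. Hence the identity functor on $\LB$ gives a conflation-exact functor $F\colon (\LB,\mathbb{E}) \to (\LB,\mathbb{C}_{\mathsf{all}})$, which in turn induces a triangle functor $\Phi\colon \Db(\LB,\mathbb{E})\to \Db(\LB,\mathbb{C}_{\mathsf{all}})$ fitting in an essentially commutative square with the canonical embeddings $i_{\mathbb{E}}$ and $i_{\mathbb{C}}$ (so $i_{\mathbb{C}}\cong \Phi\circ i_{\mathbb{E}}$).

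Next, I would invoke \Cref{Proposition:BasicPropertiesDerivedCategoryNew}(1) twice. Since any exact category satisfies axiom \ref{R3}, a sequence $X\to Y\to Z$ in $\LB$ is an $\mathbb{E}$-conflation if and only if it becomes a triangle in $\Db(\LB,\mathbb{E})$. Since $(\LB,\mathbb{C}_{\mathsf{all}})$ is deflation-exact with admissible kernels (by \Cref{LB-THM-1}), it satisfies \ref{R3+} and hence \ref{R3}, so the same characterization holds for $\mathbb{C}_{\mathsf{all}}$-conflations in $\Db(\LB,\mathbb{C}_{\mathsf{all}})$. Assuming $\Phi$ is a triangle equivalence, the relation $i_{\mathbb{C}}\cong \Phi\circ i_{\mathbb{E}}$ implies that $i_{\mathbb{E}}(X)\to i_{\mathbb{E}}(Y)\to i_{\mathbb{E}}(Z)\to \Sigma i_{\mathbb{E}}(X)$ is a triangle in $\Db(\LB,\mathbb{E})$ if and only if $i_{\mathbb{C}}(X)\to i_{\mathbb{C}}(Y)\to i_{\mathbb{C}}(Z)\to \Sigma i_{\mathbb{C}}(X)$ is one in $\Db(\LB,\mathbb{C}_{\mathsf{all}})$. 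Combining the two equivalences yields $\mathbb{E}=\mathbb{C}_{\mathsf{all}}$, contradicting the non-exactness of $\mathbb{C}_{\mathsf{all}}$.

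The consequence is then formal: by \Cref{LB-COR-1} the embedding $(\LB,\mathbb{C}_{\mathsf{all}})\hookrightarrow \mathcal{LH}(\LB,\mathbb{C}_{\mathsf{all}})$ lifts to a triangle equivalence $\Dstar(\LB,\mathbb{C}_{\mathsf{all}})\simeq \Dstar(\mathcal{LH}(\LB,\mathbb{C}_{\mathsf{all}}))$, so if the natural functor $\Dstar(\LB,\mathbb{E}_{\mathsf{top/max}})\to \Dstar(\mathcal{LH}(\LB,\mathbb{C}_{\mathsf{all}}))$ were a triangle equivalence, composing with the inverse of this equivalence would contradict the first part. The only subtle point I anticipate is verifying that $\Phi\circ i_{\mathbb{E}}\cong i_{\mathbb{C}}$ really is the correct compatibility to transfer the axiom \ref{R3} characterizations across $\Phi$, but this is immediate from the construction of the derived functor induced by the identity on $\LB$.
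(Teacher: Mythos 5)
Your proposal is correct and follows essentially the same route as the paper: both arguments rest on the inclusion $\mathbb{E}\subseteq\mathbb{C}_{\mathsf{all}}$, the non-exactness of $\mathbb{C}_{\mathsf{all}}$ from \Cref{LB-THM-1}, the axiom-\ref{R3} detection of conflations in the derived category (\Cref{Proposition:BasicPropertiesDerivedCategoryNew}), and \Cref{LB-COR-1} for the final consequence. The only difference is cosmetic: the paper exhibits a complex that is acyclic for $\mathbb{C}_{\mathsf{all}}$ but not for $\mathbb{E}$ and concludes that the canonical functor is not even faithful, whereas you argue by contradiction that an equivalence would force $\mathbb{E}=\mathbb{C}_{\mathsf{all}}$.
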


\begin{proof}
	As $\mathbb{E}\subseteq \mathbb{C}_{\mathsf{all}}$, the identity $(\LB,\mathbb{E})\to (\LB,\mathbb{C}_{\mathsf{all}})$ lifts to a triangle functor $\Dstar(\LB,\mathbb{E})\to \Dstar(\LB,\mathbb{C}_{\mathsf{all}})$. As $(\LB,\mathbb{C}_{\mathsf{all}})$ is not exact, there is a conflation $X\inflation Y\deflation Z$ in $(\LB,\mathbb{C}_{\mathsf{all}})$ which is not a conflation in $(\LB,\mathbb{E})$. Extending the above conflation to a complex $U^{\bullet}$, \Cref{Proposition:BasicPropertiesDerivedCategoryNew} implies that $U^{\bullet}\in \Ac(\LB,\mathbb{C}_{\mathsf{all}})$ but $U^{\bullet}\notin \Ac(\LB,\mathbb{E})$ by \Cref{Proposition:BasicPropertiesDerivedCategoryNew}. It follows that $\Dstar(\LB,\mathbb{E})\to \Dstar(\LB,\mathbb{C}_{\mathsf{all}})$ is not faithful.
\end{proof}

\begin{remark}
As the proof indicates, the statement of Theorem \ref{theorem:NoDerivedEquivalenceForExact} holds after replacing $\LB$ with any additive regular category which is not an exact category (when endowed with the maximal conflation structure).  The dual statement holds for additive coregular categories.
\end{remark}

We conclude this article by outlining that the dual situation, i.e.~inflation-exact categories having admissible cokernels (or, thus, additive coregular categories), appear naturally in the functional analytic context as well.

\begin{example} The category $\mathsf{COM}$ of complete Hausdorff locally convex spaces, furnished with linear and continuous maps as morphisms, is inflation quasi-abelian and not deflation quasi-abelian, see \cite[Theorem 3.3]{HassounShahWegner20}. As in the proof of Theorem \ref{LB-THM-1} it follows that the latter category is inflation-exact and has admissible cokernels if endowed with the conflation structure consisting of all kernel-cokernel pairs. The latter contains the maximal exact structure as a proper subclass. Consequently, the embedding $(\mathsf{COM},\mathbb{C}_{\mathsf{all}})\to\mathcal{RH}(\mathsf{COM})$ lifts to an equivalence of bounded derived categories, whereas the functor $(\mathsf{COM},\mathbb{E}_{\mathsf{max}})\to\mathcal{RH}(\mathsf{COM})$ does not.	
\end{example}

\begin{example}
Let $\operatorname{Top}_\bZ^{\text sc}$ be the category of complete and separated topological groups with linear topology.  Likewise, for a field $k$, we write $\operatorname{Top}_k^{\text sc}$ for the category of complete and separated topological $k$-vector spaces with linear topology.  It is shown in \cite[Proposition 8.3]{Positselski20} that the categories $\operatorname{Top}_\bZ^{\text sc}$ and $\operatorname{Top}_k^{\text sc}$ are inflation quasi-abelian (thus, inflation-exact categories with admissible cokernels, or equivalently, satisfying the admissible cointersection property) but not quasi-abelian.
\end{example}

\begin{example}
Let $\PLS$ be the category of countable projective limits of Silva spaces (see \cite{Domanski04}); examples of $\PLS$-spaces include the space of distributions and the space of real analytic functions.  It is shown in \cite[Theorem 7]{LawsonWegner21} that the category $\PLS$ is inflation quasi-abelian (but not quasi-abelian).  Hence, $\PLS$ is an additive coregular category.  Similar statements hold for the categories $\PLN$ and $\PLSW$.
\end{example}

\section*{Compliance with ethical standards}

\subsection*{Conflict of interest} On behalf of all authors, the corresponding author states that there is no conflict of
interest.

\subsection*{Data availability} Data sharing is not applicable to this article as no new data were created or
analyzed in this study.






\def\bysame{\leavevmode\hbox to3em{\hrulefill}\thinspace}

\end{document}